\documentclass[11pt, a4paper]{amsart}

\usepackage{amscd,verbatim}
\usepackage{amssymb}

\usepackage{bm}

\usepackage{amssymb, amsmath}
\usepackage[all]{xy}
\usepackage[inline]{enumitem}
\usepackage{hyphenat}
\usepackage[colorlinks,linkcolor=blue,citecolor=blue,urlcolor=red]{hyperref}
\usepackage[dvipsnames]{xcolor}
\usepackage{mathtools}
\usepackage{tikz-cd}
\usetikzlibrary{cd,decorations.pathreplacing}
\usepackage[labelformat=empty]{caption}
\usepackage{xfrac}
 \usepackage[ left=3cm, right=3cm, top = 2.8cm, bottom = 2.8cm ]{geometry}
\usepackage{stmaryrd}
\usepackage{bbm}

\makeatletter
\def\@tocline#1#2#3#4#5#6#7{\relax
  \ifnum #1>\c@tocdepth 
  \else
    \par \addpenalty\@secpenalty\addvspace{#2}%
    \begingroup \hyphenpenalty\@M
    \@ifempty{#4}{%
      \@tempdima\csname r@tocindent\number#1\endcsname\relax
    }{%
      \@tempdima#4\relax
    }%
    \parindent\z@ \leftskip#3\relax \advance\leftskip\@tempdima\relax
    \rightskip\@pnumwidth plus4em \parfillskip-\@pnumwidth
    #5\leavevmode\hskip-\@tempdima
      \ifcase #1
      \or\or \hskip 2em \or \hskip 2em \else \hskip 3em \fi%
      #6\nobreak\relax
    \dotfill\hbox to\@pnumwidth{\@tocpagenum{#7}}\par
    \nobreak
    \endgroup
  \fi}
\makeatother

\newcommand{\A}{\mathbf{A}}

\newcommand{\C}{\mathbf{C}}

\newcommand{\G}{\mathbf{G}}

\renewcommand{\P}{\mathbf{P}}
\newcommand{\Q}{\mathbb{Q}}

\newcommand{\Z}{\mathbb{Z}}
\newcommand{\F}{\mathbb{F}}

\newcommand{\sN}{\mathcal{N}}
\newcommand{\sM}{{\mathscr M}}
\newcommand{\sO}{\mathcal{O}}
\newcommand{\sP}{\mathcal{P}}

\newcommand{\sU}{\mathcal{U}}
\newcommand{\sV}{\mathcal{V}}

\newcommand{\sX}{\mathcal{X}}
\newcommand{\sY}{\mathcal{Y}}

\newcommand{\fm}{\mathfrak{m}}

\newcommand{\fp}{\mathfrak{p}}
\newcommand{\fq}{\mathfrak{q}}
\newcommand{\fr}{\mathfrak{r}}

\newcommand{\Cone}{\operatorname{Cone}}

\newcommand{\ul}[1]{{\underline{#1}}}

\newcommand{\Set}{{\operatorname{\mathbf{Set}}}}
\newcommand{\Cpx}{{\operatorname{\mathbf{Cpx}}}}

\newcommand{\Hom}{\operatorname{Hom}}

\newcommand{\Ker}{\operatorname{Ker}}

\newcommand{\Tr}{\operatorname{Tr}}

\newcommand{\Spec}{\operatorname{Spec}}
\newcommand{\Spa}{\operatorname{Spa}}

\newcommand{\Sch}{\operatorname{\mathbf{Sch}}}
\newcommand{\Shv}{\operatorname{\mathbf{Shv}}}
\newcommand{\Ab}{\operatorname{\mathbf{Ab}}}

\newcommand{\pro}[1]{\text{\rm pro}_{#1}\text{\rm--}}

\newcommand{\dlog}{{\operatorname{dlog}}}

\newcommand{\eff}{{\operatorname{eff}}}

\newcommand{\red}{{\operatorname{red}}}

\newcommand{\Zar}{{\operatorname{Zar}}}
\newcommand{\Nis}{{\operatorname{Nis}}}
\newcommand{\et}{{\operatorname{\acute{e}t}}}

\newcommand{\inj}{\hookrightarrow}

\newcommand{\surj}{\rightarrow\!\!\!\!\!\rightarrow}

\newcommand{\Res}{\operatorname{Res}}

\newcommand{\id}{{\operatorname{Id}}}

\newcommand{\Sym}{{\operatorname{Sym}}}

\newcommand{\CH}{{\operatorname{CH}}}

\newcommand{\Frac}{{\operatorname{Frac}}}

\renewcommand{\lim}{\operatornamewithlimits{\varprojlim}}
\newcommand{\colim}{\operatornamewithlimits{\varinjlim}}

\newcommand{\ol}{\overline}

\renewcommand{\phi}{\varphi}
\renewcommand{\epsilon}{\varepsilon}

\newcommand{\CI}{\operatorname{\mathbf{CI}}}

\newcommand{\Bl}{{\mathbf{Bl}}}

\newcommand{\M}{\mathbf{M}}

\def\rmapo#1{\overset{#1}{\longrightarrow}}

\def\tV{\widetilde{V}}
\def\tW{\widetilde{W}}

\newcounter{spec}
{\end{list}}%

\newtheorem{lemma}{Lemma}[section]
\newtheorem{thm}[lemma]{Theorem}

\newtheorem{theorem}{Theorem}
\newtheorem{prop}[lemma]{Proposition}

\newtheorem{corollary}[lemma]{Corollary}
\newtheorem{cor-intro}{Corollary}

\newtheorem{claim}[lemma]{Claim}
\theoremstyle{definition}
\newtheorem{defn}[lemma]{Definition}

\newtheorem{definition}[lemma]{Definition}
\newtheorem{para}[lemma]{}
\theoremstyle{remark}

\newtheorem{rmk}[lemma]{Remark}
\newtheorem{remark}[lemma]{Remark}

\newtheorem{example}[lemma]{Example}

\numberwithin{equation}{section}

\numberwithin{equation}{lemma}

\colorlet{LightRubineRed}{RubineRed!70!}

\usepackage[color = blue!20, bordercolor = black, textsize = tiny]{todonotes}

\usepackage{relsize} 
\usepackage[bbgreekl]{mathbbol} 
\usepackage{amsfonts} 
\DeclareSymbolFontAlphabet{\mathbb}{AMSb} 
\DeclareSymbolFontAlphabet{\mathbbl}{bbold} 

\DeclareSymbolFontAlphabet{\mathbbl}{bbold}

\usepackage{scalerel}
\usepackage[inline]{enumitem}
\def\Ab{\mathbf{Ab}}
\def\lSm{\mathbf{lSm}}
\def\SmlSm{\mathbf{SmlSm}}
\def\Sm{\mathbf{Sm}}
\def\sM{\mathcal{M}}
\def\RSC{\mathbf{RSC}}

\def\qfor{ \text{ for } }

\def\tX{\tilde{X}}

\def\ty{\tilde{y}}
\def\tU{\tilde{U}}
\def\tV{\tilde{V}}
\def\tW{\tilde{W}}
\def\tT{\tilde{T}}
\def\tA{\tilde{A}}
\def\tB{\tilde{B}}
\def\tC{\tilde{C}}
\def\tD{\tilde{D}}

\def\tc{\tilde{c}}
\def\tf{\tilde{f}}

\def\tg{\tilde{g}}
\def\th{\tilde{h}}

\def\tY{\tilde{Y}}

\def\tu{\tilde{u}}
\def\tV{\tilde{V}}
\def\cU{\mathcal{U}}
\def\cW{\mathcal{W}}
\def\cV{\mathcal{V}}
\def\cT{\mathcal{T}}
\def\cY{\mathcal{Y}}
\def\cQ{\mathcal{Q}}
\def\Spa{\mathrm{Spa}}
\def\W{\mathbb{W}}
\def\tP{\tilde{P}}

\def\catprojlim#1{\underset{#1}{``\varprojlim"}}

\def\qaq{\;\text{ and }\;}
\def\rmapo#1{\overset{#1}{\longrightarrow}}

\def\SchSS{\Sch_{(S,\tS)}}
\def\SchXX{\Sch_{(X,\tX)}}

\newcounter{elno}

\newcommand\sbullet[1][.6]{\mathbin{\ThisStyle{\vcenter{\hbox{%
  \scalebox{#1}{$\SavedStyle\bullet$}}}}}%
}
\begin{document}

\def\THH{\operatorname{THH}}
\def\TC{\operatorname{TC}}
\def\TCmin{\operatorname{TC}^-}
\def\TP{\operatorname{TP}}
\def\HH{\operatorname{HH}}
\def\HC{\operatorname{HC}}
\def\HCmin{\operatorname{HC}^-}
\def\HP{\operatorname{HP}}

\def\Fil{\operatorname{Fil}}
\def\Gr{\operatorname{Gr}}
\def\gr{\operatorname{gr}}

\def\QSyn{\operatorname{QSyn}}
\def\QRSPerfd{\operatorname{QRSPerfd}}
\def\lQSyn{\operatorname{lQSyn}}
\def\lQRSPerfd{\operatorname{lQRSPerfd}}

\def\syn{\mathrm{syn}}
\def\Fsyn{\mathrm{Fsyn}}
\def\Fet{\mathrm{F\acute{e}t}}

\def\LogRec{\operatorname{\mathbf{LogRec}}}
\def\Ch{\operatorname{\mathrm{Ch}}}

\def\ltr{\mathrm{ltr}}

\def\kX{\mathfrak{X}}
\def\kY{\mathfrak{Y}}

\def\otCIsp{\otimes_{\CI}^{sp}}
\def\otCINissp{\otimes_{\CI}^{\Nis,sp}}

\def\tL{\tilde{L}}
\def\tX{\tilde{X}}
\def\tY{\tilde{Y}}
\def\tF{\widetilde{F}}
\def\tG{\widetilde{G}}
\def\tR{\widetilde{R}}
\def\tS{\widetilde{S}}

\def\Sh{\operatorname{\mathbf{Shv}}}
\def\PSh{\operatorname{\mathbf{PSh}}}
\def\Shltr{\operatorname{\mathbf{Shv}_{dNis}^{ltr}}}
\def\Shlog{\operatorname{\mathbf{Shv}_{dNis}^{log}}}
\def\Shvlog{\operatorname{\mathbf{Shv}^{log}}}
\def\Sm{\operatorname{\mathrm{Sm}}}
\def\SmlSm{\operatorname{\mathrm{SmlSm}}}
\def\lSm{\operatorname{\mathrm{lSm}}}
\def\FlSm{\operatorname{\mathrm{FlSm}}}
\def\FlQSm{\operatorname{\mathrm{FlQSm}}}
\def\FlQSyn{\operatorname{\mathrm{FlQSyn}}}
\def\lCor{\operatorname{\mathrm{lCor}}}
\def\SmlCor{\operatorname{\mathrm{SmlCor}}}
\def\PShltr{\operatorname{\mathbf{PSh}^{ltr}}}
\def\PShlog{\operatorname{\mathbf{PSh}^{log}}}
\def\logCI{\mathbf{logCI}} 

\def\Mod{\operatorname{Mod}}
\newcommand{\DM}[1][]{\operatorname{\mathcal{DM}_{#1}}}
\newcommand{\DMeff}[1][]{\operatorname{\mathcal{DM}^{\eff}_{#1}}}
\newcommand{\DA}[1][]{\operatorname{\mathcal{DA}_{#1}}}
\newcommand{\DAeff}[1][]{\operatorname{\mathcal{DA}^{\eff}_{#1}}}
\newcommand{\FDA}[1][]{\operatorname{\mathcal{DA}_{#1}}}
\newcommand{\FDAeff}[1][]{\operatorname{\mathcal{DA}^{\eff}_{#1}}}
\newcommand{\SH}[1][]{\operatorname{\mathcal{SH}_{#1}}}
\newcommand{\logSH}[1][]{\operatorname{\mathbf{log}\mathcal{SH}_{#1}}}
\newcommand{\logDA}[1][]{\operatorname{\mathbf{log}\mathcal{DA}_{#1}}}
\newcommand{\logDAeff}[1][]{\operatorname{\mathbf{log}\mathcal{DA}^{\eff}_{#1}}}
\newcommand{\logDM}[1][]{\operatorname{\mathbf{log}\mathcal{DM}_{#1}}}
\newcommand{\logDMeff}[1][]{\operatorname{\mathbf{log}\mathcal{DM}^{\eff}_{#1}}}
\newcommand{\logFDA}[1][]{\operatorname{\mathbf{log}\mathcal{FDA}_{#1}}}
\newcommand{\logFDAeff}[1][]{\operatorname{\mathbf{log}\mathcal{FDA}^{\eff}_{#1}}}
\newcommand{\WOmega}{\operatorname{\mathcal{W}\Omega}}
\def\Log{\operatorname{\mathcal{L}\textit{og}}}
\def\Rsc{\operatorname{\mathcal{R}\textit{sc}}}
\def\Pro{\mathrm{Pro}\textrm{-}}
\def\pro{\mathrm{pro}\textrm{-}}
\def\dg{\mathrm{dg}}
\def\plim{\mathrm{``lim"}}
\def\ker{\mathrm{ker}}
\def\coker{\mathrm{coker}}
\def\PrL{\mathcal{P}\mathrm{r^L}}
\def\PrLo{\mathcal{P}\mathrm{r^{L,\otimes}}}
\def\Spt{\mathcal{S}\mathrm{pt}}
\def\PSpt{\mathrm{Pre}\mathcal{S}\mathrm{pt}}
\def\Fun{\mathrm{Fun}}
\def\Sym{\mathrm{Sym}}
\def\CAlg{\mathrm{CAlg}}
\def\Poly{\mathrm{Poly}}
\def\Cat{\mathrm{Cat}}

\def\Alb{\operatorname{Alb}}
\def\bAlb{\mathbf{Alb}}
\def\Gal{\operatorname{Gal}}

\def\hofib{\mathrm{hofib}}
\def\fib{\mathrm{fib}}
\def\triv{\mathrm{triv}}
\def\ABl{\mathcal{A}\textit{Bl}}
\def\divsm#1{{#1_\mathrm{div}^{\mathrm{Sm}}}}

\def\cA{\mathcal{A}}
\def\cB{\mathcal{B}}
\def\cC{\mathcal{C}}
\def\cD{\mathcal{D}}
\def\cE{\mathcal{E}}
\def\cF{\mathcal{F}}
\def\cG{\mathcal{G}}
\def\cH{\mathcal{H}}
\def\cI{\mathcal{I}}
\def\cJ{\mathcal{J}}
\def\cK{\mathcal{K}}
\def\cL{\mathcal{L}}
\def\cM{\mathcal{M}}
\def\cN{\mathcal{N}}
\def\cO{\mathcal{O}}
\def\cP{\mathcal{P}}
\def\cQ{\mathcal{Q}}
\def\cR{\mathcal{R}}
\def\cS{\mathcal{S}}
\def\cT{\mathcal{T}}
\def\cU{\mathcal{U}}
\def\cV{\mathcal{V}}
\def\cW{\mathcal{W}}
\def\cX{\mathcal{X}}
\def\cY{\mathcal{Y}}
\def\cZ{\mathcal{Z}}

\def\tcA{\widetilde{\mathcal{A}}}
\def\tcB{\widetilde{\mathcal{B}}}
\def\tcC{\widetilde{\mathcal{C}}}
\def\tcD{\widetilde{\mathcal{D}}}
\def\tcE{\widetilde{\mathcal{E}}}
\def\tcF{\widetilde{\mathcal{F}}}

\def\one{\mathbbm{1}}

\def\XP{X \backslash \sP}
\def\M0a{{}^t\cM_0^a}
\newcommand{\Ind}{{\operatorname{Ind}}}

\def\Xkbar{\overline{X}_{\overline{k}}}
\def\dx{{\rm d}x}

\newcommand{\dNis}{{\operatorname{dNis}}}
\newcommand{\loget}{{\operatorname{l\acute{e}t}}}
\newcommand{\ket}{{\operatorname{k\acute{e}t}}}
\newcommand{\vet}{{\operatorname{v\acute{e}t}}}
\newcommand{\ABNis}{{\operatorname{AB-Nis}}}
\newcommand{\sNis}{{\operatorname{sNis}}}
\newcommand{\sZar}{{\operatorname{sZar}}}
\newcommand{\set}{{\operatorname{s\acute{e}t}}}
\newcommand{\cofib}{\mathrm{Cofib}}

\newcommand{\Gmlog}{\G_m^{\log}}
\newcommand{\Gmlogred}{\overline{\G_m^{\log}}}

\newcommand{\varcolim}{\mathop{\mathrm{colim}}}
\newcommand{\varlim}{\mathop{\mathrm{lim}}}
\newcommand{\tensor}{\otimes}

\newcommand{\eq}[2]{\begin{equation}\label{#1}#2 \end{equation}}
\newcommand{\eqalign}[2]{\begin{equation}\label{#1}\begin{aligned}#2 \end{aligned}\end{equation}}

\def\varplim#1{\text{``}\varlim_{#1}\text{''}}
\def\det{\mathrm{d\acute{e}t}}

\renewcommand{\int}{\mathrm{int}}
\def\federem#1{\begin{color}{teal}{#1}\end{color}}
\def\tomrem#1{\begin{color}{purple}{#1}\end{color}}

\author{Alberto Merici, Kay R\"ulling \and Shuji Saito}
\address{Institut f\"ur Mathematik,  Universit\"at Heidelberg. INF 205, 69115 Heidelberg, Germany}
\email[A. Merici]{merici@mathi.uni-heidelberg.de}

\address{School of Mathematics and Natural Sciences, University of Wuppertal, Germany}
\email[K. R\"ulling]{ruelling@uni-wuppertal.de}

\address{Graduate School of Mathematical Sciences, the University of Tokyo, 3-8-1 Komaba Meguro-ku
Tokyo 153-8914, Japan}
\email[S. Saito]{sshuji.goo@gmail.com}

\thanks{Funded by the Deutsche Forschungsgemeinschaft (DFG, German Research 
Foundation)
TRR 326 \textit{Geometry and Arithmetic of Uniformized Structures}, 
project number 444845124./}
\title{A construction of tame sheaves and tame de Rham--Witt cohomology}

\begin{abstract} 
In this article, we consider an algebraic version of the tame site of a pair $(X,\tX)$. With this definition, we provide a general machinery to construct a tame sheaf from the data of an \'etale sheaf on $X$ and a family of local tame sections. We apply this construction to the big de Rham--Witt sheaves with tame sections defined by log poles and, over a field, to reciprocity sheaves, and deduce some consequences. As an application, we compare tame syntomic cohomology with the Nygaard filtration on the tame de Rham--Witt complex.
\end{abstract}
\maketitle
\tableofcontents

\section*{Introduction}

Let $k$ be a field and $\Sm_k$ be the category of smooth schemes separated and of finite type over $k$. 
If {$k$ has characteristic zero}, a well behaved cohomology theory on $\Sm_k$ is given by
\[
X\mapsto R\Gamma(\ol{X},\Omega^\bullet_{\ol{X}}(\log D)), \tag{1}\label{eq;Omegalog}\]
where $\ol{X}$ is a smooth compactification of $X$ with $D=\ol{X}-X$ a divisor with simple normal {crossings} and 
$\Omega^\bullet_{\ol{X}}(\log D)$ is the associated {de Rham complex with logarithmic poles along $D$.} 
{It is independent of the choice of $\ol{X}$, by  \cite{DeligneHodgeII} }. 
An important feature of \eqref{eq;Omegalog} is that it satisfies \'etale descent: For example, this follows from the comparison with singular cohomology assuming $k=\C$ by the Lefschetz principle. 

{If the characteristic of $k$ is $p>0$, then one can} construct an integral $p$-adic cohomology theory on $\Sm_k$  following the above construction using the de Rham--Witt complex {with log poles instead}, see e.g. \cite{ertl2021integral}. 
{But it turns out that it does not satisfies  \'etale descent.}
{Indeed,} assuming the existence of good compactifications, the cohomology theory
\[
X\mapsto R\lim_m R\Gamma(\ol{X},W_m\Omega^\bullet_{\ol{X}}(\log D)) \tag{2}\label{eq;WOmegalog}\]
compares with rigid cohomology while the cohomology groups are  finitely generated $W(k)$-modules. Therefore,  
it cannot have \'etale descent, by \cite{AbeCrew}. 
A counterexample is given by an  Artin--Schreier covering so this discrepancy is due to  wild ramification.
On the other hand, in \cite{mericicrys}, it has been shown that (under the assumption of resolutions of singularities) the cohomology theory \eqref{eq;WOmegalog} satisfies the tame descent in the sense of \cite{HS2020}, using tools from motivic homotopy theory of logarithmic schemes. This suggests that in order to construct integral models of $p$-adic {cohomology theories} 
of (not necessarily proper) schemes, one needs to take into account this weaker descent instead of \'etale descent.
\medbreak

The main goal of this paper is to 
introduce a variant of the tame topology of \cite{HS2020} and establish its foundational results, which enable us to construct various cohomology theories which satisfy  tame descent.
For a quasi-compact open immersion $S\hookrightarrow \tS$\footnote{For example, $S=\tS=\Spec(k)$ or $(S,\tS)=(\Spec(\Frac(R)),\Spec(R))$ for a discrete valuation ring $R$.}, we consider a tame site $\Sch_{(S,\tS),t}$ whose underlying category is a category of quasi-compact open immersions $X\hookrightarrow \tX$ over $S\hookrightarrow \tS$ with $X$ of finite type over $S$ and $\tX\to \tS$ satisfying a certain finiteness condition. For $(X,\tX)\in \Sch_{(S,\tS),t}$, we also consider the tame site $(X,\tX)_t$ {on} the category of the objects $(U,\tU)$ over $(X,\tX)$ with $U\to X$ \'etale endowed with the induced topology of $\Sch_{(S,\tS),t}$.
A useful property of tame cohomology is the independence of compactifications:
For two objects $(X,\tX),\; (X,\tX')$ of $\Sch_{(S,\tS),t}$ such that $\tX,\tX'$ are both  Nagata compactifications of $X$ over $\tS$, there is a natural equivalence 
\[ R\Gamma((X,\tX)_t,F) \simeq R\Gamma((X,\tX')_t,F),\]
for any sheaf $F$ on $\Sch_{(S,\tS),t}$, {see Lemma \ref{lem:blow-up-inv}. Thus}  the formation
$X\to R\Gamma((X,\tX)_t,F)$ with a choice of a compactification $\tX$ gives a well-defined cohomology theory on the category of schemes separated and of finite type over $S$ relative to the embedding $S\hookrightarrow \tS$, which is functorial for morphisms of $S$-schemes.
Another useful fact is that we can provide a general machinery to construct sheaves on $(X,\tX)_t$ providing several examples of tame sheaves and cohomology theories satisfying the tame descent. 
The construction which we call the $\beta$-construction, takes as an input an \'etale sheaf $F$ on $X_{\et}$ equipped with a local datum $\beta$ of ``tame sections'' on every valuation ring of a residue field of $X$ centered on $\tX$, and then gives as an output a tame sheaf $F_{\beta}$ on $(X,\tX)_t$ (see Proposition \ref{prop:constr-tame-sheaves}).
In the circumstances where resolution of singularities holds, it turns out that the cohomology theories \eqref{eq;Omegalog} and \eqref{eq;WOmegalog} are obtained as special instances of the $\beta$-construction, giving another proof that these cohomology theories satisfy tame descent (see Theorem \ref{thm:tameDRWcoh} and \ref{thm:tamecohRSC}).

The construction of the tame site $(X,\tX)_t$ is similar in spirit to that of the site of \cite{Huebner2020} constructed on the adic space $\Spa(X,\tX)$. A difference here is that since the objects of our site are schemes (rather than adic spaces), it is much easier to construct sheaves on $(X,\tX)_t$. We were informed by D'Addezio that he had defined related sites in the setup of marked schemes in \cite{DAddezio}.
\medbreak

Now we state the main result on a computation of tame cohomology on $(X,\tX)_t$.

\begin{theorem}\label{thm:tame-vs-etale-intro}
    (see Theorem \ref{thm:tame-vs-etale}):
	Let $F$ be a tame sheaf of abelian groups on $(X,\tX)_t$ such that the following condition is satisfied:
	\begin{enumerate}[label=(p)]
		\item
		for every $(U,\tU)\to (X,\tX)$ in $(X,\tX)_t$, the stalk of the Zariski sheaf on $\tU$ given by $\tW\subset \tU \mapsto F(\tW\cap U, \tW)$
		at any point $x\in \tU$ is a $\Z_{(p_x)}$-module, where 
		{$p_x$ is the exponential characteristic of $\kappa(x)$.}
	\end{enumerate}
	Then, we have a natural equivalence
\[
R\Gamma((X,\tX)_t, F)\simeq \colim_{\cW\to (X,\tX)}R\Gamma(\tW_{\et}, j^{\cW}_*F), 
\]
where the colimit runs along all admissible blow-ups $\cW=(W,\tW)\to (X,\tX)$ and $j^{\cW}_*F$ is the \'etale sheaf on $\tW_{\et}$ given by $\tV/\tW \mapsto F(\tV\times_{\tW} W,\tV)$.
\end{theorem}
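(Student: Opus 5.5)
We compare the two sides through the topos-theoretic limit over admissible blow-ups. For each admissible blow-up $\cW=(W,\tW)\to(X,\tX)$ there is a morphism of sites $\epsilon_{\cW}\colon (X,\tX)_t\to \tW_{\et}$. It sends an étale $\tV\to\tW$ to $(\tV\times_{\tW}W,\tV)$, which is an object of $(X,\tX)_t$ because étale covers of $\tW$ restrict to tame covers. By construction $\epsilon_{\cW,*}F=j^{\cW}_*F$, so we get compatible maps $R\Gamma(\tW_{\et},j^{\cW}_*F)\to R\Gamma((X,\tX)_t,F)$. By the blow-up invariance of Lemma \ref{lem:blow-up-inv} we may replace $(X,\tX)$ by any $\cW$, so these maps assemble into the map from the colimit. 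To show it is an equivalence, it suffices to prove that the colimit $\colim_{\cW}R^q\epsilon_{\cW,*}F$, suitably pulled back, vanishes for $q>0$. Equivalently, we show that tame cohomology classes are killed, locally on $\tW$ for the étale topology, after a further admissible blow-up. Then a standard Čech/hypercovering argument combined with cofinality of blow-ups gives the equivalence, using that cohomology commutes with the filtered colimit on quasi-compact objects.

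The key local statement is as follows. Let $\kappa$ range over the points of the Riemann–Zariski space $\lim_{\cW}\tW$, i.e. over valuation rings $\sO_v$ of residue fields of $X$ centered on $\tX$. The colimit over blow-ups of the strict henselizations of the $\tW$ at the centers of $v$ is the strict henselization $\sO_v^{sh}$ of the valuation ring. The tame site localized there is governed by the tame extensions of $\mathrm{Frac}(\sO_v^{sh})$. Thus the stalks of $\colim_{\cW}R^q\epsilon_{\cW,*}F$ are identified with the Galois cohomology $H^q(G^{t},F_{\bar\eta})$ of the tame inertia group $G^{t}$. This is the group of tame extensions of the strictly henselian valued field, modulo wild inertia. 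I would establish this identification first, using that every tame cover is refined, after an admissible blow-up, by a Kummer-type cover of prime-to-$p$ degree. The refinement comes from the structure of tame extensions of henselian valued fields together with the description of tame coverings in the definition of $\Sch_{(S,\tS),t}$.

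The vanishing then comes from condition (p). The group $G^t$ is a quotient of $\prod_{\ell\neq p}\Z_\ell(1)^{r}$ and has pro-order prime to $p_x$. Meanwhile, by (p) the relevant stalks of $F$ are $\Z_{(p_x)}$-modules, on which every integer prime to $p_x$ acts invertibly. Continuous cohomology of a profinite group of order prime to $p$ with coefficients in a $\Z_{(p)}$-module vanishes in positive degrees, because restriction to open subgroups followed by corestriction is multiplication by the index, which is invertible. Hence the higher stalks vanish and only $H^0$ survives. That $H^0$ matches the stalk of $j^{\cW}_*F$ in the colimit by the definition of the sheafification.

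The main obstacle is the local identification in the second paragraph. We must show that the filtered colimit over blow-ups of étale neighborhoods computes the tame cohomology at a valuation and reduces it to tame Galois cohomology. This requires care with the limit of sites (a continuity argument of the type in SGA4 VI, using quasi-compactness and finite presentation of the objects). It also requires refining tame covers by Kummer covers after blowing up. I would first try to reduce to the case where $\tX$ is the spectrum of a henselian valuation ring by passing to points of the limit topos. Then I would verify there directly that tame covers become split by finite tame Galois extensions.
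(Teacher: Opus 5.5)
Your global architecture agrees with the paper's. There, $j^{\cV}=\lambda^{\cV}\circ\nu$ is factored through the v-\'etale site, whose cohomology is the colimit of \'etale cohomology over blow-ups (Lemma \ref{lem:etaleCoh}); this plays the role of your limit of the topoi $\tW_{\et}$. Then $R^i\nu_*F=0$ is checked on stalks (Remark \ref{rmk;Deligne}) via prime-to-$p$ Galois covers and condition (p). The gap is in the local step you call the main obstacle, and it has two parts.

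\textbf{The local objects are misidentified.} The colimit of strict henselizations of the $\tW$ at the centers of $(x,v)$ is not $\cO_v^{sh}$. It is a pair $(\Spec A,\Spec A\times_K\cO_v)$ with $A$ henselian local and ind-\'etale over $\cO^h_{X,x}$, with residue field $K$ and $(K,v)$ strictly henselian (Proposition \ref{prop:tamely-acyclic}). For instance, for $X=\tX=\A^1_k$, $x$ a closed point and $v$ trivial, it is the strict henselization of $\A^1_k$ at $x$, not a field. Hence you cannot reduce to $\tX$ being the spectrum of a henselian valuation ring, because the stalks of $F$ genuinely see $A$: for $\cO^t$ the stalk is $A\times_K\cO_v$. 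The refinement of tame covers by Galois covers must therefore be done on $A$. One lifts a tame Galois extension $L/K$ to a finite \'etale $A\to B$ by henselianity and takes $\tB=B\times_L\cO_w$ (Remark \ref{rmk:refinement-covering}). No blow-up or Kummer description is needed locally, whereas a global ``Kummer after blow-up'' statement would require resolution-type input. Proving that the local objects are exactly of this glued form is itself a substantial part of the paper.

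\textbf{Cofinality of covers only gives \v{C}ech cohomology.} Knowing that prime-to-$p$ Galois covers are cofinal computes \v{C}ech cohomology of the local object. It does not by itself identify $\colim_\lambda H^q((\cU_\lambda)_t,F)$ with $H^q(G^t,-)$. The paper bridges this with the \v{C}ech comparison Theorem \ref{thm:cech-comp}, which rests on three inputs:
\begin{enumerate}
\item tamely acyclic pro-covers (Lemma \ref{lem:existence-acyclic});
\item the fact that fibre products of $x$-local objects are disjoint unions of $x$-local objects (Lemma \ref{lem:product-x-local});
\item Lemma \ref{lem:limit}, to descend tame covers from the limit to a finite stage.
\end{enumerate}
An alternative would be a Hochschild--Serre argument for the Galois covers at finite level, combined with vanishing of $\colim H^q$ at the tamely local objects. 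That in turn needs those objects to be points of the tame topos, i.e.\ the ``$\Leftarrow$'' direction of Proposition \ref{prop:tamely-acyclic}. Your proposal supplies neither mechanism.

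Once these are in place, your last step is exactly Proposition \ref{prop:tame-local-vanishing}: the complex becomes $H^i(\Gal(L/K),F(V,\tV))$. The coefficients are $F$ evaluated on the local objects, not a generic stalk $F_{\bar\eta}$. They are $\Z_{(p)}$-modules by (p) applied at the closed point, whose residue characteristic is that of $\cO_v$.
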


An analogous result for torsion sheaves on adic spaces is obtained by combining \cite[Proposition 8.5]{Huebner2020} and \cite[Corollary 5.7]{Huebner2025} by a different method.
The above theorem plays a fundamental role in our subsequent article, where we construct a functorial  integral structure on cohomology of the structure sheaf over a non archimedian field.
\medbreak

As an application, over a perfect field $k$ of positive characteristics $p$, we define the \emph{tame de Rham--Witt complex} $W_{\sbullet}\Omega^{\bullet, t}$, a complex of tame sheaves obtained from the de Rham--Witt complex using the $\beta$-construction mentioned above, and get the complexes
\[\Z/p^{\sbullet}(r)^t= \Cone(F_r-\iota_r: 
\sN^r W_{\sbullet}\Omega^{\bullet, t}\to W_{\sbullet}\Omega^{\bullet,t})[-1],\]
where $\sN^r W_{\sbullet}\Omega^{\bullet, t}$ is a Nygaard filtration on $W_{\sbullet}\Omega^{\bullet, t}$ defined following \cite{LangerZink} (see Definition \ref{defn:tame-syntomic}).
Then, the $r$th \emph{tame syntomic complex} is defined as  
\[\Z_p(r)^t=R\lim \Z/p^{\sbullet}(r)^t.\]
We then show that the tame syntomic complex recovers the tame motivic cohomology considered in \cite{lueders-tame-twists} and \cite{mericicrys}:
\begin{theorem}[Theorem \ref{thm:syn-vs-log}]
Assume $\tX$ is separated and of finite type over $k$ and $X\subset \tX$ is a smooth open subscheme.
We have  an isomorphism of pro-complexes on $(X,\tX)_t$ 
\[\Z/p^{\sbullet}(r)^t= W_{\sbullet}\Omega^{r,t}_{\log}[-r],\]
where  $W_n\Omega^{r,t}_{\log}$ is the sheaf on 
$(X,\tX)_t$ whose section over an object $(V,\tV)$ of $(X,\tX)_t$ is given by  
\[W_n\Omega^{r,t}_{\log}(V,\tV):= \Gamma(V,W_n\Omega^r_{X,\log}).\]
\end{theorem}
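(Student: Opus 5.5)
The natural strategy is to reduce the statement to the classical comparison between syntomic complexes and logarithmic de Rham--Witt sheaves on smooth schemes, carried out stalkwise. Both sides are pro-complexes of tame sheaves on $(X,\tX)_t$, and the tame sheaves are obtained via the $\beta$-construction of Proposition \ref{prop:constr-tame-sheaves} from \'etale sheaves on $X$ together with tame sections on valuation rings. First I would produce a natural map
\[W_{\sbullet}\Omega^{r,t}_{\log}[-r]\to \Z/p^{\sbullet}(r)^t.\]
A dlog form $\dlog[x_1]\cdots\dlog[x_r]$ lies in $\sN^rW_n\Omega^{r,t}$ and satisfies $F_r=\iota_r$ on it. Such forms are tame sections, since they have at most log poles on any valuation ring. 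This gives a map into the fiber of $F_r-\iota_r$ in degree $r$, and it is compatible with the differential because $d\,\dlog=0$.

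To check that this map is an isomorphism, I would reduce to the points of the tame site, or equivalently, via the colimit description in Theorem \ref{thm:tame-vs-etale-intro}, to stalks of the Zariski or \'etale sheaves on admissible blow-ups $\tW$. These stalks are controlled by henselian valuation rings $\cO$ with fraction field $K$ given by a residue field of $X$ and with center on $\tX$, together with their tame, that is strictly henselian-up-to-$p$, variants. There the tame de Rham--Witt complex is computed by $W_n\Omega^\bullet$ of the log ring $(\cO, \cO\cap K^\times)$: by construction the tame sections on valuation rings are the forms with log poles.

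The key step is then a valuation-ring version of the Illusie--Kato / Langer--Zink statement. On the tame stalk, I must show that the cone of $F_r-\iota_r$ on the Nygaard pieces has cohomology only in degree $r$, and that this cohomology equals $W_n\Omega^r_{\log}(K)$. The latter is the same as $\Gamma(V,W_n\Omega^r_{\log})$ in the stalk, since log forms extend.

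For this I would write $\cO$ as a filtered colimit of smooth $k$-algebras with snc log structure. This uses Gabber--Temkin style local uniformization for valuation rings up to tame (prime-to-$p$) extensions, which is available at the tame stalk thanks to condition (p)-type divisibility. Then I would apply the known log-smooth result: for $(A,D)$ smooth with snc divisor, the Nygaard-syntomic complex of $W_n\Omega^\bullet_A(\log D)$ is quasi-isomorphic to $W_n\Omega^r_{A[1/D],\log}[-r]$ (Kato, Hyodo, Lorenzon, and Langer--Zink's local computation). Finally I would pass to the colimit, using that all functors involved commute with filtered colimits.

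I expect the main obstacle to be exactly this reduction step. One must justify that the tame stalks are filtered colimits of log-smooth snc pieces, which may require restricting to $\tX$ of finite type and the tame (not wild) extensions, and that the Nygaard filtration defined via the $\beta$-construction agrees with the Langer--Zink Nygaard filtration on these pieces. If uniformization is unavailable, the fallback is a direct computation on valuation rings. In that approach one uses the Cartier isomorphism for log differentials of $\cO$, valid since $\Omega^1_{\cO}(\log)$ is flat and $\cO$ is Cartier-smooth in the log sense by Gabber's results. One then runs the standard filtration-by-$V$ and $dV$ dévissage to get the exactness of $F-1$.
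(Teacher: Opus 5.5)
You correctly get the map $W_n\Omega^{r,t}_{\log}[-r]\to\Z/p^n(r)^t$ from $\dlog$ forms, using $F\dlog[x]=\dlog[x]$ and $d\dlog[x]=0$, and reducing to points of the tame topos is the right idea. But the local computation is set up on the wrong rings, and it relies on an input that is not available.

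\emph{The points are not valuation rings.} By Proposition \ref{prop:tamely-acyclic} and Remark \ref{rmk;Deligne}, the points to test are $(\Spec A,\Spec\tA)$, where $A$ is henselian local and ind-\'etale over $X$, $K=A/\fm_A$, $(K,v)$ is tamely closed, and $\tA=A\times_K\cO_v$. (Theorem \ref{thm:tame-vs-etale-intro} is a statement about cohomology and is not the tool for testing a map of sheaves.) By Proposition \ref{prop:DRWOmegaplus} and Lemma \ref{lem:ses-log-dRW}, the stalk of $W_n\Omega^{q,t}$ at such a point is $W_n\Omega^q_{\tA}(\log)$. This is an extension of $W_n\Omega^q_{\cO_v}(\log)$ by $\Ker(W_n\Omega^q_A\to W_n\Omega^q_K)$, and the stalk of $W_n\Omega^{r,t}_{\log}$ is $W_n\Omega^r_{A,\log}$.

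Your claim that $W_n\Omega^r_{K,\log}$ ``is the same as'' this stalk ``since log forms extend'' fails as soon as $x$ is not a generic point. For a regular parameter $t\in\fm_A$, the form $\dlog[1+t]$ is nonzero in $W_n\Omega^1_{A,\log}$ (its image in $\Omega^1_A$ is $dt/(1+t)$), yet it dies in $W_n\Omega^1_K$. So exactness for $W_n\Omega^\bullet_{\cO_v}(\log)$ alone, whether from your main route or your log-Cartier fallback, does not prove the theorem.

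\emph{The uniformization input is unavailable.} Your main route needs local uniformization of valuations up to prime-to-$p$ extensions. This is not known in positive characteristic: Temkin's inseparable local uniformization requires purely inseparable, hence wild, extensions. Condition (p) has no bearing on this.

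\emph{The missing idea is that at these stalks the statement is elementary and needs no models.} In degrees $q<r$ one has $\iota_r^q=p^{r-q-1}V$, and in degrees $q>r$ one has $F_r^q=p^{q-r}F$. Both are pro-nilpotent, so $F_r-\iota_r$ is a pro-isomorphism in all degrees $\neq r$. The theorem therefore reduces to exactness, as pro-systems in $n$, of
\[0\to W_n\Omega^{r}_{A,\log}\to W_n\Omega^{r}_{\tA}(\log)\xrightarrow{{\rm id}-F} W_n\Omega^r_{\tA}(\log)\to 0.\]
Left exactness is inherited from the inclusion $W_n\Omega^r_{\tA}(\log)\subset W_n\Omega^r_A$. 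There Illusie's sequence \cite[I, Th\'eor\`eme 5.7.2]{IllusiedRW} applies, because $A$ is ind-smooth over $k$.

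For right exactness, note that $W_n\Omega^r_{\tA}(\log)$ is spanned by products $\omega\,\dlog[x_1]\cdots\dlog[x_s]$ with $\omega\in W_n\Omega^{r-s}_{\tA}$ and $x_i\in A^\times$. Since $F$ is multiplicative and $F\dlog[x]=\dlog[x]$, one gets $({\rm id}-F)(\omega\,\dlog[x_1]\cdots\dlog[x_s])=(({\rm id}-F)\omega)\,\dlog[x_1]\cdots\dlog[x_s]$. It therefore suffices that ${\rm id}-F$ is surjective on $W_n\Omega^{*}_{\tA}$, which holds because $\tA$ is strictly henselian. This is the paper's argument; it needs no uniformization, no log-smooth models, and no log Cartier isomorphism on valuation rings.
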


Using the above result we obtain a cycle map to tame de Rham--Witt cohomology.

\begin{theorem}[Theorem \ref{thm:tameDRWcoh} and Corollary \ref{cor:tame-DRW-cycle-map}]\label{thm:intro3}
Assume $\tX$ is  smooth and  proper over  $k$ and $X\subset \tX$  is a dense open subscheme. 
There is a functorial  cycle map ${\rm cyc}^t$ to tame de Rham--Witt cohomology which factors via tame syntomic cohomology and 
is compatible with the usual crystalline cycle map ${\rm cyc}^{\rm crys}$ in the sense that the following diagram commutes
\[\xymatrix{
\CH^r(\tX)\ar[r]^-{{\rm cyc}^{\rm crys}}\ar[d] & H^r_{\rm crys}(\tX/W(k))\ar[d]\\
\CH^r(X)\ar[r]^-{{\rm cyc}^t} & H^r((X,\tX)_t, W\Omega^{\bullet,t}).
}\]

Moreover, if resolution of singularities in a strong sense hold in dimension $d=\dim \tX$ (this is the case for $d\le 2$),
and if $D=\tX\setminus X$ is a divisor with simple normal crossings, then there is a canonical equivalence
\[R\Gamma((X,\tX)_t, W\Omega^{\bullet,t})= R\Gamma_{\rm log-crys}((\tX,D)/W(k)),\]
where the right hand side denotes log-crystalline cohomology.
\end{theorem}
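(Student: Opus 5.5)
The cycle map is built in three stages: first at the level of tame syntomic cohomology, then pushed to tame de Rham--Witt cohomology, and finally compared with the crystalline cycle class.

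\emph{Step 1: the tame syntomic cycle map.} By the Theorem above (Theorem \ref{thm:syn-vs-log}) there is an isomorphism of pro-complexes $\Z/p^{\sbullet}(r)^t\simeq W_{\sbullet}\Omega^{r,t}_{\log}[-r]$ on $(X,\tX)_t$. The sections of $W_n\Omega^{r,t}_{\log}$ over $(V,\tV)$ are simply $\Gamma(V,W_n\Omega^r_{X,\log})$. Hence this sheaf is the pushforward to the tame site of the étale sheaf $W_n\Omega^r_{\log}$ on $X$, where $X$ is smooth.

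Gros' logarithmic cycle class gives
\[
\CH^r(X)\to H^r(X_{\et},W_n\Omega^r_{\log}[-r])=H^{2r}\ \text{(after shift)}.
\]
The right-hand side is computed via Gersten/purity: $H^r_Z(W_n\Omega^r_{\log})$ is generated by the components of $Z$. To transport this to the tame site, I will use that pullback along the morphism of sites $X_{\et}\to(X,\tX)_t$ gives a map $R\Gamma((X,\tX)_t,W_n\Omega^{r,t}_{\log})\to R\Gamma(X_{\et},W_n\Omega^r_{\log})$. This map goes the wrong way, so I will instead construct the class directly tamely.

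Concretely, for $Z\subset X$ of codimension $r$, I will construct a class in $H^r_Z$ on the tame site. Tame cohomology with supports in $Z$ localizes to a neighbourhood of generic points of $Z$, where the tame and étale computations agree: for $\tV=V$ the site is just étale. The class then comes from $\dlog$ of local equations via the Bloch--Gabber--Kato/Gros construction. Rational equivalence is handled by the tame homotopy/Gersten exactness of $W_n\Omega^r_{\log}$, which follows from the étale statement because the presheaf depends only on $V$.

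Passing to $R\lim_n$ gives ${\rm cyc}^{\rm syn}\colon \CH^r(X)\to H^{2r}((X,\tX)_t,\Z_p(r)^t)$.

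\emph{Step 2: to tame de Rham--Witt cohomology.} Compose with the canonical map $\Z_p(r)^t\to \sN^rW\Omega^{\bullet,t}\to W\Omega^{\bullet,t}$ coming from the defining cone. This is functorial in $(X,\tX)$ because each ingredient is.

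\emph{Step 3: compatibility with crystalline cycle map.} Both maps factor through syntomic/logarithmic classes: ${\rm cyc}^{\rm crys}$ on $\tX$ is known to be the image of Gros' $\dlog$ class under $W\Omega^r_{\log}[-r]\to W\Omega^\bullet_{\tX}$. The restriction map $H_{\rm crys}(\tX)\to H((X,\tX)_t,W\Omega^{\bullet,t})$ is induced by the natural map $W\Omega^\bullet_{\tX}\to W\Omega^{\bullet,t}$, since sections on $(\tV,\tV)$ are regular forms. This map is compatible with $\dlog$, so the square commutes. It suffices to check this on the cycle classes of integral subvarieties, using that ${\rm CH}^r(\tX)\to\CH^r(X)$ is restriction.

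\emph{Step 4: comparison with log-crystalline cohomology.} Here I would apply Theorem \ref{thm:tame-vs-etale}. Condition (p) holds because the stalks are $W(k)$-modules, and at points of characteristic $p$ the exponential characteristic is $p$. This gives
\[
R\Gamma((X,\tX)_t,W_n\Omega^{\bullet,t})\simeq\colim_{\cW}R\Gamma(\tW_{\et},j^{\cW}_*W_n\Omega^{\bullet,t}).
\]
By the $\beta$-construction, $j^{\cW}_*W_n\Omega^{\bullet,t}=W_n\Omega^\bullet_{\tW}(\log D_W)$ whenever $(\tW,D_W)$ is an snc pair. Using strong resolution, such snc models are cofinal among admissible blow-ups. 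The transition maps are quasi-isomorphisms by blow-up invariance of log de Rham--Witt cohomology along snc-compatible blow-ups. Hence the colimit is constant, equal to $R\Gamma(\tX,W_n\Omega^\bullet_{\tX}(\log D))$, which is log-crystalline cohomology by Hyodo--Kato; then take $R\lim_n$.

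The main obstacle is this last step: proving invariance under admissible blow-ups between snc models and identifying $j_*$ with log forms. I would attack it via the Gersten-type description of log de Rham--Witt sheaves and the known $\P^n$-bundle/blow-up formulas.
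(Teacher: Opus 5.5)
The genuine gap is in Step~1, and Step~3 depends on it. You correctly note that the natural comparison map goes from tame cohomology to $R\Gamma(X_{\et},W_n\Omega^r_{\log})$, which is the wrong direction. The workaround does not hold up, for two reasons.

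First, the inference that tame Gersten exactness (or purity) for $W_n\Omega^{r,t}_{\log}$ follows from the \'etale statement ``because the presheaf depends only on $V$'' is invalid. The sheaf $W_n\Omega^{r,t}_{\log}$ is the pushforward of $W_n\Omega^r_{X,\log}$ along the morphism of sites $X_{\et}\to(X,\tX)_t$ induced by $(V,\tV)\mapsto V$, and its higher direct images need not vanish. At a tame point $(\Spec S,\Spec S\times_k\cO_v)$ as in Proposition~\ref{prop:tamely-acyclic}, the residue field $k$ is only tamely closed and still has Artin--Schreier extensions. Already for $r=0$, $n=1$ over an algebraically closed field, $H^1((\A^1,\P^1)_t,\Z/p)=0$ while $H^1(\A^1_{\et},\Z/p)\neq 0$.

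Second, localizing cohomology with supports in $Z$ to pairs $(V,V)$ near the generic points of $Z$ would need a tame excision or semi-purity theorem that is not available. It also cannot work as stated: $Z$ need not be closed in $\tX$, and the points of $\Spa(X,\tX)$ over $Z$ whose center lies in $\tX\setminus X$ are not covered by any $(V,V)$. So neither the class nor its invariance under rational equivalence is established.

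The missing idea is to build the map on $\tX_{\Zar}$, where $j_*\to Rj_*$ points the right way. Let $\sigma\colon X\inj\tX$ and $Y=\tX\setminus X$.
\begin{itemize}
\item Restriction of Bloch cycles gives $\Z(r)_{\tX}/i_*\Z(r)_Y\to\sigma_*\Z(r)_X$.
\item Geisser--Levine gives $\Z/p^n(r)_X\simeq W_n\Omega^r_{X,\log}[-r]$.
\item One has $\sigma_*W_n\Omega^r_{X,\log}[-r]=j_*W_n\Omega^{r,t}_{\log}[-r]\to Rj_*\Z/p^n(r)^t$ by Theorem~\ref{thm:syn-vs-log}.
\item Bloch's localization theorem identifies $R\Gamma(\tX_{\Zar},\Z(r)_{\tX}/i_*\Z(r)_Y)$ with $z^r(X,\ast)[-2r]$.
\end{itemize}
Applying $R\lim_n R\Gamma(\tX_{\Zar},-)$ then gives $\CH^r(X,2r-i)\to H^i((X,\tX)_t,\Z_p(r)^t)$, to be composed with $\iota_r$. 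Compatibility with ${\rm cyc}^{\rm crys}$ follows from naturality of this construction in the pair, applied to $(X,\tX)\to(\tX,\tX)$.

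Your Step~4 is essentially the paper's proof of the second statement. It uses Theorem~\ref{thm:tame-vs-etale}, where condition (p) holds for the reason you give, together with cofinality, the identification of $j_*$ with log forms, and blow-up invariance. Note that the cofinal system must consist of composites of blow-ups in centers with normal crossings with the boundary, not arbitrary snc models; this is exactly what \ref{res2} provides.

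The two inputs you leave open are where the work lies.
\begin{itemize}
\item The identification $j_*W_n\Omega^{q,t}=W_n\Omega^q_{\tX}(\log D)$ is not automatic from the $\beta$-construction. It is Proposition~\ref{lem:WnOmegatame}, which combines the trace argument of Proposition~\ref{prop:WOmegat-via-val} to descend from tame extensions, the Cohen--Macaulay property of $W_n\Omega^q_{\tX}(\log D)$ to reduce to codimension-one points, and Matsuue's local description.
\item Blow-up invariance is \cite[Theorem 5.2]{shujilog} combined with $\underline{\omega}^{\CI}W_n\Omega^q=W_n\Omega^q(\log D)$.
\end{itemize}
The paper applies all of this degree by degree in $q$, and only then forms the total complex and takes $R\lim_n$.
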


In fact using the $\beta$-construction and \cite{Hesselholt-dRW}
we define the tame big de Rham--Witt complex $\W_T\Omega^{\bullet,t}$ on $(X,\tX)_t$,
where $X\inj \tX$ is any quasi-compact open immersion between qcqs schemes and $T$ is any truncation set. 
One of the main structure results is Proposition \ref{prop:WOmegat-via-val} which together with 
Theorem \ref{thm:tame-vs-etale-intro} yields the proof of the second part of Theorem \ref{thm:intro3}. 
We remark that with the same arguments and \cite{DeligneHodgeII} one deduces in case $X$ is smooth and separated 
over $\C$ with some compactification $\tX$ the equivalence
\[R\Gamma((X,\tX), \Omega^{\bullet, t})\simeq R\Gamma_{\text{Betti}}(X(\C), \C).\]

The following is an algebraic version of the comparison of the tame cohomology with tame adic cohomology,
in \cite[14.8]{HS2020} (there for $\F_p$-schemes). 
\begin{theorem}[Proposition \ref{prop-def;XStHS} and Theorem \ref{thm:HS-comparison}]
    Let $S$ be an affine scheme and $X$ an $S$-scheme with a quasi-compact open 
    immersion $X\hookrightarrow \tX$ into  a  proper $S$-scheme $\tX$. Then there are adjoint functors\[
\begin{tikzcd}
\Sh((X,\tX)_t)\ar[r,"u_*"'] &\Sh((X/S)_t),\ar[l, shift right = 1, "u^*"']
\end{tikzcd}
\]
with $u^*$ exact, and for all $G\in \Sh((X/S)_t)$ we have an equivalence
\[
    R\Gamma((X/S)_t,G) \simeq R\Gamma((X,\tX)_t,u^*G).
    \]
\end{theorem}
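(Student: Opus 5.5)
The plan is to construct the comparison on the level of sites and then apply a comparison lemma for topoi. First I would define a continuous functor $u\colon (X/S)_t\to (X,\tX)_t$ on underlying categories. Here $(X/S)_t$ is the Hübner--Schmidt tame site of étale $X$-schemes, with covers the étale covers that are tamely ramified along every valuation of $k(x)$ centred on $S$. An object $U\to X$ étale goes to $(U,\tU)$, where $\tU$ is the normalization (or integral closure) of $\tX$ in $U$, or alternatively a Nagata compactification of $U$ over $\tX$. Independence of the chosen compactification, up to admissible blow-up, is Lemma \ref{lem:blow-up-inv}, so the choice is harmless after sheafification. I would rather take the opposite direction, namely the forgetful functor $v\colon (X,\tX)_t\to (X/S)_t$, $(U,\tU)\mapsto U$. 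Then $u_*G(U,\tU):=G(U)$, and $u^*$ is its left adjoint. This is Proposition \ref{prop-def;XStHS}: one checks that $v$ sends tame covers to tame covers and commutes with fibre products, so $u^*$ is exact.

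The key geometric input is the following. Because $\tX$ is proper over $S$, every valuation ring of a residue field of $U$ centred on $S$ has, by the valuative criterion, a unique centre on $\tU$ whenever $\tU$ is proper over $\tX$. Hence tame ramification along all valuations centred on $S$ is the same as tame ramification along all valuations centred on $\tU$. This gives two facts. (a) A family $\{(U_i,\tU_i)\to(U,\tU)\}$ is a tame cover if and only if $\{U_i\to U\}$ is a cover in $(X/S)_t$. (b) Every cover $\{U_i\to U\}$ in $(X/S)_t$ can be refined by the image of a tame cover, after replacing $\tU_i$ by compactifications over $\tU$.

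Using (a) and (b), I would apply the comparison lemma (SGA4 III.4.1, in its form for continuous and cocontinuous functors). The functor $v$ is fully faithful on the fibres over each $U$ modulo admissible blow-ups, and it is cocontinuous and essentially surjective up to covers. Consequently $u^*$ and $u_*$ become inverse on sheaves once sheaves on $(X,\tX)_t$ are shown to be insensitive to the choice of $\tU$. At worst, $u^*u_*\to\id$ is an isomorphism on the relevant subcategory. The unit $G\to u_*u^*G$ is then an isomorphism, and by exactness of $u^*$ together with $u_*$ preserving injectives, because $u^*$ is exact, we get $R\Gamma((X,\tX)_t,u^*G)\simeq R\Gamma((X/S)_t,u_*u^*G)=R\Gamma((X/S)_t,G)$.

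I expect the main obstacle to be cocontinuity, step (b): lifting a Hübner--Schmidt tame cover to a cover in $(X,\tX)_t$. This requires the valuative description of tame covers in $\Sch_{(S,\tS),t}$ from earlier in the paper, combined with Nagata compactification and properness of $\tX$ to control centres of valuations. Alternatively, one can argue pointwise: both sites have the same points, given by strictly tame henselizations at valuations centred on $S$ (respectively on $\tX$), and these coincide by properness. Then $u^*$ is an equivalence on stalks, and the cohomology comparison follows from the Čech/hypercover description.
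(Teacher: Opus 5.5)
There is a genuine gap, and it sits in two places.

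\textbf{First gap: the forgetful functor is not continuous on the whole site.} The functor $v\colon (U,\tU)\mapsto U$ is not continuous on all of $(X,\tX)_t$. So your fact (a), and the claim that $v$ sends tame covers to covers of $(X/S)_t$, are false in general.

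\begin{itemize}
\item Objects of $(X,\tX)_t$ only have $\tU\to\tX$ ift, and tameness is tested only at valuations centred on $\tU$.
\item For example, take $p\cO_S=0$ and work in $(\A^1_S,\P^1_S)_t$. The Artin--Schreier cover $(C,C)\to(\A^1_S,\A^1_S)$ is a tame covering there. But $C\to\A^1_S$ is wild at $\infty$, so it is not a covering in $(\A^1_S/S)_t$ (Remark \ref{rmk:u-lower-star}).
\item Your valuative-criterion argument only works on the subsite $(X,\tX)_{t,suc}$ of pairs with $\tU\to\tX$ separated and universally closed. On that subsite the comparison lemma does give $\Sh((X,\tX)_{t,suc})\simeq\Sh((X/S)_t)$. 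The functors $u^*,u_*$ must then be obtained by composing with the inclusion $(X,\tX)_{t,suc}\hookrightarrow(X,\tX)_t$.
\item To compute $u^*G$ and get $u^*G(U,\tU)=G(U)$ on such pairs, you need one more fact. Every map from $(U,\tU)$ to a suc pair must locally factor through a suc compactification of $U$. This uses Lemma \ref{lem:pro-zmt} for the integral part of the ift map, plus Nagata compactification. It yields only $G\cong u_*u^*G$.
\item Your labelling is also reversed: the sheafification of $(U,\tU)\mapsto G(U)$ is $u^*G$, not $u_*$ of anything.
\end{itemize}

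More seriously, the two topoi are not equivalent. Sheaves on $(X,\tX)_t$ are not insensitive to $\tU$ beyond modifications. One has $u^*u_*\cO^t\neq\cO^t$, and $\cO^t$ is not of the form $u^*G$ (Remark \ref{rmk:u-lower-star}). Hence ``$u^*$ and $u_*$ become inverse on sheaves'' cannot be established, not even on a ``relevant subcategory'' of $(X,\tX)_t$-sheaves.

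\textbf{Second gap: the derived step is a non sequitur.} Exactness of $u^*$, and $u_*$ preserving injectives, only give the Leray identification
\[R\Gamma((X,\tX)_t,u^*G)\simeq R\Gamma((X/S)_t,Ru_*u^*G).\]
To replace $Ru_*u^*G$ by $u_*u^*G\cong G$ you must prove $R^qu_*u^*G=0$ for $q>0$. Since the topoi differ, this vanishing is the actual content of the theorem.

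Your pointwise alternative does not supply it either. Knowing that $u^*$ preserves stalks says nothing about the higher cohomology of $u^*G$ on tame neighbourhoods.

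\textbf{How the paper proves the vanishing.} It works at a tame point $(\bar x,\bar v)$, where the stalk of $R^qu_*u^*G$ is $\varinjlim_U H^q((U,\tU)_t,u^*G)$.
\begin{enumerate}
\item Choose $\tU$ quasi-projective over $S$, using Chow's lemma and Raynaud--Gruson, so that finite sets of points lie in affine opens.
\item Apply Theorem \ref{thm:cech-comp} twice: to pass to \v{C}ech cohomology, and to replace $u^*G$ by the presheaf $(V,\tV)\mapsto G(V)$.
\item Use the factorization claim to restrict to covers $(V,\ol V)\to(U,\tU)$ with $\ol V$ separated and universally closed over $\tX$.
\item Identify these with coverings in $(X/S)_t$ via the valuative criterion.
\item Conclude by the vanishing of \v{C}ech cohomology over the tame local ring, \cite[Theorem 7.16]{HS2020}.
\end{enumerate}

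Your proposal is therefore missing two ingredients: the suc subsite, and this local acyclicity argument through the \v{C}ech comparison of Section \ref{sec:6}.
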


As we can easily construct sheaves on $(X,\tX)_t$ one might wonder how the cohomology of
a sheaf $F$ on $(X,\tX)_t$ compares to the cohomology of $u_*F$ on $(X/S)_t$. 
These do not coincide in general and  in fact can be rather different.
For example, if $S=k$ is an algebraically closed field of positive characteristic $p$, $X$ is a smooth separated $k$-scheme of 
dimension $d\le 3$, $X\inj \tX$ is a smooth compactification, and $\sO^t$ is the tame structure sheaf on $(X,\tX)_t$, then 
\[H^i((X,\tX)_t, \sO^t)=H^i(\tX, \sO_{\tX})\quad \text{and}\quad H^i((X/k)_t, u_*\sO^t)= H^i(\tX_{\et}, \Z/p)\otimes_{\F_p} k.\]
These two cohomology groups only coincide if the Frobenius acts bijectively on the  cohomology of the structure sheaf.
If $\tX$ is a supersingular elliptic curve and $X$ is any non-empty open, then the left hand side 
is a one dimensional $k$-vector space whereas the right hand side vanishes,
see Remark \ref{rmk:tame-vs-tame}. 

In fact a similar phenomena also appears in characteristic zero.
Assume ${\rm ch}(k)=0$, $\tX$ is smooth and proper over $k$, and {$D=\tX\setminus X$} has simple normal crossings. 
Then we have   by Theorem \ref{thm:intro3} and Lemma \ref{lem:tameRSCA1}
\[
H^j((X,\tX)_t, \Omega^{q,t})= H^j(\tX, \Omega^q_{\tX}(\log {D})),\]
whereas
\[ H^j((X/k)_t,  u_*\Omega^{q,t})= H^j(X, h^{0}_{\A^1}(\Omega^q)),
 \]
where $h^{0}_{\A^1}(\Omega^q)$ is the maximal $\A^1$-invariant subsheaf of $\Omega^q$.
Here we use that $h^{0}_{\A^1}(\Omega^q)$ is in fact an \'etale sheaf on $X$, by \cite[Proposition 5.24]{VoevPST}.
Note however that in positive characteristic it is not an \'etale sheaf, see Remark \ref{rmk:h0-tame-vs-et} for details.

\medbreak
Here is an outline of the paper:
\begin{enumerate}
	\item In Section \ref{sec:1}, we give some preliminary results on not necessarily Noetherian compactifications that will be used later. This uses the notion of \emph{ift} morphisms due to Temkin (see \cite{Temkin-insep-loc-uni}).
	\item Sections \ref{sec:2}--\ref{sec:7} are devoted to the definition and properties of the tame and $\vet$ site. These definitions and computations are very much inspired by the definition of the \'etale and tame site of a Huber pair (see \cite{Huebner2020} and \cite{Huebner2021}), but with some technical difficulties in the proofs to be addressed, therefore everything needed to be rewritten:
	\begin{enumerate}
	\item In Section \ref{sec:2} and \ref{sec:3}, we define the tame and $\vet$ topos and prove some immediate results. 
	\item In Section \ref{sec:4} and \ref{sec:5} we show the existence of acyclic schemes and compute the local schemes for the tame and $\vet$ topology. The proof of Proposition \ref{prop:tamely-acyclic} is  
    {rather long and follows the proof of a  similar computation in \cite{KelMiya}}.
	\item In Section \ref{sec:6}, we use the existence of such acyclic objects to show that \v Cech cohomology agrees with sheaf cohomology. {This relies on the classical strategy by Artin \cite{artin} and } is very close to \cite[Theorem 11.1 and Corollary 11.7]{HS2020}.
	\item {In Section \ref{sec:7} we prove Theorem \ref{thm:tame-vs-etale-intro}.} 
    \end{enumerate}
	\item In Sections \ref{sec:8}--\ref{sec:11}, we give the construction of tame sheaves and provide examples:
	\begin{enumerate}
		\item In Section \ref{sec:8} we construct the general machinery.
		\item In Section \ref{sec:9} we prove {basic structural results} on the {tame} big de Rham--Witt complex.
		\item In Section \ref{sec:10}, we restrict to the case of positive characteristic and compute the cohomology of the tame $p$-typical de Rham--Witt sheaves, and we deduce some results on tame syntomic cohomology.
		\item In Section \ref{sec:11}, we consider reciprocity sheaves over a perfect field $k$.
	\end{enumerate} 
	\item Finally, in Section \ref{sec:12} we {prove} a comparison with the tame site of \cite{HS2020}, and we remark that the cohomology theories constructed in the previous sections have tame descent even though they do not come from 
    {cohomology theories} of sheaves over the site $(X/S)_t$. This implies that passing from the site $(X,\tX)_t$ to the site $(X/S)_t$, a lot of information is lost.
    \end{enumerate}

\subsection*{Future work} In subsequent papers, we hope to use the construction and properties of tame sheaves developed here to construct integral models of cohomology theories in positive and mixed characteristics. In particular, for $X$ smooth over a non-archimedean field $K$, the constructions above give $\cO_K$-models of coherent and de Rham cohomology. According to the various result on failure of existence of integral structures on crystalline cohomology (see e.g. \cite{AbeCrew}), the tame descent is remarkable, as \'etale descent indeed cannot be fulfilled.

\subsection*{Acknowledgments}
We thank Katharina Hübner and Amine Koubaa for useful discussions and Hugo Zock for pointing out a mistake in an earlier version.
We also thank Vincent Cossart for answering a question on resolution of singularities in positive characteristic. We thank Marco D'Addezio for pointing out the reference \cite{DAddezio}.

\section{Ift morphisms}\label{sec:1}

We recall a notion inspired by \cite{Huber}, and hinted in \cite[Remark 2.2.4 (iii)]{Temkin-insep-loc-uni}. 

\begin{defn}
\begin{enumerate}
    \item Let $R$ be a ring. A map $R\to A$ is \emph{ift}  (integral over finite type) if there exists a factorization $R\to A'\to A$ with $R\to A'$ of finite type and $A'\to A$ integral. 
    \item A morphism $f\colon X\to S$ is \emph{ift at $x\in X$} if there exists an affine open neighborhood $\Spec(A)=U\subseteq X$ of $x$ and an affine open $\Spec(R)=V\subseteq S$ with $V\subseteq f(U)$ such that the induced map $R\to A$ is ift.
    \item A morphism $f\colon X\to S$ is \emph{locally ift} if it is ift at all $x\in X$ and it is \emph{ift} if it is locally ift and quasi-compact.
\end{enumerate}
\end{defn}

\begin{rmk}
As remarked in the proof of \cite[Proposition 2.2.5]{Temkin-insep-loc-uni}, for $R\to R'$ integral and $R'\to A$ of finite type, the composition $R\to A$ is ift. Hence, being (locally) ift is stable under compositions and if $X\to Y$ is a morphism of 
quasi-compact quasi-separated schemes over a base $S$ with $X$ locally ift over $S$, then $X\to Y$ is locally ift. Moreover, it is clear that the property is stable under base change and, as observed in \cite[Remark 2.2.4]{Temkin-insep-loc-uni}, it is a local property on the source, 
\end{rmk}
In our context, we will consider ift morphisms that are of finite presentation on a fixed open. These morphisms will then be \emph{nft} in the sense of \cite[Definition 2.2.2 and Lemma 2.2.7]{Temkin-insep-loc-uni}, but as we will need to keep track of the fixed open, we will not use this notion. We prove the following lemma, which generalizes \cite[Proposition 2.3.8]{Temkin-insep-loc-uni}:
\begin{lemma}\label{lem:ift-fp-limits} 
Let $\phi:\tR\to R$ and $\psi: \tA\to A$ be maps of rings and 
$(f,\tf):(R,\tR)\to (A,\tA)$ be a pair of maps of rings compatible with $\phi$ and $\psi$ such that 
$f$ is of finite presentation and $\tf$ is ift.
Let $\{(B_i,\tB_i)\}_{i\in I}$ be a filtered system of pairs of rings such that $\tB_i\subseteq B_i$ are integrally closed and $(g_i,\tg_i)_{i\in I} \colon (R,\tR)\to \{(B_i,\tB_i)\}_{i\in I}$ be a system of pairs of maps of rings.
Let $B=\colim B_i$ and $\tB = \colim \tB_i$ and $(g,\tg)=\colim(g_i,\tg_i):(R,\tR) \to (B,\tB)$. Then, for all $(h,\th)\colon (A,\tA)\to (B,\tB)$ compatible with $(f,\tf)$ and $(g,\tg)$, there is $i\in I$ and $(h_i,\th_i)$ fitting into the following commutative squares of pairs of rings:\[
    \begin{tikzcd}
        (R,\tR)\ar[r,"{(f,\tf)}"]\ar[d,"{(g_i,\tg_i)}"']&(A,\tA)\ar[d,"{(h,\th)}"]\arrow{dl}[rotate=30,swap,xshift=3ex]{(h_i,\th_i)}\\
        (B_i,\tB_i)\ar[r]&(B,\tB).
    \end{tikzcd}
    \]
        If $\tg_i$ are ift, then so is $\th_i$.
\end{lemma}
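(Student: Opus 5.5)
The plan is to construct $h_i$ and $\th_i$ separately, then use the filtered system to make them compatible.

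First, the open part. Since $f\colon R\to A$ is of finite presentation, the standard limit property gives $A$ as a compact object in $R$-algebras. Concretely, write $A=R[x_1,\dots,x_n]/(p_1,\dots,p_m)$. Then $h$ factors through some $h_{i_0}\colon A\to B_{i_0}$ with $h_{i_0}\circ f=g_{i_0}$. To see this, lift the images $h(x_j)$ to some $B_{i_0}$; the relations $p_k$ hold in $B$, hence already in some later $B_{i_0}$.

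Next, the tilde part. Here $\tf$ is only ift, so choose a factorization $\tR\to\tA'\to\tA$ with $\tA'=\tR[y_1,\dots,y_r]/J$ of finite type and $\tA'\to\tA$ integral. We may replace $\tA'$ by its image in $\tA$. This is allowed because a map out of the image, compatible with the composite, will follow from the next step.

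The key point is to get a map $\th_i\colon\tA\to\tB_i$ using the integral closure hypothesis rather than a presentation. This is the step I expect to be the main obstacle: $\tA$ is not finitely presented over $\tR$, so I do not expect to construct $\th_i$ by lifting generators and relations. Instead, I would define $\th_i$ through the commutative square. Let $\psi\colon\tA\to A$ be the structure map and $\phi_i\colon\tB_i\to B_i$ the inclusion, and for $i\ge i_0$ let $h_i$ be the composite of $h_{i_0}$ with the transition map. The candidate is then $\th_i=h_i\circ\psi$, viewed as landing in $B_i$.

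1. Enlarging $i$, I would first arrange that the finitely many elements $h_i\psi(\bar y_j)$ lie in $\tB_i$. This is possible because $\th(\bar y_j)\in\tB$ comes from some $\tB_i$, and its image in $B_i$ agrees with $h_i\psi(\bar y_j)$ after enlarging $i$; that agreement holds because it holds in $B$. The same argument applies to the images of a set of generators of $\tR$ needed for compatibility with $\tg_i$. In fact compatibility with $\tg_i$ is automatic from $\phi_i\tg_i=g_i\phi$.

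2. Then $h_i\psi(\tA')\subseteq\tB_i$, since it is generated over $\tg_i(\tR)$ by those elements. Every $a\in\tA$ is integral over $\tA'$, so $h_i\psi(a)$ is integral over $\tB_i$ inside $B_i$. By integral closedness it lies in $\tB_i$.

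3. Hence $h_i\circ\psi$ factors through $\tB_i$, which yields $\th_i$. The squares commute by construction after composing with the injections $\tB_i\subseteq B_i$ and $\tB\subseteq B$; the latter holds because filtered colimits are exact.

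I would also note that the phrase "$\tB_i\subseteq B_i$ integrally closed" implicitly means injective.

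Finally, the ift claim. Suppose each $\tg_i$ is ift, so $\tB_i$ is ift over $\tR$. Since $\th_i\circ\tf=\tg_i$, the map $\tA\to\tB_i$ is ift by the cancellation remarked above: ift is stable under composition and base change, and locally ift is inherited by morphisms over $S$. Concretely, if $\tR\to\tB'\to\tB_i$ with $\tB'$ of finite type and $\tB_i$ integral over $\tB'$, then $\tA\otimes_{\tR}\tB'\to\tB_i$ is integral and $\tA\to\tA\otimes_{\tR}\tB'$ is of finite type.
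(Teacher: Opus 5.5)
Your proposal is correct and follows essentially the paper's route: finite presentation of $f$ gives $h_i$, the finitely many generators of the finite-type part of the ift factorization of $\tf$ are pushed into $\tB_i$ for large $i$ (the paper phrases this as lifting $\tR[t_1,\dots,t_n]\to\tB$ to some $\tB_i$ and factoring through $E$), and integral closedness of $\tB_i\subseteq B_i$ absorbs the integral part. Your check of the lower triangle via injectivity of $\tB\to B$ and your tensor-product argument for the final ift claim spell out what the paper leaves implicit or cites from Temkin's Proposition 2.2.5.
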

\begin{proof}
    As $f\colon R\to A$ is of finite presentation, $h\colon A\to B$ factors through $h_i\colon A\to B_i$ for some $i$ by \cite[\href{https://stacks.math.columbia.edu/tag/00QO}{Tag 00QO}]{stacks-project}. We need to show that the composition $\tA\to A\xrightarrow{h_i} B_i$ factors via 
    $\tB_i\inj B_i$ for $i$ large enough.
    By definition, $\tf\colon \tR\to \tA$ factors as a composition $\tR\to \tR[t_1\ldots t_n]\twoheadrightarrow E\xrightarrow{\phi} \tA$ with $\phi$ integral. As $\tR\to \tR[t_1,\ldots,t_n]$ is of finite presentation, for $i$ large enough, there is a map $\nu\colon \tR[t_1,\ldots t_n]\to \tB_i$ 
   fitting in the following commutative diagram
\[\xymatrix{
\tilde{R}[t_1,\ldots, t_n]\ar[r]\ar[rd]_-\nu &\tilde{A} \ar[r]& A \ar[d]^-{h_i} \\
 &\tilde{B}_{i} \ar[r]& B_i}\]
 Since the composite $\tilde{R}[t_1,\ldots, t_n]\to A$ factors through $E$, the kernel of $\tR[x_1,\ldots x_n]\twoheadrightarrow E$ maps to zero in $B_i$ via this composition. 
 Since $\tilde{B}_{i} \to B_{i}$ is injective, 
 $\nu$ induces a map $E\to \tB_i$. As $\tA$ is integral over $E$, the image of $\tA$ in $B_{i}$ is integral over $\tB_i$. 
       Since $\tB_i$ is integrally closed in $B_i$, we conclude that the map $(h_i)_{|\tA}\colon \tA\to B_i$ factors via  $\th_i\colon \tA\to \tilde{B}_{i}$. 
       If $\tg_i$ is ift, so is $\th_i$ by \cite[Proposition 2.2.5]{Temkin-insep-loc-uni}
\end{proof}
\begin{remark}\label{rmk:ift-limit-fp-etale}
    In the situation of Lemma \ref{lem:ift-fp-limits}, if $g_i$ is of finite presentation, so is $h_i$ by \cite[\href{https://stacks.math.columbia.edu/tag/00F4}{Tag 00F4}]{stacks-project}.
    If both $f$ and $g_i$ are \'etale, then $h_i$ is \'etale by 
    \cite[\href{https://stacks.math.columbia.edu/tag/02GW}{Tag 02GW}]{stacks-project}.
\end{remark}
\begin{lemma}\label{lem;A1}
    Let $f\colon X\to S$ be a morphism of schemes. Assume that
    \begin{enumerate}
        \item f is ift
        \item $X$ is qcqs
        \item $S$ is quasi-separated
    \end{enumerate}
    Then there exists $f'\colon X'\to S$ of finite presentation and $X\to X'$ integral that factor $f$. If $X\to S$ is separated, then $X'\to S$ is separated.
    \begin{proof}
        This is a version of \cite[\href{https://stacks.math.columbia.edu/tag/01ZG}{Tag 01ZG}]{stacks-project} and we follow its proof.
        By \cite[\href{https://stacks.math.columbia.edu/tag/01ZA}{Tag 01ZA}]{stacks-project} we can write 
        $X=\lim_{i\in I} X_i$ with $I$ a directed set, $X_i$ of finite presentation over $\Spec(\Z)$, and with affine transition maps $X_{i'}\to X_i$. Then, take an affine open covering $X=V_1\cup\ldots \cup V_n$ such that each $V_j$ maps to an affine open $U_j\subseteq S$ and the corresponding map of rings is ift, \emph{i.e.} it factors as $\cO(U_j)\to R\to \cO(V_j)$ with $\cO(U_j)\to R$ of finite type and $R\to \cO(V_j)$ integral. By \cite[\href{https://stacks.math.columbia.edu/tag/01Z4}{Tag 01Z4}]{stacks-project},
        and \cite[\href{https://stacks.math.columbia.edu/tag/01Z6}{Tag 01Z6}]{stacks-project},  we can write $V_j = \lim_{i\in I} V_{ij}$ for $1\leq j\leq n$, such that $V_{ij}$ are of finite presentation over $\Spec(\Z)$, we have
        $V_j=V_{ij}\times_{X_i} X$, and $X_i = V_{i1}\cup\cdots\cup V_{in}$ is an affine open covering for each $i\in I$. For each $j$, choose finitely many $h_{ja}\in R$ 
        that generate $R$ as a $\cO(U_j)$-algebra and consider their image $k_{ja}$ in  $\cO(V_j)$. 
        Choose  $i_0$ such that the $k_{ja}$ come from $\cO(V_{i_0j})$, for all $j$ and $a$. 
 In particular, this implies that the map $V_j\to V_{i_0j}\times_{\Spec(\Z)}U_j=:W_{j}$ is integral. Moreover,  $X':=\cup_{j=1}^n W_{j}$ 
 is an open subscheme of $X_{i_0,S}=X_{i_0}\times_{\Spec \Z} S$, which contains the image of $X\to X_{i_0, S}$.
 Thus $f$ factors as $X\to X'\to S$ with  $X\to X'$  integral. 
 As $X_{i_0}$  and $S$ are quasi-separated \footnote{Note that morphisms 
         of finite presentation are quasi-separated by definition.} so is $X_{i_0, S}$.
         Recall that if $T$ is quasi-separated and $T'\subseteq T$ is an affine open, then the open immersion $T'\to T$ is quasicompact by \cite[\href{https://stacks.math.columbia.edu/tag/01K4}{Tag 01K4} and \href{https://stacks.math.columbia.edu/tag/01KO}{Tag 01KO}]{stacks-project}. 
         Thus  $W_j\to X_{i_0,S}$ is a quasi-compact open immersion and hence so is $X'\to X_{i_0, S}$. It follows that  $X'\to X_{i_0,S}$ is of finite presentation, and therefore also $X'\to S$.

    \end{proof}
\end{lemma}

\begin{lemma}\label{lem:pro-zmt}
    Let $f\colon X\to Y$ be an integral morphism and $j\colon Y\to \ol{Y}$ a quasi-compact open immersion. 
    Then there exists a cartesian diagram with $g$ an integral morphism
    \[\xymatrix{ 
     X\ar[r]\ar[d]_f & \ol{X}\ar[d]^g\\
     Y\ar[r]^j   & \ol{Y}.    }    
    \]
\end{lemma}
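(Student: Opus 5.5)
We construct $\ol{X}$ as a relative spectrum over $\ol{Y}$, taking the integral closure of $\sO_{\ol{Y}}$ inside $j_*f_*\sO_X$.

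Since $f$ is integral it is affine, so $X=\underline{\Spec}_Y(\cA)$ for the quasi-coherent $\sO_Y$-algebra $\cA=f_*\sO_X$, which is integral over $\sO_Y$. Because $j$ is a quasi-compact open immersion, $j$ is quasi-compact and quasi-separated. Hence $j_*\cA$ is a quasi-coherent $\sO_{\ol{Y}}$-algebra, and $j^*j_*\cA=\cA$ (Stacks, Tag 01LC). Let $\cB\subseteq j_*\cA$ be the integral closure of the image of $\sO_{\ol{Y}}\to j_*\cA$.

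To see that $\cB$ is quasi-coherent, we check it on affines. On an affine open $\Spec(R)\subseteq\ol{Y}$ we have $j_*\cA|_{\Spec R}=\widetilde{M}$ with $M=\Gamma(\Spec(R)\cap Y,\cA)$, an $R$-algebra. Set $B=$ the integral closure of $R$ in $M$. Integral closure commutes with localization, i.e.\ $(\text{int.\ closure of } R \text{ in } M)_g$ is the integral closure of $R_g$ in $M_g$ (Stacks, Tag 0307). Therefore these local pieces $\widetilde{B}$ glue to a quasi-coherent subalgebra $\cB\subseteq j_*\cA$, integral over $\sO_{\ol{Y}}$.

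We set $\ol{X}=\underline{\Spec}_{\ol{Y}}(\cB)$ and let $g$ be the structure morphism. Then $g$ is affine, and integral because $\cB$ is integral over $\sO_{\ol{Y}}$.

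The main step is cartesianness: $\ol{X}\times_{\ol{Y}}Y=\underline{\Spec}_Y(j^*\cB)$, so we must show $j^*\cB=\cA$ as subalgebras of $j^*j_*\cA=\cA$. This is local on $Y$, so take an affine open $\Spec(R)\subseteq \ol{Y}$ and $g\in R$ with $D(g)\subseteq Y\cap\Spec(R)$; such $D(g)$ cover $Y\cap\Spec R$. By the localization statement above, $\cB(D(g))$ is the integral closure of $R_g$ in $(j_*\cA)(D(g))=\cA(D(g))$. Since $\cA(D(g))$ is integral over $\sO_Y(D(g))=R_g$, by integrality of $f$, this integral closure is all of $\cA(D(g))$. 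Hence $\cB|_Y=\cA$, which gives the cartesian square with $X\to\ol{X}$ the induced map.

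The only real obstacle is the quasi-coherence of $j_*\cA$, which needs $j$ quasi-compact and hence is exactly where that hypothesis enters, together with the compatibility of integral closure with localization. Both are standard, so the argument should go through directly.
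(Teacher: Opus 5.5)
Your proposal is correct and is essentially the paper's proof: $\ol{X}$ is the relative spectrum of the integral closure of $\sO_{\ol{Y}}$ in $(j\circ f)_*\sO_X$. This sheaf is quasi-coherent because $j\circ f$ is quasi-compact and quasi-separated, and the square is cartesian because $f$ is already integral. You also write out two points the paper leaves implicit: the quasi-coherence of the integral closure, via its compatibility with localization, and the identification $\cB|_Y=\cA$ on standard opens $D(g)\subseteq Y$.
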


\begin{proof}
We may take $\ol{X}$ to be the integral closure of $\ol{Y}$ in $X$. More precisely,
by assumption the composition $j\circ f$ is quasi-compact and quasi separated and hence $(j\circ f)_*\cO_X$ is a quasi-coherent
$\cO_{\ol{Y}}$-module. Let $\cA$ be the subsheaf which over an open $V\subset \ol{Y}$ is given by 
\[\cA(V)=\{s\in \cO_X((j\circ f)^{-1}V)\mid s \text{ is integral over } \cO_{\ol{Y}}(V)\}.\]
Then $\cA$ is a quasi-coherent $\cO_{\ol{Y}}$-algebra and 
$j\circ f$ factors as $X\xrightarrow{u}\ol{X}:=\Spec \cA\xrightarrow{g} \ol{Y}$ with $g$  integral. 
As $f$ is integral we have $g^{-1}(Y)=X$ by construction.
\end{proof}

The following result will be used in Section \ref{Compvet}. 
An analogous result for proper morphisms instead of ift morphisms is \cite[Lemma 4.14.(2)]{Kel-S23}.

\begin{thm}\label{thm:ift-sep-uc-limit-modifications}
Let $f:X\to Y$ be a morphism between quasi-compact quasi-separated schemes, which is ift, separated, and universally closed,
and let $U\subset Y$ be a quasi-compact open such that the base change 
$X_U= X\times_Y U\to U$ is an isomorphism. Then $X=\lim_i X_i$ is a directed limit with 
finite $Y$-morphisms $\tau: X_j\to X_i$ ($j\ge i$) as transition maps  inducing injections $\tau^*:\cO_{X_i}\inj \tau_*\cO_{X_j}$,
such that each  $X_i\to Y$ is proper and an isomorphism over $U$.
\end{thm}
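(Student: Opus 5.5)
We plan to realize $X$ as a limit of finite-type approximations built from Lemma \ref{lem;A1} and then cut these down so they become isomorphisms over $U$. By Lemma \ref{lem;A1} (whose hypotheses hold: $f$ is ift, $X$ is qcqs, $Y$ is quasi-separated), $f$ factors as $X\xrightarrow{\pi} X'\xrightarrow{f'} Y$ with $f'$ of finite presentation and separated and $\pi$ integral. Since $\pi$ is integral, $\pi_*\cO_X$ is a quasi-coherent $\cO_{X'}$-algebra that is integral over $\cO_{X'}$, so it is the filtered union of its finite-type (hence finite) $\cO_{X'}$-subalgebras $\cB_\lambda$; this is the standard fact that a quasi-coherent algebra over a qcqs scheme is the union of its finite-type subalgebras (cf.\ Stacks, Tag 01ZG area). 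Set $X_\lambda=\Spec_{X'}\cB_\lambda$. Then $X=\lim_\lambda X_\lambda$, the transition maps are finite, and the pullbacks $\cO_{X_\lambda}\to\tau_*\cO_{X_\mu}$ are injective, since they come from inclusions of subalgebras. Each $X_\lambda\to Y$ is of finite type and separated.

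Next we make the $X_\lambda$ proper. Because $X\to Y$ is universally closed, we expect that for large $\lambda$ the scheme-theoretic image of $X$ in $X_\lambda$, which equals $X_\lambda$ itself since $\cB_\lambda\subseteq\pi_*\cO_X$ means $X\to X_\lambda$ is dominant and integral (hence surjective), is universally closed over $Y$. Indeed, $X\to X_\lambda$ is integral and surjective, so by the standard descent of universal closedness along surjections, $X_\lambda\to Y$ is universally closed. Being separated and of finite type, $X_\lambda\to Y$ is then proper. Surjectivity is clear because $\cO_{X_\lambda}\to\pi_*\cO_X$ is injective and integral, and lying-over applies.

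Then we arrange the isomorphism over $U$. Over $U$ we have $X_U\cong U$, so $X'_U$ receives the integral morphism $U\to X'_U$, which is a section-like closed immersion because $X'\to Y$ is separated, and $(\pi_*\cO_X)|_{U}$ corresponds to this closed subscheme $U\subset X'_U$. We intend to enlarge each $\cB_\lambda$ so that $\cB_\lambda|_U=\pi_*\cO_X|_U$. Since $U\to X'$ is a quasi-compact immersion with $U$ finite (isomorphic) over $U$, $\pi_*\cO_X|_{X'_U}$ is a finite $\cO_{X'_U}$-algebra, namely $\cO_U$, generated by $\cO_{X'_U}$ alone; hence $\cB_\lambda|_{X'_U}\to\pi_*\cO_X|_{X'_U}$ is already surjective for every $\lambda$. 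Combined with injectivity, this gives $X_{\lambda,U}\cong X_U\cong U$. If we want to avoid relying on this surjectivity, we can instead extend the finite-type quasi-coherent subsheaf from $U$ to $X'$ using Stacks Tag 01PF.

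We expect the main obstacle to be the properness step, that is, verifying carefully that universal closedness descends from $X$ to $X_\lambda$ along the integral surjection and that $X_\lambda$ remains separated over $Y$, which holds because $X_\lambda\to X'$ is affine and $X'\to Y$ is separated. We also need the precise statement that an integral quasi-coherent algebra is the filtered union of finite subalgebras, which requires quasi-compactness of $X'$.
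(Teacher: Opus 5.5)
Your proof is correct. It shares the paper's skeleton: factor via Lemma \ref{lem;A1}, write the integral algebra $\pi_*\cO_X$ as the directed union of its finite subalgebras, and take relative $\Spec$. Two steps are argued differently.

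\textbf{Properness.} The paper first passes to the scheme-theoretic image $Z$ of $X$ in $X'$. It then uses Stacks Tag 0AH6 to get $Z\to Y$ proper, so each $X_i$, being finite over $Z$, is proper. You instead descend universal closedness along the integral surjection $X\to X_\lambda$ directly. This amounts to the same thing, since your $\cB_\lambda$ automatically contains the image of $\cO_{X'}$, which is $\cO_Z$. It works for every $\lambda$, so drop the ``we expect for large $\lambda$''.

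\textbf{Isomorphism over $U$.} This is the genuine difference.
\begin{itemize}
\item The paper reduces to $U$ affine. It uses Tag 01Z6 together with \cite[Corollary A.2]{ConradNagata} (affineness descends along finite surjections) to know that every $(X_i)_U$ is affine. It then runs a ring-theoretic argument with the section $U\cong X_U\to (X_i)_U$.
\item You observe that $U\cong X_U\to X'_U$ is a section of the separated morphism $X'_U\to U$, hence a closed immersion. So $\cO_{X'}\to\pi_*\cO_X$ is already surjective over $X'_U$, and every subalgebra $\cB_\lambda$ coincides with $\pi_*\cO_X$ there.
\end{itemize}
Your route is more elementary: it needs no affine reduction and no descent of affineness, and it makes transparent that nothing depends on $\lambda$.

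When writing it up, state that the surjectivity comes from the closed-immersion property. It does not come from finiteness, nor from $U\to X'$ being a quasi-compact immersion. The fallback via Tag 01PF is unnecessary.
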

\begin{proof}
By Lemma \ref{lem;A1} we can factor $f$ as $X\xrightarrow{h} X'\xrightarrow{g} Y$ with $h$ integral 
and $g$ separated and of finite type. Let $Z\subset X'$ denote the scheme-theoretic image of $h$.
As $f$ is universally closed \cite[\href{https://stacks.math.columbia.edu/tag/0AH6}{Tag 0AH6}]{stacks-project}
yields that $f$ factors as $X\xrightarrow{h_1} Z\xrightarrow{g_1} Y$ with $g_1$ proper and $h_1$ integral and surjective.
Now consider $\cA=h_{1*}\cO_X$. It is an integral quasi-coherent $\cO_Z$-algebra and can hence be written
as a directed colimit $\cA= \colim \cA_i$ of its finite quasi-coherent $\cO_Z$-subalgebras. 
Thus $X= \lim X_i$ is a directed limit with $X_i=\Spec \cA_i$ finite over $Z$ and hence also finite transition maps
$\tau: X_j\to X_i$ which by construction are induced by the inclusions  $\cO_{X_i}\inj \tau_*\cO_{X_j}$ inside $\cA$. 
Clearly, the composition $X_i\to Z\to Y$ is proper. It remains to show that the projection 
$(X_{i})_U\to U$ is an isomorphism, for all $i$.

The question is local on $U$ and we can therefore assume $U=\Spec R$ is affine. 
Hence  $X_U$ is affine as well and the projection induces an isomorphism 
\[R\xrightarrow{\simeq} A:=\cO_{X}(X_U).\] 
By \cite[\href{https://stacks.math.columbia.edu/tag/01Z6}{Tag 01Z6}]{stacks-project}, the scheme $(X_i)_U$ is affine for $i$ big enough, so by \cite[Corollary A.2]{ConradNagata}, $(X_i)_U$ is affine for all $i$, therefore the map $X_U\to (X_i)_U$ is induced by an integral ring extension $A_i \inj A$.
Moreover, the section $\sigma_i: U \xrightarrow{\simeq} X_U\to (X_{i})_U$ of the projection corresponds to a surjection 
of rings $A_i\surj R$ which factors via the isomorphism $A\xrightarrow{\simeq} R$ and is therefore injective as well. 
 This shows that $\sigma_i$ is an isomorphism inverse to the projection.
\end{proof}
\section{Tame topos of pairs}\label{sec:2}

We fix some notations and recall definitions from, e.g., \cite[6.2]{Gabber-Ramero}, \cite[2]{HS2020}.

\medskip

Let $(K,v)$ be a valuation field with valuation ring $\cO_v$ and maximal ideal $\fm_v$.
Fix an embedding into $(\bar{K}, \bar{v})$, where $\bar{K}$ is a separable closure of $K$ and 
$\bar{v}$ is an extension of $v$ to $\bar{K}$. We denote by $(K_v^{sh}, v^{sh})$ the strict henselization 
of $(K,v)$ (inside $(\bar{K}, \bar{v})$).
A  finite separable extension $(L,w)/(K,v)$ of valuation fields is called {\em unramified} (resp. {\em tame}), if $K_v^{sh}=L_w^{sh}$ (resp.
$([L_w^{sh}:K_v^{sh}],p)=1$, where $p$ is the exponential characteristic of the residue field of $K$).
The {\em tame closure} $(K^t, v^t)$ of $(K,v)$ is the union of all finite tame Galois extensions of $(K^{sh}, v^{sh})$.
The field $K^t$ is also the fixed field of $\bar{K}$ under the tame ramification group 
\[R_{\bar{v}/v}:=\{\sigma \in \mathrm{Gal}(\bar{K}/K)\mid \sigma(\cO_{\bar{v}})\subset \cO_{\bar{v}} \text{ and }
\sigma(x)/x-1\in \fm_{\bar{v}} \textrm{ for all }x\in \bar{K}^\times\}.\]

We record the following well-known lemma  for later reference. 
\begin{lemma}\label{lem:tame-Gal-closure}
\begin{enumerate}[label=(\arabic*)]
\item\label{lem:tame-Gal-clousre1} Let $(L,w)/(K,v)$  be a finite separable extension of valuation fields.
Let $N/K$ be a Galois hull of  $L/K$ and let $\tilde{w}$ be an extension of $w$ to $N$.
Then $(L,w)/(K,v)$ is tame if and only if $(N,\tilde{w})/(K,v)$ is tame. 

In particular $(L,w)/(K,v)$ is tame if and only if $(L,w)$ is a subextension of $(K^t,v^t)/(K,v)$.
\item\label{lem:tame-Gal-clousre2} 
Let $(L,w)/(K,v)$ be a tame extension and let $(K',v')/(K,v)$ be any algebraic extension of valuation fields.
Let $L\cdot K'$ be the composition field in an algebraic closure of $K$ and let $w'$ be a valuation extending $v'$.
Then $(L\cdot K', w')/(K', v')$ is tame.
\end{enumerate}
\end{lemma}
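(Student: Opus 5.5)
The plan is to translate everything into statements about the Galois groups of the strict henselizations inside $\bar K$, and to use two facts. First, $\mathrm{Gal}(\bar K/K^{sh})$ is the inertia group $I$ of $\bar v/v$. Second, the tame ramification group $R=R_{\bar v/v}$ is a normal subgroup of the decomposition group, with $R\subset I$ and $R$ a pro-$p$ group. The quotient $I/R$ is then pro-prime-to-$p$, since it embeds into $\Hom(\Gamma_{\bar v}/\Gamma_v,\kappa(\bar v)^\times)$.

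Granting this, the paper's definition of tameness says: $(L,w)/(K,v)$ is tame exactly when the index $[I:I_w]$ is prime to $p$. Here $I_w=\mathrm{Gal}(\bar K/L_w^{sh})$, after embedding $L$ into $\bar K$ so that $\bar v$ restricts to $w$; by conjugacy this choice is harmless.

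For \ref{lem:tame-Gal-clousre1}, I would first show that tameness of $L$ is equivalent to $I_w\supseteq R$. Suppose $I_w\supseteq R$. Then $[I:I_w]$ divides the supernatural order of $I/R$, which is prime to $p$. Conversely, suppose $[I:I_w]$ is prime to $p$. Then $R\cap I_w$ has prime-to-$p$ index in the pro-$p$ group $R$, so $R\cap I_w=R$. Indeed, the image of $R$ in the finite set $I/I_w$ has size equal to a power of $p$ that divides a number prime to $p$. This gives the ``in particular'' statement, because $K^t=\bar K^{R}$.

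Now let $N$ be the Galois hull of $L/K$, with $\tilde w$ chosen compatibly with $\bar v$. The group $\mathrm{Gal}(\bar K/N)$ is the intersection of the conjugates $\sigma\,\mathrm{Gal}(\bar K/L)\,\sigma^{-1}$ over $\sigma\in\mathrm{Gal}(\bar K/K)$. Each conjugate $\sigma^{-1}(L)$ carries the valuation induced by $\bar v$, and that extension is again tame, since it is conjugate to $(L,w')$ for $w'=\bar v\circ\sigma$. Using the normality of $R$ in $\mathrm{Gal}(\bar K/K)$, each of these conjugates contains $R$, so $\mathrm{Gal}(\bar K/N)\cap I\supseteq R$ as well, and $N$ is tame. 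The converse is immediate, since $L\subset N$ and containment of $R$ passes to overgroups.

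The main subtlety is that $R$ depends on the choice of $\bar v$, and it is normal only in the decomposition group. Changing $\bar v$ by $\sigma$ conjugates $R$ to $\sigma R\sigma^{-1}=R_{\sigma\bar v/v}$. So I would phrase the criterion as follows: $(L,w)$ is tame if and only if $\mathrm{Gal}(\bar K/L)$ contains $R_{\bar v'/v}$ for one (equivalently every) extension $\bar v'$ of $w$ to $\bar K$. Applying this criterion to each conjugate, with the appropriate $\bar v'$, handles the argument above.

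For \ref{lem:tame-Gal-clousre2}, choose $\bar v$ on $\bar K$ extending $w'$; then it extends $v'$, and its restriction to $L$ is some valuation $w''$ lying over $v$. Tameness of $L$ does not depend on which extension of $v$ we pick, since by \ref{lem:tame-Gal-clousre1} this reduces to the Galois hull, where the extensions of $v$ are conjugate. Hence the criterion gives $\mathrm{Gal}(\bar K/L)\supseteq R_{\bar v/v}$. Directly from the defining formula, $R_{\bar v/v'}=R_{\bar v/v}\cap \mathrm{Gal}(\bar K/K')$. Therefore $\mathrm{Gal}(\bar K/LK')=\mathrm{Gal}(\bar K/L)\cap\mathrm{Gal}(\bar K/K')\supseteq R_{\bar v/v'}$. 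For infinite algebraic $K'$, replace $\bar K$ by a separable closure of $K'$, noting that $LK'/K'$ is separable. By the criterion, $(LK',w')/(K',v')$ is tame.
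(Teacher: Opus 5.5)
Your proposal has a genuine gap, and it sits in a step that cannot be repaired as the lemma is stated. The gap is the claim that tameness of $(L,w)/(K,v)$ does not depend on which extension of $v$ to $L$ is used.

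You rely on this claim twice. In (1), $R_{\bar v/v}$ is normal only in the decomposition group, so your sentence invoking normality of $R$ in $\mathrm{Gal}(\bar K/K)$ is not available. Instead you need every conjugate $(\sigma(L),\bar v|_{\sigma(L)})\cong(L,\bar v\circ\sigma|_L)$ to be tame. In (2), you need the claim to pass from $w''=\bar v|_L$ back to $w$.

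Your justification, ``by (1) this reduces to the Galois hull'', is circular: it invokes the very direction of (1) you are proving. Worse, the claim is false when $(K,v)$ is not henselian. Take $(K,v)=(\mathbb{Q},v_2)$ and $L=\mathbb{Q}[x]/(x^3+x+8)$.
\begin{itemize}
\item Hensel's lemma gives $L\otimes_{\mathbb{Q}}\mathbb{Q}_2\cong\mathbb{Q}_2\times\mathbb{Q}_2(\sqrt{-1})$.
\item So $v_2$ has one extension $w_1$ with $L^{sh}_{w_1}=K^{sh}_v$, which is tame.
\item It has another extension $w_2$ with ramification index $2$, which is wild.
\item The discriminant of the cubic is $-1732=-4\cdot 433$, so the Galois hull $N$ contains $\mathbb{Q}(\sqrt{-433})$, which is ramified at $2$.
\item Hence $[N^{sh}_{\tilde w}:K^{sh}_v]$ is even for every $\tilde w$, including every $\tilde w$ extending $w_1$.
\end{itemize}
Thus ``$(L,w)$ tame $\Rightarrow$ $(N,\tilde w)$ tame'' fails as stated. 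Statement (2) also fails if $w'$ is only required to extend $v'$: take $K'=K$, $w=w_1$, $w'=w_2$.

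The paper's proof has the same defect, located elsewhere. It begins by asserting that $N^{sh}_{\tilde w}$ is a Galois hull of $L^{sh}_w/K^{sh}_v$. That fails in this example, since $L^{sh}_{w_1}=K^{sh}_v$ while $N^{sh}_{\tilde w}$ is quadratic over it. For strictly henselian $K$ the paper then runs essentially your argument, where the wild inertia is normal in the whole absolute Galois group. So wherever the statement is true, your method and the paper's coincide, and the following parts of your work survive.
\begin{itemize}
\item Your criterion is correct: $(L,w)$ is tame if and only if $\mathrm{Gal}(\bar K/L)\supseteq R_{\bar v/v}$ for some (equivalently any) $\bar v$ extending $w$. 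This proves the ``in particular'' clause.
\item Statement (1) holds if $(K,v)$ is henselian. Then the decomposition group is all of $\mathrm{Gal}(\bar K/K)$, and your conjugation argument works verbatim.
\item Statement (1) also holds with $N$ replaced by a Galois hull of $L^{sh}_w/K^{sh}_v$. This local form is what the later applications, which pass to strict henselizations, actually require.
\item Statement (2) holds if $w'$ is required to restrict to $w$ on $L$, as is implicitly intended and can always be arranged by choosing the embedding of $L$. Take $\bar v$ extending $w'$; then your identity $R_{\bar v/v'}=R_{\bar v/v}\cap\mathrm{Gal}(\bar K/K')$ proves (2) directly. This route needs neither (1) nor the Gabber--Ramero fact ${K'}^t=K'\cdot K^t$ used in the paper.
\end{itemize}
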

\begin{proof}
\ref{lem:tame-Gal-clousre1}. Note that $N^{sh}_{\tilde{w}}$ is a  Galois hull  of $L^{sh}_w/K^{sh}_v$.   
Therefore we may assume $K,L,N$ are strictly henselian valuation fields of characteristic $p>0$. 
Thus if $(N,\tilde{w})/(K,v)$ is tame then $[N:K]=[N:L]\cdot [L:K]$ is prime to $p$ and hence $(L,w)/(K,v)$ is tame as well.
Now assume $(L,w)/(K,v)$ is tame. Denote by $G_K\supset G_L\supset G_N$ the absolute Galois groups with respect to 
a fixed separable  closure $\bar{K}$ of $K$, and  by $P$ the  pro-$p$-Sylow subgroup of $G_K$, which is a normal subgroup.
The indices satisfy the following equality (of supernatural numbers)
\[[G_K:G_L]\cdot [G_L:P\cap G_L]= [G_K:P]\cdot [P:P\cap G_L].\]
As $P$ is a normal subgroup of $G_K$, the intersection $P\cap G_L$ is a normal subgroup of $G_L$
and we have an inclusion of profinite groups $G_L/G_L\cap P\inj G_K/P$. 
Hence $[G_K:P]$ and $[G_L:P\cap G_L]$ are prime to $p$.
By assumption $[G_K:G_L]=[L:K]$ is prime to $p$ as well. Thus $[P:P\cap G_L]=1$, i.e., $P=P\cap G_L$.
The Galois hull of $L/K$ is the composition field (inside $\bar{K}$) of all the $\sigma(L)$, 
where $\sigma$ runs through all the embeddings  $L\inj \bar{K}$. Extending these $\sigma$'s to $K$-automorphisms of $\bar{K}$, 
we find  $G_{\sigma(L)}=\sigma G_L\sigma^{-1}$. Hence $G_N=\cap_{\sigma} \sigma G_L\sigma^{-1}$. 
As $P$ is a normal subgroup of $G_K$ it follows that $P$ is contained in $G_N$ as well. 
Thus  $[G_K:P]=[G_K:G_N]\cdot [G_N:P]$ is prime to $p$ and hence so is $[N:K]=[G_K:G_N]$.

\ref{lem:tame-Gal-clousre2} follows from \ref{lem:tame-Gal-clousre1} and the fact that ${K'}^t= K'\cdot K^t$, 
see \cite[6.2.18]{Gabber-Ramero}.
\end{proof}

Recall that for any morphism of schemes $X\to \tX$, $\Spa(X,\tX)$ is the topological space whose underlying set is the set of triples $(x,v,\epsilon)$ such that $x\in X$ is a point, $v$ is a valuation on $k(x)$ and $\epsilon\colon \Spec(\cO_v)\to \tX$ is a map compatible with $\Spec(k(x))\to X$. 
\begin{defn}\label{def;XSt}
Let $S\to \tS$ be a morphism of schemes. 
\begin{itemize}
\item[(1)]
Let $\SchSS$ be the category of pairs $(X,\tX)$, where $X\to \tX$ is a map of qcqs $\tS$-schemes 
such that $X\to \tS$ factors through $S$.
Morphisms $(X,\tX)\to (Y,\tY)$ are pairs of morphisms $f: X\to Y$ and $\tf:\tX\to \tY$ satisfying the obvious compatibility.
\item[(2)]
A morphism $(f,\tf):(X,\tX)\to (Y,\tY)$ in $\SchSS$ is a \emph{modification} if $f$ is an isomorphism and $\tf$ is proper, and \emph{integral birational} if $f$ is an isomorphism and $\tf$ is integral. We say that $(f,\tf)$ is a \emph{quasi-modification} if $f$ is an isomorphism and $\tf$ is ift, separated and universally closed.
\item[(3)] A morphism $(f,\tf):(V,\tV)\to (U,\tU)$ in $\SchSS$ is \emph{strict \'etale} if $\tf$ is \'etale, $V=\tV\times_{\tU} U$ and $f= \tf\times_{\tU} \id$.
\item[(4)]
A morphism $(f,\tf):(V,\tV)\to (U,\tU)$ in $\SchSS$ is \emph{tame over $(x,v,\epsilon_v)\in \Spa(U,\tU)$} if $f$ is \'etale over a neighborhood of $x$ and there is $(y,w,\epsilon_w)\in \Spa(V,\tV)$ such that $f(y)=x$, $w_{|k(x)}=v$, and $w/v$ is tamely ramified and the following diagram commutes:\[
\begin{tikzcd}
\Spec(\cO_w)\ar[r,"\epsilon_w"]\ar[d,"(f)_{|y,w\geq 0}"]&\tV\ar[d,"\tf"]\\
\Spec(\cO_v)\ar[r,"\epsilon_v"]&\tU.
\end{tikzcd}
\]
A morphism $(f,\tf):(V,\tV)\to (U,\tU)$ is a \emph{tame covering} if it is tame over all points of $\Spa(U,\tU)$.
\end{itemize}
\end{defn}
\begin{rmk}\label{rmk1;modification}
    \begin{enumerate}
        \item By Raynaud--Gruson \cite[\href{https://stacks.math.columbia.edu/tag/081T}{Tag 081T}]{stacks-project}, every modification $(f,\tf):(V,\tV)\to (U,\tU)$ is dominated by an \emph{admissible blow-up}, i.e. a blow-up $\pi_{\cA}: \Bl_{\cA}(\tU) \to \tU$ in a  quasi-coherent ideal $\cA\subset \cO_{\tX}$ of finite type such that the support of $\cO_{\tX}/\cA$ is contained in $\tU\setminus U$.
        \item Note that by the valuative criterion, if $\tf\colon \tU'\to \tU$ is separated and universally closed and $U\to \tU'$ is any map, we have a bijection $\Spa(U,\tU)\cong \Spa(U,\tU')$.
        \item By Theorem \ref{thm:ift-sep-uc-limit-modifications}, every quasi-modification $(f,\tf)\colon (X,\tX)\to (Y,\tY)$ is a cofiltered limit of modifications $(f_i,\tf_i)\colon (X_i,\tX_i)\to (Y,\tY)$ such that for every $i$ the maps $(X,\tX)\to (X_i,\tX_i)$ are integral birational.
    \end{enumerate}
    
\end{rmk}

The two sites below are very much inspired by the definition of the \'etale and tame site of a Huber pair (see \cite{Huebner2021} and \cite{HS2020}).

\begin{defn}\label{def;tame site}
Let $X\hookrightarrow \tX$ be a quasi-compact open immersion of qcqs schemes. 
 Let $(X,\tX)_\tau$ be the full subcategory of $\SchXX$
 consisting of objects $(f,\tf)\colon (U,\tU)\to (X,\tX)$
 with $f$ \'etale and $\tf$ ift, and $U\hookrightarrow \tU$  a quasi-compact open immersion. This category has fiber products given by\[
    (V_1,\tilde{V_1})\times_{(U,\tilde{U})} (V_2,\tilde{V_2}) = (V_1\times_U V_2, \tilde{V_1}\times_{\tilde{U}}\tilde{V_2}), \]
    and terminal object $(X,\tX)$, so for $I$ an indexing diagram and $\{(U_i,\tU_i)\}_{i\in I}\subseteq (X,\tX)_\tau$, the limit (if it exists) is computed as $(\lim_i U_i,\lim \tU_i)$.
    On this category, we will consider the following three topologies:
    \begin{enumerate}
    \item The \emph{strict \'etale} topology which is generated by strict \'etale coverings
 \item The \emph{v-\'etale} topology which is generated by strict \'etale coverings and quasi-mo\-di\-fi\-ca\-tions.
 \item The \emph{tame} topology which is generated by tame coverings, where a family $\{(f_i,\tf_i): (V_i,\tV_i) \to (U,\tU)\}_i$ of maps in $(X,\tX)_\tau$ is a tame covering if for every $(x,v,\epsilon_v)\in \Spa(U,\tU)$,
there is $i\in I$ such that $(V_i,\tV_i) \to (U,\tU)$ is {tame} over $(x,v,\epsilon_v)$.
\end{enumerate}
We let $(X,\tX)_{\set}$, $(X,\tX)_{\vet}$ and $(X,\tX)_{t}$ denote the strict \'etale, the v-\'etale and the tame site on $(X,\tX)_\tau$, respectively.
We have morphisms of sites:
\eq{morphism-tameet}{
(X,\tX)_t \rmapo{\nu} (X,\tX)_{\vet}\rmapo{\mu} (X,\tX)_{\set}}
corresponding to the inclusion functors.

\end{defn}  
In \cite{DAddezio} D'Addezio defines a v-\'etale topology for marked schemes. This definition is similar in spirit but 
cannot be compared directly with the one above. We warn the reader that D'Addezio's definition is close to the definition of the tame site above, except that in his definition
the tameness condition is dropped, i.e., his v-\'etale site allows covers which are ramified with ramification controlled by the markings.
In the current article we will only consider the  v-\'etale site defined in Definition \ref{def;tame site} above.

\begin{rmk}\label{rmk-def;XSt}
\begin{enumerate}[label=(\arabic*)]
\item\label{rmk-def;XSt1}
The tame topology is finitary, i.e., any covering can be refined by a covering of the form 
$g:(V,\tV)\to (U,\tU)$ for $(V,\tV)$ and $(U,\tU)$ in $(X,\tX)_t$. 
This follows from the fact that $\Spa(X,\tX)$ 
is a quasi-compact topological space by \cite[Lemma 4.3]{HS2020} with the topology defined there.
\item\label{rmk-def;XSt2} Let $(U,\tU)\in (X,\tX)_t$ and let $\ol{U}$ be the closure of $U$ in $\tU$. Then $(U,\ol{U})\to (U,\tU)$ is a quasi-modification, hence it is a v-\'etale covering and a tame covering.
\item\label{rmk-def;XSt3}
Note that for any $(U,\tU)\in (X,\tX)_t$, there exists  a quasi-coherent ideal sheaf $\cI\subset \sO_{\tU}$
{\em of  finite type} such that the support of $\sO_{\tU}/\cI$ is equal to $\tU\setminus U$. 
This uses that $U\inj\tU$ is a quasi-compact open immersion
and that $\tU$ is qcqs.
In particular, blowing up $\tU$  in such an ideal we obtain
a modification $(U,\bar{U})\to (U,\tU)$ such that the complement $\bar{U}\setminus U$ is the support of an effective Cartier divisor.
\end{enumerate}
\end{rmk}

\begin{example}\label{ex;lem:Gat}
Consider the presheaf $\sO^t$ on $\SchSS$ given by 
$\sO^t(U,\tU)=\sO(\tU^{\rm int})$ for $(U,\tU)\in \SchSS$,
where $\tU^{\rm int}$ denotes the integral closure of $\tU$ in $U$.
We will see in Lemma \ref{lem:Gat} that this defines a tame sheaf.
\end{example}

\begin{lemma}\label{lem:blow-up-inv}
Let $(X,\tX')\to (X,\tX)$ be a quasi-modification. 
Then, for any sheaf $F\in \Sh((X,\tX)_{\vet})$ and $(U,\tU)\in (X,\tX)_\tau$, we have an isomorphism
\[F(U,\tU)\cong F(U,\tU\times_{\tX} \tX').\]
In particular, the restriction functors along the inclusion $(X,\tX')_\tau\to (X,\tX)_\tau$
\[{\Sh}((X,\tX)_{\vet})\to {\Sh}((X,\tX')_{\vet}) \qaq 
{\Sh}((X,\tX)_t)\to {\Sh}((X,\tX')_t)\]
are equivalences of topoi so that for a presheaf $F$ of abelian groups, we have equivalences
\[R\Gamma((X,\tX)_{\vet},a_{\vet}F)\cong R\Gamma((X,\tX')_{\vet},a_{\vet}F)\]
and
\[R\Gamma((X,\tX)_{t},a_{t}F)\cong R\Gamma((X,\tX')_{t},a_{t}F),\]
where $a_t$ and $a_{\vet}$ denote the respective sheafification functor.
\end{lemma}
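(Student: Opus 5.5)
The core claim is that $(U,\tU\times_{\tX}\tX')\to(U,\tU)$ is a v-\'etale covering consisting of a single morphism whose diagonal is an isomorphism, so the sheaf condition gives the isomorphism; the remaining statements are formal consequences.

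First I check that the base-changed map is again a quasi-modification lying in $(X,\tX)_\tau$. Put $\tU'=\tU\times_{\tX}\tX'$. Being ift, separated and universally closed is stable under base change, so $\tU'\to\tU$ has all three properties. The map $U\to \tU'$ is induced by $U\to\tU$ and $U\to X\cong X'\subset\tX'$. Since $X\to\tX'$ is an open immersion, $U\to\tU'$ is the base change of $X\to\tX'$ along $\tU'\to\tX'$, provided $U=\tU\times_{\tX}X$. This need not hold in general, because we only know that $U\to X$ is \'etale. I therefore argue directly. We have $U\times_{\tU}\tU' = U\times_{\tX}\tX' = U\times_X(X\times_{\tX}\tX')= U$, where the last step uses that $X\times_{\tX}\tX'=X$; this holds because $(X,\tX')\to(X,\tX)$ has first component an isomorphism and $X$ is an open subscheme of both. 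It follows that $U\to\tU'$ is a quasi-compact open immersion, being the base change of $U\inj\tU$. Moreover $\tU'\to\tX$ is ift as a composition, and $\tU'$ is qcqs, so $(U,\tU')\in(X,\tX)_\tau$ and $(U,\tU')\to(U,\tU)$ is a quasi-modification.

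Second, I apply the sheaf property of $F$ to this one-element covering. Fibre products in $(X,\tX)_\tau$ are computed componentwise, so the self-product is $(U\times_U U,\tU'\times_{\tU}\tU')=(U,\tU'\times_{\tU}\tU')$. The equalizer condition reduces to showing that the two maps $F(U,\tU')\rightrightarrows F(U,\tU'\times_{\tU}\tU')$ agree. The diagonal $(U,\tU')\to(U,\tU'\times_{\tU}\tU')$ is a closed immersion, since $\tU'\to\tU$ is separated, and its first component is the identity; hence it is integral birational and in particular a quasi-modification, so it is itself a v-\'etale covering. Because a single-map covering $s$ with $p_1s=p_2s=\id$ makes $F(s)$ injective by the sheaf axiom, it follows that $p_1^*=p_2^*$. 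The equalizer is then all of $F(U,\tU')$, which gives $F(U,\tU)\cong F(U,\tU')$. I expect this step to be the main obstacle. One way around it is to check that $F(s)$ is injective by applying separatedness to the covering $s$; another is to argue via Remark \ref{rmk1;modification}(3) and the bijection on $\Spa$ from Remark \ref{rmk1;modification}(2).

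Third, for the equivalences of topoi, note that $(X,\tX')_\tau$ is a full subcategory of $(X,\tX)_\tau$, because $\tX'\to\tX$ is ift. The functor $(U,\tU)\mapsto(U,\tU\times_{\tX}\tX')$ takes values in it and covers every object by a quasi-modification. The covering families in the subcategory are exactly the induced ones: strict \'etale and quasi-modification covers are preserved, and tame covers are detected on $\Spa$, which is unchanged by Remark \ref{rmk1;modification}(2). The comparison lemma (SGA4 III.4.1) then shows that restriction is an equivalence of both the v-\'etale and the tame topoi; the isomorphism above is exactly the statement that a sheaf is determined by its values on the subcategory. Finally, restriction commutes with sheafification and preserves injectives under an equivalence, which yields the equivalences of $R\Gamma$. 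For the tame case I use the same argument, noting that a quasi-modification is a tame covering by Remark \ref{rmk1;modification}(2), so $F(U,\tU)\cong F(U,\tU')$ holds for tame sheaves as well.
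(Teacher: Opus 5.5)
Your proof is correct and follows the paper's own argument. The projection $(U,\tU')\to(U,\tU)$ and the diagonal $\delta\colon(U,\tU')\to(U,\tU'\times_{\tU}\tU')$ are quasi-modifications, hence coverings, so $\delta^*$ is injective; this forces $pr_1^*=pr_2^*$ and gives $F(U,\tU)\cong F(U,\tU')$. For the topos statement the paper simply writes down the quasi-inverse $G\mapsto\bigl((U,\tU)\mapsto G(U,\tU\times_{\tX}\tX')\bigr)$ rather than citing a comparison lemma.

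Two justifications in your write-up are wrong as stated, though both are easily repaired:

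\begin{enumerate}
\item Your claim $X\times_{\tX}\tX'=X$ can fail for a quasi-modification as literally defined. Take $\tX'=\tX\sqcup\tX$ with $X$ sitting in the first copy. You do not need it: what you need is that $U\to\tU'$ is a quasi-compact open immersion, and this follows from the factorization $U\subset\tU\times_{\tX}X\subset\tU\times_{\tX}\tX'$.
\item The functor $(X,\tX')_\tau\to(X,\tX)_\tau$ is faithful but in general not full. It is full on pairs with $U$ schematically dense in $\tU$, because $\tX'\to\tX$ is separated. So the fully faithful comparison lemma of SGA4 should be replaced by the Stacks Project version (Tag 03A0), as used in Lemma~\ref{lem:reduction-affine}. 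That version's refinement condition is met by passing to the closure of $U$.
\end{enumerate}
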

\begin{proof}
We prove it for $\vet$. The proof for $t$ is the same.
Let $F$ be a sheaf of sets on $(X,\tX)_{\vet}$. For $(U,\tU)\in (X,\tX)_\tau$ and $\tU'=\tU\times_{\tX} \tX'$,
the map induced by projection $(U,\tU')\to (U,\tU)$ and the map induced by the diagonal 
$\delta: (U, \tU')\to (U,\tU'\times_{\tU} \tU')$ are modifications and thus are coverings in $(X,\tX)_{\vet}$. Hence,  
\[ \delta^*: F(U,\tU'\times_{\tU} \tU') \to F(U,\tU')\]
is injective and we find
\[\begin{aligned}
F(U,\tU) &\cong {\rm eq}\left(F(U,\tU')
\overset{pr_1^*}{\underset{pr_2^*}{\rightrightarrows}} F(U,\tU'\times_{\tU} \tU')\right)\\
&\cong {\rm eq}\left(F(U,\tU')\overset{id}{\underset{id}{\rightrightarrows}} F(U,\tU')\right)= F(U,\tU').\\
\end{aligned}\]
Hence, the functor 
\[{\Sh}((X,\tX')_{\vet})\to {\Sh}((X,\tX)_{\vet}),\quad 
G\mapsto \left((U,\tU)\mapsto G(U, \tU\times_{\tX} \tX')\right)\]
gives a quasi-inverse of the restiction functor ${\Sh}((X,\tX)_{\vet})\to {\Sh}((X,\tX')_{\vet})$.
\end{proof}

\begin{defn}\label{def;int site}
Let $(X,\tX)_{{\rm affine},\tau}$ be the subcategory of $(X,\tX)_{\tau}$ whose objects are pairs $(U,\tilde U)$, with $U$ and $\tilde U$ affine,
equipped with the $\vet$ and tame topologies by restriction.

Let $(X,\tX)_{{\rm int},\tau}$ be the full subcategory of $(X,\tX)_{{\rm affine},\tau}$ whose objects are pairs $(U,\tU)$ 
with $\cO(\tU)\to \cO(U)$ injective and integrally closed, equipped with v-\'etale and tame topologies by restriction.
\end{defn}

\begin{lemma}\label{lem:reduction-affine}
    The inclusion functors $(X,\tX)_{{\rm affine},\vet}\to (X,\tX)_{\vet}$ and  
    $(X,\tX)_{{\rm affine},t}\to (X,\tX)_{t}$ induce equivalences on the associated topoi.
    \begin{proof} It is straightforward to check the properties (1)-(5) of \cite[\href{https://stacks.math.columbia.edu/tag/03A0}{Tag 03A0}]{stacks-project}.
    \end{proof}
\end{lemma}

\medbreak

\section{Computation of the v-\'etale topology}\label{Compvet}\label{sec:3}

In this section we study some properties of the v-\'etale topology.

 \begin{lemma}\label{lem0:refine-vet}
      For every composition $\cV\xrightarrow{\phi_1}  \cU\xrightarrow{\phi_2} \cY$ in $\SchSS$ with $\phi_2$ strict \'etale and $\phi_1$ a quasi-modification, there is  a quasi-modification $\cT\to \cY$ such that $\cU\times_\cY \cT\to \cY$ factors through a quasi-modification $\cU\times_\cY \cT\to\cV$.
        \end{lemma}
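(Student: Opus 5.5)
The plan is to reduce first to the case of a modification, then use Raynaud--Gruson flattening on the base $\tY$, and finally pass to the limit for a general quasi-modification.

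Write $\cY=(Y,\tY)$ and $\cU=(U,\tU)$, where $\tU\to\tY$ is étale and $U=\tU\times_{\tY}Y$. Since $\phi_1$ is a quasi-modification, we may write $\cV=(U,\tV)$ with $\tV\to\tU$ ift, separated, universally closed, and an isomorphism over $U$. By Remark \ref{rmk1;modification}(3), i.e. Theorem \ref{thm:ift-sep-uc-limit-modifications}, we have $\tV=\lim_i\tV_i$, where each $\tV_i\to\tU$ is proper and an isomorphism over $U$, and each $\tV\to\tV_i$ is integral (so $(U,\tV)\to(U,\tV_i)$ is integral birational). I will treat the case where $\phi_1$ is a modification first.

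\emph{Modification case.} Here $\tV\to\tU$ is proper and an isomorphism over $U$, and $\tV\to\tY$ is of finite type (it is ift and, by Lemma \ref{lem;A1}, can be arranged to be of finite presentation after the reduction). Its restriction over the quasi-compact open $Y\subset\tY$ is $U\to Y$, which is étale and hence flat.
\begin{enumerate}
\item Apply Raynaud--Gruson flattening by blow-ups \cite[\href{https://stacks.math.columbia.edu/tag/081R}{Tag 081R}]{stacks-project} to $\tV\to\tY$ relative to $Y$. This gives a $Y$-admissible blow-up $\tT\to\tY$ such that the strict transform $\tV'$ of $\tV$ is flat and of finite presentation over $\tT$. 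Set $\cT=(Y,\tT)$; this is a modification.
\item Let $\tU'$ be the strict transform of $\tU$ in $\tU\times_{\tY}\tT$, i.e. the scheme-theoretic closure of $U$. Since $\tU\to\tY$ is étale, blow-up commutes with this flat base change, so $\tU\times_{\tY}\tT$ is the blow-up of $\tU$ in the pulled-back ideal. Hence $\tU\times_{\tY}\tT=\tU'$, and the pair $\cU\times_\cY\cT=(U,\tU\times_{\tY}\tT)$ is a modification of $\cU$.
\item It remains to produce a map $\tU\times_{\tY}\tT\to\tV$ over $\tU$ extending the identity on $U$. The natural candidate is the inverse of the proper map $\tV'\to\tU\times_{\tY}\tT$, which is an isomorphism over $U$ and has $U$ schematically dense in both source and target.
\end{enumerate}

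\emph{Main obstacle: step 3.} Both $\tV'$ and $\tU\times_{\tY}\tT$ are flat and of finite presentation over $\tT$, and the second is even étale. Between them we have a proper morphism that is an isomorphism over the schematically dense open $U$. One would like to conclude, via the fibrewise flatness criterion and a Zariski main theorem argument, that this morphism is an isomorphism, but this is not automatic in general.

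My first attempt would be the following. Replace $\tT$ by a further $Y$-admissible blow-up that also dominates the images in $\tY$ of the finitely many finite-type ideals defining a blow-up $\Bl_\cA(\tU)\to\tU$ that dominates $\tV\to\tU$ (such a blow-up exists by Remark \ref{rmk1;modification}(1)). These ideals are extended to $\tY$ as follows. Take the kernel of $\cO_{\tY}\to f_*(\cO_{\tU}/\cA)$, where $f\colon\tU\to\tY$. Then choose a finite-type subideal that is still trivial on the quasi-compact open $Y$, using \cite[\href{https://stacks.math.columbia.edu/tag/01PF}{Tag 01PF}]{stacks-project}. Its pullback to $\tU$ is contained in $\cA$. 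Using the Raynaud--Gruson product-ideal trick, after one further blow-up the pullback of $\cA$ becomes invertible on $\tU\times_{\tY}\tT$. The universal property of $\Bl_\cA(\tU)$ then gives the desired map to $\tV$.

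\emph{General case.} For a general quasi-modification we must dominate the cofiltered limit $\tV=\lim_i\tV_i$, so a single map to some $\tV_i$ is not enough. My plan is to apply the modification case to each $\tV_i$ (equivalently, to each finite-type ideal $\cA_i$), obtaining modifications $\cT_i\to\cY$. The index set of modifications is cofiltered, so we can arrange the $\cT_i$ to form a cofiltered system and set $\tT=\lim_i\tT_i$. Then $\cT=(Y,\tT)\to\cY$ is a quasi-modification, because a limit of proper maps with integral, affine transition maps is ift, separated and universally closed. The compatible maps $\tU\times_{\tY}\tT_i\to\tV_i$ assemble to a map $\tU\times_{\tY}\tT\to\lim_i\tV_i=\tV$. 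This map is an isomorphism over $U$ and is ift, separated and universally closed by the usual cancellation properties, so it is a quasi-modification, as required.
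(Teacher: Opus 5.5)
\textbf{The modification case.} Your setup is correct, but you stop at the decisive step, and the substitute you give does not work. The morphism $\alpha\colon \tV'\to \tU\times_{\tY}\tT$ is a $\tT$-morphism from a flat, finitely presented $\tT$-scheme to an \'etale $\tT$-scheme. It is therefore itself flat and of finite presentation: its graph is a section of an \'etale projection, hence an open immersion, and the other projection is flat. A proper, flat, finitely presented morphism that is an isomorphism over a schematically dense open is an isomorphism, since it is quasi-finite, hence finite locally free of rank one. So the ``natural candidate'' works automatically here. This is exactly how the paper concludes, citing \cite[Lemmas 4.15, 4.16]{Kel-S23}.

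Your ideal-theoretic replacement is not a proof. The ideal $\cJ=\ker(\cO_{\tY}\to f_*(\cO_{\tU}/\cA))$ only satisfies $f^{-1}\cJ\cdot\cO_{\tU}\subset\cA$, and blowing up something contained in $\cA$ need not make $\cA$ invertible. For instance, take $\tU=\A^2\sqcup\A^2\to\tY=\A^2$, with $Y$ the locus $x\neq 0$ and $\cA=((x,y),(x))$. Then $\cJ=(x)$ is already invertible while $(x,y)$ is not. The unspecified ``product-ideal trick'' would have to produce, for an arbitrary \'etale $f$, an admissible ideal on $\tY$ whose pullback dominates $\Bl_{\cA}(\tU)$. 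That is essentially the statement you are proving.

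\textbf{The general case.} Here the error is more serious. The $\tT_i$ come from flattening, so they are admissible blow-ups, and any transition maps $\tT_j\to\tT_i$ are proper birational, not integral or affine as you assert. A limit of such a tower need not exist as a scheme. Even when it does, it is in general not ift over $\tY$ (think of infinitely iterated point blow-ups). So $(Y,\lim_i\tT_i)$ is not a quasi-modification.

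The missing idea is that a single modification suffices, combined with an integral closure. Fix one index and set $\cV_0=(U,\tV_{i_0})$. Then $\cV\to\cV_0$ is integral birational and $\cV_0\to\cU$ is a modification. The modification case gives $\cT'=(Y,\tT')$ such that $\tU':=\tU\times_{\tY}\tT'\to\tU$ factors through $\tV_0$. Now let $\tT$ be the integral closure of $\tT'$ in $Y$. Then $\tT\to\tY$ is integral followed by proper, so $(Y,\tT)\to\cY$ is a quasi-modification. Integral closure commutes with \'etale base change \cite[Tag 03GE]{stacks-project}, so $\tU\times_{\tY}\tT$ is the integral closure of $\tU'$ in $U$. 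It therefore maps to the integral $\tU'$-scheme $\tV\times_{\tV_0}\tU'$, and composing with the projection to $\tV$ gives the required quasi-modification $\cU\times_{\cY}\cT\to\cV$. This is the paper's argument.
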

\begin{proof}
   First, we show that the result holds for $\phi_1$ a modification. This is classical (see \cite[Proposition 12.27]{MVW}): 
   let $\cU=(U,\tU)$, $\cV=(V,\tV)$, and $\cY=(Y,\tY)$. As observed in 
        Remark \ref{rmk-def;XSt}\ref{rmk-def;XSt2}, we can assume that $Y$ dense in $\tY$.
        By Raynaud--Gruson \cite[\href{https://stacks.math.columbia.edu/tag/081R}{Tag 081R}]{stacks-project}, there exists a 
        $Y$-admissible blow-up $\tT\to \tY$ such that the strict transform $\tV'$ of $\tV$ over $\tT$ is flat of finite presentation over $\tT$. 
        Note that the map $\tV'\to \tT$ factors as
        $\tV'\xrightarrow{\alpha} \tU\times_{\tY}\tT \xrightarrow{\beta} \tT$, where $\beta$ is \'etale and $\alpha$ is proper inducing an isomorphism over the dense open  $U=\tU\times_{\tY}\tT\times_{\tY} Y$. Moreover, $\alpha$ is flat by 
        \cite[Lemma 4.15]{Kel-S23} and thus an isomorphism by \cite[Lemma 4.16]{Kel-S23}.
        Hence, we get a morphism $\tU\times_{\tY}\tT \to \tV$, which is proper and an isomorphism over $U$. 
        
        Let now $\phi_1$ be a quasi-modification. By Remark \ref{rmk1;modification} (3), there exists $\cV_0=(U,\tV_0)$ such that $\phi_1$ factors as a composition of an integral birational morphism $\cV\to \cV_0$ and a modification $\cV_0\to \cU$.
By the previous paragraph, there exists a modification $\cT'=(Y,\tT')\to \cY$ such that for $\tU':=\tU\times_{\tY} \tT'$, the map $\cU'=(U,\tU')\to \cU$ factors through $\cV_0$, hence we have a commutative diagram\[
\begin{tikzcd}
    \cV\times_{\cV_0}\cU'\ar[r,"\phi_1'"]\ar[d,"q_0"]&\cU'\ar[rr,"\phi_2'"]\ar[d]&&\cT'\ar[d,"\psi_0"]\\
    \cV\ar[r]&\cV_0\ar[r]&\cU\ar[r]&\cY
\end{tikzcd}\]
with $\phi_1'$ integral birational, $\phi_2'$ strict \'etale and $q_0$ and $\psi_0$ modifications.
Let $\tT$ be the integral closure of $\tT'$ in $Y$ and put 
$\tW:=\tU'\times_{\tT'}\tT\cong \tU\times_{\tY} \tT$.
Then, 
$\tW$ is the integral closure of $\tU'$ in $U$ by \cite[\href{https://stacks.math.columbia.edu/tag/03GE}{Tag 03GE}]{stacks-project} 
since $\tU'\to \tT'$ is \'etale and $U=\tU'\times_{\tT'} Y$. As $\phi_1'$ is integral birational, the $\tU'$-scheme $\tW$ is isomorphic to
the integral closure of $\tV\times_{\tV_0} \tU'$ in $U$. 
Hence $\tW\to\tU'$ factors via $\tW=(\tV\times_{\tV_0} \tU')^{\rm int}\to \tV\times_{\tV_0} \tU'$.
Therefore we can take $\cT=(Y,\tT)$ and this completes the proof of the lemma. 
\end{proof}

\begin{lemma}\label{lem:refine-vet}
      For $F\in \Shv((X,\tX)_{{\set}})$ and $\cU\in (X,\tX)_{\tau}$, we have
    \begin{equation}\label{eq;lem:refine-vet}
    a_{\vet}(F)(\cU) = \colim_{\cV\to \cU} F(\tV) = \colim_{\cW\to \cU} F(\tW)
    \end{equation}
    where the first colimit runs along all quasi-modifications $\cV\to \cU$ with $\cV=(V,\tV)$ and $\tV$ integrally closed in $V$,
    and the second colimit runs along all admissible blow-ups $\cW\to \cU$.
    \begin{proof}
    Let $\alpha F$ be the presheaf defined by \[
    \cU\in (X,\tX)_{\tau}\mapsto \colim_{\cU'\to \cU} F(\tU'),
    \]
    where the colimit runs through all quasi-modifications. By taking the integral closure, 
    every quasi-modification is dominated by a quasi-modification $\cV=(V,\tV)\to \cU$ with $\tV$ integrally closed in $V$, 
    and by Theorem \ref{thm:ift-sep-uc-limit-modifications}, every quasi-modification is a cofiltered limit of modifications, and by Raynaud--Gruson \cite[\href{https://stacks.math.columbia.edu/tag/081T}{Tag 081T}]{stacks-project}, any modification is dominated by an admissible blow-up. Therefore the two colimits in \eqref{eq;lem:refine-vet} are isomorphic to $\alpha F(\cU)$.
    First, we claim that $\alpha F\in \Shv((X,\tX)_{\vet})$. Indeed, by construction $\alpha F$ sends quasi-modifications to isomorphisms, so it has descent for those coverings. It remains to prove that $\alpha F$ has descent for every strict \'etale covering $\{\cU_i\to \cU\}_{i\in I}$. By a standard reduction, we may assume $I=\{1,2\}$.
Using $F\in \Shv((X,\tX)_{\set})$, for any quasi-modification $\cV\to \cU$, we have 
\[ F(\cV) = F(\cV\times_\cU\cU_1) \times_{F(\cV\times_{\cU} \cU_{12})} F(\cV\times_\cU\cU_2) , \]
where $\cU_{12} = \cU_1 \times_\cU \cU_2$.
Taking the colimit over $\cV$ and using Lemma \ref{lem0:refine-vet} and the fact that filtered colimits commute with fiber products, we get 
\[ \alpha F(\cU) = \alpha F(\cU_1) \times_{\alpha F(\cU_{12})} \alpha F(\cU_2).\]
   \medbreak
   
   Thus, we get a functor 
   $\alpha: \Shv((X,\tX)_{\set})\to \Shv((X,\tX)_{\vet})$.
   It suffices to show that it is a left adjoint of the inclusion $i: \Shv((X,\tX)_{\vet})\to \Shv((X,\tX)_{\set})$.
   By construction we have a natural transformation ${\rm id}\to i\alpha$ and by Lemma \ref{lem:blow-up-inv} also
   a natural isomorphism $\alpha i\xrightarrow{\simeq} {\rm id}$. 
   The statement thus follows from \cite[IV, \S1, Theorem 2(v)]{MacLane}.
\end{proof}
\end{lemma}

\begin{lemma}\label{lem2:refine-vet}
Let $I$ be a sheaf of abelian groups on $(X,\tX)_\set$.
If $I$ is flabby, that is
$H^i(\cV_{\set},F)=0$, for any $i>0$ and $\cV\in (X,\tX)_\tau$, then $a_{\vet}I$ is flabby as a $\vet$ sheaf.
\end{lemma}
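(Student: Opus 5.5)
The strategy is to show that \v{C}ech cohomology of $a_{\vet}I$ vanishes for all $\vet$ coverings, and then apply the standard criterion (Cartan's lemma, \cite[\href{https://stacks.math.columbia.edu/tag/03F9}{Tag 03F9}]{stacks-project}): a sheaf whose higher \v{C}ech cohomology vanishes for every covering of every object has vanishing higher sheaf cohomology on every object. (In the hypothesis, $F$ is a typo for $I$.)

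First I reduce to special coverings. By Lemma \ref{lem0:refine-vet}, any composition of a strict \'etale covering followed by a quasi-modification can be refined so that the quasi-modification comes first. Hence every $\vet$ covering of $\cU$ is refined by a covering of the form $\{\cT\times_{\cY}\cU_i\}$, where $\cT\to\cU$ is a quasi-modification and $\{\cU_i\to\cU\}$ is strict \'etale. I would not rely on cofinality alone, since \v{C}ech cohomology need not be invariant under refinement. Instead I will show, via Lemma \ref{lem:blow-up-inv}, that $\vet$ cohomology is insensitive to quasi-modifications, and then deal with strict \'etale coverings separately.

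For a strict \'etale covering $\{\cU_i\to\cU\}$, Lemma \ref{lem:refine-vet} computes the \v{C}ech complex of $a_{\vet}I$ as
\[
\colim_{\cV\to\cU} \check{C}^\bullet(\{\cV\times_\cU\cU_i\}, I).
\]
Here I use two facts: quasi-modifications of $\cU$ pull back to the fibre products $\cU_{i_0\cdots i_n}$, and Lemma \ref{lem0:refine-vet} shows these pullbacks are cofinal among quasi-modifications of the $\cU_{i_0\cdots i_n}$, so the colimit can be taken uniformly over $\cV\to\cU$. Each term of the colimit is the \v{C}ech complex of the flabby $\set$-sheaf $I$ for a $\set$ covering, so it is acyclic in positive degrees. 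Since filtered colimits are exact, $\check{H}^q(\{\cU_i\},a_{\vet}I)=0$ for $q>0$.

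For a single quasi-modification $\cT\to\cU$, the fibre products $\cT\times_\cU\cdots\times_\cU\cT$ are again quasi-modifications of $\cU$. By Lemma \ref{lem:blow-up-inv}, $a_{\vet}I$ takes the same value on each of them, so the \v{C}ech complex is the constant cosimplicial group, which is acyclic in positive degrees. For the composite coverings $\{\cT\times_\cU\cU_i\}$, I first apply Lemma \ref{lem:blow-up-inv} to replace $(X,\tX)$ by $(X,\tT)$ and then apply the strict \'etale case.

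The main obstacle is the passage from these special coverings to arbitrary ones; the cleanest route avoids \v{C}ech cohomology of general coverings altogether. Let $\alpha=a_{\vet}$ as in Lemma \ref{lem:refine-vet}; it is exact, being a filtered colimit. Embed $\alpha I\hookrightarrow J$ with $J$ an injective $\vet$-sheaf. Its restriction $iJ$ to the $\set$-site is again injective, since $i$ has the exact left adjoint $\alpha$, and it satisfies $\alpha iJ=J$. Let $Q=J/\alpha I$, computed in $\vet$-sheaves. Sections of $\alpha(\cdot)$ are filtered colimits of $\set$-sections over the $\cV$, and $I$ is flabby, so
\[
\alpha I(\cU)\to J(\cU)\to Q(\cU)\to 0
\]
is exact. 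This gives $H^1(\cU_{\vet},\alpha I)=0$. Moreover, $Q=\alpha(iJ/I)$, with $iJ/I$ again a flabby $\set$-sheaf because $I$ and $iJ$ are. Induction by dimension shifting then yields $H^q(\cU_{\vet},\alpha I)=0$ for all $q>0$. The delicate point is justifying $Q=\alpha(iJ/I)$ and surjectivity on sections; both follow from exactness of $\alpha$ together with the explicit colimit formula in Lemma \ref{lem:refine-vet}.
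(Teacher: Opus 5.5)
\textbf{There is a genuine gap in your final paragraph, which is the part that carries the proof.}

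You treat $I$ as a subsheaf of $iJ$: you write $Q=\alpha(iJ/I)$ and deduce flabbiness of $iJ/I$ from that of $I$ and $iJ$. But the map $I\to iJ$ adjoint to $\alpha I\hookrightarrow J$ factors through the unit $I\to i\alpha I$, sending $s$ to its class in $\colim_{\cV\to\cU}I(\cV)$. This unit need not be injective, because restricting a $\set$-sheaf along a quasi-modification can kill sections. Here is an example.
\begin{itemize}
\item Take the $\set$-sheaf $G\colon\cU\mapsto\Gamma(\tU,\cO_{\tU})$ and the quasi-modification $(U,\ol U)\to(U,\tU)$, where $\ol U$ is the closure of $U$ in $\tU$.
\item The map $\cO(\tU)\to\cO(\ol U)$ has a kernel whenever $U$ is not schematically dense in $\tU$. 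This happens, e.g., for $U=X$ inside $\tU=\tX\sqcup\tX$, embedded in the first copy.
\item If $G\subset I$ with $I$ injective, hence flabby, then naturality of the unit and left exactness of $i\alpha$ show that $I\to i\alpha I$ is not injective either.
\end{itemize}

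Now put $K=\Ker(I\to iJ)$. You still have $Q=\alpha\bigl(iJ/(I/K)\bigr)$, since $\alpha K=\Ker(\alpha I\to J)=0$. However, both the surjectivity of $J(\cU)\to Q(\cU)$ and the flabbiness of $iJ/(I/K)$ now require $I/K$ to be $\set$-flabby. For $q\ge 1$ you only know $H^q(\cV_\set,I/K)\cong H^{q+1}(\cV_\set,K)$, which has no reason to vanish. Showing that it dies in $\colim_{\cV\to\cU}$ for a sheaf with $\alpha K=0$ is essentially the content of Lemma \ref{lem:etaleCoh}, and that lemma rests on the statement you are proving.

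\textbf{Repair using your own second paragraph.} Your strict \'etale computation shows that $i\alpha I$ has vanishing higher \v{C}ech cohomology for every strict \'etale covering. By Cartan's criterion on $(X,\tX)_\set$, where all coverings are strict \'etale, $i\alpha I$ is therefore $\set$-flabby. Replace $I$ by $I'=i\alpha I$. This satisfies $\alpha I'=\alpha I$, and $I'\hookrightarrow iJ$ by left exactness of $i$. Your dimension shifting then goes through, with the induction hypothesis applied to the $\set$-flabby sheaf $iJ/I'$. This would be a genuinely different route from the paper's, since it avoids \v{C}ech complexes of non-strict-\'etale $\vet$ coverings.

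\textbf{Your reason for abandoning the \v{C}ech route was mistaken.} Cartan's criterion only needs a cofinal family of coverings with vanishing higher \v{C}ech cohomology. The paper's proof is exactly that route: Milne's criterion, the strict \'etale computation via Lemmas \ref{lem:refine-vet} and \ref{lem0:refine-vet}, and $a_\vet I(\cU)\cong a_\vet I(\cV)$ for quasi-modifications $\cV\to\cU$.

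The cofinal family is not the one you name, though. Coverings $\{\cT\times_\cU\cU_i\}$ pulled back from $\cU$ are not cofinal. For instance, take $U=\A^2\setminus\{0\}$ and the two standard charts of the blow-up of $\A^2$ at the origin; this covering admits no such refinement. The right family is a quasi-modification $\cT\to\cU$ followed by an arbitrary strict \'etale covering $\{\cW_j\to\cT\}$. Its \v{C}ech complex for $a_\vet I$ agrees with that of $\{\cW_j\to\cT\}$, because $\cW_{j_0}\times_\cT\cdots\times_\cT\cW_{j_p}\to\cW_{j_0}\times_\cU\cdots\times_\cU\cW_{j_p}$ is a quasi-modification.
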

\begin{proof}
By \cite[III, Proposition 2.12]{MilneEtCoh}, $I$ is flabby as a $\set$ sheaf if and only if  
for any $\cU\in (X,\tX)_\tau$ and a strict \'etale covering $\cU'\to \cU$,
the \v Cech complex\[
0\to I(\cU)\to I(\cU')\to I(\cU'\times_{\cU} \cU')\to \cdots  
\]
is exact. This implies that for any modification $\cV\to \cU$, the \v Cech complex\[
0\to I(\cV)\to I(\cV\times_{\cU} \cU')\to I(\cV\times_{\cU} \cU'\times_{\cU} \cU')\to \cdots  
\]
is also exact. Noting that filtered colimits are exact, Lemma \ref{lem:refine-vet} together with Lemma \ref{lem0:refine-vet}
implies
\[
0\to a_{\vet}I(\cU)\to a_{\vet}I(\cU')\to a_{\vet}I(\cU'\times_{\cU} \cU')\to \cdots  
\]
is exact. Noting $a_{\vet}I(\cU)\simeq a_{\vet}I(\cV)$ for any modification $\cV\to \cU$,
this implies that $a_{\vet}I$ is flabby on $(X,\tX)_{\vet}$ again by \cite[III, Proposition 2.12]{MilneEtCoh}. 
\end{proof}

\begin{defn}\label{defn:lambda}
    For $\cU=(U,\tU)\in (X,\tX)_{\vet}$, let $\lambda^{\cU}\colon (U,\tU)_{\vet}\to \tU_{\et}$ be the morphism of sites defined by the functor 
    \[ \tU_{\et} \to (U,\tU)_{\vet}, \quad \tV\mapsto (\tV\times_{\tU} U,\tV).\]
     It is clear by construction 
     \[H^q((U,\tU)_{\set},F)=H^q(\tU_{\et},\lambda^{\cU}_*F).\]
\end{defn}
We can now compare the $\vet$ cohomology with the \'etale cohomologies of the sheaves $\lambda_*^{\cU}F$. This result should be compared with \cite[Corollary 5.7]{Huebner2025}, as well as \cite[Theorem 5.1.2]{BPO} and \cite[Theorem 2]{MotModulusI}, where analogous results were obtained.
\begin{lemma}\label{lem:etaleCoh}
For a sheaf of abelian groups  $F$ on $(X,\tX)_{\vet}$ and $\cU=(U,\tU)$, there is a natural isomorphism
\[ H^i(\cU_{\vet},F)=\varinjlim_{\cV\to \cU} H^i(\tV_{\et},\lambda^{\cV}_* F) = \varinjlim_{\cW\to \cU} H^i(\tW_{\et},\lambda^{\cW}_* F),\]
where the first colimit runs along all quasi-modifications $\cV\to \cU$ with $\cV=(V,\tV)$ and $\tV$ 
integrally closed in $V$, and the second colimit runs along all admissible blow-ups $\cW\to \cU$.
\end{lemma}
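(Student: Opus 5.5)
We compare $\vet$ cohomology with $\set$ cohomology via the morphism of sites $\mu\colon (X,\tX)_{\vet}\to (X,\tX)_{\set}$ of \eqref{morphism-tameet}, using Lemmas \ref{lem:refine-vet} and \ref{lem2:refine-vet}, and reduce to étale cohomology through Definition \ref{defn:lambda}.

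\emph{Step 1: a flabby resolution.} Regard $F$ as a $\set$ sheaf $\mu_*F$ and choose an injective resolution $\mu_*F\to I^\bullet$ in $\Shv((X,\tX)_{\set})$. Injective sheaves are flabby, so Lemma \ref{lem2:refine-vet} shows that each $a_{\vet}I^n$ is flabby as a $\vet$ sheaf. The functor $a_{\vet}$ is exact, since it is sheafification from a coarser to a finer topology, i.e.\ the exact inverse image $\mu^*$. Moreover $a_{\vet}\mu_*F\cong F$: by Lemma \ref{lem:blow-up-inv}, $F$ already sends quasi-modifications to isomorphisms, so the colimit formula of Lemma \ref{lem:refine-vet} returns $F$. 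Hence $F\to a_{\vet}I^\bullet$ is a flabby resolution, and
\[H^i(\cU_{\vet},F)=H^i\bigl(a_{\vet}I^\bullet(\cU)\bigr).\]

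\emph{Step 2: computing the sections.} By Lemma \ref{lem:refine-vet},
\[a_{\vet}I^\bullet(\cU)=\colim_{\cV\to\cU} I^\bullet(\cV),\]
where the colimit runs over quasi-modifications $\cV\to\cU$ with $\tV$ integrally closed in $V$, or equivalently over admissible blow-ups. These index categories are cofiltered; this uses fiber products of quasi-modifications together with integral closure, and the Raynaud--Gruson domination already invoked in Lemma \ref{lem:refine-vet}. Filtered colimits are exact, so
\[H^i(\cU_{\vet},F)=\colim_{\cV} H^i\bigl(I^\bullet(\cV)\bigr)=\colim_{\cV} H^i(\cV_{\set},\mu_*F).\]
The second equality holds because the restriction of an injective $\set$ sheaf to the localized site $(X,\tX)_{\set}/\cV$ is still injective, so $I^\bullet(\cV)$ computes $H^i(\cV_{\set},\mu_*F)$. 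Finally, Definition \ref{defn:lambda} identifies $H^i(\cV_{\set},\mu_*F)$ with $H^i(\tV_{\et},\lambda^{\cV}_*F)$, which gives both descriptions in the statement.

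\emph{Main obstacle: naturality of the transition maps.} One must check that the transition maps in the colimit agree with the pullback maps $H^i(\tV_{\et},\lambda^{\cV}_*F)\to H^i(\tV'_{\et},\lambda^{\cV'}_*F)$ along $\tV'\to\tV$. This comes down to the identification $\cV_{\set}\simeq$ (sites over $\tV_{\et}$) being compatible with the base change functors.

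A second point needs care: the strict étale site of $\cV$ must really match $\tV_{\et}$, even though objects of $(X,\tX)_\tau$ over $\cV$ need not be strict étale. I would handle this by noting that $\set$ covers are generated only by strict étale maps, so $H^i(\cV_{\set},-)$ is computed by Čech cohomology of strict étale covers. This is consistent with the "clear by construction" claim in Definition \ref{defn:lambda}, which I take as given.
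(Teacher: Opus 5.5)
Your proposal is correct and is essentially the paper's proof: resolve the restriction of $F$ to $(X,\tX)_{\set}$ by flabby (you take injective) sheaves, apply Lemma~\ref{lem2:refine-vet} to get a flabby resolution $F\to a_{\vet}I^\bullet$, rewrite $a_{\vet}I^\bullet(\cU)$ as a filtered colimit via Lemma~\ref{lem:refine-vet}, and conclude by exactness of filtered colimits together with Definition~\ref{defn:lambda}. The extra checks you supply are exactly the points the paper leaves implicit: exactness of $a_{\vet}$, the isomorphism $a_{\vet}\mu_*F\cong F$ via Lemma~\ref{lem:blow-up-inv}, and cofilteredness of the index categories.
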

\begin{proof}
This is very similar to \cite[Theorem 5.1.2]{BPO}. 
Let  $F_{|(X,\tX)_{\set}}\to I^\bullet$ be a flabby resolution of the restriction of $F$ to $(X,\tX)_{\set}$.  
By Lemma \ref{lem2:refine-vet}, this gives a flabby resolution  $F\to a_{\vet}I^{\bullet}$ on $(X,\tX)_{\vet}$. Therefore, by Lemma \ref{lem:refine-vet}\[
H^i(\cU_{\vet},F)= \pi_{-i} (a_{\vet}I^\bullet(\cU)) = \pi_{-i} (\colim_{\cV\to\cU}I^\bullet(\cV)) = \colim_{\cV\to\cU}H^i(\cV_{\set},F) = \colim_{\cV\to\cU} H^i(\tV_{\et},\lambda^{\cV}_*F).
\]
\end{proof}

\section{Acyclic objects}\label{sec:4}

In this section, we show the existence of acyclic pro-covers, Lemma \ref{lem:existence-acyclic}, which will be used in 
the comparison of the tame cohomology with \v Cech cohomology (see Lemma \ref{thm:main-cech}).

\begin{defn}
We let $\widetilde{(X,\tX)_{\tau}}$ be the full subcategory of $\SchXX$ consisting of affine pairs $(U,\tU)$ such that there exists a cofiltered system $\{(U_i,\tU_i)\}_{i\in I}$ in $(X,\tX)_{{\rm affine},\tau}$ such that $U=\lim U_i$ and $\tU=\lim \tU_i$ as schemes. Notice that in this case the map $U\to X$ is no longer in general \'etale (but rather, pro-\'etale), the 
map $U\to \tU$ is no longer in general a quasi-compact open immersion and the map $\tU\to \tX$ is no longer ift.

We also consider the full subcategory $\widetilde{(X,\tX)_{{\rm int}, \tau}}$ of $\widetilde{(X,\tX)_{\tau}}$ whose objects are limits of $(U_i,\tU_i)\in (X,\tX)_{{\rm int},\tau}$ (cf. Definition \ref{def;int site}).
\end{defn} 

\begin{rmk}\label{rmk;int}
For $\cY=(\Spec(A),\Spec(\tA))\in \widetilde{(X,\tX)_{{\rm int},\tau}}$, we observe that:
\begin{enumerate}[label=(\arabic*)]
\item\label{rmk;int2}
$\tA\to A$ is injective and integrally closed, as filtered colimits are exact and the integral closure commutes with filtered colimits.
\item\label{rmk;int3}
If $A=A_1\times A_2$ is a product of rings and $\tA_i$ are the integral closures of $\tA$ in $A_i$ for $i=1,2$, 
we have $\cY=\cY_1\sqcup\cY_2$ with $\cY_i=(\Spec(A_i),\Spec(\tA_i))$. 
\end{enumerate}
\end{rmk}

\begin{defn}\label{def;acyclic}
    We say that $(T,\tT)\in \widetilde{(X,\tX)_{\tau}}$ is \emph{v-\'etale (resp. tamely) acyclic} 
    if for every v-\'etale (resp. tame) covering $(V,\tV)\to (U,\tU)$ in $(X,\tX)_\tau$ and a map $\psi: (T,\tT)\to (U,\tU)$ in $\widetilde{(X,\tX)_{\tau}}$, there exists a map $(T,\tT)\to (V,\tV)$ which lifts $\psi$.
   \end{defn}

\begin{lemma}\label{lem:existence-acyclic}
    For every $\cY\in (X,\tX)_{\tau}$, there is $\cW\in \widetilde{(X,\tX)_\tau}$ v-\'etale (resp. tamely) acyclic such that $\cW \to \cY$ is a cofiltered limit of v-\'etale (resp. tame) coverings 
    $\cW_i\to \cY$ in $(X,\tX)_{{\rm affine},\tau}$ with $\cW_i\in (X,\tX)_{{\rm int},\tau}$.
    \end{lemma}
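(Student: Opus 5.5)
Fix the topology $\tau\in\{\vet, t\}$. The plan is the classical construction of strictly acyclic pro-objects in the style of Artin/Gabber (as in \cite[Section 11]{HS2020} or the construction of w-contractible covers). It proceeds by a transfinite, or rather countable, iteration of ``cover everything at once'' steps, with a cofinality argument at the end.

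\emph{Reduction to affine, integrally closed pairs.}
Given $\cY=(Y,\tY)$, first replace $\cY$ by a finite disjoint union of affine pairs $(U,\tU)$ with $U,\tU$ affine, using strict \'etale coverings by affines together with the finitary nature of the topologies (Remark \ref{rmk-def;XSt}\ref{rmk-def;XSt1}). Then replace each $\tU$ by the integral closure of $\tU$ in $U$. By Remark \ref{rmk1;modification}(3), this integral closure is a cofiltered limit of modifications, hence of v-\'etale and tame coverings. Each stage of this limit lies in $(X,\tX)_{{\rm affine},\tau}$, and after passing to further integral closures the stages lie in $(X,\tX)_{{\rm int},\tau}$. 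Note that taking integral closure of a covering again gives a covering, since $\Spa$ is unchanged by Remark \ref{rmk1;modification}(2). This provides a starting object $\cW^{(0)}\in\widetilde{(X,\tX)_{{\rm int},\tau}}$.

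\emph{One step of the iteration.}
Given $\cW^{(n)}=\lim_i \cW^{(n)}_i$, consider the set $E_n$ of all pairs $(\psi,\cV\to\cU)$, where $\cV\to\cU$ is an affine $\tau$-covering in $(X,\tX)_{{\rm affine},\tau}$ and $\psi:\cW^{(n)}\to\cU$. Up to isomorphism this is a set, because $\cU,\cV$ are ift and of finite presentation over the essentially small base. By Lemma \ref{lem:ift-fp-limits} together with Remark \ref{rmk:ift-limit-fp-etale}, each $\psi$ factors through some finite stage $\cW^{(n)}_i\to\cU$; this is where we use that the stages have $\tilde B_i$ integrally closed in $B_i$. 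Pull the covering back to $\cW^{(n)}_i$. For each finite subset $S\subset E_n$, take the fibre product over $\cW^{(n)}_i$ (for a common index $i$) of the pulled back coverings, and then its integral closure. Finite fibre products of coverings are coverings. The limit over $S$ and over $i$ defines $\cW^{(n+1)}\to\cW^{(n)}$, which is again a cofiltered limit of coverings of $\cY$ lying in $(X,\tX)_{{\rm int},\tau}$. By construction, every $\psi\in E_n$ lifts to $\cW^{(n+1)}\to\cV$.

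\emph{Passing to the limit.}
Set $\cW=\lim_n\cW^{(n)}$. To see that $\cW$ is acyclic, take a map $\psi:\cW\to\cU$ and a covering $\cV\to\cU$. Applying Lemma \ref{lem:ift-fp-limits} again, $\psi$ factors through some $\cW^{(n)}$, hence lies in $E_n$, and therefore lifts after $\cW^{(n+1)}$ and so after $\cW$. The map $\cW\to\cY$ is a cofiltered limit of the coverings $\cW^{(n)}_i\to\cY$ described above.

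\emph{Main obstacle.}
The delicate point is the factorization of maps out of a pro-object through a finite stage at the level of pairs. The open part $\cU$ is \'etale and hence finitely presented, but $\tU$ is only ift. This is exactly why every stage is kept integrally closed, so that Lemma \ref{lem:ift-fp-limits} applies. A second point to check is that taking integral closures of tame coverings does not destroy the covering property, which again follows from the invariance of $\Spa$.
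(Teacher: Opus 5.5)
Your proposal is correct and is essentially the paper's proof. Both run the same Bhatt--Scholze-style countable iteration that kills all coverings at each stage, and both deduce acyclicity of $\cW=\lim_n\cW^{(n)}$ from Lemma \ref{lem:ift-fp-limits}. The differences are minor. You index the coverings by maps $\psi$ out of the pro-object, whereas the paper takes all coverings of each finite stage $\cY_{\lambda_n}$ and so needs the factorization lemma only at the end. You also take integral closures at every stage; the paper leaves this implicit, but it is what makes Lemma \ref{lem:ift-fp-limits} applicable and gives $\cW_i\in(X,\tX)_{{\rm int},\tau}$.

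One small correction, which the paper shares: being ift does not bound the size of $\cO(\tU)$, since integral algebras can be arbitrarily large, so your justification that $E_n$ is a set fails. Restrict $E_n$ to coverings between pairs in $(X,\tX)_{{\rm int},\tau}$, where $\cO(\tU)\subseteq\cO(U)$ with $U$ \'etale over $X$, so these are essentially small. Nothing is lost, because $\cW$ is integrally closed and hence any $\psi\colon\cW\to\cU$ factors through $\cU^{\rm int}$.
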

    \begin{proof}
    We prove the lemma only for the tame topology. The proof for the v-\'etale topology is the same. 
        We can suppose $\cY:=(\Spec(A),\Spec(\tA))\in (X,\tX)_{\rm{affine},t}$.
        We use the same strategy of \cite[Lemma 2.2.7]{BS-proet} (see \cite[Proposition 7.12]{HS2020}). Let $I$ be the set of isomorphism classes of coverings $\cU\to\cY$ in $(X,\tX)_{\rm{affine},t}$. For each $i \in I$, pick a representative $(\Spec(B_i),\Spec(\tB_i))\to \cY$ and set
        \begin{equation}\label{eq1;lem:existence-acyclic}
           A_1 := \colim_{J\subset I \;\mathrm{finite}} \bigotimes_{j\in J}B_j\qquad \tA_1 := \colim_{J\subset I \;\mathrm{finite}} \bigotimes_{j\in J} \tB_j,
        \end{equation}
        where the tensor products are over $A$ and $\tA$, respectively.
      By construction, we can write 
      \[\cY_1:=(\Spec(A_1),\Spec(\tA_1))=\varprojlim_{\lambda_1\in \Lambda_1} \cY_{\lambda_1} \;\text{ with } \cY_{\lambda_1}=(\Spec(A_{\lambda_1}),\Spec(\tA_{\lambda_1}))\]
      as a cofiltered limit of coverings $\cY_{\lambda_1}\to \cY$ in $(X,\tX)_{\rm{affine},t}$ such that for every covering $\cU\to \cY$ in $(X,\tX)_t$, the map $\cY_1 \to\cY$ factors through $\cU$.
      For each $\lambda_1\in \Lambda_1$, let $I_{\lambda_1}$ be the set of isomorphism classes of coverings $\cU\to\cY_{\lambda_1}$ in $(X,\tX)_{\rm{affine},t}$ and apply the same construction as \eqref{eq1;lem:existence-acyclic} to $(\cY_{\lambda_1},I_{\lambda_1})$ instead of $(\cY,I)$ to get 
      $(A_{2,\lambda_1}, \tA_{2,\lambda_1})$ instead of $(A_1,\tA_1)$ and put $\cY_{2,\lambda_1}=(\Spec(A_{2,\lambda_1}), \Spec(\tA_{2,\lambda_1}))$.
      Then, for every covering $\cU\to \cY_{\lambda_1}$, the map $\cY_{2,\lambda_1}\to \cY_{\lambda_1}$ factors through $\cU$. Put
      \begin{equation}\label{eq2;lem:existence-acyclic}
           A_2 :=\colim_{J\subset \Lambda_1 \;\mathrm{finite}} \bigotimes_{\lambda_1\in J}(A_{2,\lambda_1}\otimes_{A_{\lambda_1}} A_1), \qquad  \tA_2 :=\colim_{J\subset \Lambda_1 \;\mathrm{finite}} \bigotimes_{\lambda_1\in J} (\tA_{2,\lambda_1}\otimes_{\tA_{\lambda_1}} \tA_1),
        \end{equation}
    where the tensor products are over $A_1$ and $\tA_1$, respectively.  
    Noting $\cY_{2,\lambda_1}\to \cY_{\lambda_1}$ and $\cY_1\to \cY$ are cofiltered limits of coverings,
    we can write 
      \[\cY_2:=(\Spec(A_2),\Spec(\tA_2))=
        \varprojlim_{\lambda_2\in \Lambda_2} \cY_{\lambda_2}\]
       as a cofiltered limit 
      of coverings $\cY_{\lambda_2}\to \cY$ in $(X,\tX)_{\rm{affine},t}$ such that for every $\lambda_1\in \Lambda_1$ and covering $\cU\to \cY_{\lambda_1}$ in $(X,\tX)_t$, the map 
      $\cY_2 \to\cY_1\to \cY_{\lambda_1}$ factors through $\cU$.
       Iterating the construction, we get a sequence in $\widetilde{(X,\tX)_\tau}$
       \[ \cdots\to \cY_3\to \cY_2\to \cY_1\to \cY\,\text{ with }\;
       \cY_n=\varprojlim_{\lambda_n\in \Lambda_n}  \cY_{\lambda_n}\]
       such that for every $\lambda_n\in \Lambda_n$,
      $\cY_{\lambda_n}\to \cY$ is a covering in $(X,\tX)_{\rm{affine},t}$ and for every covering $\cU\to \cY_{\lambda_n}$ in $(X,\tX)_t$, the map 
      $ \cY_{n+1}\to \cY_n\to \cY_{\lambda_n}$ factors through $\cU$.       
  Set $\cW=\varprojlim_n \cY_n\in \widetilde{(X,\tX)_\tau}$. By the construction, $\cW$ is a cofiltered limit of tame coverings of $\cY$. 
  It remains  to show that $\cW$ is tamely acyclic. Let $\cV\to \cU$ be a covering in $(X,\tX)_{\rm{affine},t}$ and
  $\phi:\cW\to \cU$ be a map in $\widetilde{(X,\tX)_\tau}$. 
  By Lemma \ref{lem:ift-fp-limits} (with $(f,\tf)$ induced by $\cU\to\cX$), $\phi$ factors through $\cY_{\lambda_n}$ for some $\lambda_n$. 
  Since $\cY_{\lambda_n}\times_\cU \cV\to \cY_{\lambda_n}$ is a covering in $(X,\tX)_t$, the map $\cY_{n+1}\to\cY_n\to \cY_{\lambda_n}$ factors through $\cY_{\lambda_n}\times_\cU \cV$ so that $\phi$ lifts to a map $\cW\to \cV$.
   This completes the proof.
    \end{proof}

\section{Fiber functors}\label{sec:5}

\def\Fib{\mathrm{Fib}}
\def\aff{\mathrm{aff}}

In this section, we characterize fiber functors of the topoi of the sheaves of sets on $(X,\tX)_{\vet}$ and $(X,\tX)_t$ (see Proposition \ref{prop:tamely-acyclic}). First, we recall the following.
\begin{defn} \label{defi:FF}
Let $(C,\gamma)$ be a site admitting finite limits.
Recall that a \emph{fiber functor} of a topos $\Sh(C,\gamma)$ of sheaves of sets, is  a functor $\phi: \Sh(C,\gamma) \to \Set$ which preserves colimits and finite limits. Let $\Fib(\Sh(C,\gamma))$ denote the category of fiber functors of $\Sh(C,\gamma)$.
\end{defn}

In what follows, let $\gamma$ denote either the v-\'etale topology or the tame topology on $(X,\tX)_\tau$.
By Lemma \ref{lem:reduction-affine}, there is an equivalence of categories of fiber functors
\[\Fib(\Sh((X,\tX)_{{\rm affine},\gamma})) \simeq \Fib(\Sh((X,\tX)_{\gamma})).\]
Hence, by \cite[Pro.7.13]{Joh77} and Remark \ref{rmk-def;XSt}(1), there is a bijection between fiber functors of 
$\Sh((X,\tX)_{\gamma})$ and cofiltered pro-objects 
\eq{Pbullet}{\cP_\bullet=\catprojlim {\lambda\in \Lambda} \cP_\lambda \;\text{ with }  \cP_\lambda=(P,\tP)\in (X,\tX)_{{\rm affine},\gamma}}
indexed by 
a cofiltered category $\Lambda$, which satisfies the $\gamma$-locality condition: For every $\gamma$-covering 
$\cV \rmapo{u} \cU$, the morphism of sets
\[
 \varinjlim_{\lambda\in \Lambda} \Hom_{(X,\tX)_\tau}(\cP_\lambda, \cV) \to \varinjlim_{\lambda\in \Lambda} \Hom_{(X,\tX)_\tau}(\cP_\lambda,  \cU)\]
 is surjective. By Lemma \ref{lem:ift-fp-limits}, the latter condition is equivalent to that
$\cT=\varprojlim_{\lambda\in \Lambda} \cP\in \widetilde{(X,\tX)_\tau}$ is $\gamma$-acyclic in the sense of Definition \ref{def;acyclic} and the corresponding fiber functor is given by 
\eq{phiT}{ \phi_{\cT}: \Sh((X,\tX)_{\gamma})\to\Set\;;\; F \to F(\cT):=\varinjlim_{\lambda}F(\cP_\lambda).}

\medbreak

\begin{prop}\label{prop:tamely-acyclic}
A pair $\cT=\lim_{i\in I} \cT_i\in \widetilde{(X,\tX)_{\tau}}$ is v-\'etale local (resp. tame local)  if and only if $\cT$ is a coproduct of objects of the form $(\Spec(S),\Spec(\tS))$ such that
$\tS$ is strictly henselian local and $S$ is henselian local
and that $\tS=S\times_{k}\cO_{v}$, where $k$ is the residue field of $S$ equipped with a valuation $v$ such that $(k,v)$ is strictly henselian (resp. $(k,v)$ is tamely closed), and $\cO_{v}\subseteq k$ is the valuation ring.
Moreover, in both cases we have $(T,\tT)\in \widetilde{(X,\tX)_{{\rm int},\tau}}$.
\end{prop}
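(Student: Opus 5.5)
The plan is to identify $\gamma$-local objects with $\gamma$-acyclic objects, using the discussion around \eqref{phiT}, and then prove both implications directly. Write $\cT=(T,\tT)$ with $T=\Spec A$ and $\tT=\Spec\tA$.

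\emph{Sufficiency.} Assume $\cT=(\Spec S,\Spec\tS)$ has the stated form. Let $\cV\to\cU$ be a $\gamma$-covering and $\psi\colon\cT\to\cU$ a map.
\begin{enumerate}
\item The closed point of $S$, together with $v$ and the morphism $\Spec\cO_v\to\Spec\tS\to\tU$, defines a point $(x,v,\epsilon_v)$ of $\Spa(U,\tU)$.
\item Since the covering is tame (resp.\ v-\'etale) over this point, there is $(y,w,\epsilon_w)\in\Spa(V,\tV)$ above it, with $w/v$ tame (resp.\ unramified).
\item Because $(k,v)$ is tamely closed (resp.\ strictly henselian), the extension $\kappa(y)/k$ with $w$ embeds into $(k,v)$ compatibly with $w$. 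This gives a $k$-point of $V$ over $\psi$.
\item Since $V\to U$ is \'etale near $x$ and $S$ is henselian local, this $k$-point lifts to a map $\Spec S\to V$.
\item The valuation criterion, together with the commutative square in Definition~\ref{def;XSt}(4), gives a compatible map $\Spec\cO_v\to\tV$.
\item The identity $\tS=S\times_k\cO_v$ glues these into a map $\Spec\tS\to\tV$, which is the required lift.
\end{enumerate}
Coproducts of acyclic objects are acyclic, since a map from a coproduct is determined on each factor.

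\emph{Necessity, integrality and decomposition.} Assume $\cT$ is acyclic.
\begin{enumerate}
\item Passing to the integral closure of $\tU_i$ in $U_i$ is a quasi-modification (Remark~\ref{rmk-def;XSt}). Lifting along it, together with Remark~\ref{rmk;int}, shows $\cT\in\widetilde{(X,\tX)_{{\rm int},\tau}}$, so $\tA\subset A$ is integrally closed.
\item Apply acyclicity to Zariski coverings $\tU=\tU_1\sqcup\tU_2$ and, more generally, to coverings of $\tU$ by two opens. Combined with the decomposition in Remark~\ref{rmk;int}\ref{rmk;int3}, this splits $\cT$ into a coproduct of pieces $(\Spec S,\Spec\tS)$ in which $\tS$ is local.
\item Strict \'etale coverings then make each $\tS$ strictly henselian, by the usual argument for stalks of the \'etale site.
\end{enumerate}

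\emph{Necessity, structure of $S$ (main obstacle).} For $f,g\in\tS$ generating the unit ideal on $\Spec S$, blow up $(f,g)$ at a finite level $\tU_i$. This is an admissible blow-up, hence a v-\'etale and tame covering, covered by the two charts $D_+(f)$ and $D_+(g)$. Acyclicity yields a lift of $\cT$ into one chart, so $g/f\in\tS$ or $f/g\in\tS$.

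From this valuation-type dichotomy I would deduce, in order:
\begin{enumerate}
\item $S$ is local, with maximal ideal $\fm_S\subset\tS$;
\item $S$ is a localization of $\tS$;
\item $\tS/\fm_S=\cO_v$ is a valuation ring of $k=S/\fm_S$;
\item $\tS=S\times_k\cO_v$, using that $\tS$ is integrally closed in $S$.
\end{enumerate}
I expect this to mirror the computation of local rings of $\Spa$ in Huber's theory, but with limits to control, and to be the hardest step. In particular, showing that $S$ is local rather than merely semilocal needs care.

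\emph{Necessity, henselianity and the residue field.}
\begin{enumerate}
\item $S$ is henselian because it is a localization of the henselian local ring $\tS$ at a prime containing no nonunits lying outside $\fm_S$. Concretely, $(\tS,\fm_S)$ is a henselian pair, and $S$ inherits the henselian property from it.
\item $(k,v)$ is strictly henselian because $\cO_v$ is a quotient of the strictly henselian ring $\tS$.
\item In the tame case, take a finite tame extension of $(k,v)$. Lift it to a finite \'etale extension of $S$, and spread it out, together with the integral closure, to a tame covering at some finite level. Acyclicity gives a section, so the extension is trivial. Hence $(k,v)$ is tamely closed.
\end{enumerate}
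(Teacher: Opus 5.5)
\textbf{The main gap: $S$ local and $\fm_S\subseteq\tS$.} Your sufficiency argument is essentially the paper's. For the v-\'etale case, check only the generating coverings (strict \'etale coverings and quasi-modifications), since the pointwise ``unramified'' description of arbitrary v-\'etale coverings is not established. In the necessity direction, your integral-closure, strict-henselianity and tame-closedness steps also match the paper. The genuine gap is the step you flag yourself: you never prove that $S$ is local with $\fm_S\subseteq\tS$. Your two-element dichotomy does not obviously give this, since you give no way to find two elements of $\tS$ separating two closed points of $T$.

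The paper argues as follows. Let $x_1\neq x_2$ be closed points of $T$.
\begin{enumerate}
\item Cover $T\setminus\{x_\nu\}$ by finitely many principal opens $D_{\tT}(g_{\nu,j})\subseteq T$ with $g_{\nu,j}\in\tS$. This is possible because $T$ is a quasi-compact open of $\tT$, so such opens form a basis of $T$; this is the precise content of ``$S$ is a localization of $\tS$''.
\item All the $g_{\nu,j}$ together generate the unit ideal of $S$. Descend them to some $\tS_\alpha$ and let $\mathfrak{I}_\alpha$ be the ideal they generate.
\item Blowing up $\mathfrak{I}_\alpha$ is an admissible blow-up. Its charts give the covering $\bigsqcup_{\nu,j}(\Spec S_\alpha[1/g_{\nu,j}],\Spec\tS_\alpha[\mathfrak{I}_\alpha/g_{\nu,j}])\to\cT_\alpha$.
\item Acyclicity and connectedness put $\cT$ into one chart, so some $g_{\nu,j}$ is a unit in $S$. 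This contradicts $x_\nu\notin D(g_{\nu,j})$.
\end{enumerate}
So you need blow-ups in ideals with arbitrarily many generators, not just two. Once $S$ is local with closed point $\fp$, you get $S=\tS_\fp$. Then $\fm_S=\fp S\subseteq\tS$ follows from your two-element dichotomy applied to $(x,h)$ with $x\in\fp$ and $h\in\tS\setminus\fp$: the chart where $x$ becomes invertible is impossible, so $x/h\in\tS$. Note that $\tS=S\times_k\cO_v$ then follows from $\fm_S\subseteq\tS$ alone; integral closedness of $\tS$ in $S$ is not what drives it.

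\textbf{Henselianity of $S$.} Your route differs from the paper's and works once $\fm_S\subseteq\tS$ is known, but not for the reason in your first sentence: a localization of a henselian local ring is in general not henselian. What works is your ``concretely''.
\begin{itemize}
\item $(\tS,\fm_S)$ is a henselian pair, since $\fm_S$ lies in the maximal ideal of the henselian local ring $\tS$.
\item Gabber's criterion for henselian pairs asks that $\fm_S$ lie in the Jacobson radical and involves only polynomials $T^n(T-1)+a_nT^n+\dots+a_0$ with $a_i\in\fm_S$, and roots in $1+\fm_S$.
\item Since $\fm_S\subseteq\tS$, these conditions transfer verbatim from $\tS$ to the local ring $(S,\fm_S)$.
\end{itemize}
This is shorter than the paper's argument, which follows H\"ubner--Schmidt: take the integral closure $\tB$ of $\tS$ in a finite connected $S$-algebra $B$ and prove $\Spec(B/\fp B)$ connected. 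But it rests on the inclusion $\fm_S\subseteq\tS$, which is exactly what is missing from your write-up.
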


\begin{remark}\label{rmk:Huberpair}
With the terminology from \cite[Definition 10.8]{HS2020}
the pair $(S,\tS)$ appearing in Proposition \ref{prop:tamely-acyclic} is a strictly henselian (resp. tamely henselian) local Huber pair.

The analogous result for local Huber pairs is \cite[Proposition 10.15]{HS2020}. Our context is  different in the following sense: on a discretely ringed adic space $\Spa(X,\tX)$, going ``local'' means looking at neighborhoods of a point $(x,v,\epsilon)$ for a \emph{fixed} valuation rign $\cO_v\subseteq k(x)$, while in the case of the site $(X,\tX)_t$, then one considers neighborhoods of a point $x\in X$ and their compactifications mapping to to $\tX$, which means that we need to consider \emph{all} the possible valuation rings $\cO_v\subseteq k(x)$ with a map $\Spec(\cO_v)\to \tX$. This was the same philosophy used in the construction of local modulus pairs of \cite[Corollary 3.5]{KelMiya}, therefore our proof will follow their arguments. For this reason, in order to keep the proof self-contained and more readable, we will write it in details.
\end{remark}
We need the following preliminary result:

\begin{lemma}\label{lem:limit}
Let $(Y,\tY)=\lim_{i\in I} (Y_i,\tY_i)$ in $\widetilde{(X,\tX)_{\tau}}$. 
    Let $(f,\tf)\colon (U,\tU)\to (Y,\tY)$ in $\widetilde{(X,\tX)_{\tau}}$ with $f$ an \'etale covering. Then there exists a cofiltered category $J$ and a system of maps
    $(f_{ij},\tf_{ij})\colon (U_{ij},\tU_{ij})\to (Y_i,\tY_i)$  indexed over $I\times J$ such that for all $(i,j)$, $(U_{ij},\tU_{ij})\in (X,\tX)_{{\rm int},\tau}$ and $f_{ij}$ is an \'etale covering, and $\lim_{(i,j)\in I\times J}(U_{ij},\tU_{ij})\to (Y,\tY)$ refines $(f,\tf)$.
Moreover, if $(f,\tf)$ is  a tame covering in the sense of Definition \ref{def;XSt}, then we find such system that $(f_{ij},\tf_{ij})$ are tame coverings for all $i,j$. 
    \begin{proof}
    By definition, $(U, \tU)$ is a limit of $(U_j,\tU_j)\in 
    (X,\tX)_{{\rm affine},t}$ with $j\in J$ a cofiltered set,
    Let $(U',\tU')=\lim(U_j,\tU_j^{\rm int})$. Then $(U',\tU')\to (Y,\tY)$ is a covering that refines $(U,\tU)$, so we can suppose that $(U_j,\tU_j)\in (X,\tX)_{{\rm int},t}$. 
    Fix $i\in I$. We need to show that 
    $(U,\tU)\to (Y,\tY)\to(X,\tX)$ factors through $(U_{j_i},\tU_{j_i})$,
    for some $j_{i}\in J$. 
    To this end, we may assume that $X$ and $\tX$ are affine, and Lemma \ref{lem:ift-fp-limits} gives such factorization. 
    In particular, we find a commutative diagram:
\[ \begin{tikzcd}
    U_{j_{i}}\ar[r,"u_{j_{i}}"]\ar[d,"f_{ij_{i}}"]&\tU_{j_{i}}\ar[d,"\tilde{f}_{ij_{i}}"]\\
        Y_{i}\ar[r,"g_{i}"]&\tY_{i},
    \end{tikzcd}
    \] 
    in which $f_{ij_{i}}$ is \'etale by Remark \ref{rmk:ift-limit-fp-etale} and $\tilde{f}_{ij_{i}}$ are ift, hence it gives a map $(f_{ij_{i}},\tf_{ij_{i}})$ in $(X,\tX)_t$. 
    Moreover, since all the indexing sets are cofiltered, for every $i\in I$,
    and every $j\in J$ as before, there exists $(U_{j'},\tU_{j'})$ that maps to both $(U_j,\tU_j)$ and $(U_{j_{i}},\tU_{j_{i}})$, hence we find a commutative diagram:
\[ 
    \begin{tikzcd}
    U_{j'}\ar[r,"u_{j'}"]\ar[d,"f_{ij'}"]&\tU_{j}\ar[d,"\tilde{f}_{ij'}"]\\
        Y_{i}\ar[r,"g_{i}"]&\tY_{i},
    \end{tikzcd}
    \]
    with $f_{ij'}$ \'etale (since $U_{j'}\to U_{j_{i}}$ and $f_{ij_{i}}$ are \'etale) and $\tf_{ij'}$ ift (since $\tU_{j'}\to \tU_{j_{i}}$ are ift by \cite[Proposition 2.2.5]{Temkin-insep-loc-uni}
     and $f_{ij_{i}}$ are \'etale), then up to replacing $J$ by a cofinal set we can suppose that for every $(U_j,\tU_j)$ there exist $i\in I$, 
    and a map $(U_j,\tU_j)\to (Y_{i},\tY_{i})$ as in the statement with $(U_j,\tU_j)\in (X,\tX)_{{\rm int},t}$.   
    
Now, assume that $(f,\tf)$ is a tame covering and prove that $(f_{ij},\tf_{ij})$ as constructed above
is a tame covering for a sufficiently large $i,j$.
 We proceed as the proof of  \cite[Theorem 4.6]{HS2020}.
Let $Z_i\subseteq \Spa(Y_i,\tY_i)$ be the set of triples $(y_i,w_i,\epsilon_{w_i})$ such that there is no $(x_i,v_i,\epsilon_{v_i})$ in $\Spa(U_{ij},\tU_{ij})$ tame over $(y_i,w_i,\epsilon_{w_i})$.
Since $(f,\tf)$ is tame, we have $\lim_i Z_i=\varnothing$.
By \cite[Cor.4.4]{Huebner2021}, $Z_i$ is closed, which implies that it is compact in the constructible topology by \cite[\href{https://stacks.math.columbia.edu/tag/08KQ}{Tag 0901}]{stacks-project} since $\Spa(Y_i,\tY_i)$ is spectral.
Since the inverse limit of nonempty compact spaces is nonempty, we must have $Z_{i}=\emptyset$ for a sufficiently large $i$, which completes the proof.
    \end{proof}
\end{lemma}

\noindent
{\it Proof of Proposition \ref{prop:tamely-acyclic}:}
    First of all, we observe that $T\to \tT$ is dense by Remark \ref{rmk-def;XSt}\ref{rmk-def;XSt2}, hence $T$ and $\tT$ have the same number of connected components.
    Let $\tT=\coprod_{i\in I}\tT_i$ and $T=\coprod_{i\in I} T_i$ be the decomposition into the connected components. We claim that $\cT=(T,\tT)$ is acyclic if and only if $\cT_i=(T_i,\tT_i)$ are acyclic for all $i$. 
    Indeed, assume that $\cT$ is acyclic and take coverings $\cV_i\to \cU_i$ in $(X,\tX)_{\tau}$ and maps $\phi_i: \cT_i\to \cU_i$ in $\widetilde{(X,\tX)_{\tau}}$ for $i\in I$.
    Fixing $i\in I$, it gives rise to a covering 
    $\cV:=\sqcup_{j\neq i} \cU_j\sqcup \cV_i \to \cU=\sqcup_{j\in I} \cU_j$ and
    a map $\phi:\cT \to \cU$ in an obvious way. By the assumption, $\phi$ factors through $\cV$ and the image of the map $\cT_i\to \cT\to \cV$  lands in $\cV_i$ since $T_i$ and $\tT_i$ are connected. Thus, $\phi_i$ factors through $\cV_i$ showing that $\cT_i$ is acyclic. 
    On the other hand, assume that $\cT_i$ are acyclic for all $i\in I$.
    Let $\cV\to \cU$ be a covering in $(X,\tX)_{\tau}$ and $\phi:\cT\to \cU$ be a map in $\widetilde{(X,\tX)_{\tau}}$.
   For each $i\in I$, the map $\cT_i\to\cT\to \cU$ factors through a map $\psi_i:\cT_i\to \cV$ since $\cT_i$ are acyclic.
   Then,  $\psi=\sqcup_{i\in I} \psi_i:\cT\to \cV$ gives a lift of $\phi$ showing that $\cT$ is acyclic. For the rest of the proof, we assume that $(T,\tT)$ is connected.
\medbreak

        $\Rightarrow$: Assume $\cT$ is v-\'etale local (resp. tame local). 
        Recall that every v-\'etale covering is also a tame covering.
    Let $\cT=(T,\tT)=(\Spec(S),\Spec(\tS))$ and write 
    \[\cT=\lim_{\alpha\in A}\cT_\alpha\;\text{ with }\cT_\alpha=(\Spec(S_\alpha),\Spec(\tS_\alpha))\in (X,\tX)_{{\rm affine},t}\]
    so $S=\colim_\alpha S_\alpha$ and $\tS=\colim_\alpha \tS_\alpha$.
    Let $(\tS)^{\rm int}$ (resp. $(\tS_\alpha)^{\rm int}$) be the integral closure of $\tS$ in $S$ (resp. $\tS_\alpha$ in $S_\alpha$).
 We have $(\tS)^{\rm int}=\colim_\alpha (\tS_\alpha)^{\rm int}$ so we have
 \[(\Spec(S),\Spec((\tS)^{\rm int}))=\lim_{\alpha\in A}(\cT_\alpha)^{\rm int}\]
 with  
 $\cT_\alpha^{\rm int}=(\Spec(S_\alpha),\Spec((\tS_\alpha)^{\rm int}))\in (X,\tX)_{{\rm int},t}$.
    Since $(\cT_\alpha)^{\rm int}\to \cT_\alpha$ is a quasi-modification and $\cT$ is acyclic, the projection
    $\cT \to \cT_\alpha$ factors through $(\cT_\alpha)^{\rm int}$, i.e.,
    $\tS_\alpha\to \tS$ factors through $(\tS_\alpha)^{\rm int}$. This implies $\tS=(\tS)^{\rm int}$ and $\cT=\lim_{\alpha\in A}(\cT_\alpha)^{\rm int}$ so we may assume $\tS_\alpha=(\tS_\alpha)^{\rm int}$ for all $\alpha\in A$.
    \medbreak
    
    Noting that the restriction of the v-\'etale topology on $\tX$ is finer than the \'etale topology, $\tS$ must be strictly henselian.
    \medbreak
    Next, we show that $S$ is a local ring (see \cite[Proposition 3.3, ``$\Rightarrow 2$"]{KelMiya})
    Let $x_1$ and $x_2$ be closed points in $T$. 
    Take open covers $\{\Spec(S[1/g_{\nu, j}])\}_{j\in J_{\nu}}$  of $T - \{x_{\nu}\}$, $\nu=1,2$.
     As $S$ is a localization of $\tS$ we may assume 
     \eq{prop:tamely-acyclic-eq1}{g_{\nu, j}\in \tS\setminus \tS\cap S^\times,\quad \text{for all }\nu,j.}
     If $x_1\neq x_2$, then the ideal $(g_{1,i},g_{2,j})_{i\in J_1,j\in J_2}$ of $\tS$ maps to the unit ideal in $S$. 
     As $T$ is quasi-compact, there exist finite subsets $K_\nu \subset J_\nu$ such that 
     \[\bigsqcup_{i\in K_1} \Spec (S[1/g_{1, i}] )\sqcup \bigsqcup_{j\in K_2} \Spec (S[1/g_{2, j}])\to T\]
     is an open covering. In particular the finitely generated ideal
     $\mathfrak{I}:=(g_{1, i}, g_{2, j})_{i\in K_1, j\in K_2}$ of $\tS$ maps to the unit ideal in $S$.  
     There exists $\alpha$ such that   $g_{1,i},g_{2,j}$ come from $g_{\alpha,1 ,i},g_{\alpha,2,j}\in \tS_{\alpha}$ and 
  the ideal $\mathfrak{I}_\alpha:=(g_{\alpha,1,i},g_{\alpha,2,j})_{i\in K_1, j\in K_2}$ of $\tS_\alpha$ maps to the unit ideal in $S_\alpha$.
Hence, we get the v-\'etale covering: \[
    \bigsqcup_{i\in K_1}\cU_{\alpha,1,i}\sqcup \bigsqcup_{j\in K_2}\cU_{\alpha,2,j}
   \to \cT_\alpha, \] 
   where
        \[ \cU_{\alpha,\nu,i}=(\Spec(S_\alpha[1/g_{\alpha,\nu,i}]),\Spec(\tS_\alpha[\tfrac{\mathfrak{I}_\alpha}{g_{\alpha,\nu,i}}])),\quad \text{for } \nu=1,2 \text{ and } i\in K_\nu.\]
     Since $\cT$ is acyclic, the projection $\cT\to \cT_\alpha$ factors through 
     $\cU_{\alpha,\nu, i}$ for  $\nu=1$ or $2$ and some $i\in K_\nu$, which implies 
     that $g_{1,i}$ or $g_{2,i}$ is a unit in $S$, contradicting \eqref{prop:tamely-acyclic-eq1}. 
     Therefore $x_1 = x_2$ and $S$ is local.
\medbreak
\def\Sa{S_\alpha}
\def\tSa{\tS_\alpha}
\def\cTa{\cT_\alpha}
\def\aa{a_\alpha}
\def\ba{b_\alpha}
\def\xa{x_\alpha}
\def\fa{f_\alpha}

Let $\fp\subseteq \tS$ be the prime ideal such that $\fp S$ is the maximal ideal of $S$.
We show that $\tS/\fp$ is a valuation ring (see \cite[Proposition 3.3, ``$\Rightarrow 3$"]{KelMiya}). Let $a,b\in \tS\setminus \fp$. Since $\fp S$ is maximal, $a$ and $b$ are invertible in $S$. There exists $\alpha\in A$ such that $a,b$ come from 
$\aa,\ba\in \tS_\alpha$.
Then,
 we have the v-\'etale covering $\cU_{\alpha,a}\sqcup \cU_{\alpha,b} \to \cTa$ with 
\[\cU_{\alpha,a}=(\Spec(\Sa),\Spec(\tSa\Bigl[\frac{\ba}{\aa}\Bigr])),\quad \cU_{\alpha,b}=(\Spec(\Sa),\Spec(\tSa\Bigl[\frac{\aa}{\ba}\Bigr])).\]
Since $\cT$ is acyclic and connected, the projection $\cT\to \cTa$ factors through 
$\cU_{\alpha,a}$ or $\cU_{\alpha,b}$, so $\tSa\to \tS$ factors through either 
$\tSa\Bigr[\ba/\aa\Bigr]$ or $\tSa\Bigl[\aa/\ba\Bigr]$.
Hence, either $b=ha$ or $a=hb$ for the image $h\in \tS$ of $\ba/\aa$ or $\aa/\ba$, which implies that $\tS/\fp$ is a valuation ring. 
\medbreak

Next, we show $\tS\simeq  S\times_{k(\fp)} \tS/\fp$, where $k(\fp)=S/\fp$ is the fraction field of $\tS/\fp$ (see \cite[Proposition 3.3, ``$\Rightarrow 4$"]{KelMiya}). 
Since $S$ is local and $\fp S$ is its maximal ideal, we have $S=\tS_{\fp}$. 
Then, it is enough to check that the map $\fp\to \fp S$ is an isomorphism. The map $\tS\to S$ is injective by Remark \ref{rmk;int}\ref{rmk;int2}, therefore $\fp\to \fp S$ is injective.
 Let $x/h$ in $\fp S$, with $x\in \fp$ and $h\in \tS\setminus \fp$. 
 There exists $\alpha\in A$ such that $x,h$ come from 
$\xa, h_{\alpha}\in \tS_\alpha$ and  $h_{\alpha}$ is a unit in $\Sa$.
  Then,
 we have the v-\'etale covering $\cU_{\alpha,f}\sqcup \cU_{\alpha,x}\to \cT$ with
 \[
 \cU_{\alpha,f}=(\Spec(\Sa),\Spec(\tSa\Bigl[\frac{\xa}{h_{\alpha}}\Bigr])),\quad
 \cU_{\alpha,x}= (\Spec(\Sa[1/\xa]),\Spec(\tSa\Bigl[\frac{h_{\alpha}}{\xa}\Bigr])).
\]
As before, this implies that $\tSa\to \tS$ factors through either 
$\tS[\xa/h_{\alpha}]$ or $\tS[h_{\alpha}/\xa]$. In the latter case, there is $y\in \tS$ such that $yx=h$, 
but this is impossible since $h\not\in \fp$. In the former case, there is $y\in \tS$ such that $y h=x$, hence  $x/h=y\in \fp$, which implies that $\fp\to \fp S$ is surjective.
\medbreak

Next, we show that $S$ is henselian (see \cite[Proposition 3.4]{KelMiya}). We follow the argument in \cite[Lemma 10.7]{HS2020}. 
Let $S\to B$ be finite with $\Spec(B)$ connected. Then, $B$ is semilocal, and to show that $S$ is henselian, it is enough to show that $B/\mathfrak{\fp}B$ is local. Since it is a finite algebra over the field $S/\mathfrak{\fp}S$, it is enough to check that $\Spec(B/\mathfrak{\fp}B)$ is connected. Let $\tB$ be the integral closure of $\tS$ in $B$.
We claim 
\begin{enumerate}[label=(\arabic*)]
    \item\label{claim0;prop:tamely-acyclic0} $\tB$ is local;
    \item\label{claim0;prop:tamely-acyclic1} $\fp B\subset \tB$;
    \item\label{claim0;prop:tamely-acyclic2} $\tB/\fp B$ is the integral closure of $\tS/\fp\tS$ in $B/\fp B$.
\end{enumerate}
Then $\tB/\fp B$ is local by \ref{claim0;prop:tamely-acyclic0}, \ref{claim0;prop:tamely-acyclic1} 
and hence $\Spec(\tB/\fp B)$ is connected. 
We conclude by \cite[\href{https://stacks.math.columbia.edu/tag/03GO}{Tag 03GO}]{stacks-project} that $\Spec(B/\fp B)$ is connected. Indeed, if $B/\fp B=R_1\times R_2$, then $\tB/\fp B = \tR_1\times \tR_2$, with $\tR_i$ the integral closure of $\tS/\fp$ in $R_i$.

Thus it remains to show the three claims above.
For \ref{claim0;prop:tamely-acyclic0} we note that $\tS\to\tB$ is integral and hence $\tB$ is a filtered union of its subrings 
$\tB_i$ finite over $\tS$. Since $\tB_i\to \tB\to B$ are injective  the maps 
$\Spec(B)\to \Spec(\tB)\to \Spec(\tB_i)$ have dense images.
Therefore, as $\Spec(B)$ is connected so are $\Spec(\tB)$ and $\Spec(\tB_i)$. Since $\tS$ is henselian, 
$\tB_i$ is local henselian for all $i$. Since the maps $\tB_i\to \tB_j$ are finite, they are local maps of henselian local rings.
Hence, $\tB$ is local.
\ref{claim0;prop:tamely-acyclic1}. Take $y\in B$ and $m\in \fp$.
Since $B$ is integral over $S$, we can write $y^n=\sum_{i=0}^{n-1} a_iy^i$, for $a_i\in S$, so that 
\[
(my)^n=\sum_{i=0}^{n-1} a_im^{n-i} (my)^i.
\]
As $a_im^{n-i}\in \fp S = \fp\subseteq \tS$, for $i\in [0,n-1]$, we get $my\in \tB$.
\ref{claim0;prop:tamely-acyclic2}. If $\ol{x}\in B/\fp B$ and $\ol{g}\in \tS/\fp[T]$ is monic such that $\ol{g}(\ol{x})=0$, 
then let $x\in B$ and $g\in \tS[T]$ be lifts of $\ol{x}$ and $\ol{g}$. 
Then $g(x)\in \fp B  \subset \tB$, in particular $f(x)$ is integral over $\tS$, so there exists $h(T)\in \tS[T]$ such that $h(g(x))=0$, therefore $x$ is integral over $\tS$ hence it lies in $\tB$, hence $\ol{x}$ lies in $\tB/\fp \tB$.
Thus we have shown that $S$ is henselian.

\medbreak

To conclude the proof of the implication $\Rightarrow$, it is enough to further check that $(k,v)$ is strictly henselian in case 
of the v-\'etale-topology and tamely closed in case of the tame topology, where $k=S/\fp S$ and $v$ is the valuation associated to $\tS/\fp$. The former case holds since $\tS$ is strictly henselian. To show the latter case, take a finite extension $k'/k$ and a valuation $v'$ on $k'$ over $v$ such that $v'/v$ is tame.
We want to prove $k=k'$. 
Since it is separable, there exists $\ol{\omega}\in \cO_{v'}$ such that $k'=k[\ol{\omega}]$ and $\cO_{v'}$ is the integral closure of $\cO_v[\ol{\omega}]$. 
Let $\ol{p}\in \cO_v[T]$ be the monic minimal polynomial of $\ol{\omega}$ over $k$.
Since $S$ is henselian and $\tS=S\times_k \cO_v$, there is $p\in \tS[T]$ that maps to $\ol{p}$ in $k[T]$ giving a finite \'etale extension $S\hookrightarrow S'=S[T]/(p)$, with $S'$ henselian local with residue field $k'=S'/\fp S'$.
Let $\tS':=S'\times_{k'}\cO_{v'}$. 

\begin{claim}\label{claim1;prop:tamely-acyclic}
$\tS'$ is the integral closure of $\tS[T]/(p)$ in $S'$.
\end{claim}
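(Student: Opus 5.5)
We show the two inclusions $\tS[T]/(p)\subseteq \tS'$ and $\tS'\subseteq (\tS[T]/(p))^{\rm int}$, and then check that $\tS'$ is integrally closed in $S'$.

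\textbf{Step 1: $\tS'$ is a subring of $S'$ containing $\tS[T]/(p)$.} By definition
\[\tS'=S'\times_{k'}\cO_{v'}=\{s\in S' \mid \bar s\in \cO_{v'}\},\]
so $\tS'$ is a subring of $S'$. The image of $\tS$ in $k$ lies in $\cO_v$, because $\tS=S\times_k\cO_v$. The class $\omega$ of $T$ reduces to $\ol{\omega}\in\cO_{v'}$. Hence $\tS[\omega]\subseteq \tS'$.

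\textbf{Step 2: $\tS'$ is integral over $\tS[\omega]$.} Take $s\in\tS'$.

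\emph{Case $\bar s=0$.} Then $s$ lies in the maximal ideal $\fp S'=\fp\otimes_S S'$ of $S'$, since $S\to S'$ is finite \'etale and local. Hence $s=\sum m_i\omega^i$ with $m_i\in\fp S=\fp\subseteq\tS$, using that $S'=\bigoplus_i S\omega^i$ is free. So $s\in\tS[\omega]$.

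\emph{General case.} The element $\bar s\in\cO_{v'}$ is integral over $\cO_v[\ol{\omega}]$, by the choice of $\ol{\omega}$. Choose a monic $\bar q\in \cO_v[\ol{\omega}][Y]$ with $\bar q(\bar s)=0$, and lift it to a monic $q\in\tS[\omega][Y]$. This is possible because $\tS\to\cO_v$ is surjective: $\tS=S\times_k\cO_v$ and $S\to k$ is onto. Then $q(s)\in\tS'$ reduces to $0$, so $q(s)\in\fp S'\subseteq\tS[\omega]$ by the previous case. Thus $s$ satisfies the monic equation $q(Y)-q(s)=0$ over $\tS[\omega]$. This gives $\tS'\subseteq(\tS[\omega])^{\rm int}$.

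\textbf{Step 3: $\tS'$ is integrally closed in $S'$.} Let $x\in S'$ be integral over $\tS'$. Reducing modulo $\fp S'$, we get that $\bar x\in k'$ is integral over the image of $\tS'$, which is $\cO_{v'}$. Since $\cO_{v'}$ is a valuation ring, it is integrally closed in $k'$, so $\bar x\in\cO_{v'}$ and hence $x\in\tS'$.

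Combining the three steps, $\tS'=(\tS[T]/(p))^{\rm int}$, the integral closure of $\tS[T]/(p)$ in $S'$.

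\textbf{Main obstacle.} The step needing the most care is the identification $\fp S'\subseteq \tS[\omega]$ in Step 2. It relies on $S'$ being free over $S$ with basis $1,\omega,\dots,\omega^{n-1}$, which holds because $p$ is monic of degree $n$. It also uses the equality $\fp S=\fp$ established earlier, from $\tS\simeq S\times_k\tS/\fp$.
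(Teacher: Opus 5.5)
Your proposal is correct and follows essentially the same argument as the paper. The paper also notes that $\tS'$ is integrally closed in $S'$, that the image of $T$ lies in $\tS'$, and that $\fp S'$ is contained in both $\tS[T]/(p)$ and $\tS'$; it then concludes integrality from the fact that modulo $\fp S'$ the map is the integral extension $\cO_v[\ol{\omega}]\subseteq\cO_{v'}$. You additionally spell out two steps the paper leaves implicit: the inclusion $\fp S'\subseteq\tS[\omega]$ via the free basis, and the lifting of monic equations.
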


Indeed, let $R=\tS[T]/(p)$. Note that $\tS'$ is integrally closed in $S'$ by definition. 
Furthermore, the image of $T$ in $S'$ lies in $\tS'$ since its image $\ol{\omega}$ in $k'=S'/\fp S'$ lies in $\cO_{v'}$.
Thus the natural map $R\to S'$ factors via $\tS'$ and it remains to show that $\tS'$ is integral over $R$.
To this end we observe that $\fp S'$ lies in $R$ and in $\tS'$ and that the induced map on the quotients 
$R/\fp S'\to \tS'/\fp S'$ is the integral extension $\sO_v[\ol{\omega}]\inj \sO_{v'}$. 
Hence $\tS'$ is integral over $R$.
  
\medbreak
Recall $\cT=(\Spec(S),\Spec(\tS))=\lim_\alpha \cTa$ with
$\cTa=(\Spec(\Sa),\Spec(\tSa))$.
We show $\cT':=(\Spec(S'),\Spec(\tS'))\in \widetilde{(X,\tX)_\tau}$.
Since $\tS=\colim \tS_\alpha$, there exists $\alpha_0\in A$ and $p_{\alpha_0}\in \tS_{\alpha_0}[T]$ mapping to $p$. Letting $p_\alpha$ be the image of $p_{\alpha_0}$ in $\tS_\alpha[T]$, we have $S'=\colim_{\alpha\geq \alpha_0} \Sa'$ with $\Sa'=S_\alpha[T]/(p_\alpha)$.
By construction, $\Sa'=S'_{\alpha_0}\otimes_{S_{\alpha_0}} \Sa$.
By \cite[\href{https://stacks.math.columbia.edu/tag/07RP}{Tag 01SR}]{stacks-project}, $S_\alpha\to \Sa'$ is \'etale for $\alpha \gg \alpha_0$, 
so $\Spec(\tSa')$ is \'etale over $X$ since $\Spec(S_\alpha)$ is \'etale over $X$. 
Let $\tS_\alpha'$ be the integral closure of $\tS_\alpha[T]/(p_\alpha)$ in $\Sa'$. 
Since $\Spec(\tS_\alpha)$ is ift over $\tX$, $\Spec(\tS_\alpha')$ is ift over $\tX$. 
By Claim \ref{claim1;prop:tamely-acyclic}, we have
$\colim_{\alpha} \tS'_\alpha=\tS'$ noting that taking integral closures commutes with filtered colimits.
By construction, $\cTa':=(\Spec(\Sa'),\Spec(\tSa')) \in (X,\tX)_\tau$ and we have
$\cT'=\lim_\alpha \cTa'$.
By Lemma \ref{lem:limit}, $\cTa'\to \cTa$ is a tame covering for $\alpha\gg \alpha_0$. Since $\cT$ is tame acyclic, this implies that for $\alpha\gg \alpha_0$, the map $\Sa\to S$ factors through $\Sa'$, which implies $k=k'$ as desired.
\medbreak

$\Leftarrow$ Take $\cT=(\Spec(S),\Spec(\tS))$ with $\tS=S\times_k \cO_v$ as in Proposition \ref{prop:tamely-acyclic}. 
        We want to show that  for any covering $h: \cV=(V,\tV)\to\cU=(U,\tU)$ in $(X,\tX)_{\gamma}$ with $\gamma=\vet$ or $\gamma=t$,
    \eq{eq0;lifting}{\Hom_{\widetilde{(X,\tX)_\tau}}(\cT,\cV) \to \Hom_{\widetilde{(X,\tX)_\tau}}(\cT,\cU)}
    is surjective.
    Clearly, it suffices to consider the generator coverings so we may assume that $h$ is either a quasi-modification or strict \'etale covering in case $\gamma=\vet$ and 
    a tame covering in case $\gamma=t$.
  Write $T=\Spec(S)$, $\tT=\Spec(\tS)$ and take $(f,\tf):(T,\tT)\to (U,\tU)$.
    
    If $h$ is a strict \'etale covering so that $\tV\to \tU$ is an \'etale covering and 
    $V=\tV\times_{\tU} U$, $\tf$ admits a lift $\tg:\tT\to \tV$ since $\tS$ is strictly henselian. Moreover,  the composite $T\to \tT\rmapo{\tg} \tV$ and $f:T\to U$ induce
    $g:T \to V=  \tV\times_{\tU} U$ so that $(g,\tg)$ gives a lift of $(f,\tf)$.
    
Assume that $h$ is a quasi-modification.      
 Then, $f:T \to U$ lifts to $g: T\to V$ because $V \to U$ is an isomorphism. By the valuative criterion for 
 universal closedness, the composite 
$ \Spec(k)\hookrightarrow T\rmapo{g} V \to \tV$ extends to a morphism
 $q:\Spec(\cO_v)\to \tV$. These morphisms factor through some open affine of $\tV$, so $q$ and $T\rmapo{g} V\to \tV$ glue to give a morphism 
$\tg:\tT=\Spec(\tS) \to \tV$ since $\tT=\Spec(\cO_v \times_k S) = 
\Spec(\cO_v) \sqcup_{\Spec(k)} T$ is the categorical pushout in the category of affine schemes. Thus, we get a lifting $(g,\tg)$ of $(f,\tf)$. 

Finally, assume that $h$ is a tame covering and $(k,v)$ is tamely closed.      
Let $x\in U$ be the image of $\Spec(k)\hookrightarrow T \rmapo{f} U$ and let $v_x$ be the restriction of $v$ to the residue field $\kappa(x)$. Note, as $f$ is pro-\'etale, the field extension $k/\kappa(x)$ is separable algebraic.  
By  assumption, there is a point $y\in V$ and a valuation $v_y$ on its residue field $\kappa(y)$ 
extending $v_x$ such that the extension $(\kappa(y),v_y)/(\kappa(x),v_x)$ is tame. Since $(k,v)$ is tamely closed, 
there exists a map of valued fields $(\kappa(y),v_y)\to (k,v)$ which factors $(\kappa(x),v_x)\to (k,v)$.
    Since $V\to U$ is an \'etale covering and $S$ is henselian, this implies that 
    $f:T=\Spec(S)\to U$ admits a lift $g:T\to V$.
    By the same argument as in the case of quasi-modifications, $g$ extends to 
    $\tg:\tT\to \tV$ so that we get a lifting $(g,\tg)$ of $(f,\tf)$. This completes the proof.
\begin{rmk}\label{rmk;Deligne}
By Remark \ref{rmk-def;XSt}\ref{rmk-def;XSt1} and Deligne's completeness theorem, \cite[Proposition VI.9.0]{SGA42} or 
\cite[Theorem 7.44, Corollary 7.17]{Joh77}, the fiber functors $\phi_{\cT}$ from \eqref{phiT} for $\cT$ satisfying the condition of Proposition \ref{prop:tamely-acyclic} form a conservative family, i.e., a morphism $f$ in $\Sh((X,\tX)_\gamma)$ is an isomorphism if and only if $\phi_{\cT}(f)$ is an isomorphism of sets for all $\gamma$-acyclic $\cT$.
Equivalently, a morphism $\cV\to \cU$ in $(X,\tX)_\tau$ is a $\gamma$-covering if and only if 
\[\Hom_{\widetilde{(X,\tX)_\tau}}(\cT,\cV) \to \Hom_{\widetilde{(X,\tX)_\tau}}(\cT,\cU)\]
is surjective for all such $\cT$, \cite[Proposition IV.6.5(a)]{SGA41}.
\end{rmk}
\begin{rmk}\label{rmk:pair}
If the valuation ring $\cO_v$ has finite height, then there exists $a\in \cO_v$ such that $k=\cO_v[1/a]$ (see \cite[Proposition 0.6.7.2 and Exercise 0.6.4]{FK}). In this case, it is closer to the modulus pairs of \cite{KelMiya}.
\end{rmk}

    \begin{remark}\label{rmk:refinement-covering}
    Let $(\Spec(A),\Spec(A\times_k\cO_v))$ be $\vet$ acyclic. Let $(\Spec(B),\Spec(\tB))\to (\Spec(A),\Spec(A\times_{k}\cO_v))$ be a tame covering in the sense of Definition \ref{def;XSt}\ref{rmk-def;XSt3}.  Since $A$ is henselian local, we can refine it so that $B\to A$ is finite \'etale asssociated to a finite separable extension of $k\hookrightarrow k'$. By tameness, there exists a valuation $w$ on $k'$ extending $v$ such that $\tB\to B\to k'$ factors through $\cO_{w}$. This implies that the map $\tB\to B$ factors through $B\times_{k'}\cO_w$, therefore the covering $(\Spec(B),\Spec(\tB))$ is refined by $(\Spec(B),\Spec(B\times_k\cO_w))$. Moreover, by Lemma \ref{lem:tame-Gal-closure}\ref{lem:tame-Gal-clousre1} we can further refine it so that we have that $k'/k$ is Galois.
\end{remark}

\section{\v Cech comparison}\label{sec:6}

In this section, we prove a comparison theorem of tame cohomology with \v Cech cohomology (see Proposition \ref{thm:cech-comp}).

\medbreak

We fix a quasi-compact open immersion $X\to \tX$ of qcqs schemes.

\begin{defn}\label{def:x-local}
    Let $\cU = (U,\tU)\in (X,\tX)_t$, let $x\in U$ and let $U_x^h=\Spec(\cO_{U,x}^h)$ be the henselization at $x$. An $x$-local object over $\cU$ is $\cT:=(\Spec(B),\Spec(\tB))\in \widetilde{(X,\tX)_t}$ with a map $\cT\to \cU$ such that 
    \begin{enumerate}[label=(\arabic*)]
        \item\label{def:x-local1} $B$ is henselian local with residue field $k$, $\Spec(B)\to U$ factors through $U_x^h$ and the map $\cO_{U,x}^h\to B$ is local and ind-\'etale;
        \item\label{def:x-local2} There is a valuation $v$ on $k$ such that $(k,v)$ is tamely closed and $\Spec k\to U$ restricts to
          $\Spec \cO_v\to \tU$;
        \item\label{def:x-local3} $\tB=B\times_k \cO_v$.
    \end{enumerate}
\end{defn}

\begin{example}\label{ex:localization}
    Let $\cU = (U,\tU)\in (X,\tX)_t$, let $x\in U$ and $k(x)$ be its residue field. Let $(x,v,\epsilon_v)\in \Spa(U,\tU)$ and choose an extension $\ol{v}$ to a separable closure $\ol{k(x)}$ of $k(x)$ and $k(x)\hookrightarrow k(x)_v^t$ be the tame closure of $k(x)$ with respect to the valuation $v$. Let $\cO_{U,x}^h\to \cO_{U,x}^t$ be the ind-\'etale map corresponding to the field extension $k(x)\hookrightarrow k(x)^t_v$ and let $\cO_v^t\subseteq k(x)^t_v$ be the valuation ring of the restriction of $\ol{v}$ to $k(x)^t_v$.
Then, $\cU_{(x,v)} := (\Spec(\cO_{U,x}^t), \Spec(\cO_{U,x}^t\times_{k(x)_v^t} \cO_v^t))$ is an $x$-local object over $\cU$. 
Moreover, $\cU_{(x,v)}\to \cU$ is a cofiltered limit of maps $\cU_i\to \cU$ in $(X,\tX)_t$ which is tame over $(x,v,\epsilon_v)$.

\end{example}

The following Lemma  is a tame version of \cite[Theorem 3.4]{artin}. The statement and the proof are  close
to \cite[Corollary 11.7]{HS2020}, which is a version for discretely ringed  adic spaces.
\begin{lemma}\label{lem:product-x-local}
    Let $\cY =(Y,\tY)\in (X,\tX)_t$ such that $\tY$ satisfies the property that every finite set of points is contained in an affine open. For $1\leq i\leq n$, let $x_i\in Y$ and let $\cP_i$ be $x_i$-local objects over $\tY$. Then $\cT = \cP_1\times_{\cY}\ldots \times_{\cY} \cP_n\in \widetilde{(X,\tX)_\tau}$ is affine and is a disjoint union of $x$-local objects, where either $x=x_i$ for some $i$ or $x$ is a generization of 
    all $x_i$, i.e., the $x_i$ lie in the closure of $x$.
    \begin{proof}
        Let $\cP_i = (\Spec(A_i),\Spec(\tA_i))$ with $\tA_i=A_i\times_{k_i}V_i$, where $k_i$ is the residue field of $A_i$ and $V_i$  
        is a valuation field on $k_i$. Let $\Spec(V_i)\to \tY$ be the induced map on the valuation rings, and let $\ty_i$ be the respective images of the closed points. Let $\Spec(\tA)\subseteq \tY$ be an affine open containing $x_i$ and $\ty_i$ for all $i$. 
        As $\ty_i$ is a specialization of $x_i$, we have natural maps
        \[\tA\to \cO_{\tY, \ty_i}\to \cO_{\tY,x_i}\to A_i\to k_i,\]
        and  by the definition of $\ty_i$ its composition factors via $V_i\inj k_i$.
        Therefore all the maps $\Spec(V_i)\to \tY$ above factor through $\Spec(\tA)$, therefore 
        \[\Spec(\tA_1)\times_{\tY}\ldots \times_{\tY} \Spec(\tA_n) = \Spec(\tA_1\otimes_{\tA} \ldots\otimes_{\tA} \tA_n).\] 
        As $\Spec(\tA)\cap Y$ is quasi-affine we find an open  $\Spec(A)\subseteq \Spec(\tA)\cap Y$ which contains all the $x_i$.
        Then all maps $\Spec(A_i)\to Y$ factor through $\Spec(A)$ hence 
        \[
        \cP_1\times_{\cY}\ldots \times_{\cY} \cP_n = \cP_1\times_{(\Spec(A),\Spec(\tA))}\ldots \times_{(\Spec(A),\Spec(\tA))} \cP_n.
        \] 
        Therefore we are reduced to the case $\cY = (\Spec(A),\Spec(\tA))$, and now the general case follows from the case where $n=2$.
        
     Thus it suffices to consider the following situation. Let $\fp$ and $\fq$ be prime ideals of $A$.
     Let $\cP = (\Spec(B), \Spec(\tB))$ and $\cQ = (\Spec(C), \Spec(\tC))$ be $\fp$-local and $\fq$-local objects, respectively,
     with $\tB=B\times_{k_B} V_B$ and $\tC=C\times_{k_C} V_C$ as in Definition \ref{def:x-local}. 
     Denote by $\fm_B\subset B$ and $\fm_C\subset C$ the maximal ideals.
    By \cite[Theorem 6.3 and Theorem 6.4]{HS2020}, $B\otimes_A C$ is a product of henselian local $A$-algebras and the following holds:
let $D$ be a factor of $B\otimes_A C$ and denote by $\fm$ its maximal ideal and  $L=D/\fm$ its residue field.
\begin{enumerate}[label=(\arabic*)]
    \item\label{enum:product-x-local1} If the maps $B\to D$ and $C\to D$ are not local,  then $L$ is separably closed;
    \item\label{enum:product-x-local2} if $\varphi: B\to D$ is local, then 
    the residue field extension $k_B\to L$ is a separable algebraic extension.
\end{enumerate}
In case  \ref{enum:product-x-local1} the natural map $\tB\otimes_{\tA}\tC\to D$ is surjective. 
Indeed, we have  $\fm_B\subset \tB$ and $\fm_C\subset \tC$ 
and as $B\to D$ and $C\to D$ are not local we have $\fm_B\cdot \fm_C\cdot D=D$.
Thus $D$ is {integral} over $\tB\otimes_{\tA} \tC$ and is strictly henselian {by \cite[Th.3.4(ii)]{artin}.} 
Hence $(D,D)$ is an $\fr$-local object, for $\fr=\fm_D\cap A\subset\fp,\fq$, where we consider the trivial valuation on  $k_D$,
and $(\Spec (D), \Spec (D))$ is a component of $\cP\times_{\cY} \cQ$.

We consider case \ref{enum:product-x-local2}. 
Denote by $\fm$ its maximal ideal of $D$ and by $L=D/\fm$ its residue field.
Let $w$ be the unique valuation on $L$ that extends the valuation $v$ on $k_B$.
Thus $(L,w)$ is a henselian valuation field and its valuation ring $\cO_w$ is equal 
to the integral closure of $V_B$ in $L$, see, e.g., \cite[VI, \S 8, Proposition 6]{BourbakiCA}. 
As $(k_B,v)$ is tamely closed so  is $(L,w)$. 
Let $\tD$ be the integral closure of $\tB\otimes_{\tA} \tC$ in $D$, 
and let $W$ be the image of the map $\tD\to D\to L$, so that we have the following commutative diagram
\begin{equation}\label{eq:cd-integral-closure}
    \begin{tikzcd}
    \tB\ar[r]\ar[d]&\tD\ar[r,hook]\ar[d] &D\ar[d]\\
    V_B\ar[r]&W\ar[r,hook]&L.
\end{tikzcd}
\end{equation}
We claim
\begin{equation}\label{eq:product-x-local1}
\fm \subset \tD.
\end{equation}
Assuming \eqref{eq:product-x-local1} we directly get $\tD=D\times_L W$.
Moreover $W$ contains $\cO_w$. Indeed,
if $a\in L$ is integral over $V_B$  and $\tilde{a}\in D$ is a lift of $a$, 
then we find a monic polynomial $f\in \tB[X]$ with $f(\tilde{a})\in \fm$ which by the claim 
is integral over $\tB\otimes_{\tA} \tC$, hence so is $\tilde{a}$, hence $a\in W$.
Thus $W$ is a henselian valuation ring by \cite[Lemma 11.4]{HS2020} 
and its valuation $w'$ is a generization of $w$, therefore $(L,w')$ is tamely closed. 
Hence $(\Spec D, \Spec \tD)$ is a $x_1$-local object over $\cY$.  
It remains to prove the claim \eqref{eq:product-x-local1}.

Let $m\in\fm$. As $\varphi: B\to D$ is integral, we find 
a monic polynomial $f(X)=X^n+a_1X^{n-1} +\ldots a_n$ in $B[X]$ such that $f^\phi(m)=0$, 
where $f^{\phi}=X^n+\phi(a_1)X^{n-1} +\ldots \phi(a_n)\in D[X]$.
Denote by $\bar{f}\in k_B[X]$ the reduction of $f$ modulo the maximal ideal $\fm_B$.
As $0=f^{\varphi}(m)\equiv \varphi(a_n)$ mod $\fm$,
we have $a_n\in \varphi^{-1}(\fm)=\fm_B$. Thus  
\[\bar{f}= X^e \cdot \bar{g},\]
for some $e\ge 1$ and $g\in k_B[X]$ monic with $g(0)\neq 0$. 
As $B$ is henselian, there exist monic polynomials $h$, $g\in B[X]$ with 
\[f= hg,  \quad h\equiv X^e \text{ mod }\fm_B, \quad g\equiv \bar{g} \text{ mod } \fm_B.\]
It follows that the constant term of $g^\varphi$ is a unit in $D$ and hence so is $g^\varphi(m)$. 
Thus $h^\varphi(m)=0$ in $D$. As $h\in X^e+\fm_B[X]\subset \tB[X]$ we find that $m$ is integral over $\tB$,
hence $m\in \tD$. This yields  claim \eqref{eq:product-x-local1} and completes the proof 
of the lemma.
\end{proof}
\end{lemma}
\def\cZ{\mathcal{Z}}

The following theorem is analogous to \cite[Theorem 4.1]{artin} while its proof is closer to the arguments given in the proof of 
\cite[Proposition 7.14 and Theorem 7.16]{HS2020}.
\begin{lemma}\label{thm:main-cech}
Let $\cY =(Y,\tY)\in (X,\tX)_t$ such that $\tY$ satisfies the property that every finite set of points is contained in an affine open.
Let $\cU=(U,\tU)\xrightarrow{(\phi,\tilde{\phi})} \cY$ be a tame covering.
Then, for a tame covering $\cV\to \cU^{\times_{\cY} n}$, there is a tame covering $\cU'\to \cU$ such that 
${\cU'}^{\times_{\cY}n}\to \cU^{\times_{\cY}n}$ factors through $\cV$. 
\end{lemma}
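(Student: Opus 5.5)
The plan follows Artin's strategy from \cite[Theorem 4.1]{artin}, using $x$-local objects as the points. The idea is to construct, for each point of $\cU$ and each valuation, a pro-object of $x$-local type over $\cU$. Their $n$-fold products then split into $x$-local pieces by Lemma \ref{lem:product-x-local}, and these pieces are acyclic. Finally a compactness argument over $\Spa(U,\tU)$ descends the lifting to a finite stage.

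\emph{Step 1 (local lifting).} Fix $(x,v,\epsilon_v)\in\Spa(U,\tU)$ and let $\cU_{(x,v)}\to\cU$ be the $x$-local object of Example \ref{ex:localization}. It is a cofiltered limit of maps $\cU_i\to\cU$ in $(X,\tX)_t$, each tame over $(x,v,\epsilon_v)$. Since $\tY$ has the affine-neighborhood property, Lemma \ref{lem:product-x-local} shows that $\cU_{(x,v)}^{\times_\cY n}$ is affine and a disjoint union of $x'$-local objects, where $x'$ is $x$ or a generization of it. By Definition \ref{def:x-local} and Proposition \ref{prop:tamely-acyclic}, each such piece is tamely acyclic: an $x'$-local object has $B$ henselian local, $(k,v)$ tamely closed and $\tB=B\times_k\cO_v$, and the direction $\Leftarrow$ of the proof of that proposition applies verbatim. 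Acyclicity passes to disjoint unions by the argument at the start of that proof. Therefore the map $\cU_{(x,v)}^{\times_\cY n}\to\cU^{\times_\cY n}$ lifts through the tame covering $\cV$.

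\emph{Step 2 (descending to a finite stage).} The product $\cU_{(x,v)}^{\times_\cY n}=\lim_i \cU_i^{\times_\cY n}$ is a cofiltered limit in $\widetilde{(X,\tX)_\tau}$, and $\cV\to\cU^{\times_\cY n}$ is ift on compactifications and of finite presentation on the open parts. Lemma \ref{lem:ift-fp-limits}, together with Remark \ref{rmk:ift-limit-fp-etale}, then gives an index $i=i(x,v)$ such that $\cU_i^{\times_\cY n}\to\cU^{\times_\cY n}$ already factors through $\cV$, after passing to integral closures so that $\cU_i\in(X,\tX)_{{\rm int},\tau}$ as Lemma \ref{lem:ift-fp-limits} requires. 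We may also take $\cU_i$ affine.

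\emph{Step 3 (compactness and gluing).} Each $\cU_{i(x,v)}\to\cU$ is tame over $(x,v,\epsilon_v)$. Tameness over a point is an open condition on $\Spa(U,\tU)$, by the closedness of the non-tame locus \cite[Cor.4.4]{Huebner2021} as used in Lemma \ref{lem:limit}. Since $\Spa(U,\tU)$ is quasi-compact (Remark \ref{rmk-def;XSt}\ref{rmk-def;XSt1}), finitely many $\cU_1,\dots,\cU_m$ already give a tame covering $\cU':=\sqcup_j\cU_j\to\cU$.

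\emph{Main obstacle.} The product ${\cU'}^{\times_\cY n}$ contains mixed terms $\cU_{j_1}\times_\cY\cdots\times_\cY\cU_{j_n}$ with different $j$'s, and these do not obviously factor through $\cV$. I would handle this as Artin and \cite[Proposition 7.14]{HS2020} do. First, in Step 1, treat all $n$-tuples of points $(x_1,v_1),\dots,(x_n,v_n)$ simultaneously: Lemma \ref{lem:product-x-local} applies to $\cP_1\times_\cY\cdots\times_\cY\cP_n$ with distinct local objects, giving a lift through $\cV$ for each tuple. Then descend each tuple to a finite level $\prod\cU_{i_k}$. Quasi-compactness of $\Spa(U,\tU)^n$, or more precisely of the constructible topology on it, should allow choosing a single finite family refining all tuples. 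The step I expect to be hardest is arranging this choice uniformly so that every mixed product factors through $\cV$. My first attempt would be an induction that repeatedly refines the family $\{\cU_j\}$ by taking fiber products over $\cU$ of the previously chosen neighborhoods; this keeps the result a tame covering while shrinking each factor enough for every mixed product.
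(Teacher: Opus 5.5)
There is a genuine gap, and it sits exactly where you place the ``main obstacle''. Steps 1--3 only show that each \emph{diagonal} product $\cU_j^{\times_\cY n}$ factors through $\cV$. The lemma, however, requires this for ${\cU'}^{\times_\cY n}$ with $\cU'=\sqcup_j\cU_j$, and that product also contains every mixed term $\cU_{j_1}\times_\cY\cdots\times_\cY\cU_{j_n}$. Your last paragraph is only a plan. The neighbourhoods you obtain for an $n$-tuple of points depend on the whole tuple, and you never say how one finite family is chosen so that every mixed product lands in one of the finitely many products already known to factor through $\cV$. Step 1 as written (one point at a time) is also not the input this needs.

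Your plan can in fact be completed. By quasi-compactness of $\Spa(U,\tU)^n$, choose finitely many boxes $O^{(m)}_1\times\cdots\times O^{(m)}_n$ together with neighbourhoods $N^{(m)}_k$, each tame over $O^{(m)}_k$, such that $N^{(m)}_1\times_\cY\cdots\times_\cY N^{(m)}_n\to\cU^{\times_\cY n}$ factors through $\cV$. For each point $q$, take the fibre product over $\cU$ of all $N^{(m)}_k$ with $q\in O^{(m)}_k$. This is tame over an open neighbourhood of $q$, by a compositum argument using Lemma \ref{lem:tame-Gal-closure}. Then pass to a finite subfamily. Any mixed product of these maps into some $N^{(m)}_1\times_\cY\cdots\times_\cY N^{(m)}_n$, hence into $\cV$. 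All of this still has to be carried out.

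The paper avoids the problem altogether. It applies Lemma \ref{lem:existence-acyclic} to a finite affine cover of $\cU$ and obtains a single tamely acyclic $\cW=\lim_i\cW_i\to\cU$, with each $\cW_i\to\cU$ a tame covering. By Proposition \ref{prop:tamely-acyclic}, every component of $\cW$ is an $x$-local object over $\cY$. So $\cW^{\times_\cY n}$ is a disjoint union of products of local objects, and is therefore tamely acyclic by Lemma \ref{lem:product-x-local}. One lift $\cW^{\times_\cY n}\to\cV$ then descends via Lemma \ref{lem:ift-fp-limits} to some $\cW_{i_0}^{\times_\cY n}$, and $\cU'=\cW_{i_0}$ works. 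Because $\cU'$ is one stage of one pro-cover, all mixed products are handled at once, and no compactness argument on $\Spa(U,\tU)^n$ is needed.
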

\begin{proof}
Since $\tU$ is quasi-compact and the open immersion $U\inj \tU$ is quasi-compact we find finitely many affine pairs 
$\cU_j=(U_j, \tU_j)\in (X,\tX)_{{\rm affine}, t}$ such that $U=\cup_j U_j$ and $\tU=\cup_j \tU_j$ are open coverings.
Thus the disjoint union $\sqcup_j\cU_j$ is an affine pair which tamely covers $\cU$. 
By Lemma \ref{lem:existence-acyclic} applied to this disjoint union, 
we find  a morphism $\cW=(W,\tW)\to \cU$ in $\widetilde{(X,\tX)_t}$,  which is a limit of tame coverings $\cW_i=(W_i, \tW_i)\to \cU$ in $(X,\tX)_{{\rm affine},t}$ with $\cW_i\in (X,\tX)_{{\rm int},t}$,  
such that $\cW$ is tamely acyclic. Note that every connected component $\cP$ of $\cW$ is an $x$-local object over $\cY$, 
for some points $x\in Y$. Indeed, by Proposition \ref{prop:tamely-acyclic}, $\cP$ satisfies the conditions \ref{def:x-local2} and \ref{def:x-local3} of Definition \ref{def:x-local}, and  
it suffices to check that $\cP$ satisfies \ref{def:x-local1}. 
Denote by $x$ the image in $Y$ of the closed point of $P=\Spec S$.
Then we get a natural local morphism $\cO_{Y,x}^h\to S$. It is ind-\'etale as $S$ is a component of 
$\colim_i \cO(W_i)$ and $W_i\to U$  and $U\to Y$ are \'etale.

Thus, $\cW^{n}$ (product over $\cY$) is a  disjoint union of  
$\cP_1\times_{\cY} ... \times_{\cY} \cP_n$, where $\cP_j$ are 
$x_j$-local objects for some $x_j\in Y$. Hence, by Lemma \ref{lem:product-x-local} and 
Proposition \ref{prop:tamely-acyclic}, $\cW^{n}$ is tamely acyclic.
Thus, for any tame covering $\cV\to \cU^n$ the map $\cW^{n}\to \cU^n$ factors via $\cV$. 
Noting $\cW_{i}^{n}\in (X,\tX)_{{\rm int},t}$, Lemma \ref{lem:ift-fp-limits} and Remark \ref{rmk:ift-limit-fp-etale} imply that 
there is $i_0$ and a map $\cW_{i_0}^{n}\to \cV$ in $(X,\tX)_{{\rm affine},t}$ that factors $\cW_{i_0}^{n}\to \cU^n$, hence by choosing
$\cU'= \cW_{i_0}$ we conclude the proof.
\end{proof}

We are now ready to prove the comparison with \v Cech cohomology, analogously to \cite[Theorem 7.16]{HS2020} and \cite[Corollary 4.2]{artin}. Let us fix the notations: for $\cY\in (X,\tX)_t$, we consider the \v Cech complex\[
\check C^\bullet(\cY_t,-)\colon \PSh((X,\tX)_t,\Ab)\to \Cpx(\Ab)\quad F\mapsto \colim_{\cU\to \cY} F(\cU^{\times_\cY\bullet}),
\] 
where the colimit runs over all the tame coverings $\cU\to \cY$. This is an exact functor, and its cohomology is the \v Cech cohomology $\check H^q(\cY_t,-)$
\begin{thm}\label{thm:cech-comp}
Let $F$ and $G$ be presheaves of abelian groups on $(X,\tX)_t$ such that $a_tF\cong a_tG$, and let $\cY =(Y,\tY)\in (X,\tX)_t$ such that $\tY$ satisfies the property that every finite set of points is contained in an affine open. Then there is an isomorphism of complexes\[
\check C^\bullet(\cY_t,F)\cong \check C^\bullet(\cY_t,G)
\] 
In particular, for $F$ a sheaf of abelian groups on $(X,\tX)_t$ and $\cY$ as above, the map comparing 
tame \v{C}ech cohomology with the cohomology of the tame site\[
\check H^q(\cY_t,F)\to H^q_t(\cY,F)
\]
is an isomorphism. 
\begin{proof}
Let $\iota \colon \Sh((X,\tX)_t,\Ab)\hookrightarrow \PSh((X,\tX)_t,\Ab)$  be the inclusion. It is enough to show that the map $\check C^\bullet(\cY_t,F)\to \check C^\bullet(\cY_t,\iota a_tF)$ is an isomorphism. Let $K$ and $K'$ be the kernel and cokernel respectively of $F\to \iota a_tF$: we have exact sequences of presheaves\[
0\to K\to F\to F/K\to 0\quad 0\to F/K\to \iota a_tF\to K'\to 0.
\]
Thus, it is enough to show that if $a_tF=0$, then $\check C^q_t(\cY,F)=0$ for all $q$. Let $\alpha \in F(\sU^{\times_{\sY} q})$ for a tame covering $\cU \to \cY$. Then there is a tame covering $\sV\to \sU^{\times_{\sY}q}$ such that  $\alpha\mapsto 0$ in $F(\sV)$. 
By Lemma \ref{thm:main-cech}, there is a tame covering
$\sU'\to \sU$ such that ${\sU'}^{\times_{\sY} q}\to \sU^{\times_{\sY} q}$ factors through $\cV$.
Hence, $\alpha\mapsto 0$ in 
$F({\sU'}^{\times_{\sY} q})$, which implies that it is zero in $\check{C}^q(\cY,F)$.
The second part follows from the first: indeed let $\cH^q(F)$ be the presheaf $\cU\to H^q(\cU_t,F)$ on $(X,\tX)_t$: since $a_t\cH^q(F)=0$ for $q>0$, by the previous part we have that $\check H^p(\cY_t,\cH^q(F)) = 0$ for $q>0$, so the \v Cech-to-cohomology spectral sequence
\[ E_2^{p,q}=\check H^p(\cY_t,\cH^q(F)) \Rightarrow H^{p+q}(\cY_t,F)\]
degenerates.

\end{proof}
\end{thm}

\section{Computation of tame cohomology}\label{sec:7}
We fix  a quasi-compact open immersion $X\to \tX$ of qcqs schemes.

 \def\Al{A_\lambda}
    \def\tAl{\tA_\lambda}
    \def\Bl{B_\lambda}
    \def\tBl{\tB_\lambda}
\begin{prop}\label{prop:tame-local-vanishing}
Let $p$ be a prime and let $F$ be a sheaf of  $\Z_{(p)}$-modules on $(X,\tX)_t$. Let $(U,\tU)\in \widetilde{ (X,\tX)}_\tau$ connected and v-\'etale acyclic.
By Proposition \ref{prop:tamely-acyclic}, $\cU=(U,\tU)=(\Spec A, \Spec \tA)$ where $A$ is a henselian local ring with residue field $K$ and $\tA=A\times_K \cO_v$ for a strictly henselian valuation $v$ .
Write $\cU$ as the limit of a cofiltered system 
$\{\cU_\lambda\}_{\lambda\in \Lambda}$ in $(X,\tX)_\tau$ with 
$\cU_\lambda=(\Spec \Al, \Spec \tAl)$.
If the residue characteristic of $\cO_v$ is $p$, then 
\[\varinjlim_{\lambda\in \Lambda} H^i((\cU_\lambda)_t, F)=0,\quad \text{for } i\ge 1.\]
\end{prop}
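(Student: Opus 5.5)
The plan is to reduce the vanishing to the vanishing of higher group cohomology of a finite group of order prime to $p$ with coefficients in a $\Z_{(p)}$-module, using the \v Cech comparison of Theorem \ref{thm:cech-comp}.

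First, each $\cU_\lambda$ is affine, so $\tU_\lambda$ satisfies the hypothesis of Theorem \ref{thm:cech-comp}. Hence $H^i((\cU_\lambda)_t,F)=\check H^i((\cU_\lambda)_t,F)$. Filtered colimits are exact, so
\[\varinjlim_\lambda H^i((\cU_\lambda)_t,F)=H^i\Bigl(\varinjlim_\lambda \check C^\bullet((\cU_\lambda)_t,F)\Bigr).\]
Extend $F$ to $\widetilde{(X,\tX)_\tau}$ by $F(\lim_\mu \cP_\mu):=\varinjlim_\mu F(\cP_\mu)$, as in \eqref{phiT}. By Lemma \ref{lem:ift-fp-limits} and Lemma \ref{lem:limit}, coverings of $\cU_\lambda$ for varying $\lambda$ become, after pulling back to $\cU$, cofinal among pro-objects of $\widetilde{(X,\tX)_\tau}$ that are limits of tame coverings of some $\cU_\lambda$. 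Fibre products also commute with these limits. Therefore the colimit complex is computed as a colimit, over (pro-)tame coverings $\cU'\to\cU$, of the complexes $F(\cU'^{\times_\cU \bullet})$. So it suffices to find a cofinal family of such coverings whose \v Cech complexes are acyclic in positive degrees.

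Second, I would identify this cofinal family using Remark \ref{rmk:refinement-covering}. Any tame covering of $\cU$ is refined by
\[\cU'=(\Spec B,\Spec(B\times_{k'}\cO_w)),\]
where $B/A$ is finite \'etale with residue extension $k'/k$ finite Galois and $w$ is the unique extension of $v$; uniqueness holds since $(k,v)$ is henselian. Because $(k,v)$ is strictly henselian, $k=k^{sh}_v$. Tameness then means that $[k':k]$ is prime to $p$, since the residue characteristic of $\cO_v$ is $p$. Let $G=\Gal(k'/k)$. Then $B\otimes_A B\cong\prod_G B$, and the integral closure of the tensor product of the $\tB$'s is correspondingly $\prod_G (B\times_{k'}\cO_w)$, because $G$ permutes the valuations and preserves $\cO_w$. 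Hence
\[\cU'^{\times_\cU n}\cong\coprod_{G^{n-1}}\cU'.\]
This is the step where care is needed. The pair $\cU'$ is only pro-finite over the $\cU_\lambda$, so I would realise it as a limit of tame coverings of $\cU_\lambda$ using Lemma \ref{lem:limit}. The decomposition of fibre products then holds in the limit. Since $F$ is a tame sheaf, it turns finite disjoint unions into products at each finite level, and this passes to the colimit.

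Third, with these identifications the \v Cech complex $F(\cU'^{\times_\cU \bullet})$ is the standard inhomogeneous cochain complex $C^\bullet(G,M)$ with $M=F(\cU')$ and the natural $G$-action. Its positive-degree cohomology is $H^i(G,M)$, which is killed by $|G|$. Since $|G|$ is prime to $p$ and $M$ is a $\Z_{(p)}$-module, multiplication by $|G|$ on $M$ is invertible, so $H^i(G,M)=0$ for $i\ge1$. Taking the colimit over the cofinal family of such Galois coverings then gives the vanishing.

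I expect the main obstacle to be the bookkeeping in the first two steps. One must check that the colimit over $\lambda$ of \v Cech complexes of $\cU_\lambda$ really agrees with the colimit over Galois pro-coverings of $\cU$. One must also check that the fibre-product decomposition, and the compatibility of $F$ with disjoint unions and integral closures, survive the passage to limits. For this I would rely on Lemma \ref{lem:ift-fp-limits}, Remark \ref{rmk:ift-limit-fp-etale}, and the fact that integral closure commutes with filtered colimits.
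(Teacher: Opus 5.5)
Your proposal is correct and follows the paper's proof essentially step by step. The shared steps are: the \v{C}ech comparison via Theorem \ref{thm:cech-comp}; passage to tame covers of the pro-object $\cU$ via Lemma \ref{lem:limit}; refinement to finite Galois tame covers via Remark \ref{rmk:refinement-covering}; and identification of the \v{C}ech complex with the cochain complex of $G$ acting on $F(\cU')$, whose higher cohomology vanishes because $|G|$ is prime to $p$.

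One step needs tightening. The literal fibre product ${\cU'}^{\times_{\cU} n}$ has second component $\Spec(\tB^{\otimes_{\tA} n})$, which is in general not integrally closed (e.g.\ when $w/v$ is ramified). So the decomposition into $\coprod_{G^{n-1}}\cU'$ only holds after replacing that component by its integral closure. This is harmless, exactly as in the paper: the map from the integral closure is a quasi-modification, so $F$ takes the same value on both.
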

\begin{proof}
By Theorem \ref{thm:cech-comp}, we have that
\[
\varinjlim_{\lambda\in \Lambda} H^i((\cU_\lambda)_t, F) = \varinjlim_{\lambda\in \Lambda}\varinjlim_{\cV_\lambda\to \cU_{\lambda}}\pi_{-i}F(\cV_\lambda^{\times_{\cU_{\lambda}}\bullet}),
\]
where the colimit is indexed over tame covers of $\cU_{\lambda}$. 
By Lemma \ref{lem:limit}, this is equal to
\begin{equation}\label{eq1;prop:tame-local-vanishing}
\varinjlim_{\cV\to \cU} \pi_{-i}F(\cV^{\times_{\cU}\bullet}),
\end{equation}
where the colimit is indexed over tame covers of $\cU$ in the sense of Definition \ref{def;XSt}
and $F$ is left Kan extended to $\widetilde{ (X,\tX)}_\tau$.
 By Remark \ref{rmk:refinement-covering}, we can further suppose that $\cV$ are of the form $(V,\tV)=(\Spec(B),\Spec(\tB))$ with $A\to B$ is a finite \'etale map of henselian local rings 
associated to the residue field extension $L/K$ which is Galois 
and tame with respect to $v$ and $\tB=B\times_L \sO_w$ with $w$ the valuation on $L$ extending $v$. Set
\[B_n:=B^{\otimes_{A} n}, \quad V_n:=\Spec B_n \quad\text{and} \quad
\tB_n:=\tB^{\otimes_{\tA}n },\quad \tV_n:=\Spec \tB_n.\]
Let $B_n^{int}$ be the integral closure of $\tB_n$ in $B_n$ and set $\tV_n^{int}=\Spec \tB_n^{int}$.
As $(V_n,\tV_n^{int})\to (V_n,\tV_n)$ is a quasi-modification, see Remark \ref{rmk;int},
the desired vanishing holds once we know that the following complex is exact
\eq{prop:tame-local-vanishing1}{
0\to F(U,\tU)\to F(V,\tV^{\rm int})\to F(V_2,\tV^{\rm int}_2)\to F(V_3,\tV_3^{\rm int})\to\cdots. }
By \cite[VI, \S8, No. 6, Proposition 6]{BourbakiCA} the ring $\sO_w$ is also the integral closure of $\sO_v$ in $L$.
Hence the Galois group ${\rm Gal}(L/K)$ is equal to the decomposition group ${\rm Aut}(\sO_w/\sO_v)$.
Moreover, as the category of finite separable field extensions of $K$ is equivalent to the category of finite local \'etale $A$-algebras,
we can identify the $A$-algebra automorphisms of $B$ with ${\rm Gal}(L/K)$.
Hence  $G:={\rm Gal}(L/K)={\rm Aut}(B/A)={\rm Aut}(\tB/\tA)$. 
As in \cite[Example 2.6]{MilneEtCoh} the isomorphism $B_2\to \prod_{\sigma\in G} B$, $b_0\otimes b_1\mapsto (\sigma(b_0)b_1)_\sigma$
and induction give the isomorphism for $n\ge 2$
\[\varphi_n: B_n\to \prod_{(\sigma_0,\ldots, \sigma_{n-2})\in G^{n-1}}B\]
with 
\[\varphi_n(b_0\otimes \ldots \otimes b_{n-1})_{(\sigma_0,\ldots, \sigma_{n-2})}= 
(\sigma_{n-2}\cdots \sigma_0)(b_0)\cdot (\sigma_{n-2}\cdots \sigma_1)(b_1)\cdots \sigma_{n-2}(b_{n-2})\cdot b_{n-1}.\]
As $\tB$ is integral over $\tA$, so is $\tB_n$, hence $\tB_n^{\rm int}$ is the integral closure of $\tA$ in {$B_n$}
and thus $\varphi_n$ restricts to an isomorphism
\[\tB_n^{\rm int}\to \prod_{(\sigma_0,\ldots,\sigma_{n-2})} \tB.\]
We thus find isomorphisms 
\[(V_n,\tV_n^{\rm int})\cong (V,\tV)\times G^{n-1}\]
as in \cite[III, Example 2.6]{MilneEtCoh} and can therefore identify the cohomology of \eqref{prop:tame-local-vanishing1}
with  Galois cohomology
\[\pi_{-i}F(\cV_\lambda^{\times_{\cU_{\lambda}}\bullet})= H^i(G, F(V,\tV)).\]
This vanishes as  $F(V,\tV)$ is a $\Z_{(p)}$-module and  the order of $G$ is invertible in $\Z_{(p)}$ by tameness.
\end{proof}

A version of the following theorem for discretely ringed adic spaces with $p$-torsion coefficients 
holds by \cite[Proposition 8.5]{Huebner2021} and \cite[Corollary 5.7]{Huebner2025}.  
The method of proof  presented below differs from the one in {\em loc. cit.}
\begin{thm}\label{thm:tame-vs-etale}
Let $X\inj \tX$ be a quasi-compact open immersion of qcqs schemes.
For $\cV:=(V,\tV)\in (X,\tX)_t$, let $j^{\cV}:(V,\tV)_t\to \tV_{\et}$ be the morphisms of sites induced by the functor $\tV_{\et} \to (V,\tV)_t$ sending $(\tW\to\tV)$ to $(V\times_{\tV} \tW,\tW)$. 
 Let $F$ be a sheaf of abelian groups on $(X,\tX)_t$ such that the following condition is satisfied:
\begin{enumerate}[label=(p)]
\item\label{thm:tame-vs-etale1}
for every $(U,\tU)\to (X,\tX)$ in $(X,\tX)_t$, the stalk of the Zariski sheaf on $\tU$ given by $W\mapsto F(W\cap U, W)$
at any point $x\in \tU$ is a $\Z_{(p_x)}$-module, where 
$p_x$ is the exponential characteristic of $\kappa(x)$.
\end{enumerate}
Then, for $\cU=(U,\tU)$ in $(X,\tX)_t$, we have canonical isomorphisms
\[
H^i(\cU_t, F)\cong H^i(\cU_{\vet}, \nu_*F) \cong \colim_{\cV\to\cU} H^i(\tV_{\et}, j^{\cW}_*F)\cong \colim_{\cW\to\cU}H^i(\tV_{\et}, j^{\cV}_*F), \quad i\ge 0,
\]
where the first colimit runs along all quasi-modifications $\cV\to \cU$ with $\cV$ integral, and the second colimit runs along all admissible blow-ups $\cW\to \cU$.
\end{thm}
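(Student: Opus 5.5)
The last two isomorphisms are Lemma \ref{lem:etaleCoh} applied to the $\vet$ sheaf $\nu_*F$. Indeed, $j^{\cV}$ factors as $\lambda^{\cV}$ followed by the inclusion of sites, so $\lambda^{\cV}_*\nu_*F=j^{\cV}_*F$. It therefore suffices to prove the first isomorphism, i.e.\ that $R^q\nu_*F=0$ for $q\ge1$. Here $\nu:(X,\tX)_t\to(X,\tX)_{\vet}$ is the morphism of sites in \eqref{morphism-tameet}, and $R^q\nu_*F$ is the $\vet$ sheafification of the presheaf $\cU\mapsto H^q(\cU_t,F)$. Once this vanishing is known, the Leray spectral sequence $H^p(\cU_{\vet},R^q\nu_*F)\Rightarrow H^{p+q}(\cU_t,F)$ degenerates and gives the first isomorphism.

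To show $R^q\nu_*F=0$, I will check stalks at the fiber functors of $\Sh((X,\tX)_{\vet})$. These form a conservative family by Remark \ref{rmk;Deligne}. By the discussion around \eqref{phiT} and Proposition \ref{prop:tamely-acyclic}, every such fiber functor is $\phi_\cT$ for a $\vet$-acyclic $\cT=\lim_\lambda \cU_\lambda$. Such a $\cT$ is a coproduct of connected pieces $(\Spec A,\Spec \tA)$ with $A$ henselian local with residue field $K$, and $\tA=A\times_K\cO_v$ for $(K,v)$ strictly henselian. Since the stalk commutes with finite coproducts, I may assume $\cT$ is connected. The stalk is then $\phi_\cT(R^q\nu_*F)=\colim_\lambda H^q((\cU_\lambda)_t,F)$, because sheafification does not change stalks. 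The goal is thus exactly the vanishing statement of Proposition \ref{prop:tame-local-vanishing}, once its hypothesis is met.

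That proposition requires $F$ to be a sheaf of $\Z_{(p)}$-modules with $p$ the residue characteristic of $\cO_v$. Condition (p) only says that certain Zariski stalks on $\tU$ are $\Z_{(p_x)}$-modules. The main point is to bridge this gap. Let $\tilde x$ be the image in $\tU_\lambda$ of the closed point of $\Spec\tA$; its residue characteristic is that of $\cO_v$, namely $p$. The key observation is that the pairs $(W\cap U_\lambda,W)$, for $W$ running over Zariski neighborhoods of $\tilde x$ in $\tU_\lambda$, are cofinal among objects through which $\cT\to\cU_\lambda$ factors. Consequently, for every object $\cV$ over $\cU_\lambda$, and in particular for the terms of the \v{C}ech complexes used in Proposition \ref{prop:tame-local-vanishing}, the colimit values are modules over the Zariski stalk at $\tilde x$, or at points over $\tilde x$ in the covers. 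These points also have residue characteristic $p$. Hence all terms of the colimit complex \eqref{eq1;prop:tame-local-vanishing} are $\Z_{(p)}$-modules.

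Concretely, I will rerun the proof of Proposition \ref{prop:tame-local-vanishing}. It reduces to the Galois cohomology $H^i(G,F(V,\tV))$, where $(V,\tV)$ is the local tame cover with $\tV=\Spec(B\times_L\cO_w)$ and $|G|$ is prime to $p$. Here $F(V,\tV)$ is a filtered colimit of the Zariski stalks at the closed point of $\tV$, whose residue characteristic is $p$, so it is a $\Z_{(p)}$-module by (p). Then multiplication by $|G|$ is invertible and the higher Galois cohomology vanishes. This gives the stalkwise vanishing for $q\ge1$, hence $R^q\nu_*F=0$, and the theorem follows. The step I expect to be delicate is precisely this justification that only stalks at points over the closed point of $\tT$ enter the colimit; I would verify it by the cofinality argument via Lemma \ref{lem:ift-fp-limits}.
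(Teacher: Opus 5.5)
Your proposal is correct and follows the paper's proof essentially verbatim. You reduce via Lemma \ref{lem:etaleCoh} to showing $R^q\nu_*F=0$, test this on the conservative family of stalks from Remark \ref{rmk;Deligne} and Proposition \ref{prop:tamely-acyclic}, and conclude with Proposition \ref{prop:tame-local-vanishing}.

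The paper only asserts that condition (p) supplies the $\Z_{(p)}$-hypothesis of that proposition; your version fills this in correctly. Only the module $F(V,\tV)$ attached to the local tame cover enters the Galois-cohomology step. Since $\tV$ is the spectrum of a local ring, its map to each approximating $\tV_\mu$ factors through every open neighbourhood of the image of its closed point. Hence $F(V,\tV)$ is a filtered colimit of Zariski stalks at points of residue characteristic $p$, so the step you flagged as delicate is immediate and does not need Lemma \ref{lem:ift-fp-limits}.
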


\begin{proof}
Note that $j^{\cV}$ is the composition of the morphism of sites 
\[\nu:(V,\tV)_t \to (V,\tV)_{\vet}\]
corresponding to the inclusion functor and the morphism of sites
\[\lambda^{\cV}\colon (V,\tV)_{\vet}\to \tV_{\et}\]
defined by the functor 
  $ \tV_{\et} \to (V,\tV)_{\vet}:\tW/\tV\mapsto (\tW\times_{\tV} V,\tW)$.
By Lemma \ref{lem:etaleCoh}, we have
\[H^i(\cU_{t},F) = H^i(\cU_{\vet},R\nu_*F) = \colim_{\cV\to \cU} H^i(\tV_{\et},\lambda^{\cV}_*R\nu_*F)= \colim_{\cW\to \cU} H^i(\tV_{\et},\lambda^{\cW}_*R\nu_*F).\]
Hence, it suffices to show $R^i\nu_*F=0$ for $i\ge 1$.
By Remark \ref{rmk;Deligne},
this follows from Proposition \ref{prop:tame-local-vanishing}
and the fact that the assumptions of {\em loc. cit.} are satisfied  by condition \ref{thm:tame-vs-etale1}. This completes the proof. 
\end{proof}

\begin{rmk}\label{rmk:tames-vs-etale}
Note that the condition \ref{thm:tame-vs-etale1} of Theorem \ref{thm:tame-vs-etale} is satisfied
if $F=\Omega^{q,t}$, see \ref{para:example-tame-sheaves}\ref{para:example-tame-sheaves2}, 
or $F=\W_S\Omega^{q,t}$  (see Definition \ref{def:DRWOmegaplus} below). Also it holds if $\tX$ is a $\Z_{(p)}$-scheme
and $F$ is any tame sheaf of $\Z_{(p)}$-modules.    
\end{rmk}

\section{Construction of tame sheaves}\label{sec:8}
Let $X\inj \tX$ be a quasi-compact open immersion of qcqs schemes. 
We give a method  to extend \'etale sheaves defined over  $X_{\et}$ to  sheaves on $(X,\tX)_t$.

\def\Vals{\mathrm{Val}^{\mathrm{s}}}

\begin{para}\label{para:constr-tame-sheaves}
For $\cX=(X,\tX)$, we let $\Vals_{\cX}$ be the category whose objects are triples $(L,w,\epsilon)$, 
where $L$ is a finite separable field extension of a residue field $k(x)$ of a point of $X$, 
$w$ is a valuation on $L$, and 
$\epsilon\colon \Spec(\cO_w)\to \tX$ is a morphism, which restricts to the map
$\Spec L \to \Spec k(x)\to X$,
and morphisms $(L,w,\epsilon)\to (L',w',\epsilon')$ are given by valued extensions 
(the compatibility with $\epsilon$ is  automatic). 
Note that for any $(x,v,\epsilon)\in \Spa(X,\tX)$ and any finite separable extension of valuation fields 
$(L, w)/(k(x), v)$ uniquely determines an element $(L,w,\epsilon')\in \Vals_{\cX}$ with $\epsilon'$ equal to the composition
$\epsilon':\Spec \sO_w\to \Spec \sO_v\xrightarrow{\epsilon} \tX$. 

For $(L, w,\epsilon)\in \Vals_{\cX}$ we set
\eq{para:constr-tame-sheaves0}{
\sO^h_{X,L}=\varinjlim_{\Spec L \to U\to X} \sO(U) \quad \text{and} \quad \sO^h_{\tX,L,w}= \sO^h_{X,L}\times_L \sO_w,}
where the direct limit is over  all étale maps $U\to X$ which factor $\Spec L\to X$.
Note that $\sO^h_{X,L}$ is the unique henselian local ring with residue field $L$ which is finite étale over $\sO_{X,x}^h$,
corresponding to the field extension $L/k(x)$. In particular the association 
$(L,w,\epsilon)\mapsto \sO^h_{X,L}$  defines a functor from $\Vals_{\cX}$ to the category of henselian local rings which are 
ind-étale over $X$.

Let $F$ be a sheaf on $X_{\et}$. We write $F(\sO^h_{X,L}):=\varinjlim_{\Spec L\to U\to X} F(U)$. 
Let 
\[\beta=\{F_w\subset F(\sO^h_{X,L})\}_{(L,w,\epsilon)\in \Vals_{\cX}},\]
be  a collection of subsets satisfying the following condition
\begin{enumerate}[label=({$\beta$}1)]
    \item\label{beta1} for any $(L,w,\epsilon)\to (L_1,w_1,\epsilon_1)$ in $\Vals_{\cX}$, 
    the pullback map $F(\sO^h_{X,L})\to F(\sO^h_{X,L_1})$ 
restricts to $F_w\to F_{w_1}$.
\end{enumerate}
For $(U,\tU)\in (X,\tX)_t$ we define
\[F_{\beta}(U,\tU):=\left\{a\in F(U)\,\middle\vert\,
\begin{minipage}[c]{7cm} for all $(x,v,\epsilon)\in \Spa(U,\tU)$ 
there exists  a finite tame extension $(L,w)/ (k(x), v)$, such that  $a_L\in F_w$\end{minipage}\right\},\]
where $a_L$ denotes the pullback of $a\in F(U)$ along  $\Spec \sO^h_{X,L}\to U$. 

Note that by Lemma \ref{lem:tame-Gal-closure} and \ref{beta1} it suffices to consider in the definition of $F_\beta(U,\tU)$ 
only the finite tame {\em Galois} extensions $(L,w)/(k(x),v)$. 
\end{para}

\begin{prop}\label{prop:constr-tame-sheaves}
Assumptions as in \ref{para:constr-tame-sheaves} above.
The assignment $(U,\tU)\mapsto F_{\beta}(U,\tU)$ defines a sheaf on $(X,\tX)_t$. 
\end{prop}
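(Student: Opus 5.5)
First I check that $F_\beta$ is a presheaf, and then I verify the sheaf condition. Since the tame topology is finitary (Remark \ref{rmk-def;XSt}\ref{rmk-def;XSt1}), it suffices to treat a single tame covering $(g,\tg)\colon(V,\tV)\to(U,\tU)$.

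Functoriality goes as follows. Let $(h,\th)\colon(U',\tU')\to(U,\tU)$ be a morphism in $(X,\tX)_\tau$, let $a\in F_\beta(U,\tU)$, and let $(x',v',\epsilon')\in\Spa(U',\tU')$. Its image is a point $(x,v,\epsilon)\in\Spa(U,\tU)$, where $v=v'|_{k(x)}$ and $k(x')/k(x)$ is finite separable because $h$ is étale. Choose a tame Galois $(L,w)/(k(x),v)$ with $a_L\in F_w$. Take the compositum $L'=L\cdot k(x')$ with a valuation $w'$ extending $v'$ and $w$. By Lemma \ref{lem:tame-Gal-closure}\ref{lem:tame-Gal-clousre2}, $(L',w')/(k(x'),v')$ is tame. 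The morphism $(L,w)\to(L',w')$ lies in $\Vals_{\cX}$, so \ref{beta1} gives $(h^*a)_{L'}=(a_L)_{L'}\in F_{w'}$. Hence $h^*a\in F_\beta(U',\tU')$.

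For the sheaf condition, I use that $F_\beta(U,\tU)\subset F(U)$ and that $g\colon V\to U$ is surjective étale. This should follow from tameness at every point, taking for instance the trivial valuation. Granting this, $F$ is an étale sheaf and $F_\beta(V\times_UV,\dots)\subset F(V\times_UV)$, so the separation property is immediate. For gluing, take $b\in F_\beta(V,\tV)$ with equal pullbacks to the fibre product. Étale descent for $F$ gives a unique $a\in F(U)$ with $g^*a=b$, and it remains to show $a\in F_\beta(U,\tU)$.

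So let $(x,v,\epsilon)\in\Spa(U,\tU)$. By the definition of a tame covering there is $(y,w_y,\epsilon_w)\in\Spa(V,\tV)$ over it with $(k(y),w_y)/(k(x),v)$ tame. Since $b\in F_\beta(V,\tV)$, there is a tame extension $(L,w)/(k(y),w_y)$ with $b_L\in F_w$. The composite $(L,w)/(k(x),v)$ is tame, because composites of tame extensions are tame; this can be seen via strict henselizations and the multiplicativity of degrees, or via Lemma \ref{lem:tame-Gal-closure}\ref{lem:tame-Gal-clousre1}. Finally $a_L=(g^*a)_L=b_L$, since $\Spec\sO^h_{X,L}\to U$ factors through $V$ compatibly with the point $y$. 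Therefore $a_L\in F_w$ and $a\in F_\beta(U,\tU)$.

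The main point needing care is the transitivity of tameness. Let $K'$ be the strict henselization of $(k(y),w_y)$ and $L'$ that of $(L,w)$. Then $[L'{}^{sh}:K^{sh}]=[L'{}^{sh}:K'{}^{sh}]\,[K'{}^{sh}:K^{sh}]$, where all the henselizations are taken in a common separable closure. Both factors are prime to $p$, and this gives the claim. A second, minor check is that the identification of $\sO^h_{X,L}$ via $U$ and via $V$ agrees, which is clear from the colimit definition \eqref{para:constr-tame-sheaves0}.
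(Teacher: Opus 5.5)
Your proof is correct and follows the same route as the paper. The presheaf property comes from the compositum $L\cdot k(x')$, Lemma~\ref{lem:tame-Gal-closure}\ref{lem:tame-Gal-clousre2} and condition \ref{beta1}. The sheaf property comes from the \'etale sheaf property of $F$, together with the check that membership in $F_\beta$ descends along a tame cover by transitivity of tame extensions.

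Your additional remarks only make explicit what the paper leaves implicit: that $V\to U$ is surjective (via trivial valuations), that $w'$ must extend both $v'$ and $w$, and the degree computation showing tameness is transitive.
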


\begin{proof}
We start by showing that $F_\beta$ is a presheaf.
Let $(u,\tu): (U',\tU')\to (U,\tU)$ be a morphism in $(X,\tX)_t$ and take $a\in F_{\beta}(U,\tU)$.
Let $(y,v,\epsilon)\in \Spa(U',\tU')$. Set $x:=u(y)\in U$ and denote by $v_x$ the restriction of $v$ to $k(x)$.
Note that $\cO_{v_x}=\sO_v\cap k(x)$, the intersection taken inside $k(y)$. 
Hence $\epsilon: \Spec \sO_v\to \tU'\to \tU$ factors uniquely via a map $\epsilon_x:\Spec \sO_{v_x}\to \tU$.
We obtain a point $(x, v_x,\epsilon_x)\in \Spa(U,\tU)$. 
By definition  there exists a finite tame extension 
$(L,w)/(k(x),v_x)$ such that $a_L\in F_w$. 
Denote by $L_1$ the composition field of $L$ and $k(y)$ in a separable closure of $k(x)$ and choose a valuation $v_1$ 
on $L_1$ extending $v$. Then the extension $(L_1,v_1)/(k(y),v)$ is tame, 
by Lemma \ref{lem:tame-Gal-closure}\ref{lem:tame-Gal-clousre2}.
Now $u^*(a)_{L_1}$,   the image of $u^*(a)$ under $F(U')\to  F(\sO^h_{U',L_1})$, 
is equal to the image of the pullback of $a_L$ under $F(\sO^h_{U,L})\to F(\sO^h_{U',L_1})$. 
As $a_L\in F_w$ we find $u^*(a)_{L_1}\in F_{w_1}$ by \ref{beta1} in \ref{para:constr-tame-sheaves}.
This shows $u^*(a)\in F_\beta(U',\tU')$. Hence $F_\beta$ is a presheaf.

As $F$ is an \'etale sheaf on $X$, $F_\beta$ will be a sheaf on $(X,\tX)_t$ if we show the following:
Let $\{(U_i,\tU_i)\to (U,\tU)\}_{i\in I}$ be a tame covering in $(X,\tX)_t$ and let $a\in F(U)$,
then
\eq{prop:constr-tame-sheaves1}{ a_{|U_i}\in F_{\beta}(U_i,\tU_i), \quad \text{for all }i \, \Longrightarrow\,  a\in F_\beta(U,\tU).}
Let $(x,v,\epsilon)\in \Spa(U,\tU)$. By definition of tame covering we find  an index  $i$ and a point 
$(y,w,\epsilon')\in \Spa(U_i,\tU_i)$ over $(x,v,\epsilon)$ such that $(k(y),w)/(k(x),v)$ is a finite tame extension.
As $a_{|U_i}\in F_{\beta}(U_i, \tU_i)$ we find a finite tame  extension $(L,w_1)/(k(y), w)$ such that 
$(a_{|U_i})_L\in F_{w_1}$. 
Hence, we get \eqref{prop:constr-tame-sheaves1}.
\end{proof}

\begin{rmk}\label{rmk:functoriality}
    By definition, for all $F$, $\beta$ as in \ref{para:constr-tame-sheaves} above we have a pullback diagram 
    \[
    \begin{tikzcd}
        F_\beta(U,\tU)\ar[r]\ar[d]& \prod \colim F_w\ar[d]\\
        F(U)\ar[r]&\prod \colim F(\sO^h_{U,L})
    \end{tikzcd}
    \]
where:
    \begin{itemize}
    \item the product ranges over all elements $(x,v,\epsilon)$ of $\Spa(U,\tU)$
    \item the colimit ranges over all $(k(x),v)\subseteq (L,w)$ finite tame.
    \end{itemize}
    
    This implies that if $\phi\colon F\to G$ is a map of sheaves on $X_\et$ and $F$ and $G$ are equipped with $\beta$-families $\beta_F:=\{F_w\}$ and $\beta_G:=\{G_w\}$ such that for every $(U,\tU)\in (X,\tX)_t$ and every $(x,v,\epsilon)\in \Spa(U,\tU)$ there is a cofinal system of  tame extensions $S_{(x,v,\epsilon)}:=\{(k(x),v)\subseteq (L,w)\}$ such that the map $\varphi: F(\sO^h_{X,L})\to G(\sO^h_{X,L})$ restricts to a map 
    $\phi_w\colon F_w\to G_w$ for all $(L,w)\in S_{x,v,\epsilon}$, then $\phi$ induces a map $F_{\beta_F}\to G_{\beta_G}$ 
    of sheaves on $(X,\tX)_t$,  denoted by $\varphi$ as well. 
\end{rmk}

\begin{example}\label{para:example-tame-sheaves}
In the following $(L,w,\epsilon)$ is always in $\Vals_{\cX}$.
We set $A_{L}=\sO^h_{X,L}$ and $A_{L,w}=\sO^h_{\tX,L,w}$, see \eqref{para:constr-tame-sheaves0} for notation
\begin{enumerate}[label=(\arabic*)]
\item Let $F$ be a sheaf on $X_{\et}$. Set
$F_{w,0}=F(A_L)$, if $(L,w,\epsilon)\in \Vals_{(X,X)}$, and $F_{w,0}=0$, else. 
Set $F_{w, {\rm triv}}=F(A_L)$, for all $(L,w,\epsilon)$. Then $\beta_0=\{F_{w,0}\}_{(L,w,\epsilon)}$
and $\beta_{\rm triv}=\{F_{w,{\rm triv}}\}$ are families as in \ref{para:constr-tame-sheaves} and we have 
\[\imath_* F_{\beta_0}= j_!F\quad \text{and}\quad \imath_* F_{\beta_{\rm triv}}=j_*F,\]
where $j: X\to \tX$ is the open immersion and $\imath: (X,\tX)_t\to \tX_{\et}$ is the morphism of sites induced by the functor 
$\tU\mapsto (\tU\times_{\tX} X, \tU)$.
\item\label{para:example-tame-sheaves1} Let $F$ be as above  and assume there is a presheaf $\tF$ on $\Sch_{\tX}$
extending $F$. Then we can define $\beta_{\tF}=\{\tF(A_{L,w})\subset F(A_L)\}_{(L,w,\epsilon)}$.
We get a sheaf $F_{\beta_{\tF}}$ on $(X,\tX)_t$. Notice that the resulting sheaf $\iota_*F_{\beta_{\tF}}$ 
may be different from $\tF_{|\tX_{\et}}$, e.g., 
for $\tF=\cO$, the structure sheaf on $\Sch_{\tX}$,
the sheaf $\cO_{|\tX_{\et}}$ is different from $\iota_*\cO_{\beta_{\tF}}=\iota_*\sO^t$, where $\cO^t$ is defined in \ref{para:example-tame-sheaves2}, see Lemma \ref{lem:Gat} below.
\item\label{para:example-tame-sheaves1.5} Given a collection $\beta$ as in \eqref{para:constr-tame-sheaves} on a sheaf $F$, we can 
define a new family 
\[\beta^{div}=\{F^{div}_w\subset F(A_L)\}_{(L,w,\epsilon)\in \Vals_{\sX}}\]
by setting
\[F^{div}_w:=
\begin{cases}
F_w &\begin{minipage}[t]{0.5\textwidth}
if  $\Spec L$ maps to a generic point of  $X$ \\
 and  $w$ is a discrete valuation,
\end{minipage}\\
F(A_L) & \text{else}.    
\end{cases}\]
If $\beta$ satisfies \ref{beta1}, then so does $\beta^{div}$. Here note that if $(L,w,\epsilon)\to (L',w',\epsilon')$ is a morphism in 
$\Vals_{\sX}$ and $w$ is discrete, then $w'$ is discrete as well. We denote by $F^{div}_{\beta}:= F_{\beta^{div}}$ 
the corresponding tame sheaf. For sections in $F^{div}_{\beta}(U,\tU)$ we only put conditions induced by $\beta$ 
along tame extensions of discrete valuations on the generic points of $U$ with center in $\tU$, hence 
$F_\beta\subset F^{div}_{\beta}$.
\item\label{para:example-tame-sheaves2} 
Let $F=\Omega^q$ be the \'etale sheaf of $q$th absolute differential forms  on $X_{\et}$.
Denote by  $\Omega^*_{A_{L,w}}(\log)$ the graded $\Omega^*_{A_{L,w}}$-subalgebra of 
$\Omega^*_{A_L}$ generated by  $\dlog (A_L^\times)$. 
Note $\Omega^q_{A_{L,w}}\subset \Omega^q_{A_{L,w}}(\log )$.
Set 
\[\beta_{\log}:= \{\Omega^q_{A_{L,w}}(\log)\subset\Omega^q_{A_L}\}_{(L,w,\epsilon)}.\]
Then $\beta_{\log}$ satisfies  \ref{beta1} in \ref{para:constr-tame-sheaves} and we get a sheaf
\eq{para:example-tame-sheaves21}{\Omega^{q,t}:=\Omega^q_{\beta_{\log}} \quad \text{on } (X,\tX)_t.}
Note for $q=0$, we write $\cO^t=\Omega^{0,t}$. This is a special case of  \ref{para:example-tame-sheaves1},
where we take $\tF$ to be the structure sheaf $\cO$ on $\Sch_{\tX}$.
 Note also that the differential of the de Rham complex induces a well-defined differential 
$d: \Omega^{q,t}\to \Omega^{q+1,t}$ giving rise to a complex of 
sheaves $\Omega^{\bullet,t}$ on $(X,\tX)_t$. Using \ref{para:example-tame-sheaves1.5} above we get an inlcusion of sheaves on $(X,\tX)_t$
\[\Omega^{q,t}\subset \Omega^{q, div}:=(\Omega^q_{\beta_{\log}})^{div}.\]
Furthermore, given a morphism $\tX\to S$ we can similarly define the complex
$\Omega^{\bullet,t}_{/S}=\Omega^\bullet_{/S, \beta_{\log/S}}$, where 
$\beta_{\log/S}=\{\Omega^q_{A_{L,w}/S}(\log)\subset\Omega^q_{A_L/S}\}_{(L,w,\epsilon)}$.
\item\label{para:example-tame-sheaves3} Let $W_n\Omega^\bullet$  denote the $p$-typical de Rham--Witt complex,
it is defined as an \'etale sheaf on all schemes, by \cite{Hesselholt-dRW}. 
Denote by $W_n\Omega^*_{A_{L,w}}(\log)$ the graded $W_n\Omega^*_{A_{L,w}}$-subalgebra of $W_n\Omega^*_{A_L}$ generated
by $\dlog[a]$, $a\in {A_L}^\times$, where $[-]: A_L\to W_n(A_L)$ denotes the multiplicative lift.
Note $W_n\Omega^q_{A_{L,w}}\subset W_n\Omega^q_{A_{L,w}}(\log)$.
Then $\beta_{\log,n}=\{W_n\Omega^q_{A_{L,w}}(\log)\subset W_n\Omega^q_{A_L}\}_{(L,w,\epsilon)}$
satisfies the assumption from \ref{para:constr-tame-sheaves} and we get  sheaves 
\[W_n\Omega^{q,t}:=W_n\Omega^q_{\beta_{\log,n}}\subset W_n\Omega^{q,div}:=(W_n\Omega^{q}_{\beta_{\log,n}})^{div}\quad 
\text{on } (X,\tX)_t.\]
Note that the differential, the restriction map, as well as  Frobenius and Verschiebung on the de Rham--Witt complex 
induce well-defined maps on $W_n\Omega^{q,t}$ and $W_n\Omega^{q,div}$. In particular $W_\bullet\Omega^{\bullet, t}$
and $W_n\Omega^{\bullet,div}$ are Witt complexes in the sense of \cite[Definition 4]{Hesselholt-dRW}.
\end{enumerate}    
\end{example}
The following is a global version of the classical result that the integral closure of a subring  $A$ of a field $K$ 
is the intersection of all the valuation rings on $K$ containing $A$, see \cite[VI, \S1, Theorem 3]{BourbakiCA}.
See also \cite[Lemma 1.3.3]{DAddezio} for a similar computation in the setup of marked rings.
\begin{lemma}\label{lem:Gat}
Let $\sO^t=\Omega^{0,t}$ and $\sO^{div}=\Omega^{0,div}$ in the notation 
from \ref{para:example-tame-sheaves}\ref{para:example-tame-sheaves2}.
For all $(U,\tU)\in (X,\tX)_t$     we have 
\[\sO^t(U,\tU)=\sO(\tU^{\rm int}),\]
where $\tU^{\rm int}$ denotes the integral closure of $\tU$ in $U$.
Moreover, if $\tU$ is a Nagata scheme and $U$ is normal, then we have 
\[\sO^t(U,\tU)=\sO^{div}(U,\tU).\]
\end{lemma}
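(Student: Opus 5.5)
We unwind the definition of $\sO^t=\cO_{\beta_{\log}}$ with $q=0$, where the local condition is $a_L\in \sO^h_{\tX,L,w}=A_L\times_L\sO_w$. Since $a_L$ lies in the henselian local ring $A_L$ with residue field $L$, the condition $a_L\in A_L\times_L\sO_w$ just says that the image of $a$ in $L$ lies in $\sO_w$. As $\sO_w\cap k(x)=\sO_v$ for any extension $(L,w)/(k(x),v)$, this is equivalent to the image $a(x)\in k(x)$ lying in $\sO_v$. Hence
\[\sO^t(U,\tU)=\{a\in \sO(U)\mid a(x)\in \sO_v \text{ for all }(x,v,\epsilon)\in\Spa(U,\tU)\}.\]
The problem is Zariski local on $\tU$: both sides are Zariski sheaves on $\tU$, integral closure commutes with localization, and $\Spa$ is compatible with open covers. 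So we may assume $\tU=\Spec\tA$ and $U$ quasi-compact open. First let $a\in\sO(\tU^{\rm int})$, so $a$ satisfies a monic equation over $\tA$ on $U$. For any $(x,v,\epsilon)$, the map $\epsilon$ sends $\tA$ into $\sO_v$, so $a(x)$ is integral over $\sO_v$ and hence lies in $\sO_v$, since valuation rings are integrally closed.

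For the converse, take $a\in\sO(U)$ satisfying the valuative condition and suppose $a$ is not integral over $\tA$. Consider the ring $\tA[a^{-1}]$ inside $\sO(U)[a^{-1}]$, i.e.\ work on $U_a=\{a\neq 0\}\subset U$. If $a$ were not integral, the classical argument gives that $a^{-1}$ is a non-unit in $\tA[a^{-1}]$, where $\tA$ here means its image in $\sO(U_a)$. In the scheme setting I would rather argue as follows. Let $\tU'=\Spec\tA[T]$, and let $\tU''$ be the closure of the graph $U\to\tU\times\P^1$ given by $a$. Non-integrality means $\tU''\to\tU$ is not finite over the point $\infty$, so $\infty\in\P^1$ meets $\tU''$. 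Pick a point of $\tU''\cap\{\infty\}$; it is a specialization of a point of (the image of) $U$. Choose a valuation ring $\sO_v$ of $k(x)$, for some $x\in U$, dominating that specialization, using the standard existence of valuations dominating a local ring under a specialization. This gives $(x,v,\epsilon)\in\Spa(U,\tU)$ with $a(x)^{-1}\in\fm_v$, a contradiction. The main obstacle is making this specialization argument clean when $U$ is not integral or is non-noetherian. Reducing to a closed point over $\infty$ and then to $\Spec$ of a local ring of $\tU''$ with a generization in $U$ should handle it, via Stacks-project lemmas on valuation rings dominating a specialization. Equivalently, one can run the Bourbaki argument pointwise using the quasi-compactness of $\Spa$.

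For the second statement, the inclusion $\sO^t\subset\sO^{div}$ holds by construction, so we need equality. Take $a\in\sO^{div}(U,\tU)$: then $a$ lies in $\sO_v$ for all discrete valuations on generic points $\eta$ of $U$ with center on $\tU$. Replacing $\tU$ by $\tU^{\rm int}$, which is finite over $\tU$ since $\tU$ is Nagata and $U$ is normal, hence normal and Nagata, it suffices to show that $a$ extends to a regular function on the normal scheme $\tU^{\rm int}$. By normality (Serre's criterion) and Nagata finiteness, such a scheme is locally an intersection of its localizations at height one primes. Those local rings are DVRs centered on $\tU$ at generic points of $U$, so $a$ lies in all of them, and hence $a\in\sO(\tU^{\rm int})=\sO^t(U,\tU)$. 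The key point to check is that normal noetherian local domains satisfy $A=\bigcap_{\mathrm{ht}\,\fp=1}A_\fp$, which holds for normal noetherian domains (Krull); $\tU^{\rm int}$ is noetherian if $\tU$ is, which I would assume implicitly via Nagata.
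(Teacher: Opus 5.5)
Your proposal is correct, and for the first equality it takes a different route from the paper. Both arguments rest on the fact that in degree $0$ the tame condition reduces to $a(x)\in\sO_v$ for all $(x,v,\epsilon)\in\Spa(U,\tU)$, because $\sO_w\cap k(x)=\sO_v$. You state this explicitly; the paper only uses it implicitly at the end.

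\textbf{The paper's route.} It uses Lemma~\ref{lem:blow-up-inv} and a covering argument to reduce to $(U,\tU)=(\Spec A,\Spec\tA)$ with $\tA$ integrally closed in $A$. It then proves a purely algebraic claim, a global form of Bourbaki's description of the integral closure as an intersection of valuation rings. For $x\in A\setminus\tA$ it picks a maximal ideal $\fm\ni 1/x$ of $C=\tA[1/x]\subset A_x$. Flatness of $C_\fm$ gives a prime $\fp$ of $A$ with $C\cap\fp A_x\subsetneq\fm$. A valuation of $\Frac(A/\fp)$ centered at the image of $\fm$ then has $v(x)<0$.

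\textbf{Your route.} You take the closure of the graph of $a$ in $\P^1_{\tU}$. If it misses the section at infinity, it is proper and affine, hence finite, over $\tU$, so $a$ is integral. Otherwise a point over $\infty$ yields a valuation with $v(a(x))<0$. This is the valuative criterion in disguise. It is shorter and works directly for $\tU$ affine and $U$ merely quasi-compact, with no reduction to $U$ affine or to $\tU=\tU^{\rm int}$. The paper's version stays inside commutative algebra.

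\textbf{The obstacle you flag does not arise.} Since $U\to\P^1_{\tU}$ is quasi-compact, every point of the closure of its image is a specialization of an image point, with no noetherian or integrality hypotheses. The standard lemma attaching a valuation ring to a specialization lets you take its fraction field to be exactly $k(x)$. All of $\Spec\sO_v$ then maps into the affine chart around $\infty$, which gives $a(x)\neq 0$ and $1/a(x)\in\fm_v$.

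\textbf{Second statement.} Your argument is the same as the paper's: pass to the normal noetherian $\tU^{\rm int}$, which has the same $\Spa$ by the valuative criterion. Then use, componentwise, that a normal noetherian domain is the intersection of its localizations at height-one primes.
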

\begin{proof}
By Lemma \ref{lem:blow-up-inv} we can assume $\tU=\tU^{\rm int}$.
We notice that both sides of the first equality in the statement 
are contained in $\sO(U)$  and that this "$\supset$" inclusion holds by definition of the left hand side.
As both sides are sheaves on $\tU$ it suffices to check the other inclusion for $\tU$ affine.
As an affine open cover $U=\cup_i U_i$ induces an affine open cover 
$\tU=\cup_i \tU_i^{\rm int}$ we can assume $(U,\tU)=(\Spec A, \Spec \tA)$ with $\tA$ integrally closed  in $A$.
Let $x\in A\setminus \tA$. 

\begin{claim}
There exist a  prime ideal $\fp$ in $A$ and a valuation $v$ on $K=\Frac(A/\fp)$ such that 
$\tA\to A\to K$ factors through $\cO_v$ and $v(x)<0$. 
\end{claim}

Admitting the claim, we also have $w(x)<0$ for all (tame) extensions $(L,w)/(K,v)$ of valuation fields and $x$ cannot lie in $\sO^t(U,\tU)$, which completes the proof of the lemma.

\medskip

We prove the claim. Denote by $C=\tA[1/x]$ the subring of $A_x$ generated by the image of $\tA$ and $1/x$.
As $x$ is not integral over $\tA$ it follows from  \cite[VI, \S1, no. 2, Lemma 1]{BourbakiCA}, 
that $A_x$ is not the zero-ring and  that there exists a maximal ideal $\fm\subset C$ such that
$1/x\in \fm$ and  $\fm\cap \tA$ is a maximal ideal of $\tA$.
Consider the  localization $C_\fm$ of $C$. By the flatness of $C_\fm$ over $C$,
the inclusion $C\inj A_x$ induces an injection of rings
$C_\fm=C\otimes_C C_\fm \inj A_x\otimes_C C_{\fm}$. As $C_{\fm}$ is not the zero ring, the ring 
$A_x\otimes_C C_{\fm}$ is not zero and hence has a prime ideal; 
it corresponds to a prime ideal $\fp$ of $A$, which does not contain $x$ and has empty intersection with
$C\setminus \fm$. Set $\fp_0:=C\cap \fp A_x$. By construction we have $\fp_0\subset \fm$, and in fact
this inclusion is strict as else we would have $1/x\in \fp A_x$ since $1/x\in \fm$. 
Thus, $C/\fp_0$ is not a field.
Set $K_0:=\Frac(C/\fp_0)\subset K:=\Frac(A_x/\fp A_x)=\Frac(A/\fp)$.
Let $v_0$ be a valuation on $K_0$ such that $C/\fp_0\subset \sO_{v_0}\subset K_0$
and that $\fm_{v_0}\cap C/\fp_0$
is the image of $\fm$ in $C/\fp_0$. 
Let $v$ be a valuation on $K$ extending $v_0$.
Thus we obtain a commutative diagram
\[
   \begin{tikzcd}
   \tA\ar[r]\ar[d] & C/\fp_0\ar[r]\ar[d] & \sO_{v_0}\ar[r] & \sO_v\ar[d]\\
  A\ar[r] & A_x/\fp A_x\ar[rr] &  & K.
   \end{tikzcd}
  \]
Note $v(x)=v_0(x)<0$ since $1/x\in \fm$ so that its image in $C/\fp_0$ is in $\fm_{v_0}$.
This completes the proof of the claim.

For the last statement, we observe that as $\tU$ is Nagata and $U$ is normal
the integral closure $\tU^{int}$ is finite over $\tU$ and hence is locally noetherian again. As $U$ is normal so is $\tU^{int}$. We may therefore assume that $\tU^{int}$ is noetherian, integral and normal.
Hence
\[\sO^t(U,\tU)=\sO(\tU^{\int})=\bigcap_{x\in  {\tU}^{int,(1)}}\sO_{\tU^{int},x}= \sO^{div}(U,\tU),\]
where $\tU^{int, (1)}$ is the set of 1-codimensional points in $U^{int}$ and  
where the last equality follows from $\sO^{div}(U,\tU)=\sO^{div}(U,\tU^{int})$ and the definition of $\sO^{div}$, 
see \ref{para:example-tame-sheaves1.5}.
\end{proof}

\begin{rmk}\label{rmk:Ot}
Combining Lemma \ref{lem:Gat} and Theorem \ref{thm:tame-vs-etale}, we conclude that for $S$ a qcqs scheme 
and $X$ a normal separated $S$-scheme of finite type and $\tX$ any $S$-compactification, the following isomorphism holds for all $i\geq 0$:\[
H^i((X,\tX)_t,\cO^t) \cong \colim_{\tY} H^i(\tY_{\Zar},\cO_{\tY}),
\] 
where the colimit is over all quasi-modifications $(X,\tY)\to (X,\tX)$ with $\tY$ normal. An analogous result for certain adic spaces was obtained in  
\cite[Corollary 13.6]{Huebner2021}.
\end{rmk}

\section{Tame big de Rham--Witt complex}\label{sec:9}

In this section we define the tame big de Rham--Witt complex for any quasi-compact open immersion $X\subset \tX$ of qcqs schemes,
see Definition \ref{def:DRWOmegaplus}, generalizing the  tame sheaves $\Omega^{\bullet, t}_{-/\Z}$ and $W_n\Omega^{\bullet, t}$ 
introduced in  \ref{para:example-tame-sheaves}. We compute the big de Rham--Witt complex for local rings in the tame site, 
see Proposition \ref{prop:DRWOmegaplus} below. In the next section 
we will use these results to analyse the tame cohomology of 
the tame $p$-typical de Rham--Witt complex in positive characteristic.

\begin{para}\label{para:dRW}
We denote by $\W_S\Omega^q_A$ the degree $q$-part of the big de Rham--Witt complex of a ring $A$ at the truncation set $S$,
defined by Hesselholt in \cite[Theorem B]{Hesselholt-dRW}. If $X$ is a scheme, then we denote by $\W_S\Omega^q_X$ 
the \'etale sheaf on $(\rm  Sch/X)_{\et}$ associated to the presheaf
\[(T\to X)\mapsto \W_S\Omega^q_{\Gamma(T,\sO_T)}.\]
For $S$ a finite truncation set, it follows from the fact that $\W_S(A)\to \W_S(B)$ is \'etale if $A\to B$ is 
(see \cite[Theorem 2.4]{van-der-Kallen} or \cite[Theorem B]{Borger}, cf. also \cite[Theorem 1.25]{Hesselholt-dRW}), 
\'etale base change \cite[Theorem C]{Hesselholt-dRW}, and descent that 
\[\Gamma(\Spec A, \W_S\Omega^q_X)=\W_S\Omega^q_A, \quad \text{for any }  \Spec A\to X.\]
In particular $\W_{\{1\}}\Omega^q_X$  is the usual sheaf of absolute Kähler differentials of degree $q$.
Note however, that $\W_S\Omega^*_A$ is not a complex in general as $dd(a)=\dlog[-1]da$, 
for $a\in \W_S\Omega^*_A$ 
and $\dlog[-1]$ does not vanish in general. 
If $X$ is an $\F_p$-scheme, then 
\[\W_{\{1,p,\ldots, p^{n-1}\}}\Omega^q_X=W_n\Omega^q_X\]
is the degree $q$ and length $n$ part of the $p$-typical de Rham--Witt complex of Bloch--Deligne--Illusie \cite{IllusiedRW},
see  \cite[Remark 4.2(c)]{Hesselholt-dRW}. Furthermore, 
in this case the big de Rham--Witt complex at a general truncation set decomposes
as a product of $p$-typical ones, see \cite[Corollary 1.2.6]{Hesselholt-Madsen-NilEnd}.
\end{para}

\begin{lemma}\label{lem:dRW-quotient}
Let $A$ be a ring and $I\subset A$ an ideal.
Then the kernel of the natural surjection $\W_S\Omega^*_A\to \W_S\Omega^*_{A/I}$ is the  graded ideal
$J_S^*$ which in degree $q$ is given by 
\[J_S^q:= \W_S(I)\cdot \W_S\Omega^q_A+d(\W_S(I)\cdot \W_S\Omega^{q-1}_A),\]    
where $\W_S(I):=\Ker(\W_S(A)\to \W_S(A/I))$. 
\end{lemma}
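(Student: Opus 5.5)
The plan is to show that $J_S^*$ is a differential graded ideal contained in the kernel, and then to construct an inverse map $\W_S\Omega^*_{A/I}\to \W_S\Omega^*_A/J_S^*$ using the universal property of the big de Rham--Witt complex. Since $\W_S(A)\to\W_S(A/I)$ is surjective, the map $\W_S\Omega^*_A\to\W_S\Omega^*_{A/I}$ is surjective. Indeed, $\W_S\Omega^*_{A/I}$ is generated as a graded ring by $\W_S(A/I)$ and $d\W_S(A/I)$, which follows from Hesselholt's construction as a quotient of $\Omega^*_{\W_S(A)}$.

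First I will check that $J_S^*$ is a two-sided graded ideal of $\W_S\Omega^*_A$ stable under $d$. Products $\omega\cdot d(a\eta)$ with $a\in\W_S(I)$ can be rewritten by the Leibniz rule as $\pm d(a\omega\eta)\mp d\omega\cdot a\eta$, which lies in $J_S$. Stability under $d$ needs care because $dd=\dlog[-1]d$ rather than $0$. Still, $dd(a\eta)=\dlog[-1]\cdot d(a\eta)$ lies in $J_S$, since $J_S$ is an ideal. Clearly $J_S^*$ lies in the kernel, because $\W_S(I)$ maps to zero and the map commutes with $d$.

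Next I will show that $J_S^*$ is stable under the Frobenius maps $F_n$ and Verschiebungen $V_n$, and under the restrictions $R^S_{T}$. Here $\W_S(I)$ is stable under $F_n$, $V_n$ and $R$ because these are functorial ring operations or additive maps compatible with the quotient. I will then use the relations $F_n d V_n=d$ and $F_n d[a]=[a]^{n-1}d[a]$, together with the fact that for $a\in\W_S(I)$ one can write $F_n(d a)$ via the Witt-vector formula $F_n d(a)= $ (an element in $\W(I)\cdot\W\Omega^1 + d\W(I)$). This formula is obtained by decomposing $a=\sum_m V_m[a_m]$ with $a_m\in I$, which is possible since $\W_S(I)$ consists of Witt vectors with coordinates in $I$. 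Then $F_ndV_m[a_m]$ is computed by the standard formulas, e.g. $F_n dV_m[x]=V_{m/c}F_{n/c}\dots$ with $c=\gcd(m,n)$, and these land in $J$ because $V$ preserves $J$ via $V(\omega\, F\eta)=V\omega\cdot\eta$.

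With these stabilities, $\W_S\Omega^*_A/J_S^*$ is a big Witt complex over $A/I$: its degree-zero part is $\W_S(A)/\W_S(I)=\W_S(A/I)$, and all structure maps descend. The universal property of $\W_S\Omega^*_{A/I}$ as the initial Witt complex over $A/I$ (Hesselholt, Theorem B) then gives a map $\W_S\Omega^*_{A/I}\to\W_S\Omega^*_A/J_S^*$. This map is inverse to the induced surjection because both composites are maps of Witt complexes that are the identity in degree $0$, and both sides are generated by degree $0$ under products and $d$.

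The main obstacle is the $F_n$-stability, namely showing that $F_n d(V_m[x])\in J_S$ for $x\in I$. I would first try to handle it through the explicit formula $F_n dV_m[x]= V_{m/c}(\,[x]^{n/c-1}d[x]\,)$-type expressions (up to $\dlog[-1]$ terms), each visibly in $\W(I)\Omega+d(\W(I)\Omega)$ after $V$-stability. If that becomes messy, the fallback is to reduce to the universal case $A=\Z[x_i]$ with $I$ generated by the $x_i$, where torsion-freeness lets one verify the claim inside ghost coordinates.
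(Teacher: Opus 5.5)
Your proposal follows the paper's proof: you show that $J_S^*$ is a graded ideal stable under $d$, $F_n$ and $V_n$ (the $F$-step via $a=\sum_m V_m([a_m])$ with $a_m\in I$ and the explicit formula for $F_m dV_n([a])$ with its $\dlog[-1]$ correction terms), so that $\W_{-}\Omega^*_A/J^*_{-}$ is a Witt complex over $A/I$ and the universal property supplies the inverse; your $d$-stability argument via $dd\omega=\dlog[-1]\,d\omega$ is even slightly quicker than the paper's. The one step you leave thin is $V_m(J^*_S)\subset J^*_T$: the projection formula $V_m(xF_m(y))=V_m(x)y$ only applies after you rewrite each $db_i$ as $F_m dV_m(b_i)-(m-1)b_i\dlog[-1]$ (the big-Witt form of your relation $F_n dV_n=d$) and use $F_m(\dlog[-1])=\dlog[-1]$ and $\dlog[-1]\cdot\dlog[-1]=0$, so that $\W_S(I)\cdot\W_S\Omega^q_A$ is spanned by elements $aF_m(\eta)$ with $a\in\W_S(I)$, which is exactly how the paper argues.
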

\begin{proof}
This is a version of \cite[Lemma 2.4]{Geisser-Hesselholt} for the big de Rham--Witt complex.
Note  that by the Leibniz rule \cite[Definition 4.1 (i)]{Hesselholt-dRW} we have 
\eq{lem:dRW-quotient0}{J_S^q= \W_S(I)\cdot \W_S\Omega^q_A+d(\W_S(I))\cdot \W_S\Omega^{q-1}_A}
We use this presentation of $J^q_S$ in the following.
We get a functor from truncation sets to anticommutative graded rings 
$S\mapsto \W_S\Omega^*_A/ J_S^*$ and as in {\em loc. cit.} it suffices to show that the differential $d$, Frobenius $F_n$, and Verschiebung $V_n$
on $\W_{-}\Omega^*_A$ induce well-defined maps on $\W_{-}\Omega^*_A/ J_{-}^*$. Indeed, in this case 
the natural surjection of Witt complexes $\W_{-}\Omega^*_A\to \W_{-}\Omega^*_{A/I}$ induces a surjection
of Witt-complexes $\W_{-}\Omega^*_A/J_{-}^*\to \W_{-}\Omega^*_{A/I}$ and the universal property of the de Rham--Witt complex 
yields the inverse map.

Every element in $J_S^*$ can be written as a sum of elements
\[V_{n}([a])\cdot \beta \quad \text{and} \quad dV_n([a])\cdot \beta,\]
where $\beta\in \W_S\Omega^*_A$, $a\in I$,  $[a]\in \W_{S/n}(I)$ denotes the multiplicative lift of $a$, and $V_n$ 
is the $n$th Verschiebung.  By \cite[Definition 4.1 (i)]{Hesselholt-dRW} we have 
\eq{lem:dRW-quotient1}{d(V_n([a])\beta)= dV_n([a]) \beta+ V_n([a]) d\beta}
and 
\[d(dV_n([a])\beta)= \dlog[-1]dV_n([a])\beta- dV_n([a]) d\beta= -dV_n([a])(\dlog[-1]\beta+d\beta).\]
Hence $d(J^*_S)\subset J^{*+1}_S$. As $F_m: \W_S\Omega^*_A\to \W_{S/m}\Omega^*_A$ ($m\ge 1$) is a ring map 
which restricts to a map $\W_S(I)\to\W_{S/m}(I)$ the containment $F_m(J^*_S)\subset J^*_{S/m}$ follows from 
\begin{align}
F_mdV_n([a]) &=  idF_{m_1}V_{n_1}([a])+jF_{m_1}V_{n_1}d([a]) 
                                                      +(c-1)\dlog[-1] F_{m_1}V_{n_1}([a]) \label{lem:dRW-quotient2}\\
             &=  idV_{n_1}([a^{m_1}])+jV_{n_1}([a]^{m_1-1}d[a]) 
                                                        + (c-1) V_{n_1}([a]^{m_1})\dlog[-1] \notag\\
             &=  idV_{n_1}([a^{m_1}]) +jV_{n_1}([a]^{m_1-1})dV_{n_1}([a])
              +(c-1-jn_1+j)V_{n_1}([a]^{m_1})\dlog[-1],             \notag
\end{align}
where $c={\rm gcd}(m,n)$, $m=cm_1$, $n=cn_1$,  and $i,j\in \Z$ satisfy $im+nj=c$.
Here the first equality is the third equation in \cite[Lemma 4.3]{Hesselholt-dRW}, 
the second equality uses $V_{r}F_{s}=F_{s}V_{r}$, for $(r,s)=1$, $F_{s}([a])= [a]^{s}$, and $F_s(d[a])=[a]^{s-1}d[a]$,
and the third equality uses $F_rdV_r([a])=d[a]+ (r-1)\dlog[-1]\cdot [a]$, $F_r(\dlog[-1])=\dlog[-1]$, 
and $V_r(F_r(x)y)=xV_r(y)$, see \cite[Definition 4.1, Lemma 4.3]{Hesselholt-dRW}.
(If $m_1=1$ we can choose $i=1$ and $j=0$ and deduce from the second equality  that $F_mdV_n([a])$ lies in  $J_{S/m}^1$.)
It remains to show $V_m(J^*_{S})\subset J^*_T$ for any truncation set $T$ with $T/m=S$. 
From $\eqref{lem:dRW-quotient1}$ and $V_md=mdV_m$ we see that it suffices to show
$V_m(a\beta)\in J^*_T$, for any $a\in \W_S(I)$. As by construction of the big de Rham--Witt complex a form $\beta$
is a sum of elements $b_0d b_1\cdots db_{q}$ with $b_i\in \W_S(A)$ and since  $a b_0\in \W_S(I)$
it suffices to consider elements $\beta=db_1\cdots db_q$. 
As $db_i= F_mdV_m(b_i)- (m-1)b_i\dlog[-1]$  and $\dlog[-1]\cdot\dlog[-1]=0$ \cite[Lemma 4.3]{Hesselholt-dRW} we find
\begin{align*}
    adb_1\cdots db_q & =a F_m\left(dV_m(b_1)\cdots dV_m(b_q)\right)\\
& -(m-1)\sum_{i=1}^q ab_i F_m\left(dV_m(b_1)\cdots \underbrace{\dlog[-1]}_{i\text{th spot}}\cdots dV_m(b_q)\right). 
\end{align*}
Thus $V_m(adb_1\cdots db_q)\in J_T^*$ follows from the formula $V_m(xF_m(y))=V_m(x)y$. 
This completes the proof of the lemma.
\end{proof}

In the following, the {\em truncation set $S$ is always finite}.

\begin{para}\label{para:log-dRW}
Let $(A,\fm_A)$ be a local ring with residue field $K=A/\fm_A$. Let $v$ be a valuation on $K$ 
with valuation ring $\sO_v$ and set $\tA=A\times_K \sO_v$. 
Note that $\fm_A$ is a prime ideal of $\tA$.
In this situation, we define $\W_S\Omega^q_{\tA}(\log)$
to be the degree $q$-part of the graded  $\W_S\Omega^*_{\tA}$-subalgebra of $\W_S\Omega^*_A$ generated 
by forms $\dlog[a]$, with $a\in A^\times$, where $[a]\in \W_S(A)$ denotes the multiplicative lift. 

This extends the definition of $\Omega^q_{\tA}(\log)$ from \ref{para:example-tame-sheaves}\ref{para:example-tame-sheaves2}.
Note that $\W_S\Omega^0_{\tA}(\log )=\W_S(\tA)$.
Moreover, $d$, $F_n$, and $V_n$ restrict to maps on $\W_{-}\Omega^*_{\tA}(\log)$. 
Indeed for $F_n$ this follows from the fact that $F_n$ is a ring map and  the formula $F_n\dlog[a]=\dlog[a]$;
for $V_n$ this follows from the latter formula and the $F_n$-linearity of $V_n$; 
for $d$ this follows from the Leibniz rule and the formula
\eq{para:log-dRW1}{d\dlog[a]=-\dlog[a]\dlog[a]+\dlog[-1]\dlog[a]=2 \dlog[-1]\dlog[a]=0,}
where the first equality uses Leibniz rule and $dd[a]=\dlog[-1]d[a]$ and the second
equality uses $d[a]d[a]=-\dlog[-1]F_2d[a]=-[a]\dlog[-1]d[a]$.
\end{para}

\begin{lemma}\label{lem:locetbc-log}
Let $A,\fm_A, K=A/\fm_A, v,\tA$  be as in \ref{para:log-dRW} above. Let $\tA\to\tB$ be an \'etale local homomorphism of local rings.
Then $\tB/\fm_A \tB$ is a valuation ring $\sO_w$, which is unramified over $\sO_v$ and $\tB=B\times_L \sO_w$,
where $B=\tB\otimes_{\tA} A$ is a local ring with maximal ideal $\fm_A B$ and residue field  $L=\Frac(\sO_w)$. Moreover, 
$\W_S(\tB)$ is a faithfully flat \'etale $\W_S(\tA)$-algebra and  
multiplication induces a canonical isomorphism
\eq{lem:locetbc-log1}{\W_S(\tB)\otimes_{\W_S(\tA)} \W_S\Omega^q_{\tA}(\log)\xrightarrow{\simeq} 
\W_S\Omega^q_{\tB}(\log).}
 \end{lemma}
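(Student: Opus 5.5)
The proof has three parts: the ring-theoretic description of $\tB$, the Witt vector statement, and the isomorphism \eqref{lem:locetbc-log1}.

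\textbf{Structure of $\tB$.} Set $\fp:=\fm_A\subset\tA$, a prime ideal with $\tA/\fp=\sO_v$ and $\tA_\fp=A$. Since $\tA\to\tB$ is étale, $\sO_v=\tA/\fp\to\tB/\fp\tB$ is étale and local. An étale local extension of a valuation ring is again a valuation ring $\sO_w$, unramified over $\sO_v$; in particular it is a domain with fraction field $L$ finite separable over $K$.

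Now $B=\tB\otimes_{\tA}A$ is the localization of $\tB$ at the multiplicative set $\tA\setminus\fp$. The ideal $\fp\tB$ is prime because its quotient is $\sO_w$, a domain. Because $\fp\tB$ lies over $\fp$, the ring $B$ is local with maximal ideal $\fm_AB$ and residue field $L$.

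To obtain $\tB=B\times_L\sO_w$, I would reduce to showing $\fp\tB\xrightarrow{\simeq}\fp B$, as in the proof of Proposition \ref{prop:tamely-acyclic}:
\begin{itemize}
\item Injectivity of $\tB\to B$ follows from flatness, since $\tA\to A$ is injective.
\item Since $\tA\to A$ is the localization at $\fp$, flatness gives $\fp\tB\otimes_{\tA}A=\fp B$.
\item We have $\fp=\fp A$. Also $\fp\tB=\fp\otimes_{\tA}\tB$ by flatness. Hence
\[\fp B=\fp A\otimes_{\tA}\tB=\fp\otimes_{\tA}\tB=\fp\tB.\]
\end{itemize}
This gives the cartesian square.

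\textbf{Witt vectors.} By van der Kallen's theorem \cite[Theorem 2.4]{van-der-Kallen}, $\W_S(\tA)\to\W_S(\tB)$ is étale, since $S$ is finite. For faithful flatness, I would use that a flat local homomorphism is faithfully flat. The maximal ideals of $\W_S(\tA)$ lie over those of $\tA$, because $\W_S(\fm)$ is nil modulo the ghost components. Therefore $\Spec\W_S(\tB)\to\Spec\W_S(\tA)$ is surjective on closed points, as $\tB$ is local and $\tA\to\tB$ is local.

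\textbf{The isomorphism \eqref{lem:locetbc-log1}.} By Hesselholt's étale base change \cite[Theorem C]{Hesselholt-dRW}, we have isomorphisms
\[\W_S(\tB)\otimes\W_S\Omega^q_{\tA}\cong\W_S\Omega^q_{\tB},\qquad \W_S(B)\otimes_{\W_S(A)}\W_S\Omega^q_A\cong\W_S\Omega^q_B.\]
Also $\W_S(\tB)\otimes_{\W_S(\tA)}\W_S(A)\cong\W_S(B)$, again by étale base change for Witt vectors. Tensoring the inclusion $\W_S\Omega^q_{\tA}(\log)\subset\W_S\Omega^q_A$ with the flat algebra $\W_S(\tB)$ shows that the map in \eqref{lem:locetbc-log1} is injective, with target in $\W_S\Omega^q_B$. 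Its image lies in $\W_S\Omega^q_{\tB}(\log)$, because $A^\times\subset B^\times$.

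The main obstacle is surjectivity. The image of \eqref{lem:locetbc-log1} is the $\W_S\Omega^*_{\tB}$-submodule generated by $\W_S\Omega^*_{\tA}(\log)$; it is in fact a subalgebra. It therefore suffices to show $\dlog[b]$ lies in it for every $b\in B^\times$. I would proceed as follows.
\begin{itemize}
\item Write $b=u\cdot c$ with $u\in\tB^\times$ and $c\in A^\times$, up to the appropriate factors. Then $\dlog[b]=\dlog[u]+\dlog[c]$, where $\dlog[u]=[u]^{-1}d[u]$ already lies in $\W_S\Omega^1_{\tB}$.
\item To produce this decomposition, reduce to the residue field. The valuation $w$ is unramified over $v$, so $L^\times=\sO_w^\times\cdot K^\times$ up to the value group. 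Since $w/v$ is unramified, $w(L^\times)=v(K^\times)$, hence $\bar b=\bar u\cdot\bar c$ with $\bar u\in\sO_w^\times$ and $\bar c\in K^\times$.
\item Lift $\bar c$ to $c\in A^\times$ and $\bar u$ to $u\in\tB^\times$; a lift into $\tB$ is invertible because $\tB$ is local over $\sO_w$. Then $b/(uc)\in1+\fm_AB\subset\tB^\times$, so it contributes a $\dlog$ of a unit of $\tB$.
\end{itemize}
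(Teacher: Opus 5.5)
Your treatment of the structure of $\tB$ and of the isomorphism \eqref{lem:locetbc-log1} is the paper's argument. Injectivity comes from tensoring $\W_S\Omega^q_{\tA}(\log)\subset\W_S\Omega^q_A$ with the flat algebra $\W_S(\tB)$ and using $\W_S(\tB)\otimes_{\W_S(\tA)}\W_S(A)\cong\W_S(B)$. Surjectivity reduces to $\dlog[b]$ via the factorization $b=cu(1+m)$ with $c\in A^\times$, $u\in\tB^\times$, $m\in\fm_AB$.

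You genuinely differ in the proof that $\W_S(\tA)\to\W_S(\tB)$ is faithfully flat. The paper notes that $\tA$ is a $\Z_{(p)}$-algebra and reduces to $p$-typical $W_n$. It then proves surjectivity on all of $\Spec$ by induction on $n$, using Borger's functorial integral map $W_n(\tA)\to W_{n-1}(\tA)\times\tA$ with nilpotent kernel. You only test closed points, which suffices for a flat map and avoids the $p$-typical decomposition. However, you need to state the input correctly: the ghost map $\W_S(\tA)\to\prod_{n\in S}\tA$ is integral with nil kernel. Hence every maximal ideal $\mathfrak M$ of $\W_S(\tA)$ equals $w_n^{-1}(\fm_{\tA})$ for some $n\in S$. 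Since $\tA\to\tB$ is local, $\mathfrak M\,\W_S(\tB)$ is then contained in the proper ideal $w_n^{-1}(\fm_{\tB})$. Your phrase that $\W_S(\fm)$ is ``nil modulo the ghost components'' does not express this. You should also drop the appeal to ``flat local implies faithfully flat'': $\W_S(\tA)$ is usually not local, e.g.\ $\W_S(\tA)\cong\tA^S$ when $\Q\subset\tA$.

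One step also needs repair. The fact that $\fp\tB$ lies over $\fp$ does not by itself make $B=(\tA\setminus\fp)^{-1}\tB$ local. A priori, primes $\fq\not\subset\fp\tB$ of $\tB$ with $\fq\cap\tA\subset\fp$ could give further maximal ideals. Prove $\fp B=\fp\tB$ first, as you do. Then $\fp B\subset\fm_{\tB}$ gives $1+\fp B\subset\tB^\times\subset B^\times$, so $\fp B$ lies in the Jacobson radical of $B$. Since $B/\fp B=\sO_w\otimes_{\sO_v}K=L$ is a field, $B$ is local with maximal ideal $\fm_AB$. This same inclusion $\fm_AB=\fm_A\tB\subset\fm_{\tB}$ is what you later use for $1+\fm_AB\subset\tB^\times$.
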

\begin{proof}
By assumption  $\sO_v=\tA/\fm_A\to \tB/\fm_A \tB$ is an \'etale local homomorphism and it follows from the arguments 
in \cite[Remark 6.1.12(vi)]{Gabber-Ramero} that $\tB/\fm_A \tB=\sO_w$ is a valuation ring, which is 
unramified over $\sO_v$ by \cite[Lemma 6.2.5]{Gabber-Ramero}. 
In particular, $\sO_w$ is the integral closure of $\sO_v$ in $L$ and hence 
$L=K\otimes_{\sO_v}\sO_w= A\otimes_{\tA} \tB/\fm_A \tB$. It follows that $B=A\otimes_{\tA}\tB$
is a local ring with maximal ideal $A\otimes_{\tA}\fm_A\tB=\fm_A B$, where we use for
the last equality that $A=\tA_{\fm_A}$ is a localization of $\tA$. On the other hand 
as $\tB$ is flat over $\tA$ and $\fm_A$ is an ideal of $\tA$ we have 
$\fm_A\otimes_{\tA} \tB=\fm_A\tB$. Thus $\fm_A B=\fm_A\tB$.
The commutative diagram with exact rows
\[\xymatrix{
0\ar[r] & \fm_A \tB\ar@{=}[d]\ar[r] & \tB\ar[r]\ar[d] & \tB/\fm_A\tB\ar@{=}[d]\ar[r] & 0\\
0\ar[r] & \fm_A B\ar[r] & B\times_L \sO_w \ar[r] & \sO_w\ar[r] & 0
}\]
therefore yields $\tB=B\times_L \sO_w$. 

As already noted in \ref{para:dRW}, the ring $\W_S(\tB)$
is an \'etale $\W_S(\tA)$ algebra. It is also faithfully flat if we show that
$\Spec \W_S(\tB)\to \Spec \W_S(\tA)$ is surjective. Let $p={\rm char}(\sO_v/\fm_v)$. 
Then $\tA$ is a $\Z_{(p)}$-algebra and hence it suffices to show that 
$\Spec W_n(\tB)\to \Spec W_n(\tA)$ is surjective for all $n\ge 1$, e.g., 
\cite[Proposition 1.10]{Hesselholt-dRW}. 
By \cite[8.1 Proposition]{Borger} there is an 
integral ring map $W_n(\tA)\to W_{n-1}(\tA)\times \tA$ with nilpotent kernel 
and which is functorial in $\tA$. Thus the induced map 
$\Spec (W_{n-1}(\tA)\times \tA)\to \Spec \, W_n(\tA)$ is surjective,
and similarly with $\tB$ instead of $\tA$. 
As $\Spec (W_{n-1}(\tB)\times \tB) \to \Spec (W_{n-1}(\tA)\times \tA)$ is surjective by induction,
we see that $\Spec W_n(\tB)\to\Spec W_n(\tA)$ is surjective as well.

Finally, we show that \eqref{lem:locetbc-log1} is an isomorphism.
Consider the composition 
\[\W_S(\tB)\otimes_{\W_S(\tA)} \W_S\Omega^q_{\tA}(\log)
\to \W_S(\tB)\otimes_{\W_S(\tA)} \W_S\Omega^q_{A}\xrightarrow{\simeq} \W_S(B)\otimes_{\W_S(A)} \W_S\Omega^q_A \xrightarrow{\simeq}\W_S\Omega^q_B\]
where the first map is injective as $\W_S(\tB)$ is flat over $\W_S(\tA)$,
the second map is an isomorphism as 
$\W_S(B)=\W_S(\tB\otimes_{\tA} A)=\W_S(\tB)\otimes_{\W_S(\tA)} \W_S(A)$, 
see \cite[9.4 Corollary]{Borger}, and the last map is an isomorphism by \'etale base change,
see \cite[Theorem C]{Hesselholt-dRW}. Hence \eqref{lem:locetbc-log1} is injective
and by \'etale base change for $\tB/\tA$ it remains to show that the elements 
$\dlog[b]$, for $b\in B^\times$, are in the image of \eqref{lem:locetbc-log1} for $q=1$.
Let $\bar{b}$ be the image of $b$ in $\sO_w$. As $\sO_w/\sO_V$ is unramified 
we find $a\in A^\times$ and $u\in \tB^\times$ such that $\bar{b}= \bar{a} \bar{u}$.
Thus $b=au(1+c)$, for some $c\in \fm_B$. Hence
\[\dlog [b] = \dlog[a] +\dlog [u(1+c)].\]
Clearly the first summand lies in the image of \eqref{lem:locetbc-log1} and  
as $u(1+c)\in \tB^\times$ the second summand lies in $\W_S\Omega^1_{\tB}$
and therefore also in the image of \eqref{lem:locetbc-log1} by \'etale base change for
$\tB/\tA$. This completes the proof.
\end{proof}

\begin{defn}\label{def:DRWOmegaplus}
Let $X\inj \tX$ be a quasi-compact open immersion between qcqs schemes.
We denote by $\W_S\Omega^{q,t}$ the tame sheaf on $(X,\tX)_t$ associated by \ref{para:constr-tame-sheaves} 
and Proposition \ref{prop:constr-tame-sheaves} to
\[\beta=\{\W_S\Omega^q_{\sO^h_{\tX,L,w}}(\log )\subset \W_S\Omega^q_{\sO_{X,L}^h}\}_{(L,w, \epsilon)\in \Vals_{\cX}}.\]
Considering only the tame extensions over the generic points of $X$ and discrete valuations yields the tame sheaf
$\W_S\Omega^{q,div}$, see  \ref{para:constr-tame-sheaves}\ref{para:example-tame-sheaves1.5}.
Note $\W_S\Omega^{q,t}\subset \W_S\Omega^{q,div}$. 

If $S=\{1\}$, these sheaves are equal to the sheaves $\Omega^{q,t}$ and $\Omega^{q, div}$ 
defined in \ref{para:example-tame-sheaves}\ref{para:example-tame-sheaves2}
and if $S=\{1, p,\ldots, p^{n-1}\}$ these sheaves are equal to the sheaves $W_n\Omega^{q,t}$ and $W_n\Omega^{q, div}$ defined 
in \ref{para:example-tame-sheaves}\ref{para:example-tame-sheaves3}.
By the above $\W_{-}\Omega^{\bullet,t}(U,\tU)$ and $\W_{-}\Omega^{\bullet,div}(U,\tU)$ are Witt complexes over 
$\sO(\tU)$, in the sense of \cite[Definition 4.1]{Hesselholt-dRW}, for all $(U,\tU)\in (X,\tX)_t$. 
\end{defn}

We aim to give a more explicit description of $\W_S\Omega^{q,t}$ and $\W_S\Omega^{q,div}$ 
in various situations of geometric and arithmetic interest.
This requires some preliminary work.

\begin{lemma}\label{lem:ses-log-dRW}
In the situation of \ref{para:log-dRW}, let $J^q_S$ be the kernel of 
$\W_S\Omega^q_A\to \W_S\Omega^q_K$.
Then the diagram with vertical maps the natural inclusions
\[\xymatrix{
0\ar[r] &J^q_S\ar@{=}[d]\ar[r] & \W_S\Omega^q_A\ar[r] &\W_S\Omega^q_K\ar[r]& 0\\
0\ar[r] & J^q_S\ar[r]    & \W_S\Omega^q_{\tA}(\log )\ar[r]\ar[u] & \W_S\Omega^q_{\sO_v}(\log )\ar[r]\ar[u] & 0
}\]
is commutative with exact rows and cartesian square on the right. 
Moreover,
\eq{lem:ses-log-dRW0}{J^q_S= 
\W_S(\fm_A)\cdot\W_S\Omega^q_{\tA}(\log)+ d(\W_S(\fm_A)\cdot\W_S\Omega^{q-1}_{\tA}(\log)).}
\end{lemma}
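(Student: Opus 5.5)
Since $\fm_A\subset \tA$ is an ideal of both $A$ and $\tA$, with $A/\fm_A=K$ and $\tA/\fm_A=\sO_v$, the plan is to apply Lemma \ref{lem:dRW-quotient} twice. For $A\to K$ it gives
\[J^q_S=\W_S(\fm_A)\cdot\W_S\Omega^q_A+d(\W_S(\fm_A)\cdot \W_S\Omega^{q-1}_A).\]
For $\tA\to\sO_v$ it gives the kernel $\tilde J^q_S$ of $\W_S\Omega^q_{\tA}\to\W_S\Omega^q_{\sO_v}$, given by the same formula with $\tA$ in place of $A$. Note that $\W_S(\fm_A)$ is the same ideal whether computed in $\W_S(A)$ or in $\W_S(\tA)$. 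Commutativity of the diagram is clear, since all maps are induced by functoriality and inclusions, and the log generators $\dlog[a]$, $a\in A^\times$, map to $\dlog[\bar a]$ with $\bar a\in K^\times$. The map $\W_S\Omega^q_{\tA}(\log)\to\W_S\Omega^q_{\sO_v}(\log)$ is surjective: $\W_S\Omega^*_{\tA}\to\W_S\Omega^*_{\sO_v}$ is surjective, and every $\bar a\in K^\times$ lifts to some $a\in A^\times$ because $A$ is local.

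The key step is to show that the right-hand side of \eqref{lem:ses-log-dRW0}, call it $J'$, equals $J^q_S$. The inclusion $J'\subset J^q_S$ is immediate because $\W_S\Omega^*_{\tA}(\log)\subset\W_S\Omega^*_A$. For the converse I would use the presentation \eqref{lem:dRW-quotient0}, so it suffices to show
\[\W_S(\fm_A)\cdot\W_S\Omega^q_A\subset J' \quad\text{and}\quad d\W_S(\fm_A)\cdot \W_S\Omega^{q-1}_A\subset J'.\]
Since $\W_S\Omega^*_A$ is generated by $\W_S(A)$ and $d\W_S(A)$, I need to absorb arbitrary elements of $A$ into $\tA$ together with log forms. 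Every $a\in A$ can be written either as $a\in\tA$ or as $a=u^{-1}t$ with $u\in A^\times$, and even more simply, since $A=\tA_{\fm_A}$, as $a=t/s$ with $t\in\tA$ and $s\in\tA\setminus\fm_A\subset A^\times$. Multiplicativity gives $[a]=[t][s]^{-1}$ and $d[a]=[a](\dlog[t]-\dlog[s])$ when $t$ is a unit, with a similar treatment otherwise.

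I expect the main obstacle to be handling general Witt vectors $V_n([a])$ and their differentials in $\W_S(A)$. The intended route is to use that $\W_S(\fm_A)$ is an ideal of $\W_S(A)$, so products of it with elements of $\W_S(A)$ lie again in $\W_S(\fm_A)$. One then moves $V_n$ out via $xV_n(y)=V_n(F_n(x)y)$ and uses $dV_n$ formulas such as \eqref{lem:dRW-quotient1} and \eqref{lem:dRW-quotient2}, reducing everything to elements of the form $V_n([m]\cdot \omega)$ and $dV_n([m]\cdot\omega)$ with $m\in\fm_A$ and $\omega$ a product of $d[t]$, $t\in\tA$, and $\dlog[s]$, $s\in A^\times$. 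Here one should use the identity $[m]d[a]=[ma]\dlog[a]$ for $a\in A^\times$, with $ma\in \fm_A$. The complication is that $V$ does not commute with these rewritings, and this is where a careful induction on the truncation set, or on the length, will be needed. As a fallback, one can reduce via \'etale base change to checking on generators, combined with the description of $J^q_S$ as spanned by $V_n([m])\beta$ and $dV_n([m])\beta$ given in the proof of Lemma \ref{lem:dRW-quotient}; for these one writes $\beta=V$-free products of $d[a]$ and absorbs each $d[a]$ as above using $[m]$.

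Granting \eqref{lem:ses-log-dRW0}, $J^q_S\subset\W_S\Omega^q_{\tA}(\log)$, and it maps to zero in $\W_S\Omega^q_K$. It remains to show that the kernel of $\W_S\Omega^q_{\tA}(\log)\to\W_S\Omega^q_{\sO_v}(\log)$ is contained in $J^q_S$. This kernel is contained in the kernel of $\W_S\Omega^q_{\tA}(\log)\to\W_S\Omega^q_{K}$, and that kernel lies in $\W_S\Omega^q_A$ mapping to zero in $\W_S\Omega^q_K$, hence in $J^q_S$. This gives exactness of the bottom row, granting injectivity $\W_S\Omega^q_{\sO_v}(\log)\subset\W_S\Omega^q_K$, which holds by definition as a subalgebra. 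Finally, the right square is cartesian by a diagram chase, since the two rows have the same kernel and the vertical maps are injective: if $\omega\in\W_S\Omega^q_A$ has image in $\W_S\Omega^q_{\sO_v}(\log)$, lift that image to some $\omega'\in\W_S\Omega^q_{\tA}(\log)$; then $\omega-\omega'\in J^q_S\subset\W_S\Omega^q_{\tA}(\log)$, and so $\omega\in\W_S\Omega^q_{\tA}(\log)$.
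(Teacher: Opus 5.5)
Your formal reductions are fine and agree with the paper. These are: commutativity; surjectivity on the right by lifting $\bar a\in K^\times$ to $a\in A^\times$; the fact that the kernel of the bottom row is $J^q_S$ once $J^q_S\subset \W_S\Omega^q_{\tA}(\log)$ is known; the diagram chase for cartesianness; and the easy inclusion $J'\subset J^q_S$. But the whole content of the lemma is the reverse inclusion $\W_S(\fm_A)\cdot\W_S\Omega^q_A\subset J'$. For it you only sketch an intended route, and you explicitly leave open the step you yourself call the main obstacle. So the proposal has a genuine gap at exactly the nontrivial point.

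Concretely, the problematic generators are $V_m([a])\,dV_n([b])$ with $a\in\fm_A$, $b\in A^\times$ and $n>1$. Here $dV_n([b])$ is not a log form over $\tA$. The identity $[m]\,d[b]=[mb]\dlog[b]$ does not apply, because $d$ hits $V_n([b])$ rather than a Teichm\"uller lift. Naive Leibniz manipulations are circular: $d(V_m([a])V_n([b]))$ only trades the element for $V_n([b])\,dV_m([a])$. Your fallback does not help either. $\W_S\Omega^*_A$ is not generated over $\W_S(A)$ by products of forms $d[a]$, so terms $dV_n([b])$ cannot be avoided, and \'etale base change plays no role here.

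The missing idea, as in the paper, is to write $V_m([a])\,dV_n([b])=V_m([a]\,F_m dV_n([b]))$. Then expand $F_mdV_n([b])$ by the second line of \eqref{lem:dRW-quotient2}, with $c=\gcd(m,n)$, $m=cm_1$, $n=cn_1$ and $im+jn=c$.
\begin{itemize}
\item The $j$-term and the $(c-1)$-term become $V_{mn_1}([a^{n_1}b^{m_1}])\dlog[b]$ and $V_{mn_1}([a^{n_1}b^{m_1}])\dlog[-1]$, which lie in $J'$.
\item If $n_1=1$, take $i=0$, so there is no $i$-term.
\item If $n_1\ge 2$, Leibniz and $V_md=m\,dV_m$ turn the $i$-term into $im\,dV_{mn_1}([a^{n_1}b^{m_1}])-iV_{mn_1}([b^{m_1}a^{n_1-1}]\,d[a])$. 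The first summand lies in $d\W_S(\fm_A)$. The second has coefficient $[b^{m_1}a^{n_1-1}]$ in $\W(\fm_A)$ precisely because $n_1\ge 2$, so it can be absorbed; the paper does this by writing $[a]=[a+1]-[1]+V_r(\alpha)$.
\end{itemize}
This settles degree one. The case $q\ge 2$ then needs a separate induction on $q$, using $da\,db_1\cdots db_{q-1}=d(a\,db_1\cdots db_{q-1})+a\gamma\dlog[-1]$. None of this is present in your proposal.
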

\begin{proof}
For the first statement it suffices  to show $J_S^q\subset \W_S\Omega^q_{\tA}(\log)$. 
Thus it remains to show \eqref{lem:ses-log-dRW0}. 
This "$\supset$" inclusion follows immediately from Lemma \ref{lem:dRW-quotient}.
As the right hand side in \eqref{lem:ses-log-dRW0}  is closed under $d$ (by the Leibniz rule and
the formula $dda=\dlog[-1]da$) it remains by Lemma \ref{lem:dRW-quotient} to show
\eq{lem:ses-log-dRW01}{\W_S(\fm_A)\cdot \W_S\Omega^q_A\subset 
\W_S(\fm_A)\cdot\W_S\Omega^q_{\tA}(\log)+ d(\W_S(\fm_A)\cdot\W_S\Omega^{q-1}_{\tA}(\log))=:I^q_S.}
We prove this by induction over $q$, the case $q=0$ being trivial. 
For $q=1$, we prove for later use the stronger statement 
\eq{lem:ses-log-dRW1}{\W_S(\fm_A)\cdot \W_S\Omega^1_A\subset 
\W_S(\fm_A)\cdot \W_S\Omega^1_{\tA}(\log)+ d\W^0_S(\fm_A)=:H^1_S\subset I^1_S,}
where $\W_S^0(\fm_A)=\ker(\W_S(\fm_A)\to \fm_A)$ is the kernel of the restriction map.
It suffices to show that any element of the form $V_m([a])dV_n([b])$ with $a\in \fm_A$, $b\in A$, 
and $n,m\in S$,  lies in $H^1_S$.
As $\fm_A\subset \tA$, we may assume $b\in A\setminus \fm_A=A^\times$. We have
\begin{align*}
V_m([a])dV_n([b])& = V_m([a]F_m dV_n([b]))\\
                 &= iV_m([a]dV_{n_1}([b]^{m_1}))+jV_m([a]V_{n_1}([b]^{m_1-1}d[b]))\\
                 &\phantom{=} + (c-1) V_m([a]V_{n_1}([b]^{m_1})\dlog[-1]),
\end{align*}
where $c={\rm gcd}(m,n)$, $m=cm_1$, $n=cn_1$,  and $i,j\in \Z$ satisfy $im+nj=c$.
Here the second equality follows from the second equality in \eqref{lem:dRW-quotient2} (with $a$ there replaced by $b$ here).
The third summand
\[(c-1)V_m([a]V_{n_1}([b]^{m_1})\dlog[-1])= (c-1)V_{mn_1}([a^{n_1}b^{m_1}])\dlog[-1]\]
clearly is contained in $H^1_S$, as is the second summand
\[jV_m([a]V_{n_1}([b]^{m_1-1}d[b]))= jV_{mn_1}([a^{n_1}b^{m_1}])\dlog[b].\]
If $n_1=1$, then we can take $j=1$ and $i=0$ and hence when considering the first summand we can assume $n_1\ge 2$.
Using the Leibniz rule and $V_md=mdV_m$ we can rewrite the first summand as
\[iV_m([a]dV_{n_1}([b]^{m_1}))= imdV_{mn_1}([a^{n_1}b^{m_1}])- iV_m(V_{n_1}([b]^{m_1}) d[a])\]
and it remains to show that $V_m(V_{n_1}([b]^{m_1}) d[a])$ ($n_1\ge 2$) is contained in $H^1_S$.
We can write 
\[[a]=[a+1]-[1] +V_r(\alpha),\]
for some $r>1$ in $S$ and $\alpha\in \W_{S/r}(\fm_A)$.
(Note that $\alpha$ lies in $\W_{S/r}(\fm_A)$ as $[a]+[1]-[a+1]$ maps to zero in $\W_S(A/\fm_A)$.)
Thus 
\begin{align}
V_m(V_{n_1}([b]^{m_1}) d[a]) &  = V_{mn_1}([b]^{m_1}[a]^{n_1-1} d[a])\label{lem:ses-log-dRW2}\\
                             & = V_{mn_1}([b]^{m_1}[a]^{n_1-1}[a+1])\dlog[a+1] + 
                                 V_{mn_1}([b]^{m_1}[a]^{n_1-1} dV_r(\alpha)).\notag
\end{align}
As $n_1\ge 2$, the first summand in this last equality is contained in $H^1_S$. Using 
\[dV_r(\alpha)= F_{mn_1}dV_{mn_1r}(\alpha)- (mn_1-1)\dlog[-1]V_r(\alpha),\]
see \cite[Definition 4.1(iv)]{Hesselholt-dRW}, we rewrite the second summand of the last equality in \eqref{lem:ses-log-dRW2}  as follows
\begin{multline*}
V_{mn_1}([b]^{m_1}[a]^{n_1-1} dV_r(\alpha))\\
= V_{mn_1}([b^{m_1}a^{n_1-1}])dV_{mn_1r}(\alpha)
- (mn_1-1)V_{mn_1r}\left([b^{m_1r}a^{(n_1-1)r}]\alpha\right)\dlog[-1],
\end{multline*}
which hence  lies in $H^1_S$ as well.
This completes the proof of claim \eqref{lem:ses-log-dRW1}.

Now we show \eqref{lem:ses-log-dRW01} for $q\ge 1$, i.e.,
for $a\in \W_S(\fm)$ and $\beta\in \W_S\Omega^q_A$
\[a\beta\in I^q_S.\]
We may assume that $\beta=\beta_1\beta_{q-1}$ with $\beta_1\in \W_S\Omega^1_A$ and 
$\beta_{q-1}=db_1\cdots db_{q-1}$, $b_i\in \W_S(A)$.
By \eqref{lem:ses-log-dRW1} we find $a_i, a_i'\in \W_S(\fm_A)$, 
and $\gamma_i\in \W_S\Omega^1_{\tA}(\log )$ such that
\[a\beta=a\beta_1\beta_{q-1}= \sum_i a_i'\gamma_i\beta_{q-1} + \sum_i d(a_i)\beta_{q-1}.\]
Thus by induction it remains to show  $dadb_1\cdots db_{q-1}\in I^q_S$, for $a\in \W_S(\fm_A)$.
By the Leibniz rule and the formula $ddx=\dlog[-1]dx$ we find an element 
$\gamma\in \W_S\Omega^{q-1}_A$ such that 
\[dadb_1\cdots db_{q-1}= d\left(a db_1\cdots db_{q-1}\right)+ a\gamma\dlog[-1],\]
which lies in $I^q_S$ by induction. This completes the proof of the lemma.
\end{proof}

\begin{lemma}\label{lem:DRW-val-tame-bc}
Let $(L,w)/(K,v)$ be a finite tame Galois extension of strictly henselian valuation fields.
Then we have the following equality of $\W_S(\sO_w)$-submodules of $\W_S\Omega^q_L$
\eq{lem:DRW-val-tame-bc0}{\W_S\Omega^q_{\sO_w}(\log )=
\W_S(\sO_w)\cdot \W_S\Omega^q_{\sO_v}(\log )+ d(\W_S(\sO_w))\cdot \W_S\Omega^{q-1}_{\sO_v}(\log ).}
\end{lemma}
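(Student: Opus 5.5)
We first dispose of the inclusion ``$\supset$'', which is immediate. By definition $\W_S\Omega^*_{\sO_w}(\log)$ is a graded $\W_S\Omega^*_{\sO_w}$-algebra containing $\dlog[a]$ for all $a\in L^\times$. In particular it contains $\dlog[a]$ for $a\in K^\times$. It is stable under $d$ by \eqref{para:log-dRW1} and the Leibniz rule. Hence it contains $\W_S\Omega^*_{\sO_v}(\log)$, and therefore also the right hand side of \eqref{lem:DRW-val-tame-bc0}.

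For ``$\subset$'', call the right hand side $M^q$. We first check that $M^*=\bigoplus_q M^q$ is a graded subring of $\W_S\Omega^*_L$ stable under $d$. Stability under $d$ uses $dd(x)=\dlog[-1]dx$ together with $\dlog[-1]\in\W_S\Omega^1_{\sO_v}(\log)$. Since $\W_S\Omega^*_{\sO_w}$ is generated as a ring by $\W_S(\sO_w)$ and $d\W_S(\sO_w)$, it then suffices to show that $\dlog[b]\in M^1$ for every $b\in L^\times$.

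This is where tameness enters. The extensions are strictly henselian and tame, so the residue fields agree and $[L:K]=e$ is prime to $p$, with $p$ the residue characteristic. Moreover $\Gamma_w/\Gamma_v$ is a finite group of order $e$, so $b^e=a u$ with $a\in K^\times$ and $u\in\sO_w^\times$. This gives
\[ e\,\dlog[b]=\dlog[b^e]=\dlog[a]+\dlog[u]. \]
Here $\dlog[a]\in\W_S\Omega^1_{\sO_v}(\log)$, and $\dlog[u]=[u^{-1}]d[u]\in \W_S(\sO_w)\,d\W_S(\sO_w)\subset M^1$.

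It remains to divide by $e$, which is the step where care is needed. If the residue characteristic $p$ is positive, then $\sO_w$ is a $\Z_{(p)}$-algebra. Hence $\W_S(\sO_w)$ is a product of $p$-typical Witt rings, by the decomposition cited in \ref{para:dRW} (or \cite[Proposition 1.10]{Hesselholt-dRW}), and $e$ is invertible in $\W_S(\sO_w)$. If $p=1$, the residue field has characteristic $0$, so $\Q\subset\sO_w$ and again $e$ is invertible in $\W_S(\sO_w)$. Since $M^1$ is a $\W_S(\sO_w)$-module, we conclude $\dlog[b]=e^{-1}(\dlog[a]+\dlog[u])\in M^1$.

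With $\dlog[b]\in M^1$ for all $b\in L^\times$, the subring $M^*$ contains all the generators of $\W_S\Omega^*_{\sO_w}(\log)$, which gives ``$\subset$'' and proves \eqref{lem:DRW-val-tame-bc0}.

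The main obstacle I expect is the closure of $M^*$ under products. A product of two terms of the form $x\,dy\,\omega$, with $x,y\in\W_S(\sO_w)$ and $\omega\in\W_S\Omega^*_{\sO_v}(\log)$, contains $dy\,dy'$. To place this back in $M^*$ one must express $\W_S\Omega^q_{\sO_w}$ in terms of $\W_S(\sO_w)$, $d\W_S(\sO_w)$ and forms coming from $\sO_v$. For this I would first try to show that $\sO_v\to\sO_w$ is ind-étale up to the log part. Concretely, write $\sO_w$ as generated over $\sO_v$ by elements $b$ with $b^e\in K$ and use $\dlog$-formulas to reduce $d\W_S(\sO_w)$ to $\W_S(\sO_w)\,d\W_S(\sO_v)$ plus $\W_S(\sO_w)\dlog[b]$. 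If that works, $M^*$ equals $\W_S(\sO_w)\cdot\W_S\Omega^*_{\sO_v}(\log)$ plus the $\dlog$ terms and is visibly a ring. Should the reduction fail, I would instead avoid the ring closure altogether: first prove $\W_S(\sO_w)\cdot\W_S\Omega^1_{\sO_w}\subset M^1$ using étale base change for the unit part, and then induct on $q$ as in the proof of Lemma \ref{lem:ses-log-dRW}.
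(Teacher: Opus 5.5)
Your arguments for ``$\supset$'', for the $d$-stability of $M^*$, and for $\dlog[b]$ via $b^e=au$ are correct. The last one is exactly how the paper handles elements $\alpha\cdot\dlog[c]$.

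There is, however, a genuine gap. You announce that you check $M^*$ is a graded subring, but you only verify stability under $d$, and at the end you concede that multiplicativity is the main obstacle. This is not a technicality; it is the substance of the lemma.

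Already closure under multiplication by $M^0=\W_S(\sO_w)$ amounts to $\W_S(\sO_w)\cdot d\W_S(\sO_w)\subset M^1$. In $M^1$ the exact forms $d\W_S(\sO_w)$ only carry coefficients from $\W_S(\sO_v)$, so this inclusion is not automatic. You use precisely this unproved inclusion when you put $\dlog[u]=[u^{-1}]d[u]$ into $M^1$. In degrees $\ge 2$ you would further need products $dx\,dy$ with $x,y\in\W_S(\sO_w)$.

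Your fallbacks do not close the gap. \'Etale base change is unavailable, since $\sO_v\to\sO_w$ is ramified as soon as $e>1$. Your first fallback points in the right direction (radicals whose $\dlog$ comes from $K$), but its target is wrong. Reducing $d\W_S(\sO_w)$ to $\W_S(\sO_w)\,d\W_S(\sO_v)+\W_S(\sO_w)\dlog[b]$ runs into the terms $dV_n([\gamma])$. The identity $n\,dV_n([\gamma])=V_n([\gamma])\dlog[\gamma]$ absorbs them only when $p\nmid n$. This is why the summand $d\W_S(\sO_w)$ has to stay on the right-hand side rather than be eliminated.

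The missing idea is an explicit computation with Kummer monomials.

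By \cite[Corollary 6.2.14]{Gabber-Ramero}, $L=K[b_1,\ldots,b_k]$ with $b_j^{r_j}=a_j\in K^\times$, $(r_j,p)=1$, and $\Gamma_L/\Gamma_K\cong\bigoplus_j\Z/r_j\Z$, with $w(b_j)$ mapping to the $j$th basis vector.

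The monomials $xb^I$ with $x\in K$ and $0\le i_j<r_j$ have pairwise distinct values modulo $\Gamma_K$. Hence every element of $\sO_w$ is a sum of monomials $xb^I\in\sO_w$. By induction on $S$, every element of $\W_S(\sO_w)$ is then a sum of terms $V_n([xb^I])$. Crucially,
\[\dlog[xb^I]=\dlog[x]+\textstyle\sum_j\frac{i_j}{r_j}\dlog[a_j]\in\W_S\Omega^1_{\sO_v}(\log).\]

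For monomials $\alpha,\beta$, write $V_m([\alpha])dV_n([\beta])=V_m([\alpha]F_mdV_n([\beta]))$ and apply the formula \eqref{lem:dRW-quotient2}. This gives
\[V_m([\alpha])\,dV_n([\beta])=im\,dV_{mn_1}([\gamma])+V_{mn_1}([\gamma])\bigl(j\dlog[\beta]-i\dlog[\alpha]+(c-1)\dlog[-1]\bigr),\]
where $c=\gcd(m,n)$, $m=cm_1$, $n=cn_1$, $im+jn=c$, and $\gamma=\alpha^{n_1}\beta^{m_1}$. The right-hand side lies in $\W_S(\sO_w)\cdot\W_S\Omega^1_{\sO_v}(\log)+d\W_S(\sO_w)$, which settles $q=1$.

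For $q\ge 2$ the paper inducts on $q$. Elements of $\W_S\Omega^q_{\sO_w}(\log)$ are sums of $\alpha\,d\beta$ and $\alpha\beta'\dlog[-1]$ with $\alpha$ of degree one. One then uses $\alpha\,d\beta=-d(\alpha\beta)+d(\alpha)\beta$, and writes $\alpha=b\gamma'+db'$ by the case $q=1$. This reduces everything to lower degrees, so multiplicativity is obtained inductively rather than assumed.
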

\begin{proof}
First note that by étale base change $\W_S\Omega^q_K\to \W_S\Omega^q_L$ is injective. 
Hence both sides of the statement are contained in $\W_S\Omega^q_L$ and clearly the right-hand side is contained in the left-hand side.
The statement is trivially true for $q=0$. 

We consider the case $q=1$.  
Let $p$ be the characteristic exponent of the residue field $\sO_v/\fm_v$. 
As $(L,w)/(K,v)$ is a tame extension,  \cite[Corollary 6.2.14]{Gabber-Ramero} yields:
$L=K[b_1,\ldots, b_k]$ with $b_i^{r_i}=a_i\in K^\times$, $r_i\ge 1$ and $(p,r_i)=1$, and there is an isomorphism 
of the quotient of the valuation groups $\Gamma_L/\Gamma_K$ onto $\Z/r_1\Z\oplus\ldots\oplus \Z/r_k\Z$, 
which sends $w(b_i)$ to $(0,\ldots, 0,1, 0,\ldots, 0)$ with 1 sitting at position $i$. 
Up to replacing $b_i$  and $a_i$ by their inverse we can assume $b_i\in \sO_w$ and $a_i\in \cO_v$.
Set $\Lambda:= \prod_{j=1}^k \{0,1,\ldots, r_j\}$.
We can write any $z\in L$ as  $z=\sum_I x_I b^I$, where $I=(i_1,\ldots, i_k)\in \Lambda$,
$x_I\in K$, and $b^I=b_1^{i_1}\cdots b_k^{i_k}$. 
If $z\in \sO_w$, then $c_I:=x_I b^I\in \sO_w$, for all $I$.
Indeed, we have $w(c_I)= v(x_I)+ \sum_j i_jw(b_j)$ in $\Gamma_L$ which maps to  the residue class of $(i_1,\ldots, i_k)$ under 
$\Gamma_L\to \Gamma_L/\Gamma_K\cong \bigoplus_j \Z/r_j\Z$. Hence all $w(c_I)$, for $I$ as above, 
are pairwise different and therefore $0\le w(z)=\min_I\{w(c_I)\}$. 
Thus any element in $\sO_w$ can be written as a sum of elements of the form 
$xb^I$, with $x\in K$ and  $I\in \Lambda$ such that $xb^I\in \sO_w$. 
By induction over the length of $S$ we see that any element in $\W_S(\sO_w)$ can be written 
as a sum of elements $V_n([xb^I])$ with $xb^I$ as above.

Hence to prove the statement in the case $q=1$ it suffices to show
that the elements 
\begin{enumerate}[label=(\arabic*)]
    \item\label{lem:DRW-val-tame-bc1}
    $V_m([xb^I])dV_n([y b^J])$, for $x,y\in K^\times$,    $I,J\in \Lambda$ with $xb^I, y b^J\in \sO_w$, $n,m\in S$, and
    \item\label{lem:DRW-val-tame-bc2} 
    $\alpha\cdot \dlog[c]$, for $c\in L^\times$, $\alpha\in \W_S(\sO_w)$,
\end{enumerate}
are contained in 
\eq{lem:DRW-val-tame-bc3}{\W_S(\sO_w)\cdot \W_S\Omega^1_{\sO_v}(\log )+ d\W_S(\sO_w).}
We start by considering the elements of type \ref{lem:DRW-val-tame-bc1}. 
Set $\alpha=xb^I$, $\beta=yb^J$ and $e={\rm gcd}(m,n)$ and write $m=em_1$, $n=en_1$, and let $i,j\in \Z$ with $im+jn=e$.
We have 
\[V_m([\alpha])dV_n([\beta]) = V_m([\alpha]F_mdV_n([\beta])).\]
Using the second equality in \eqref{lem:dRW-quotient2} to rewrite $F_mdV_n([\beta])$ and 
using further the relations $V_md=mdV_m$, $F_n(d[a])=[a]^n\dlog[a]$, $V_n(a)b=V_n(aF_n(b))$ and the Leibniz rule we find that the above element is equal to the following element in  $\W_S\Omega^1_L$ 
\[imdV_{mn_1}([\gamma])-iV_{mn_1}([\gamma])\dlog [\alpha] 
    + jV_{mn_1}([\gamma])\dlog[\beta]+(e-1)V_{mn_1}([\gamma])\dlog[-1],\]
where $\gamma=\alpha^{n_1}\beta^{m_1}\in \sO_w$. As $r_j$ is invertible in $K$ we have moreover
\[\dlog[\alpha]=\dlog[x]+\sum_{j=1}^k i_j \dlog[b_j]= \dlog[x]+\sum_{j=1}^k \frac{i_j}{r_j} \dlog[a_j]
\in \W_S\Omega^1_{\sO_v}(\log )\]
and similarly with $\dlog[\beta]$. Thus the elements of type \ref{lem:DRW-val-tame-bc1}
are contained in \eqref{lem:DRW-val-tame-bc3}.
Let $\alpha\cdot\dlog[c]$ be an element of type \ref{lem:DRW-val-tame-bc2}.
Let $r=r_1\cdots r_k$, which is invertible in $K$. Then $w(c^r)$ is mapped to zero in $\Gamma_L/\Gamma_K=\bigoplus_{j=1}^k \Z/r_j\Z$
and thus  we find a unit $u\in \sO_w^\times$ and an element $x\in K^\times$ with $c^r=ux$. Thus
\eq{lem:DRW-val-tame-bc4}{\alpha\cdot \dlog[c]=\tfrac{1}{r}\alpha\cdot\dlog[u]+ \tfrac{1}{r}\alpha\cdot\dlog[x].}
Here the second summand clearly is contained in \eqref{lem:DRW-val-tame-bc3} and 
the first summand is contained in $\W_S\Omega^1_{\sO_w}$, hence is a sum of elements of type  \ref{lem:DRW-val-tame-bc1}
and therefore lies in \eqref{lem:DRW-val-tame-bc3} as well. This completes the proof in the case $q=1$.

For the case $q\ge 1$, we denote by $R^q_S$  the right hand side of \eqref{lem:DRW-val-tame-bc0}.
Note  $R^q_S\subset \W_S\Omega^q_{\sO_w}(\log )$ and  $dR^{q-1}_S\subset R^q_S$, as follows 
from the Leibniz rule and the formula $ddx= \dlog[-1]dx$.
From this latter formula and \eqref{para:log-dRW1} it also follows that any element in
$\W_S\Omega^{q-1}_{\sO_w}(\log )$, which is a product of elements of the form $db$,  with $b\in\W_S(\sO_w)$, 
and $\dlog[c]$, with $c\in L^\times$, can be written as
$d(\beta)+ \beta'\dlog[-1]$ with $\beta, \beta'\in \W_S\Omega^{q-2}_{\sO_w}(\log)$.
Thus any element in $\W_S\Omega^{q}_{\sO_w}(\log )$ is a sum of elements
\[\alpha d\beta\quad \text{and}\quad  \alpha \beta'\dlog[-1]\quad \text{with }\alpha\in \W_S\Omega^1_{\sO_w}(\log), 
\beta, \beta'\in \W_S\Omega^{q-2}_{\sO_w}(\log ).\]
We show by induction over $q$ that these elements are in $R^q_S$.
By induction we have $\alpha\beta'\in R_S^{q-1}$, hence $\alpha\beta'\dlog[-1]\in R^q_S$.
By the Leibniz rule $\alpha d\beta= -d(\alpha\beta)+ d(\alpha)\beta$ and as $d(\alpha\beta)\in dR^{q-1}_S$ by induction, we are
left to consider the element $d(\alpha)\beta$.
By the case $q=1$ we know that $\alpha$ is contained in \eqref{lem:DRW-val-tame-bc3} and hence is
a sum of elements  $b \gamma + db'$, with $b,b'\in \W_S(\sO_w)$ and $\gamma\in \W_S\Omega^1_{\sO_v}(\log )$.
Thus $d(\alpha)\beta$ is a sum of elements
\[d(\gamma) b\beta - \gamma d(b)\beta+ \dlog[-1] d(b')\beta.\]
By induction $b\beta\in R_S^{q-2}$ and $d(b)\beta$, $d(b')\beta\in R_S^{q-1}$. Hence  $d(\alpha)\beta$ is contained in  $R^q_S$.
\end{proof}

For $S=\{1\}$ we get a stronger statement:

\begin{lemma}\label{lem:DRW-val-tame-bcS1}
Let $(L,w)/(K,v)$ be as in Lemma \ref{lem:DRW-val-tame-bc} above.
Then 
\eq{lem:DRW-val-tame-bcS10}{\Omega^q_{\sO_w}(\log)=\sO_w\otimes_{\sO_v}\Omega^q_{\sO_v}(\log).}
\end{lemma}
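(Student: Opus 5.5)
Since $\sO_w$ is torsion free over the valuation ring $\sO_v$, it is flat. Hence the natural map $\sO_w\otimes_{\sO_v}\Omega^q_{\sO_v}(\log)\to \sO_w\otimes_{\sO_v}\Omega^q_K$ is injective. Because $L/K$ is separable, the target equals $L\otimes_K\Omega^q_K\cong \Omega^q_L$, so the left side of \eqref{lem:DRW-val-tame-bcS10} embeds into $\Omega^q_L$. Both sides are therefore $\sO_w$-submodules of $\Omega^q_L$, and clearly $\sO_w\cdot\Omega^q_{\sO_v}(\log)\subset \Omega^q_{\sO_w}(\log)$. What has to be shown is the reverse inclusion.

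By Lemma \ref{lem:DRW-val-tame-bc} with $S=\{1\}$, we have $\Omega^q_{\sO_w}(\log)=\sO_w\Omega^q_{\sO_v}(\log)+d(\sO_w)\Omega^{q-1}_{\sO_v}(\log)$. Since the second summand is a product, it suffices to show $d\sO_w\subset \sO_w\cdot\Omega^1_{\sO_v}(\log)$, i.e. the case $q=1$ for exact forms. For this I use the structure from \cite[Corollary 6.2.14]{Gabber-Ramero}, as in the proof of Lemma \ref{lem:DRW-val-tame-bc}: every element of $\sO_w$ is a sum of elements $xb^I\in\sO_w$ with $x\in K$, $b_j^{r_j}=a_j\in\sO_v$ and $(r_j,p)=1$. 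For $x\neq 0$ one computes
\[d(xb^I)=xb^I\,\dlog(xb^I)=xb^I\Bigl(\dlog x+\sum_j \tfrac{i_j}{r_j}\dlog a_j\Bigr).\]
Here $xb^I\in\sO_w$, the integers $r_j$ are units in $\sO_v$ (the residue characteristic exponent is $p$), and $\dlog x,\dlog a_j\in\Omega^1_{\sO_v}(\log)$ because $x,a_j\in K^\times$. Hence $d(xb^I)\in \sO_w\cdot\Omega^1_{\sO_v}(\log)$. Summing gives $d\sO_w\subset\sO_w\Omega^1_{\sO_v}(\log)$, and this proves the equality.

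I need to check two points. First, in the definition of $\Omega^*_{\sO_v}(\log)$ used here, $\dlog$ of an arbitrary element of $K^\times$ is allowed. In the local setting of \ref{para:log-dRW} with $A=K$, the generators are $\dlog a$ for $a\in A^\times=K^\times$, so this holds. Second, I must make sure that the equality is meant inside $\Omega^q_L$, i.e. that the tensor product is identified with its image. The flatness argument above settles this, and it is the main (mild) subtle point.
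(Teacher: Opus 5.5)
Your proof is correct and is essentially the paper's argument: injectivity into $\Omega^q_L$ via flatness of $\sO_w$ over $\sO_v$, then the decomposition of $\sO_w$ into elements $xb^I$ from \cite[Corollary 6.2.14]{Gabber-Ramero}, together with the identity $d(xb^I)=xb^I\,\dlog(xb^I)$. The only difference is that you quote Lemma \ref{lem:DRW-val-tame-bc} for $S=\{1\}$ to dispose of the generators $\dlog c$, $c\in L^\times$, and of higher degrees, whereas the paper checks the generators $db$ and $\dlog c$ directly, reusing the argument around \eqref{lem:DRW-val-tame-bc4} for the latter.
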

\begin{proof}
As $\sO_w/\sO_v$ is flat, by \cite[VI, \S3, no. 6, Lemme 1]{BourbakiCA}, the map 
\[\sO_w\otimes_{\sO_v} \Omega^q_{\sO_v}(\log)\to \sO_w\otimes_{\sO_v} \Omega^q_K= L\otimes_K\Omega^q_K=\Omega^q_L\]
is injective, hence it suffices to show $\Omega^q_{\sO_w}(\log)\subset \sO_w\cdot \Omega^q_{\sO_v}(\log)$.
To this end we show  $db$, $\dlog\,c\in \sO_w\cdot \Omega^q_{\sO_v}(\log)$,  for all $b\in \sO_w$ and $c\in L^\times$.
Concerning the elements of type $db$ note that as explained in the beginning of the proof of Lemma \ref{lem:DRW-val-tame-bc} 
(and with the notation from there) it suffices to consider elements $b= x b_1^{i_1}\cdots b_k^{i_k}\in \sO_w$, with
$x\in K$ and $\sqrt[r_i]{b_i}=a_i\in K^\times$. In this case 
\[db= b\cdot\dlog\, b= b\cdot\dlog\, x + \sum_j \tfrac{i_j}{r_j}b\cdot\dlog \,a_j\]
clearly lies in $\sO_w\cdot \Omega^q_{\sO_v}(\log)$. For $\dlog\, c$ the argument around \eqref{lem:DRW-val-tame-bc4} applies.
\end{proof}

\begin{prop}\label{prop:DRW-tame-bc}
Let $\tA=A\times_K \sO_v$ be as in \ref{para:log-dRW} and assume additionally that $\sO_v$ is strictly henselian.
Let $A\to B$ be a finite \'etale map of henselian local rings associated to the  residue field extension $L/K$, 
which we assume to be Galois and tame with respect to $v$. Let $w$ be the extension of $v$ to $L$ and $\sO_w$ its valuation ring
and set $\tB=B\times_L \sO_w$.
Then we have the following equality of $\W_S(\tB)$-submodules of $\W_S\Omega^q_B$
\[\W_S\Omega^q_{\tB}(\log )=\W_S(\tB)\W_S\Omega^q_{\tA}(\log )+ d(\W_S(\tB))\W_S\Omega^{q-1}_{\tA}(\log ).\]
\end{prop}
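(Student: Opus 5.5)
The plan is to reduce the statement, via the short exact sequences of Lemma \ref{lem:ses-log-dRW}, to the residue-field statement Lemma \ref{lem:DRW-val-tame-bc} plus a statement about the kernel $J^q_S$. Denote by $R^q_S$ the right-hand side of the claimed equality. The inclusion $R^q_S\subset \W_S\Omega^q_{\tB}(\log)$ is clear. Indeed, $\W_S\Omega^*_{\tB}(\log)$ is a graded ring, and it contains $\W_S\Omega^*_{\tA}(\log)$ because the generators $\dlog[a]$, $a\in A^\times\subset B^\times$, are among its generators. It is also stable under $d$. For the reverse inclusion, note that $\fm_B=\fm_A B$ since $A\to B$ is finite étale of local rings. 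We then have the commutative diagram with exact rows
\[0\to J^q_{S,B}\to \W_S\Omega^q_{\tB}(\log)\to \W_S\Omega^q_{\sO_w}(\log)\to 0,\]
and the analogous one for $\tA$, given by Lemma \ref{lem:ses-log-dRW}. It therefore suffices to show two things: that $R^q_S$ surjects onto $\W_S\Omega^q_{\sO_w}(\log)$, and that $J^q_{S,B}\subset R^q_S$.

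For surjectivity onto the quotient, Lemma \ref{lem:DRW-val-tame-bc} applies, since $v$ is strictly henselian and $L/K$ is tame Galois. It writes $\W_S\Omega^q_{\sO_w}(\log)$ as $\W_S(\sO_w)\W_S\Omega^q_{\sO_v}(\log)+d\W_S(\sO_w)\W_S\Omega^{q-1}_{\sO_v}(\log)$. Lifting is then straightforward. The map $\tB\to\sO_w$ is surjective, hence so is $\W_S(\tB)\to\W_S(\sO_w)$. Similarly $\W_S\Omega^*_{\tA}(\log)\to \W_S\Omega^*_{\sO_v}(\log)$ is surjective by the bottom row of Lemma \ref{lem:ses-log-dRW}. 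Finally, the maps are compatible with $d$ and products.

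For the kernel, formula \eqref{lem:ses-log-dRW0} applied to $B$ gives
\[J^q_{S,B}=\W_S(\fm_B)\W_S\Omega^q_{\tB}(\log)+d\bigl(\W_S(\fm_B)\W_S\Omega^{q-1}_{\tB}(\log)\bigr).\]
We want to replace $\W_S\Omega_{\tB}(\log)$ by something built from $\tA$. I would first try the inclusion $J^q_{S,B}\subset \W_S(\fm_B)\W_S\Omega^q_B+d(\ldots)$, and then use étale base change $\W_S\Omega^q_B=\W_S(B)\otimes_{\W_S(A)}\W_S\Omega^q_A$. Combined with the generation of $\W_S\Omega^q_A$ by forms $b_0db_1\cdots db_q$, this gives $\W_S(\fm_B)\W_S\Omega^q_B=\W_S(\fm_B)\W_S\Omega^q_A$. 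Next, \eqref{lem:ses-log-dRW01} for $A$ gives $\W_S(\fm_A)\W_S\Omega^q_A\subset J^q_{S,A}\subset\W_S\Omega^q_{\tA}(\log)$.

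The main obstacle is that $\W_S(\fm_B)$ is not $\W_S(\fm_A)\W_S(B)$ on the nose. I would handle this by writing elements of $\W_S(\fm_B)$ as sums of $V_n([m])$ with $m\in\fm_B=\fm_A B$. One then shows that $V_n([m])\omega$, for $\omega\in\W_S\Omega^q_A$, lies in $\W_S(\tB)\W_S\Omega^q_{\tA}(\log)+d\W_S(\tB)\W_S\Omega^{q-1}_{\tA}(\log)$. This can be done by redoing the computation proving \eqref{lem:ses-log-dRW1}, with $[a]$ replaced by $[m]$ and the elements $b\in A^\times$ kept. That computation only used $V_m$, $F_m$, $d$, $\dlog$ relations and $\fm\subset\tA$, so it should carry over, producing terms $V(\ldots)\dlog[b]$ and $dV(\ldots)$ with coefficients in $\W_S(\tB)$.

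The $d$-part of $J^q_{S,B}$ then follows. $R^*_S$ is closed under $d$ by the Leibniz rule and $ddx=\dlog[-1]dx$, and it is closed under multiplication by $\W_S(\tB)$. Both closure properties are needed, and the induction on $q$ used in Lemma \ref{lem:ses-log-dRW} runs the same way with $R^q_S$ in place of $I^q_S$.
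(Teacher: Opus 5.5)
\textbf{There is a genuine gap in your treatment of the kernel.} The computation proving \eqref{lem:ses-log-dRW1} does not carry over once $a\in\fm_A$ is replaced by $x\in\fm_B$. Besides the formal $V$, $F$, $d$, $\dlog$ relations, it uses two facts specific to $A$:
\begin{enumerate}[label=(\roman*)]
\item $a+1\in A^\times$, so that the summand $V_N([b]^{m_1}[a]^{n_1-1}[a+1])\dlog[a+1]$ has its form part in $\W_S\Omega^1_{\tA}(\log)$;
\item $[a]+[1]-[a+1]=V_r(\alpha)$ with $\alpha\in\W_{S/r}(\fm_A)$, so that $dV_{Nr}(\alpha)\in d\W_S(\tA)$.
\end{enumerate}
For $x\in\fm_B$, the element $x+1$ is a unit of $\tB$ but in general not of $A$, and $\alpha$ only lies in $\W_{S/r}(\fm_B)$. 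The two summands therefore become elements of $\W_S(\fm_B)\cdot\dlog[1+x]$ and $\W_S(\fm_B)\cdot d\W_S(\fm_B)$, i.e.\ of $\W_S(\tB)\cdot d\W_S(\tB)$.

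The differential part of $R^1_S$ is only $d(\W_S(\tB))\cdot\W_S(\tA)$, so these terms are not visibly in $R^1_S$. Showing that they are is, in degree one, the statement you are trying to prove. For the same reason you cannot invoke closure of $R^*_S$ under multiplication by $\W_S(\tB)$. Closure under $d$ and under $\W_S\Omega^*_{\tA}(\log)$ is formal. But $x\,dy\,\omega\in R^q_S$ for $x,y\in\W_S(\tB)$ is only known after the proposition is proved.

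\textbf{The missing ingredient} is an induction on the cardinality of $S$, together with the fact that $V_n(R^1_{S/n})\subset R^1_S$ for $n\in S$. To see the latter, write elements of $R^1_{S/n}$ as sums $b_0da_0+b_1\dlog[a_1]+db_2$ with $b_i\in\W_{S/n}(\tB)$, $a_0\in\W_{S/n}(\tA)$ and $a_1\in A^\times$. Then use $da_0=F_ndV_n(a_0)-(n-1)a_0\dlog[-1]$, $V_n(zF_n(y))=V_n(z)y$ and $V_nd=ndV_n$.

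With this in hand, the offending summand $V_N([b]^{m_1}[x]^{n_1-1}d[x])$, where $N\ge 2$, is $V_N$ of an element of $\W_{S/N}(\fm_B)\cdot\W_{S/N}\Omega^1_B$. That element lies in $R^1_{S/N}$ by induction, so the summand lies in $R^1_S$. Your direct route then goes through, and you no longer need the $[a+1]$ trick at all.

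\textbf{What you call the main obstacle is not one.} Using Borger's \'etale base change for Witt vectors, the paper shows $\W_S(\fm_B)=\W_S(\fm_A)\cdot\W_S(B)$. Combined with the sharper form \eqref{lem:ses-log-dRW01} for $A$ (landing in $I^q_S$, not merely in $J^q_{S,A}$) and the Leibniz rule, this reduces everything to the inclusion $\W_S(\fm_A)\cdot\W_S\Omega^1_B\subset R^1_S$. The paper proves that inclusion by exactly the induction on $|S|$ described above, via the identity $b\,dV_n(a)=dV_n(F_n(b)a)-V_n(aF_ndb)$.
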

\begin{proof}
Denote by $R^q_S$ the right-hand side of the equality in the statement. 
The image of $\W_S\Omega^q_{\tB}(\log )$ under the natural map $\W_S\Omega^q_B\to \W_S\Omega^q_L$
is by definition equal to $\W_S\Omega^q_{\sO_w}(\log )$, which  by
Lemma \ref{lem:DRW-val-tame-bc} and our assumption is also equal to the image of $R^q_S$ in $\W_S\Omega^q_{L}$. 
By Lemma \ref{lem:ses-log-dRW} it suffices to show that the group
\[\W_S(\fm_B)\cdot \W_S\Omega^q_B+ d(\W_S(\fm_B)\cdot \W_S\Omega^{q-1}_B)\]
is contained in $R^q_S$, where $\fm_B=\fm_A B$ is the maximal ideal of $B$. 
As $R^q_S$ is closed under the differential $d$
it suffices to show 
\eq{prop:DRW-tame-bc1}{\W_S(\fm_B)\cdot \W_S\Omega^q_B\subset R^q_S.}
As we have a surjection $\W_S(\fm_A\otimes_A B)\to \W_S(\fm_B)$, \cite[Corollary 9.4]{Borger} yields
\eq{prop:DRW-tame-bc2}{\W_S(\fm_B)=\W_S(\fm_A)\cdot \W_S(B).}
By \'etale base change \cite[Theorem C]{Hesselholt-dRW} 
\[ \W_S\Omega^q_B=\W_{S}(B)\otimes_{\W_S(A)}\W_S\Omega^q_A.\]
Thus
\begin{align*}
\W_S(\fm_B)\cdot \W_S\Omega^q_B & =\W_S(B)\cdot \W_S(\fm_A)\cdot\W_S\Omega^q_A\\
                           & \subset \W_S(B)\cdot \left(\W_S(\fm_A)\cdot \W_S\Omega^q_{\tA}(\log) +  d(\W_S(\fm_A)\cdot \W_S\Omega^{q-1}_{\tA}(\log))\right)    \\
                        &\subset  \W_S(\fm_B)\cdot \W_S\Omega^q_{\tA}(\log) +
                          \W_S(B)\cdot d(\W_S(\fm_A)\cdot \W_S\Omega^{q-1}_{\tA}(\log))\\
                &\subset R^q_S + \W_S(\fm_A) \cdot d(\W_S(B))\cdot  \W_S\Omega^{q-1}_{\tA}(\log),
\end{align*}
where we use \eqref{prop:DRW-tame-bc2} for the equality,  \eqref{lem:ses-log-dRW0} for the first inclusion, 
\eqref{prop:DRW-tame-bc2} for the second inclusion, and an application of the Leibniz rule together with \eqref{prop:DRW-tame-bc2}
for the last inclusion.
It remains to show
\eq{prop:DRW-tame-bc3}{ \W_S(\fm_A)\cdot \W_S\Omega^1_B\subset R^1_S.}
We prove this by induction over the cardinality of $S$.
For $S=\{1\}$ this amounts by \'etale base change 
to show that for $a\in \fm_A$, $a_0\in A$ and $b\in B$ the element
$\alpha:=abda_0$ is in $\fm_B\cdot \Omega^1_{\tA}(\log)$. But this is immediate if $a_0\in \fm_A\subset\tA$
and  if $a_0\in A^\times$, this follows from $\alpha=aba_0\dlog a_0$. 
Now we assume \eqref{prop:DRW-tame-bc3} is proven for all truncation sets of smaller cardinality 
than $S$. First observe that $V_n: \W_{S/n}\Omega^1_{\tB}(\log)\to \W_S\Omega^1_{\tB}(\log)$, $n\in S$,
induces a map
\[V_n: R^1_{S/n}\to R^1_S.\]
Indeed, any element in $R^1_{S/n}$ can be written as sum of elements
\begin{multline*}
b_0da_0  + b_1\dlog [a_1]+ db_2 \\
=  b_0 F_ndV_n(a_0)-(n-1)b_0a_0F_n(\dlog[-1])+ b_1 F_n(\dlog[a_1])  + db_2,    
\end{multline*}
where $b_i\in \W_{S/n}(\tB)$, $a_0\in \W_{S/n}(\tA)$,   $a_1\in A^\times$, 
see \cite[Definition 4.1, (iv), (v)]{Hesselholt-dRW} for the equality.
By \cite[Definition 4.1 (iii) and Lemma 4.3]{Hesselholt-dRW} this is  under $V_n$ 
mapped to 
\[V_n(b_0) dV_n(a_0)-(n-1)V_n(b_0a_0)\dlog[-1]+ V_n(b_1)\dlog[a_1]+ ndV_n(b_2),\]
which lies in $R^1_S$. 
We show the inclusion \eqref{prop:DRW-tame-bc3}. 
By \'etale base change, \eqref{lem:ses-log-dRW1}, and \eqref{prop:DRW-tame-bc2} it remains to show
\[\W_S(B)\cdot d\W^0_S(\fm_A)\subset R^1_S,\]
where $\W^0_S(\fm_A)=\Ker(\W_S(\fm_A)\to\fm_A)$. 
An element in the left hand side can be written as a sum of elements
\[bdV_n(a)= dV_n(F_n(b)a)- V_n(a F_ndb),\]
for $b\in \W_S(B)$, $a\in\W_{S/n}(\fm_A)$, and $1<n\in S$.
As $F_n(b)a\in \W_{S/n}(\fm_B)$, we find $dV_n(F_n(b)a)\in R^1_S$.
By induction $aF_ndb\in \W_{S/n}(\fm_A)\cdot \W_{S/n}\Omega^1_B\subset R^1_{S/n}$, and 
hence $V_n(a F_ndb)\in R^1_S$. This completes the proof of the proposition.
\end{proof}

For $S=\{1\}$ we get the stronger statement:

\begin{corollary}\label{cor:DR-tame-bc}
Let $\tB/\tA$ be as in Proposition \ref{prop:DRW-tame-bc} above.
Then 
\[\Omega^q_{\tB}(\log)=\tB\cdot\Omega^q_{\tA}(\log).\]
If, additionally $A$ is noetherian, then we also have
\[\Omega^q_{\tB}(\log)=\tB\otimes_{\tA} \Omega^q_{\tA}(\log).\]
\end{corollary}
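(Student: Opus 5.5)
The plan is to run the proof of Proposition \ref{prop:DRW-tame-bc} again in the special case $S=\{1\}$, where most terms drop out, and to use Lemma \ref{lem:DRW-val-tame-bcS1} at the valuation-ring level in place of Lemma \ref{lem:DRW-val-tame-bc}. We write $R^q:=\tB\cdot\Omega^q_{\tA}(\log)\subset\Omega^q_B$. Clearly $R^q\subset\Omega^q_{\tB}(\log)$, so only the reverse inclusion needs proof.

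\emph{Step 1: reduction to the maximal ideal.} The image of $\Omega^q_{\tB}(\log)$ in $\Omega^q_L$ is $\Omega^q_{\sO_w}(\log)$. By Lemma \ref{lem:DRW-val-tame-bcS1} this equals $\sO_w\cdot\Omega^q_{\sO_v}(\log)$, which is the image of $R^q$, since $\tB\to\sO_w$ and $\tA\to \sO_v$ are surjective. By the exact sequence of Lemma \ref{lem:ses-log-dRW} (with $S=\{1\}$), applied to $\tB=B\times_L\sO_w$, it therefore suffices to show that the kernel $J^q=\fm_B\Omega^q_B+d(\fm_B)\Omega^{q-1}_B$ lies in $R^q$.

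\emph{Step 2: proving $J^q\subset R^q$.} Since $\fm_B=\fm_A B$ and $\Omega^q_B=B\otimes_A\Omega^q_A$ by étale base change, an element of $\fm_B\Omega^q_B$ is a sum of terms $ab\,da_1\cdots da_q$ with $a\in\fm_A$, $b\in B$ and $a_i\in A$. If $a_i\in\fm_A\subset\tA$, then $da_i$ already lies in $\Omega^1_{\tA}$. If $a_i\in A^\times$, we write $da_i=a_i\dlog a_i$. In both cases the product lies in $\tB\cdot\Omega^q_{\tA}(\log)$, because $ab\in\fm_B\subset\tB$. For the terms coming from $d(\fm_B)$ we write $d(ab)=b\,da+a\,db$ with $a\in\fm_A$, $b\in B$. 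The summand $b\,da$ is fine once $b\in\tB$, and the general case reduces to this one by the Leibniz rule, since $d(\fm_B)\Omega^{q-1}_B\subset d(\fm_B\Omega^{q-1}_B)+\fm_B\Omega^q_B$. The remaining problem is $d(\fm_B\Omega^{q-1}_B)$. Here we use that $\fm_B\Omega^{q-1}_B\subset R^{q-1}$ (the $q-1$ case of the argument above) and that $dR^{q-1}\subset \tB\,d\Omega^{q-1}_{\tA}(\log)+d\tB\cdot\Omega^{q-1}_{\tA}(\log)$. So the whole statement comes down to showing $d\tB\subset\tB\cdot\Omega^1_{\tA}(\log)$.

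\emph{Step 3: the main obstacle, $d\tB\subset\tB\cdot\Omega^1_{\tA}(\log)$.} Modulo $\fm_B$, this is Lemma \ref{lem:DRW-val-tame-bcS1}: every $\bar b\in\sO_w$ is a sum of $x b^I$ with $d(xb^I)=xb^I\dlog(xb^I)\in\sO_w\Omega^1_{\sO_v}(\log)$. We lift this to $\tB$. By Hensel's lemma, $B$ contains roots $\beta_j$ of $T^{r_j}-\tilde a_j$, where $\tilde a_j\in\tA$ lifts $a_j$ (the equation is separable since $r_j$ is invertible). Then $d\beta_j=\tfrac{1}{r_j}\beta_j\dlog\tilde a_j$, so $d(\tilde x\beta^I)\in\tB\cdot\Omega^1_{\tA}(\log)$ for lifts $\tilde x$ of $x$. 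Any $b\in\tB$ then differs from a sum of such elements by some $m\in\fm_B$, and $dm\in J^1$ was treated in Step 2 without circularity (only the $\fm_A b\,da_0$ terms are needed there). This gives the first equality.

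\emph{Noetherian case.} If $A$ is noetherian, then $\tB$ is flat over $\tA$. Indeed, $\tB$ is torsion-free, being a subring of $B$ which is flat over $A$, and $\sO_w$ is flat over $\sO_v$. Gluing these along $\fm_B=\fm_A\otimes_AB$ and using the fibre-product description should give flatness over $\tA$. Flatness then makes $\tB\otimes_{\tA}\Omega^q_{\tA}(\log)\to\tB\otimes_{\tA}\Omega^q_A=\Omega^q_B$ injective, which upgrades the first equality to the tensor-product statement.
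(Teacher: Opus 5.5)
Your proof of the first equality is correct. It uses the same ingredients as the paper: Lemma~\ref{lem:DRW-val-tame-bcS1} for the image in $\Omega^q_L$, Lemma~\ref{lem:ses-log-dRW} for the kernel, and the computation $ab\,da_0\in\tB\cdot\Omega^1_{\tA}(\log)$. It is, however, organised less economically. The paper gets the reduction to $d\tB\subset\tB\cdot\Omega^1_{\tA}(\log)$ for free from Proposition~\ref{prop:DRW-tame-bc} with $S=\{1\}$. It then proves the stronger inclusion $\Omega^1_{\tB}(\log)\subset\tB\cdot\Omega^1_{\tA}(\log)$ by exactly your Step~1 in degree $q=1$. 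So your Step~3 is superfluous: once $J^1\subset R^1$ is known, Step~1 for $q=1$ gives $\Omega^1_{\tB}(\log)=R^1\ni d\tB$ without lifting the Kummer generators (the lifting itself is fine).

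One point needs tightening. The non-circular treatment of $dm$, $m\in\fm_B$, is not covered by the terms $ab\,da_0$ alone. Writing $m=\sum a_kb_k$ with $a_k\in\fm_A$, you need $b_k\in\tB$ to handle $b_k\,da_k$, i.e.\ you need $\fm_B=\fm_A\tB$. This holds because every $b\in B$ is $\tilde b/s$ with $\tilde b\in\tB$ and $s\in\tA\setminus\fm_A$: lift a nonzero element of $\sO_v$ that moves $\bar b$ into $\sO_w$. Since $\fm_A$ is an ideal of $A$, you then get $ab=(a/s)(sb)$ with $a/s\in\fm_A$. Equivalently, $B=\tB\otimes_{\tA}A$; the paper uses this implicitly too.

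The genuine gap is in the noetherian part: flatness of $\tB$ over $\tA$ is only asserted (``should give flatness''). Torsion-freeness, plus flatness of $B$ over $A$ and of $\sO_w=\tB/\fm_A\tB$ over $\sO_v$, is not a proof by itself. Indeed, torsion-free does not imply flat over $\tA$ in general: take $\sO_v=K$, so $\tA=A$, with $A$ regular local of dimension~$2$. You must either quote an actual patching theorem for flat modules over the cartesian square $\tA=A\times_K\sO_v$, or follow the paper. The paper deduces flatness from the argument of \cite[Proposition 11.8]{Huebner2021}, using $B=\tB\otimes_{\tA}A$ and torsion-freeness.

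A self-contained fix uses the data of your Step~3.
\begin{itemize}
\item The Hensel lifts $\beta^I$, $I\in\prod_j\{0,\ldots,r_j-1\}$, reduce to a $K$-basis of $L$. By Nakayama they therefore form an $A$-basis of $B$, which is free of rank $[L:K]$.
\item Hence $\tB=\bigoplus_I\tilde J_I\beta^I$ with $\tilde J_I=\{y\in A\mid \bar y\,b^I\in\sO_w\}$.
\item Each $\tilde J_I$ is the directed union of the free rank one modules $\tilde x\tA$. Here $\tilde x\in A^\times$ lifts some $x\in K^\times$ with $xb^I\in\sO_w$, and $\tilde x\tA=\{z\in A\mid \bar z\in x\sO_v\}$.
\end{itemize}
So $\tB$ is flat over $\tA$, and this argument does not even use that $A$ is noetherian. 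With flatness and $\tB\otimes_{\tA}\Omega^q_A=B\otimes_A\Omega^q_A=\Omega^q_B$, your injectivity argument goes through.
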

\begin{proof}
In case $A$ is noetherian the ring $\tB$ is a flat $\tA$-module as follows from 
the argument in \cite[Proposition 11.8]{Huebner2021} 
and the fact that in the situation at hand we have $B=\tB\otimes_{\tA} A$ and 
$\tB$ is $\tA$-torsion free. In the same way as in the beginning of the 
proof of Lemma \ref{lem:DRW-val-tame-bcS1} we see that the natural maps 
$\tB\otimes_{\tA} \Omega^q_{\tA}(\log)\to \Omega^q_{\tB}(\log)$ is injective. 
Thus it remains to show the first statement in which we allow $A$ also to be non-noetherian.

By Proposition \ref{prop:DRW-tame-bc} it suffices to show 
$d(\tB)\subset \Omega^1_{\tB}(\log)\subset \tB\cdot \Omega^1_{\tA}(\log)$.
By Lemmas \ref{lem:DRW-val-tame-bcS1} and \ref{lem:ses-log-dRW} and since $\fm_B=\fm_A\cdot B$ it remains to show that the group
\[\fm_A\cdot \Omega^1_{\tB}(\log)+ d(\fm_A\cdot\tB)= \fm_A\Omega^1_{\tB}(\log)+\tB\cdot d\fm_A\]
is contained in $\tB\cdot\Omega^1_{\tA}(\log)$. This is clear for $\tB\cdot d\fm_A$. 
By \'etale base change for $\Omega^1_B$ any element in $\fm_A\Omega^1_{\tB}(\log)$ is a sum of elements of the form $abda_0$ with $a\in\fm_A$, $b\in B$,  $a_0\in A$. 
If $a_0\in \fm_A$, then $abda_0 \in \tB\cdot \Omega^1_{\tA}(\log)$; 
if $a_0\in A^\times$, then  $ab d a_0=ab a_0\dlog\, a_0\in \tB\cdot\Omega^1_{\tA}(\log)$.
Hence the statement.
\end{proof}

\begin{para}\label{para:DRW-trace}
Let $A\to B$ be a finite \'etale map between henselian local rings corresponding to a finite Galois extension $L/K$ 
of the residue fields. Assume $K$ is a strictly henselian  valuation field with valuation $v$ and valuation ring $\sO_v$
and that $L$ is tame  over $K$. Denote by $w$ the unique extension of $v$ to $L$ and by $\sO_w$ its valuation ring.
By \cite[VI, \S8, No. 6, Proposition 6]{BourbakiCA} the ring $\sO_w$ is also the integral closure of $\sO_v$ in $L$.
Hence the Galois group ${\rm Gal}(L/K)$ is equal to the decomposition group ${\rm Aut}(\sO_w/\sO_v)$.
Moreover, as the category of finite separable field extensions of $K$ is equivalent to the category of finite local \'etale $A$-algebras,
we can identify the $A$-algebra automorphisms of $B$ with ${\rm Gal}(L/K)$.
Hence  ${\rm Gal}(L/K)={\rm Aut}(B/A)={\rm Aut}(\tB/\tA)$ and   $\tB^{{\rm Gal}(L/K)}=\tA$.
We denote the ring map  $\W_S(\tB)\to \W_S(\tB)$ induced by $\sigma\in {\rm Gal}(L/K)$ 
again by $\sigma$. Hence the usual trace map 
\[\Tr: \W_S(B)\to \W_S(A), \quad b\to \sum_{\sigma \in {\rm Gal}(L/K)} \sigma(b)\]
restricts to 
\[\Tr: \W_S(\tB)\to \W_S(\tA).\]
Thus  the trace
\[\W_S\Omega^q_B\cong\W_S(B)\otimes_{\W_S(A)} \W_S\Omega^q_A\xrightarrow{\Tr\otimes \id} \W_S\Omega^q_A,\]
where the first isomorphism is \'etale base change,  restricts by 
Proposition \ref{prop:DRW-tame-bc} to a trace
\[\Tr: \W_S\Omega^q_{\tB}(\log )\to \W_S\Omega^q_{\tA}(\log ),\]
given by $\beta\mapsto \sum_{\sigma} \sigma(\beta)$.    
\end{para}

\begin{prop}\label{prop:WOmegat-via-val}
 Let $X\subset \tX$ be a quasi-compact open immersion between qcqs schemes.
 Then
 \[\W_S\Omega^{q,t}(X,\tX)=\Ker\left(\W_S\Omega^q(X)\to 
 \prod_{(x,v,\epsilon)\in \Spa(X,\tX)} \frac{\W_S\Omega^q_{k(x)}}{\W_S\Omega^q_{\sO_v}(\log)}\right),\]
 \[\W_S\Omega^{q,div}(X,\tX)=\Ker\left(\W_S\Omega^q(X)\to 
 \prod_{(\eta,v,\epsilon)\in \Spa(X,\tX)^{div}} \frac{\W_S\Omega^q_{k(\eta)}}{\W_S\Omega^q_{\sO_v}(\log)}\right),\]
 where the map into the products is induced by restriction along $\Spec k(x)\to U$, 
 $\sO_v\subset k(x)$ is the valuation ring defined by $v$, and $\Spa(X,\tX)^{div}$ denotes the subset of $\Spa(X,\tX)$ consisting of
 triples $(\eta, v,\epsilon)$ with $\eta$ a generic point of $X$ and $v$ a discrete valuation. 
\end{prop}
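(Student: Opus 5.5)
The plan is to unwind the definition of $\W_S\Omega^{q,t}=F_\beta$ from \ref{para:constr-tame-sheaves}. This reduces the statement to two pointwise claims about a single triple $(x,v,\epsilon)\in\Spa(X,\tX)$ and a form $a\in\W_S\Omega^q(X)$. Write $A_L=\sO^h_{X,L}$ and $A_{L,w}=A_L\times_L\sO_w$ for a finite tame extension $(L,w)/(k(x),v)$.
\begin{enumerate}[label=(\roman*)]
\item The germ $a_L$ lies in $\W_S\Omega^q_{A_{L,w}}(\log)$ if and only if its image in $\W_S\Omega^q_L$ lies in $\W_S\Omega^q_{\sO_w}(\log)$.
\item Let $\bar a$ be the image of $a$ in $\W_S\Omega^q_{k(x)}$. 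There exists a finite tame $(L,w)$ with $\bar a_L\in\W_S\Omega^q_{\sO_w}(\log)$ if and only if $\bar a\in\W_S\Omega^q_{\sO_v}(\log)$.
\end{enumerate}
Combining (i) and (ii) over all points of $\Spa(X,\tX)$ gives the first kernel description. The $div$ version is obtained by the same argument, restricted to triples with $x$ a generic point and $v$ discrete; a finite tame extension of a discrete valuation is again discrete, so the same $(L,w)$ are available.

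Claim (i) is exactly the cartesian square of Lemma \ref{lem:ses-log-dRW}. We apply it to the local ring $A_L$ with residue field $L$ and $\tA=A_{L,w}$, which is precisely the situation of \ref{para:log-dRW}. In claim (ii), the direction ``$\Leftarrow$'' is trivial: take $L=k(x)$ and $w=v$. For ``$\Rightarrow$'' I first reduce to the strictly henselian case. Put $K=k(x)$ and let $(K^{sh},v^{sh})$ be the strict henselization; $\sO_{v^{sh}}$ is a filtered colimit of étale local $\sO_v$-algebras. Apply Lemma \ref{lem:locetbc-log} with the local ring $A=K$ (so $\fm_A=0$ and $\tA=\sO_v$). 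It gives, at each finite stage, that $\W_S(\tB)$ is faithfully flat over $\W_S(\sO_v)$, together with the base change isomorphism $\W_S(\tB)\otimes_{\W_S(\sO_v)}\W_S\Omega^q_{\sO_v}(\log)\cong\W_S\Omega^q_{\tB}(\log)$. Étale base change does the same for $\W_S\Omega^q_K$. Faithfully flat descent, applied to the inclusion of modules $\W_S\Omega^q_{\sO_v}(\log)\subset\W_S\Omega^q_K$ (using injectivity of $(M/M')\to (M/M')\otimes\W_S(\tB)$), then yields
\[\W_S\Omega^q_{\sO_v}(\log)=\W_S\Omega^q_K\cap\W_S\Omega^q_{\sO_{v^{sh}}}(\log).\]
Since $(L\cdot K^{sh},w')/(K^{sh},v^{sh})$ is again tame by Lemma \ref{lem:tame-Gal-closure}, and we may pass to a Galois hull, we are reduced to the following situation: $(L,w)/(K,v)$ is a finite tame Galois extension of strictly henselian valuation fields with group $G$.

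In that situation I would use the trace of \ref{para:DRW-trace}, applied with $A=K$ and $B=L$, so that $\tA=\sO_v$ and $\tB=\sO_w$. By Proposition \ref{prop:DRW-tame-bc} (or directly by Lemma \ref{lem:DRW-val-tame-bc}), it maps $\W_S\Omega^q_{\sO_w}(\log)$ into $\W_S\Omega^q_{\sO_v}(\log)$. For $\bar a\in\W_S\Omega^q_K$ we have $\Tr(\bar a_L)=|G|\cdot\bar a$. Tameness means $|G|$ is prime to the residue characteristic $p$, so $|G|$ is invertible in $\sO_v$. For a finite truncation set $S$, an integer invertible in the ring is also invertible in $\W_S(\sO_v)$; if $p>1$ this follows from the product decomposition into $p$-typical Witt vectors. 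Hence
\[\bar a=|G|^{-1}\Tr(\bar a_L)\in\W_S\Omega^q_{\sO_v}(\log).\]

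The step I expect to be most delicate is the descent along the strict henselization. One has to check that the intersection statement really follows from Lemma \ref{lem:locetbc-log} at each finite étale stage and commutes with the filtered colimit. One also has to check that the trace identity is compatible with the $\W_S(\tB)$-module structure used in \ref{para:DRW-trace}. The rest of the argument is bookkeeping with the definition of $F_\beta$.
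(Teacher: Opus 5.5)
Your proposal is correct and follows essentially the paper's argument: the cartesian square of Lemma \ref{lem:ses-log-dRW} gives the easy inclusion, and the other inclusion comes from the normalized trace $\tfrac{1}{r}\Tr$ over the strict henselization followed by faithfully flat descent via Lemma \ref{lem:locetbc-log}. The only difference is the level at which you work. You first pass to the valuation rings, applying Lemma \ref{lem:locetbc-log} with $A=K$ and using only Lemma \ref{lem:DRW-val-tame-bc}, whereas the paper runs the trace and the descent on the henselian local rings $\sO^h_{\tX,K,v}\to\sO^h_{\tX,K^{sh},v}$ via Proposition \ref{prop:DRW-tame-bc}; both routes work.
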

\begin{proof}
This "$\supset$" inclusion follows in both cases directly from the definition and the equality
\[\W_S\Omega^q_{\sO^h_{\tX, k(x), v}}(log)= \W_S\Omega^q_{\sO^h_{X,x}}\times_{\W_S\Omega^q_{k(x)}} \W_S\Omega^q_{\sO_v}(\log),\]
see Lemma \ref{lem:ses-log-dRW}. 
For the other inclusion, let $a\in \W_S\Omega^{q,t}(X,\tX)$ (resp. $\W_S\Omega^{q,div}(X,\tX)$)
and take $(x,v,\epsilon)\in \Spa(X,\tX)$ (resp. $\Spa(X,\tX)^{div}$). 
By Definition \ref{para:constr-tame-sheaves} and Lemma \ref{lem:tame-Gal-closure} there exists a finite tame Galois extension
$(L,w)/(K=k(x),v)$, such that $a_L$, the pullback of $a$ along $\Spec \sO_{X,L}^h\to X$,
lies in $\W_S\Omega^q_{\sO^h_{\tX, L,w}}(\log)$.
Let $L^{sh}$ and $K^{sh}$ be the strict henselizations, so that $L^{sh}/K^{sh}$ is
a finite Galois extension of degree $r$, which is prime to the residue characteristic of $K^{sh}$.
Thus $\sO^h_{X, K^{sh}}\to \sO^h_{X, L^{sh}}$ is a finite \'etale extension as in 
\ref{para:DRW-trace} and thus
\[a_{K^{sh}}= \Tr(\tfrac{1}{r} a_{L^{sh}})\in \W_S\Omega^q_{\sO^h_{\tX, K^{sh},v}}(\log).\]
By Lemma \ref{lem:locetbc-log}, the map 
$\W_S(\sO^h_{\tX, K,v})\to \W_S(\sO^h_{\tX, K^{sh},v})$ is faithfully flat and we have 
\[\W_S\Omega^q_{\sO^h_{\tX, K^{sh},v}}(\log)=
\W_S(\sO^h_{\tX, K^{sh},v})\otimes_{\W_S(\sO^h_{\tX, K,v})} \W_S\Omega^q_{\sO^h_{\tX, K,v}}(\log),\]
thus descent yields $a_K\in \W_S\Omega^q_{\sO^h_{\tX, K,v}}(\log)$.
As this is true for all $(x,v,\epsilon)\in \Spa(X,\tX)$,  the element $a$ is contained in 
the respective right hand sides of the statement. 
\end{proof}

\begin{prop}\label{prop:DRWOmegaplus}
Let $(A,\fm_A)$ be a local ring with residue field $K=A/\fm_A$. Let $v$ be a valuation on $K$ 
with valuation ring $\sO_v$ and set $\tA=A\times_K \sO_v$. 
Set $(U,\tU)=(\Spec(A),\Spec(\tA))$.
Then for $q\ge 0$ 
\[\W_S\Omega^{q,t}(U,\tU)=\W_S\Omega^q_{\tA}(\log ).\]
\end{prop}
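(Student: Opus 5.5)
The plan is to reduce the statement to the valuative description of Proposition \ref{prop:WOmegat-via-val}, and then use the cartesian square of Lemma \ref{lem:ses-log-dRW} at the single point of $\Spa(U,\tU)$ that carries all the information, namely the closed point of $U$ with the valuation $v$.

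\emph{Step 1.} First I make precise what $\W_S\Omega^{q,t}(U,\tU)$ means, since $\Spec A\to\Spec\tA$ need not be an open immersion. I take the definition from \ref{para:constr-tame-sheaves} verbatim: a section is an element $a\in\W_S\Omega^q_A$ such that for every $(x,v',\epsilon)\in\Spa(U,\tU)$ some finite tame extension of $(k(x),v')$ sends $a$ into the corresponding $\beta$-subgroup. Equivalently, one could take the colimit over a presentation $(U,\tU)=\lim(U_i,\tU_i)$, using Lemma \ref{lem:ift-fp-limits}. The proof of Proposition \ref{prop:WOmegat-via-val} uses only three ingredients: the definition, the trace of \ref{para:DRW-trace}, and the faithfully flat descent of Lemma \ref{lem:locetbc-log}. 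None of these uses that $X\subset\tX$ is an open immersion, so the same argument gives
\[\W_S\Omega^{q,t}(U,\tU)=\Ker\Bigl(\W_S\Omega^q_A\to\prod_{(x,v',\epsilon)\in\Spa(U,\tU)}\W_S\Omega^q_{k(x)}/\W_S\Omega^q_{\sO_{v'}}(\log)\Bigr).\]
I expect this step, checking that the tame trace and descent argument really transfers to the non-open local pair, to be the only real obstacle. If it does not transfer cleanly, I would instead pass to the limit over the $(U_i,\tU_i)$ and use that $\W_S\Omega^q$ commutes with filtered colimits of rings.

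\emph{Step 2: the inclusion ``$\supset$''.} Let $(x,v',\epsilon)\in\Spa(U,\tU)$. The map $\epsilon$ forces the composite $\tA\to A\to k(x)$ to factor through $\sO_{v'}$. Hence $\W_S\Omega^*_{\tA}\to\W_S\Omega^*_{k(x)}$ lands in $\W_S\Omega^*_{\sO_{v'}}$. For $a\in A^\times$ the image of $a$ in $k(x)$ is a unit, so $\dlog[a]$ maps to $\dlog$ of an element of $k(x)^\times$. Since $\W_S\Omega^q_{\tA}(\log)$ is generated as a $\W_S\Omega^*_{\tA}$-algebra by the forms $\dlog[a]$ with $a\in A^\times$, its image lies in $\W_S\Omega^q_{\sO_{v'}}(\log)$. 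So $\W_S\Omega^q_{\tA}(\log)$ is contained in the kernel from Step 1.

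\emph{Step 3: the inclusion ``$\subset$''.} Let $a$ lie in the kernel. Take the point $(\fm_A,v,\epsilon_0)\in\Spa(U,\tU)$, where $\epsilon_0\colon\Spec\sO_v\to\Spec\tA$ is induced by the projection $\tA=A\times_K\sO_v\to\sO_v$. Then the image of $a$ in $\W_S\Omega^q_K$ lies in $\W_S\Omega^q_{\sO_v}(\log)$. By Lemma \ref{lem:ses-log-dRW}, the square with corners $\W_S\Omega^q_{\tA}(\log)$, $\W_S\Omega^q_A$, $\W_S\Omega^q_{\sO_v}(\log)$ and $\W_S\Omega^q_K$ is cartesian. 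Therefore $a\in\W_S\Omega^q_{\tA}(\log)$, which proves the claimed equality.
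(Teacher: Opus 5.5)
Your proposal is correct and follows the paper's proof. The inclusion $\subset$ comes from Proposition \ref{prop:WOmegat-via-val} at the point $(\fm_A,v,\epsilon_0)$ combined with the cartesian square of Lemma \ref{lem:ses-log-dRW}. The inclusion $\supset$ comes from the fact that every $(x,v',\epsilon)\in\Spa(U,\tU)$ induces a map $\W_S\Omega^q_{\tA}(\log)\to\W_S\Omega^q_{\sO_{v'}}(\log)$. The paper applies Proposition \ref{prop:WOmegat-via-val} to this local pair without comment, so your Step 1 only makes explicit a point the paper leaves implicit.
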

\begin{proof}
This "$\subset$" inclusion holds by Proposition \ref{prop:WOmegat-via-val} and 
Lemma \ref{lem:ses-log-dRW}.  For the other inclusion, 
let $(x,v,\epsilon)\in \Spa(U,\tU)$. It corresponds to ring maps $A\to k(x)$ and $\tA\to \sO_v$, which induce a morphism $\W_S\Omega^q_{\tA}(\log)\to \W_S\Omega^q_{\sO_v}(\log )$. 
Hence  this inclusion 
$\W_S\Omega^q_{\tA}(\log)\subset \W_S\Omega^{q,t}(U,\tU)$ follows from 
Proposition \ref{prop:WOmegat-via-val}.
\end{proof}

\begin{example}\label{exa:DRWdim1}
Let $\tC$ be an integral, regular scheme of dimension 1. Let
$\sigma: C\inj  \tC$ be a dense open subscheme 
and denote by 
$j: (C,\tC)_t\to \tC_{\Zar}$ the natural morphism of sites.
 \begin{enumerate}[label=(\arabic*)]
     \item\label{exa:DRWdim1.1} Assume  the image of
     $\tC\to \Spec \Z$ is a single point. Then  there is a canonical exact sequence 
     \eq{exa:DRWdim1.2}{0\to \W_S\Omega^q_{\tC}\to j_*\W_S\Omega^{q,t}\to 
     \bigoplus_{x\in \tC\setminus C} i_{x*}\W_S\Omega^{q-1}_{k(x)}\to 0,}
     where $i_x: \Spec k(x)\to \tC$ is the closed immersion defined by $x$. 

     Indeed, we have an exact sequence
     \[0\to \W_S\Omega^q_{\tC}\to \sigma_*\W_S\Omega^{q}_C\to 
     \bigoplus_{x\in \tC\setminus C} 
     i_{x*}\frac{\W_S\Omega^{q}_{k(C)}}{\W_S\Omega^{q}_{\tC, x}}\to 0.\]
     As $\tC$ is defined over a field the big de Rham--Witt decomposes as a product of $p$-typical de Rham--Witt - or classical de Rham complexes and hence this sequence follows from the structure results in \cite{IllusiedRW} together with the fact that locally $\tC$ is a limit of schemes, which are smooth over the prime field,
     by Popescu's Theorem. By Proposition \ref{prop:WOmegat-via-val} there is a similar sequence for $j_*\W_S\Omega^{q,t}$, where the denominator on the right hand side is replaced by 
     $\W_S\Omega^{q}_{\tC, x}(\log)$.  Here we note that 
     it suffices to take the product in the statement of this proposition over all $(\eta,v,\epsilon)\in \Spa(C,\tC)$ with $\eta\in C$ the generic point and such that $\epsilon: \Spec \sO_v\to \tC$, has center in a (closed) point of $\tC\setminus C$, say $x$. 
     But in this case we have inclusions 
     $\sO_{\tC,x}\subset \sO_v\subset k(C)$ and as $\sO_{\tC,x}$ is a
     discrete valuation ring of rank one, it is a maximal subring of $k(C)$ and hence   $\sO_v=\sO_{\tC, x}$. Putting the two sequences together an application of the snake lemma gives the exact sequence
     \eqref{exa:DRWdim1.2} with the right term replaced by 
     $\oplus_{x} \W_S\Omega^q_{\sO_{\tC,x}(\log)}/ \W_S\Omega^q_{\tC,x}$.
     The residue map, e.g. \cite[Definition 2.11]{Rul}, gives an isomorphism 
     \[\Res_{S}^q: \frac{\W_S\Omega^q_{\sO_{\tC,x}(\log)}}{\W_S\Omega^q_{\tC,x}}\xrightarrow{\simeq} \W_S\Omega^{q-1}_{k(x)}.\]
     For this one has to identify the completion  of $\sO_{\tC,x}$
     with $k(x)[[t]]$, which requires the choice of a local parameter and a coefficient field. It is however not hard to show that since we only consider log-poles the residue map is in fact independent of 
     these choices. This yields the exact sequence \eqref{exa:DRWdim1.2}.
\item Assume  $\tC\to \Spec \Z$ is finite, i.e., $\tC$ is the spectrum of
the ring of integers in a number field $K$. Then for $q\ge 1$.
\[j_*\W_S\Omega^{q,t}= \sigma_*\W_S\Omega^q_C.\]
Indeed, in this case $\W_S\Omega^q_{K}=\W_S(K)\otimes_{\W_S(\Q)}\W_S\Omega^q_{\Q}=0$, for $q\ge 1$ and hence 
the above equality follows directly from Proposition 
\ref{prop:WOmegat-via-val}. If $C$ is \'etale over $\Z$, this vanishes
for $q\ge 2$, by \cite[Theorem 6.1]{Hesselholt-dRW} and \'etale base change. 
 \end{enumerate}   
\end{example}

\begin{prop}\label{prop:Omegaplus}
Let $T$ be a normal noetherian scheme,  $\tX\to T$ a smooth morphism, and $X\subset \tX$ a dense open, such that 
$\tX\setminus X$  is the support of a simple normal crossing divisor $D$ over $T$ (i.e., all intersections $D_{i_1}\cap\ldots\cap D_{i_r}$ 
    of the irreducible components of $D$ are smooth over $T$).  Let $j:(X,\tX)_t\to \tX_\Zar$ be the natural morphism of sites from above. Then 
    \[j_*\Omega^{q,t}_{/T}=j_*\Omega^{q, div}_{/T}=\Omega^{q}_{\tX/T}(\log D).\]
\end{prop}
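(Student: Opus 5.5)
The plan is to prove the chain of inclusions
\[\Omega^q_{\tX/T}(\log D)\subset j_*\Omega^{q,t}_{/T}\subset j_*\Omega^{q,div}_{/T}\subset \Omega^q_{\tX/T}(\log D)\]
of subsheaves of $\sigma_*\Omega^q_{X/T}$ on $\tX_{\Zar}$, where $\sigma\colon X\inj \tX$ denotes the inclusion. The question is Zariski local on $\tX$, so we fix an affine open $\tU\subset\tX$, put $U=\tU\cap X$, and may assume that there are coordinates $t_1,\dots,t_n$ étale over $T$ with $D\cap\tU=\{t_1\cdots t_r=0\}$. The middle inclusion holds by construction, see \ref{para:example-tame-sheaves}\ref{para:example-tame-sheaves1.5}: a section in $\Omega^{q,t}_{/T}$ satisfies the log condition at all valuations, hence in particular at the discrete ones on generic points.

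\emph{First inclusion.} By the analogue of Proposition \ref{prop:WOmegat-via-val} for $\Omega^q_{/T}$ (same proof, with $S=\{1\}$ and relative forms), it suffices to show that for every $(x,v,\epsilon)\in\Spa(U,\tU)$ the image of $\Omega^q_{\tU/T}(\log D)$ in $\Omega^q_{k(x)/T}$ lies in $\Omega^q_{\sO_v/T}(\log)$. The module $\Omega^q_{\tU/T}(\log D)$ is generated by wedge products of the forms $dt_i$ and $\dlog t_j$ for $j\le r$. Since $\epsilon$ maps $\Spec\sO_v$ to $\tU$, each $t_i$ lands in $\sO_v$, so $dt_i\in\Omega^1_{\sO_v}$. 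Since $x\in U$, each $t_j$ with $j\le r$ is a unit in $k(x)$, so $\dlog t_j\in\Omega^1_{\sO_v}(\log)$. This proves the first inclusion.

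\emph{Last inclusion.} This is the main point. Here $\tU$ is normal and noetherian: it is smooth over the normal noetherian scheme $T$. Take $\omega\in\Omega^q(U)$ satisfying the div-condition, that is, for every generic point $\eta$ and every discrete valuation $v$ on $k(\eta)$ centered on $\tU$, the form $\omega$ lies in $\Omega^q_{\sO_v/T}(\log)$. Since $\Omega^q_{\tU/T}(\log D)$ is locally free, hence reflexive on the normal scheme $\tU$, we have
\[\Omega^q_{\tU/T}(\log D)=\bigcap_{y\in\tU^{(1)}}\Omega^q_{\tU/T}(\log D)_y\]
inside $\Omega^q_{k(\eta)/T}$. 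For $y\in U$ the form $\omega$ is already regular at $y$. For $y$ a generic point of a component $D_j$, the local ring $\sO_{\tU,y}$ is a DVR with uniformizer $t_j$, so it defines a discrete valuation $v$ centered on $\tU$, and the hypothesis gives $\omega\in\Omega^q_{\sO_{\tU,y}/T}(\log)$.

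It then remains to check that $\Omega^q_{\sO_{\tU,y}/T}(\log)$, defined as the subalgebra generated by $\Omega^*_{\sO_{\tU,y}/T}$ and $\dlog$ of all units of $k(\eta)$, equals the stalk $\Omega^q_{\tU/T}(\log D)_y$. Every unit of $k(\eta)$ has the form $u t_j^m$ with $u\in\sO_{\tU,y}^\times$, so $\dlog(u t_j^m)=\dlog u+m\,\dlog t_j$. Moreover $\dlog u\in\Omega^1_{\sO_{\tU,y}/T}$, and $\Omega^1_{\sO_{\tU,y}/T}$ is free on the $dt_i$. Hence both modules are generated by the $dt_i$ together with $\dlog t_j$, and they coincide. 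The only care needed here is the local freeness of $\Omega^1_{\tU/T}$ at codimension-one points when $T$ is not a field, which follows from smoothness; I expect this identification, together with the reflexivity over a non-field base, to be the step requiring the most attention.

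Combining the three inclusions gives $j_*\Omega^{q,t}_{/T}=j_*\Omega^{q,div}_{/T}=\Omega^q_{\tX/T}(\log D)$.
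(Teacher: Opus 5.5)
\paragraph{Where the gap is.}
Your skeleton is the paper's. The chain of inclusions, the easy first and middle inclusions, and the reduction of the last inclusion to the generic points of $D$ are all fine; for that reduction you use reflexivity of $\Omega^q_{\tX/T}(\log D)$ on the normal scheme $\tX$, which the paper phrases as a kernel description. The gap is the sentence where you unpack the div-condition.

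By \ref{para:constr-tame-sheaves} and \ref{para:example-tame-sheaves}\ref{para:example-tame-sheaves1.5}, $\omega\in\Omega^{q,div}_{/T}(U,\tU)$ means the following: for every generic point $\eta$ and every discrete valuation $v$ on $k(\eta)$ centered on $\tU$, there \emph{exists a finite tame extension} $(L,w)/(k(\eta),v)$ with $\omega_L\in\Omega^q_{A_{L,w}/T}(\log)$. It does not say $\omega\in\Omega^q_{\sO_v/T}(\log)$; that is exactly what has to be proved.

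So at a generic point $y$ of $D$, your claim ``the hypothesis gives $\omega\in\Omega^q_{\sO_{\tU,y}/T}(\log)$'' is unjustified. By comparison, the step you single out as delicate, matching $\Omega^q_{\sO_{\tU,y}/T}(\log)$ with the stalk of $\Omega^q_{\tU/T}(\log D)$, is routine. Your appeal to an ``analogue of Proposition \ref{prop:WOmegat-via-val} for relative forms, same proof'' does not close the gap either. You use it only in the trivial direction. The nontrivial direction of that proposition is a tame descent whose key input, the trace of \ref{para:DRW-trace} resting on Proposition \ref{prop:DRW-tame-bc}, is established there only for absolute forms.

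\paragraph{The missing step.}
What is missing, and is the entire content of the paper's proof, is tame descent for log forms on a discretely valued field. Suppose $(K,v)$ is a discrete valuation field with $\sO_v$ an $R$-algebra, $(L,w)/(K,v)$ is finite tame Galois, and $\alpha\in\Omega^q_{K/R}$ satisfies $\alpha_L\in\Omega^q_{\sO_w/R}(\log)$. Then $\alpha\in\Omega^q_{\sO_v/R}(\log)$. The paper proves this as follows.
\begin{enumerate}
\item Reduce by \'etale descent to $(K,v)$ and $(L,w)$ strictly henselian.
\item Combine Lemma \ref{lem:DRW-val-tame-bcS1} with the surjection $\Omega^q_{\sO_w}(\log)\to\Omega^q_{\sO_w/R}(\log)$ to get $\Omega^q_{\sO_w/R}(\log)=\sO_w\cdot\Omega^q_{\sO_v/R}(\log)$. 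Hence the Galois trace maps $\Omega^q_{\sO_w/R}(\log)$ into $\Omega^q_{\sO_v/R}(\log)$.
\item Conclude with $\alpha=\frac{1}{e}\Tr(\alpha_L)$, where $e=[L:K]$ is invertible in $\sO_v$ by tameness.
\end{enumerate}

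This is the only place tameness is used, and it is essential. Take $k$ a perfect field of characteristic $3$, $K=k(t)$ with the $t$-adic valuation, and $L=K(y)$ with $y^3-y=t^{-2}$. Then $dt/t^2=-t\,dy$, which is regular on $\sO_w$, although $dt/t^2\notin\Omega^1_{\sO_v/k}(\log)$.

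Inserting this descent lemma at the generic points of $D$ completes your argument.
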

\begin{proof}
We may assume $\tX$ is integral. Clearly, we have the inclusions
\[\Omega^{q}_{\tX/T}(\log D)\subset j_*\Omega^{q,t}_{/T} \subset j_*\Omega^{q, div}_{/T}\]
and the equality 
\[\Omega^{q}_{\tX/T}(\log D)= 
\Ker\left(\Omega^q_{X/T}\to \bigoplus_{x\in D^{(0)}} \Omega^q_{\eta/T}/\Omega^q_{\tX, x/T}(\log D)\right),\]
where $\eta\in \tX$ is the generic point. Thus it remains to show:
Let $R$ be a ring and $(K,v)$ a discrete valuation field, such that $\sO_v$  is an $R$-algebra. 
Let $\alpha\in \Omega^q_{K/R}$ and assume there exists a finite tame Galois extension $(L,w)/(K,v)$ such that 
$\alpha_L\in \Omega^q_{\sO_w}(\log)$. Then $\alpha\in \Omega^q_{\sO_v/R}(\log)$. 

To this end, we may assume by \'etale descent that $(K,v)$ and $(L,w)$ are strictly henselian.
We have a   surjection $\Omega^q_{\sO_w}(\log )\surj \Omega^q_{\sO_w/R}(\log )$ and hence 
by Lemma \ref{lem:DRW-val-tame-bcS1}
\[\Omega^q_{\sO_w/R}(\log)= \sO_w\cdot \Omega^q_{\sO_v/R}(\log).\]
Hence the trace from \ref{para:DRW-trace} for $S=\{1\}$ also induces a trace for relative differentials
\[\Tr: \Omega^q_{\sO_w/R}(\log)\to \Omega^q_{\sO_v/R}(\log ).\]
Denoting by $e=[L:K]$ the degree, which is prime to the residue characteristic of $\sO_v$, we obtain 
$\alpha= \frac{1}{e}\Tr(\alpha_L)\in \Omega^q_{\sO_v/R}(\log )$. This completes the proof.
\end{proof}

\section{Tame de Rham--Witt cohomology in positive characteristic}\label{sec:10}

Throughout this section $k$ is a perfect field of  positive characteristic $p$,
$X$ is a smooth $k$-scheme of finite type and $X\inj \tX$ is a dense open immersion into 
a separated and finite type $k$-scheme $\tX$. By Definition \ref{def:DRWOmegaplus} 
we can consider the tame ($p$-typical) de Rham--Witt complex 
$W_n\Omega^{\bullet, t}$  and the divisorial version $W_n\Omega^{\bullet, {div}}$ on $(X,\tX)_t$.

We set $\sX=(X,\tX)$ and denote
by $j: (X,\tX)_t\to \tX_{\Zar}$ the morphism of sites induced by the functor 
$(\tV\subset \tX) \mapsto (\tV\cap X, \tV)$.

\begin{prop}\label{lem:WnOmegatame}
Assume $\tX$ is smooth and the complement $\tX\setminus X$ is the support of a simple normal 
crossing divisor $D$. Then
\[j_*W_n\Omega^{q,t}=j_*W_n\Omega^{q,div}=W_n\Omega^q_{\tX}(\log D),\]
where the right hand side denotes  $q$-forms of the logarithmic  de Rham--Witt complex, 
associated to the 
smooth log scheme $(\tX, j_*\sO_X^\times\cap \sO_{\tX})$, see \cite{hyodoKato}, \cite{matsuue}.
\end{prop}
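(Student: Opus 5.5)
The plan is to mimic the proof of Proposition \ref{prop:Omegaplus}, replacing differential forms by de Rham--Witt forms. There are obvious inclusions
\[W_n\Omega^q_{\tX}(\log D)\subset j_*W_n\Omega^{q,t}\subset j_*W_n\Omega^{q,div}.\]
The first holds because, locally at a point of $\tX$, the log de Rham--Witt forms are generated by $W_n\Omega^*_{\tX}$ and $\dlog[t_i]$ for local equations $t_i$ of the components of $D$. Their pullbacks to any $\sO^h_{\tX,L,w}$ land in $W_n\Omega^q_{\sO^h_{\tX,L,w}}(\log)$, since each $t_i$ becomes a unit on $\sO^h_{X,L}$. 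The second inclusion is the general $F_\beta\subset F^{div}_\beta$. So the content is the inclusion $j_*W_n\Omega^{q,div}\subset W_n\Omega^q_{\tX}(\log D)$. This is Zariski local, so we may assume $\tX$ is affine and integral with generic point $\eta$.

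First I use the known structure of log de Rham--Witt forms on a smooth log scheme with snc boundary (Hyodo--Kato, Matsuue). I expect this to give the purity-type description
\[W_n\Omega^{q}_{\tX}(\log D)=\Ker\Bigl(W_n\Omega^q_X\to \bigoplus_{x\in D^{(0)}} W_n\Omega^q_{k(\eta)}/W_n\Omega^q_{\sO_{\tX,x}}(\log)\Bigr),\]
where $\sO_{\tX,x}$ is the DVR at the generic point of a component of $D$. This is the analogue of the corresponding equality used in Proposition \ref{prop:Omegaplus}. To justify it, I would use that $W_n\Omega^q_{\tX}(\log D)$ is locally free over $W_n\sO_{\tX}$ up to a finite filtration whose graded pieces are locally free $\sO_{\tX}$-modules, or at least $S_2$ coherent sheaves on the normal scheme $\tX$. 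Then a section over $X$ that is log-regular at every codimension one point of $D$ extends. This reduces everything to the divisorial case.

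Next, take $\alpha\in j_*W_n\Omega^{q,div}(\tX)$ and a codimension one point $x\in D$. By Proposition \ref{prop:WOmegat-via-val}, the image of $\alpha$ in $W_n\Omega^q_{k(\eta)}$ lies in $W_n\Omega^q_{\sO_v}(\log)$ for every discrete valuation $v$ with center on $\tX$. In particular this holds for $v$ the valuation of $\sO_{\tX,x}$. Note that Proposition \ref{prop:WOmegat-via-val} already contains the tame-descent step, via the trace of \ref{para:DRW-trace} and faithfully flat descent from Lemma \ref{lem:locetbc-log}. Hence $\alpha$ lies in the kernel above, which completes the argument.

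The main obstacle is the purity statement for $W_n\Omega^q_{\tX}(\log D)$. This requires identifying Hyodo--Kato's log de Rham--Witt sheaf with the subsheaf of $j_*W_n\Omega^q_X$ generated by $W_n\Omega^*_{\tX}$ and the $\dlog[t_i]$. I would first try induction on $n$ using the exact sequences
\[0\to \mathrm{gr}^{n-1}\to W_n\Omega^q(\log D)\to W_{n-1}\Omega^q(\log D)\to 0.\]
Here the graded pieces are expressed through $\Omega^{q}_{\tX}(\log D)$, $\Omega^{q-1}_{\tX}(\log D)$ and their images under the Cartier operator, which are locally free in the snc case. This reduces purity to the case $n=1$, which is Proposition \ref{prop:Omegaplus}, together with the fact that locally free sheaves on normal schemes have sections determined in codimension one.
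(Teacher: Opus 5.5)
Your proposal is correct and follows essentially the same route as the paper. You get $W_n\Omega^q_{\tX}(\log D)\subset j_*W_n\Omega^{q,t}$ from the local description by $\dlog[t_i]$, and the reverse inclusion $j_*W_n\Omega^{q,div}\subset W_n\Omega^q_{\tX}(\log D)$ from Proposition~\ref{prop:WOmegat-via-val} plus purity, which holds because $W_n\Omega^q_{\tX}(\log D)$ is a successive extension of locally free $\sO_{\tX}$-modules (hence Cohen--Macaulay). The only difference is that the paper handles your ``main obstacle'' by citation rather than by induction on $n$ down to the case $n=1$. It cites \cite[Corollary 4.4]{matsuue} for the local description and the identification of the codimension-one stalks with $W_n\Omega^q_{\sO_w}(\log)$, and \cite[9.]{matsuue} with \cite[I, Corollaire 3.9]{IllusiedRW} for the locally free graded pieces.
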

\begin{proof}
For definiteness, we set 
\[W_n\Omega^q_{\tX}(\log D):= W_n\Lambda^q_{(\tX, \sM)/(\Spec k, {\rm triv})}\]
with the notation from  \cite[Proposition-Definition 3.10]{matsuue},
where $\sM= j_*\sO_X^\times\cap \sO_{\tX}$ is the log structure defined by $D$ and we equip the base $\Spec k$ 
with the trivial log structure. If $U$ is a smooth $k$-scheme and $x\in U$ is a 1-codimensional point such that its closure 
$D_x$ in $U$ is smooth, and $L$ is the fraction field of $\sO_{U,x}$ 
(resp. the fraction field of its strict henselization at a geometric point $\bar{x}$ over $x$) 
and $w$ is the discrete valuation defined by $x$ on $L$,  
then it follows from \'etale base change and \cite[Corollary 4.4]{matsuue}
that the Zariski stalk at $x$ (resp. the \'etale stalk at $\bar{x}$) of $W_n\Omega^q_{U}(\log D_x)$ is equal to
$W_n\Omega^q_{\sO_w}(\log )$. Furthermore, it follows from the explanations in \cite[9.]{matsuue} and \cite[I, Corollaire 3.9]{IllusiedRW}
that $W_n\Omega^q_{\tX}(\log D)$ is a successive extension of locally free $\sO_{\tX}$-modules and hence is 
Cohen--Macaulay. Thus $W_n\Omega^q_{\tX}(\log D)=j_*W_n\Omega^{q,div}$, by Proposition \ref{prop:WOmegat-via-val}
with $S=\{1, p,\ldots, p^{n-1}\}$. Thus also  $j_*W_n\Omega^{q,t}\subset W_n\Omega^q_{\tX}(\log D)$. 
On the other hand it follows from the local description of $W_n\Omega^q_{\tX}(\log D)$, given by \cite[Corollary 4.4]{matsuue}
and \'etale base change, that the pullback of a local section 
$\alpha\in W_n\Omega^q_{\tX}(\log D)$ along 
$(\Spec L, \Spec \sO_w)\to (X,\tX)$, with $(L,w)$ a valuation field,  
lies in $W_n\Omega^q_{\sO_w}(\log)$.
Hence  the inclusion $W_n\Omega^q_{\tX}(\log D)\subset j_*W_n\Omega^{q,t}$.  
\end{proof}

\begin{para}\label{para:res}
Consider the following condition:
\begin{enumerate}[label={$({\rm princ})_{d}$}]
    \item\label{res2} Assume $(\tX, D)$  is a normal crossing pair of dimension $d$, 
    i.e., $\tX$ is a smooth integral $k$-scheme of dimension $d$ and $D$ is a divisor on $\tX$ which has simple normal crossings. 
    Assume further that $Z\subset \tX$ is a closed (possibly non-reduced) subscheme of codimension 
    $\ge 2$ which is contained in $D$.
    Then there exists a finite sequence of morphisms
    \[\pi:\tY=\tX_n\xrightarrow{\pi_n} \tX_{n-1}\to\ldots\to \tX_1\xrightarrow{\pi_1} \tX_0=\tX,\]
    such that 
    \begin{enumerate}
         \item each $(\tX_j, D_j)$ is a normal crossing pair, where $D_0:=D$ and 
         $D_{j}=\pi^{-1}_j(D_{j-1})_{\rm red}$, $j\ge 1$,
        \item each $\pi_j$, $1\le j\le n$, is the blow-up in a smooth closed subscheme $C_{j-1}\subset X_{j-1}$, which has normal crossings with $D_{j-1}$,
        \item each $C_j$, $0\le j\le n-1$ is contained in $Z_j$, where 
         $Z_0:=Z$ and $Z_j$ is the strict transform of $Z_{j-1}$ under $\pi_j$, 
        \item $\pi^{-1}Z$ is an Cartier divisor on $\tY$.
    \end{enumerate}
    Recall that given a normal crossing pair $(\tX,D)$,  we say that a closed subscheme $C\subset D$ has {\em normal crossings with $D$},
    if for every point $x\in C$ we find a regular parameter sequence $t_1,\ldots, t_r\in \sO_{X,x}$ such that 
    the ideal of $C$ at $x$ is given by $(t_1,\ldots, t_b)$ and $D$ is at $x$ given by ${\rm div}(t_a\cdots t_c)$
    with $1\le a\le b\le c\le r$. 
\end{enumerate}
It is well-known that \ref{res2} holds for $0\le d\le 2$, see e.g.
\cite[\href{https://stacks.math.columbia.edu/tag/0AHH}{Tag 0AHH}]{stacks-project};
in characteristic zero it holds by Hironaka. Notice that \ref{res2} is trivially satisfied
if $D=\emptyset$.
\end{para}

\begin{thm}\label{thm:tameDRWcoh}
Assume $\tX$ is smooth and the complement $\tX\setminus X$ is the support of a simple normal 
crossing divisor $D$. Assume \ref{res2} holds for $d=\dim X$.
    Then 
    \[R\Gamma((X,\tX)_t, W_n\Omega^{q,t})= R\Gamma(\tX_{\Zar}, W_n\Omega^{q}_{\tX}(\log D)).\]
    Moreover, setting 
    \[W\Omega^{\bullet,t}:=R\lim_n W_n\Omega^{\bullet,t},\] 
    we get
    \[R\Gamma((X,\tX)_t, W\Omega^{\bullet,t})= R\Gamma_{\text{log-crys}}((\tX,D)/W(k)),\]
    where the right hand is logarithmic crystalline cohomology, \cite{hyodoKato}:
       \[
    R\Gamma_{\text{log-crys}}((\tX,D)/W(k))=R\lim_n R\Gamma_{\text{log-crys}}((\tX,D)/W_n(k)).\]
    
\end{thm}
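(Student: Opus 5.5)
The plan is to combine Theorem \ref{thm:tame-vs-etale} with Proposition \ref{lem:WnOmegatame} and a cofinality argument using \ref{res2}. First note that $W_n\Omega^{q,t}$ satisfies condition (p) of Theorem \ref{thm:tame-vs-etale}: everything lives over $\F_p$, so all stalks are $W_n(k)$-modules, hence $\Z_{(p)}$-modules (cf. Remark \ref{rmk:tames-vs-etale}). Theorem \ref{thm:tame-vs-etale} therefore gives
\[R\Gamma((X,\tX)_t, W_n\Omega^{q,t})\simeq \colim_{\cW\to (X,\tX)} R\Gamma(\tW_{\et}, j^{\cW}_*W_n\Omega^{q,t}),\]
the colimit running over admissible blow-ups $\cW=(X,\tW)\to (X,\tX)$.

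The key step is to show that admissible blow-ups $\tY\to\tX$ obtained as in \ref{res2} are cofinal, and that for these the transition maps are isomorphisms. Any admissible blow-up is the blow-up of $\tX$ in a closed subscheme $Z$ with support in $D$. If $Z$ is a divisor, the blow-up is dominated by the identity up to refinement. In general one reduces to the case $\operatorname{codim} Z\ge 2$ by removing the divisorial part (twisting the ideal by an invertible ideal supported on $D$ does not change the blow-up). Then \ref{res2} yields $\pi:\tY\to \tX$ with $\pi^{-1}Z$ Cartier, so $\pi$ factors through $\Bl_Z\tX$. Moreover $(\tY,D_Y)$ is again a simple normal crossing pair with $\tY\setminus X=D_Y$. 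Hence the colimit may be computed over such $\tY$. For each of them, Proposition \ref{lem:WnOmegatame}, together with its étale-local version (the same proof applies to étale $\tV\to \tY$), identifies $j^{\cY}_*W_n\Omega^{q,t}$ with $W_n\Omega^q_{\tY}(\log D_Y)$ on $\tY_{\et}$. Since this is quasi-coherent over $W_n\sO_{\tY}$, its étale cohomology equals its Zariski cohomology.

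It remains to show that for a blow-up $\pi:\tY\to\tX$ in a smooth center $C$ having normal crossings with $D$ we have
\[R\pi_*W_n\Omega^q_{\tY}(\log D_Y)\simeq W_n\Omega^q_{\tX}(\log D).\]
I expect this to be the main obstacle. I would reduce to $n=1$ by induction on $n$, using the filtration of $W_n\Omega^q(\log)$ by $V$ and $dV$, whose graded pieces are built from $\Omega^{\bullet}_{\tX}(\log D)$ and Frobenius twists thereof (cf. \cite{matsuue}, \cite{IllusiedRW}). Each graded piece is a locally free sheaf of $\sO$-modules of the form $B$, $Z$, or $\Omega(\log)$, so by five-lemma arguments it suffices to treat $\Omega^q(\log)$ together with the Cartier-type sheaves. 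For $\Omega^q(\log)$ the isomorphism $R\pi_*\Omega^q_{\tY}(\log D_Y)\simeq\Omega^q_{\tX}(\log D)$ for blow-ups in centers transversal to $D$ is a standard local computation: reduce to $\A^r$ with $C$ a coordinate subspace contained in $D$, and use Čech/toric cohomology of $\P^{b-1}$-bundles, where $D_Y$ contains the exceptional divisor since $C\subset D$. Passing through $B$ and $Z$ via the Cartier isomorphism then gives the $W_n$-statement.

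Composing the isomorphisms along the tower of \ref{res2} shows that the colimit is constant, equal to $R\Gamma(\tX_{\Zar},W_n\Omega^q_{\tX}(\log D))$; this proves the first claim, and it is compatible with $d$, $F$, $V$ and restriction. For the second claim, apply the first to every $q$ to get $R\Gamma((X,\tX)_t,W_n\Omega^{\bullet,t})\simeq R\Gamma(\tX,W_n\Omega^\bullet_{\tX}(\log D))$. Then use Hyodo–Kato's comparison of the log de Rham–Witt complex with log crystalline cohomology over $W_n(k)$, and finally take $R\lim_n$, which commutes with $R\Gamma$.
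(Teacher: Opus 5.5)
Your proposal is correct. Its skeleton is the paper's: Theorem \ref{thm:tame-vs-etale} (condition (p) is automatic in characteristic $p$), cofinality of the towers from \ref{res2}, Proposition \ref{lem:WnOmegatame} on each such $\tY$ (\'etale locally, which the paper also uses implicitly), \'etale $=$ Zariski cohomology, constancy of the colimit, and finally the total complex and $R\lim_n$. Your explicit appeal to the Hyodo--Kato comparison is left implicit in the paper. Your cofinality argument splits off the invertible part of the ideal on the regular scheme $\tX$ and applies \ref{res2} to the residual codimension $\ge 2$ subscheme; this is essentially the argument of \cite[Tag 0AHI]{stacks-project} that the paper cites.

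The genuine difference is the blow-up invariance step.

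\textbf{The paper's route.} It gets that $\pi^*\colon H^i(\tX_{\Zar},W_n\Omega^q_{\tX}(\log D))\to H^i(\tY_{\Zar},W_n\Omega^q_{\tY}(\log D'))$ is an isomorphism in one stroke from Saito's invariance theorem \cite[Theorem 5.2]{shujilog}. This requires first identifying $W_n\Omega^q_{\tX}(\log D)$ with the modulus sheaf $\underline{\omega}^{\CI}W_n\Omega^q$ of the reciprocity sheaf $W_n\Omega^q$ (\cite{RulSaito} for $q=0$, \cite{RenRuelling} or \cite{mericicrys} for $q\ge 1$).

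\textbf{Your route.} You prove the sheaf-level statement $R\pi_*W_n\Omega^q_{\tY}(\log D_Y)\simeq W_n\Omega^q_{\tX}(\log D)$ by hand. You d\'evisse along the canonical filtration, whose graded pieces are extensions of $\Omega^{q-1}(\log)/Z_n$ by $\Omega^q(\log)/B_n$. The log Cartier sequences for $B_n$ and $Z_n$ then reduce everything to $R\pi_*\Omega^q_{\tY}(\log D_Y)\simeq\Omega^q_{\tX}(\log D)$ for all $q$. That is a coherent computation on the \'etale-local toric model.

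\textbf{Trade-offs.} Your argument is more elementary and self-contained, and gives a stronger, sheaf-level result, but it is specific to de Rham--Witt sheaves. The paper pays with heavy machinery but stays uniform: the same argument yields Theorem \ref{thm:tamecohRSC} for general reciprocity sheaves.

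\textbf{What you still owe.} The coherent computation is the one non-formal input, and you only sketch it. It genuinely uses $C\subset D$, i.e. $E\subset D_Y$. In top degree, for instance, one gets $\omega_{\tY}(D_Y)\cong\pi^*\omega_{\tX}(D)\otimes\mathcal{O}((b-m)E)$. Here $b=\operatorname{codim}C$ and $m\ge 1$ is the number of components of $D$ containing $C$. Since $R\pi_*\mathcal{O}(jE)=\mathcal{O}$ only for $0\le j\le b-1$, this holds exactly because $m\ge 1$. The intermediate degrees need the analogous check.
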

\begin{proof}
For the first statement, observe that by Proposition \ref{lem:WnOmegatame} we have a natural map
\[W_n\Omega^{q}_{\tX}(\log D)=j_*W_n\Omega^{q,t}\to Rj_*W_n\Omega^{q,t}.\]
It remains to show that this yields an isomorphism after applying $H^i(\tX_{\Zar}, -)$. 
By Theorem \ref{thm:tame-vs-etale}  we have 
 \[H^i((X,\tX)_t, W_n\Omega^{q,t})=
\varinjlim_{\sY\to \sX} H^i(\tY_{\et}, j^{\sY}_*W_n\Omega^{q,t}),\]
where the direct limit is over all modifications $\sY=(X,\tY)\to \sX$. 
Under \ref{res2} modifications as in \ref{res2} are cofinal in the 
cofiltered category of all modifications to $\sX$,
see, e.g., the argument in the proof of 
\cite[\href{https://stacks.math.columbia.edu/tag/0AHI}{Tag 0AHI}]{stacks-project}.
Using further  that \'etale- and Zariski cohomology of $W_n\Omega^q_{\tX}(\log D)$ coincide, we obtain 
 \[H^i((X,\tX)_t, W_n\Omega^{q,t})=
\varinjlim_{\sY\to \sX} H^i(\tY_{\Zar}, W_n\Omega^{q}_{\tY}(\log D')),\]
where the limit is over modifications induced by morphisms $\pi:\tY\to \tX$ as in  \ref{res2} and 
and where  $D':=\pi^{-1}(D)_{\rm red}$. The pullback
\[\pi^*: H^i(\tX_{\Zar}, W_n\Omega^{q}_{\tX}(\log D ))\to 
H^i(\tX_{\Zar}, W_n\Omega^q_{\tY}(\log D'))\]
is an isomorphism by \cite[Theorem 5.2]{shujilog} and the fact that with the notation from  
{\em loc. cit.} the Nisnevich sheaf given by 
$U\mapsto \underline{\omega}^{\CI}W_n\Omega^q(U,D_{|U})$ is equal to
$W_n\Omega^q_{\tX}(\log D)$. 
For $q=0$, this follows from \cite[Theorem 4.15(4) and Theorem 7.20]{RulSaito}, 
for $q\ge 1$ this holds by \cite[Theorem 5.4 and Remark 5.5(3)]{RenRuelling} 
or the proof of  \cite[Theorem 4.4]{mericicrys}. This yields the first statement.

Considering the total complex of the double complex
$(\ldots \to R\Gamma((X,\tX)_t, W_n\Omega^{q,t})\xrightarrow{d} 
R\Gamma((X,\tX)_t, W_n\Omega^{q+1,t})\to \ldots)$ and applying $R\lim$
yields the second statement.
\end{proof}

{
\begin{remark}\label{rmk:limWnOmegat}
Let $(X,\tX)$ be as at the beginning of this section with  $X$ integral of dimension $d$.
Assume \ref{res2} and assume resolution of singularities hold in dimension $d$, in the sense that 
for any finite type and separated $k$-scheme $Y$ of dimension $d$ and every reduced closed subset $Z\subset Y$
such that $Y\setminus Z$ is smooth, there exists a proper morphism $f: Y_1\to Y$ which is an isomorphism 
over $Y\setminus Z$, such that $Y_1$ is smooth over $k$ and $f^{-1}(Z)_{\red}$ has simple normal crossings. 
Then we have 
\eq{thm:tameDRWcoh1}{ R\lim_n W_n\Omega^{q,t}= \lim_n W_n\Omega^{q,t},}
i.e., $R^i\lim_n W_n\Omega^{q,t}=0$, for all $i\ge 1$. 

Indeed  by resolution of singularities every object of $(X,\tX)_t$ has a covering
consisting of pairs $(V,\tV)$ with $\tV$ smooth and affine, such that 
$\tV\setminus V$ has simple normal crossing support. For such pairs we have 
\[H^i((V,\tV)_t, W_n\Omega^{q,t})=0, \quad \text{for all }i>0,\]
by the first statement of Theorem \ref{thm:tameDRWcoh}, which uses \ref{res2}. Moreover, the restriction maps
$W_{n+1}\Omega^{q,t}(V,\tV)\to W_{n}\Omega^{q,t}(V,\tV)$ are surjective by Proposition \ref{lem:WnOmegatame}, 
whence  \eqref{thm:tameDRWcoh1} holds by \cite[\href{https://stacks.math.columbia.edu/tag/0BKY}{Tag 0BKY}]{stacks-project}. 
\end{remark}
}

\begin{remark}
 We remark, that if $k=\C$, then \ref{res2} holds for any $d$ and the same proof  for Theorme \ref{thm:tameDRWcoh}
 together with \cite{DeligneHodgeII} shows in case $\tX$ is proper
 \[R\Gamma((X,\tX)_t, \Omega^{\bullet,t}_{/\C})\cong R\Gamma_{\rm Betti}(X(\C), \C),\]
 where we use Proposition \ref{prop:Omegaplus} instead of Proposition \ref{lem:WnOmegatame}.
\end{remark}

\begin{definition}\label{defn:tame-syntomic}
Following \cite[after Definition 2.1]{LangerZink-Displays}\footnote{See also \cite[Proposition 4.4]{LangerZink-Displays} for the relation with other definitions of the Nygaard complex in the non-tame setup.} 
we define the $r$th {\em tame Nygaard complex} of level $n$ on $(X,\tX)_t$ ($r\ge 0$) by
\[\sN^rW_n\Omega^{\bullet,t}: 0\to W_{n-1}\sO^t\xrightarrow{d}\ldots \xrightarrow{d} W_{n-1}\Omega^{r-1,t}\xrightarrow{dV}
W_n\Omega^{r,t}\xrightarrow{d}W_n\Omega^{r+1,t}\to\ldots \]
We obtain a pro-complex $\sN^rW_{\sbullet}\Omega^{\bullet,t}$ in which the transition maps 
are induced by restriction. We set 
\[\sN^rW\Omega^{\bullet, t}=R\lim\sN^rW_{n}\Omega^{\bullet,t}.\]

We have two morphism of pro-complexes
\[\iota_r, F_r: \sN^rW_{\sbullet}\Omega^{\bullet}\to W_{\sbullet}\Omega^{\bullet, t},\]
which in degree $q$ are given by 
\[\iota_r^q=\begin{cases}
    p^{r-q-1}V & \text{if }q\le r-1\\  {\rm id} &\text{if } q\ge r,
\end{cases} \quad
F_r^q=\begin{cases}
    {\rm id} & \text{if } q\le r-1\\ p^{q-r} F & \text{if } q\ge r.
\end{cases}\]
It is direct to check that these define morphism of complexes, cf. \cite[(5)]{LangerZink-Displays}. 
We define the $r$th tame syntomic pro-complex on $(X\,\tX)_t$ as 
\[\Z/p^{\sbullet}(r)^t= \Cone(F_r-\iota_r: 
\sN^r W_{\sbullet}\Omega^{\bullet, t}\to W_{\sbullet}\Omega^{\bullet,t})[-1],\]
and the $r$th tame syntomic complex by 
\[\Z_p(r)^t=R\lim \Z/p^{\sbullet}(r)^t.\]
\end{definition}

\begin{definition}\label{defn:WnOmegalog}
Let $W_n\Omega^r_{X,\log}$\footnote{Also denoted by $\nu_n(r)$ or $\nu_n^r$ in the literature.} 
be the sheaf on $X_\et$ of logarithmic de Rham--Witt differentials of degree $r$, 
e.g., \cite[I, 5.7]{IllusiedRW}.  We define a presheaf $W_n\Omega^{r,t}_{\log}$  on 
$(X,\tX)_t$ by 
\[W_n\Omega^{r,t}_{\log}(V,\tV):= \Gamma(V,W_n\Omega^r_{X,\log}).\]
It follows directly from the definition, that $W_n\Omega^{r,t}_{\log}$ coincides with 
$F_{\beta}$, where $F=W_n\Omega^r_{X,\log}$ and  
$\beta=\{F_w\subset F(\cO^h_{X,L})\}_{(L,w,\epsilon\in \Vals_{\cX})}$ with $F_w=F(\cO^h_{X,L})$
in the notation of \ref{para:constr-tame-sheaves}. 
Hence $W_n\Omega^{r,t}_{\log}$ is a sheaf on $(X,\tX)_t$. 
\end{definition}

\begin{thm}\label{thm:syn-vs-log}
We have  an isomorphism of pro-complexes $(X,\tX)_t$ 
\[\Z/p^{\sbullet}(r)^t= W_{\sbullet}\Omega^{r,t}_{\log}[-r].\]
\end{thm}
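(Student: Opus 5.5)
The plan is to reduce to the classical Illusie–Milne–Kato computation on smooth schemes. The tame pro-complex $\Z/p^{\sbullet}(r)^t$ is built from the sheaves $W_n\Omega^{q,t}$. These are subsheaves, defined by valuative conditions, of the pushforward of the étale sheaves $W_n\Omega^q_X$ along $(V,\tV)\mapsto V$. So the statement should follow from the known quasi-isomorphism on $X_\et$ together with a tameness check on the valuative side. Concretely, on smooth $X$ over perfect $k$ the classical result of Langer–Zink/Illusie says that the (non-tame) syntomic pro-complex $\Cone(F_r-\iota_r:\sN^rW_{\sbullet}\Omega^\bullet_X\to W_{\sbullet}\Omega^\bullet_X)[-1]$ is quasi-isomorphic, as a pro-complex, to $W_{\sbullet}\Omega^r_{X,\log}[-r]$. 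The proof uses $1-F$ surjectivity, exactness of $0\to W_n\Omega^r_{\log}\to W_n\Omega^r\xrightarrow{1-F} W_n\Omega^r/dV^{n-1}\to0$ and the Cartier-type exactness in degrees $\ne r$, all up to pro-isomorphism.

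\textbf{Step 1 (construct the map).} Since $\dlog[a]$ lies in $W_n\Omega^r_{A_{L,w}}(\log)$ for $a\in A_L^\times$, sections of $W_n\Omega^r_{X,\log}$ automatically satisfy the $\beta_{\log,n}$ condition. Hence $W_n\Omega^{r,t}_{\log}\subset W_n\Omega^{r,t}$. Moreover $F=\id$ on log forms and $dV$-terms vanish on them, so $W_{\sbullet}\Omega^{r,t}_{\log}[-r]\to\Z/p^{\sbullet}(r)^t$ is a map of pro-complexes. This map is given in degree $r$ by the inclusion into $\sN^r$ in degree $r$, with zero component in the cone.

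\textbf{Step 2 (reduce to stalks).} By Remark \ref{rmk;Deligne}, it suffices to prove that the map is a pro-quasi-isomorphism after applying the fiber functors $\phi_\cT$ for tame local $\cT=(\Spec S,\Spec(S\times_k\cO_v))$ as in Proposition \ref{prop:tamely-acyclic}. A pro-isomorphism of pro-sheaves can be checked on a conservative family of points when the pro-systems are uniform, e.g. when kernels die after a bounded shift in $n$. I will use the uniform bounds of the classical proof (a shift by one or two levels) to ensure this. By Proposition \ref{prop:DRWOmegaplus} and compatibility with colimits, $\phi_\cT W_n\Omega^{q,t}=W_n\Omega^q_{\tS}(\log)$, and $\phi_\cT W_n\Omega^{r,t}_{\log}=W_n\Omega^r_{S,\log}$.

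\textbf{Step 3 (local computation).} We must show that $\Cone(F_r-\iota_r)$ built from the log-forms $W_n\Omega^\bullet_{\tS}(\log)$ is pro-quasi-isomorphic to $W_n\Omega^r_{S,\log}[-r]$, where $S$ is a henselian local, ind-étale over a smooth $k$-scheme, and $\tS=S\times_k\cO_v$ with $(k,v)$ tamely closed. Lemma \ref{lem:ses-log-dRW} gives exact sequences $0\to J\to W_n\Omega^q_{\tS}(\log)\to W_n\Omega^q_{\cO_v}(\log)\to0$ compatible with $F,V,d$, where $J$ is the common kernel for $S\to k$. The cone for $S$ is known classically to be $W_{\sbullet}\Omega^r_{S,\log}[-r]$. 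Thus, by the fiber-product description, it suffices to show that the cone built from $W_n\Omega^\bullet_{\cO_v}(\log)$ maps pro-quasi-isomorphically to the one built from $W_n\Omega^\bullet_k$. Equivalently, the cone of the quotients $W_n\Omega^\bullet_k/W_n\Omega^\bullet_{\cO_v}(\log)$ must be pro-acyclic. This is the key claim: $1-F$ acts bijectively (pro-) on the quotients $W_n\Omega^q_k/W_n\Omega^q_{\cO_v}(\log)$, and $d$, $V$ behave compatibly. On a filtered-by-valuation description this should follow as in Artin–Schreier theory: an equation $x-Fx=y$ with $y$ integral (log) has a solution that is again integral, because a polar part would be multiplied by $p$ in valuation under $F$. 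Solvability of $x-F x=y$ itself in $k$ modulo the subgroup uses that $(k,v)$ is tamely closed only up to the $p$-part. The obstruction sits in wild Artin–Schreier extensions, which is exactly where the pole analysis applies.

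I expect Step 3 to be the main obstacle, specifically controlling $F$ on non-integral forms over a non-discrete valuation ring $\cO_v$ in the de Rham–Witt setting. I would first handle $n=1$, where it reduces to the statement that $C^{-1}-1$ (inverse Cartier) is bijective on $\Omega^q_k/\Omega^q_{\cO_v}(\log)$ together with the computation of $d$-cohomology. I would then induct on $n$ using the standard filtration $\Fil^s W_n\Omega^q$ and its graded pieces, which are expressible via $\Omega^\bullet$ with $F,V$ acting as $C^{-1}$-type maps.
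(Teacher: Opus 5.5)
\textbf{The gap is in Step 3, and it already appears for $r=0$, $n=1$.} You use that ``the cone for $S$ is classically $W_{\sbullet}\Omega^r_{S,\log}[-r]$''. But $S$ is only henselian. Its residue field $k$ is tamely closed, not separably closed, once $v$ is nontrivial: if $v(a)<0$ and $v(a)\notin p\Gamma_v$, then $x^p-x=a$ has no root in $k$, since a root would force $v(a)=p\,v(x)$.

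The classical result is a quasi-isomorphism of complexes of \'etale sheaves. On sections over such an $S$, the non-tame syntomic cone has extra cohomology $H^{r+1}=\mathrm{coker}(1-F\mid W_{\sbullet}\Omega^r_S)$. For $r=0$, $n=1$ this is $S/\wp(S)=k/\wp(k)\neq 0$. More generally, for $r=0$ the pro-system $\{H^1(k,\Z/p^n)\}_n$ has surjective transition maps, so it is not pro-zero.

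Correspondingly, your key claim is false. The map $1-F$ on $Q^r=W_{\sbullet}\Omega^r_k/W_{\sbullet}\Omega^r_{\cO_v}(\log)$ is injective; this is what your pole argument actually shows. It is not surjective: for $r=0$, $n=1$ its cokernel is $k/(\wp(k)+\cO_v)=k/\wp(k)$, because $\wp(\cO_v)=\cO_v$. In general, the snake lemma for $0\to W_{\sbullet}\Omega^r_{\tS}(\log)\to W_{\sbullet}\Omega^r_S\to Q^r\to 0$ identifies this cokernel with $\mathrm{coker}(1-F\mid W_{\sbullet}\Omega^r_S)$. These are exactly the wild Artin--Schreier classes. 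They are genuinely nonzero, so no pole analysis can make the quotient cone acyclic. Your two false statements cancel each other, but as set up the argument tries to prove an acyclicity that does not hold.

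\textbf{The missing idea.} Surjectivity of $1-F$ is only needed for integral (logarithmic) right-hand sides over $\tS$, and it is $\tS$, not $S$, that is strictly henselian. Your pole argument only shows that any solution of $x-Fx=y$ with $y$ integral is itself integral. What is actually needed is the existence of a solution for integral $y$, and that comes from strict henselianity of $\tS$. Solving modulo the subgroup in $k$ is never needed.

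The paper proceeds as follows.
\begin{enumerate}
\item In degrees $q\neq r$, one of $F_r^q,\iota_r^q$ is the identity and the other is pro-nilpotent ($p^{r-q-1}V$, resp.\ $p^{q-r}F$). Hence $F_r-\iota_r$ is a pro-isomorphism there.
\item It remains to show, at the tame local points, exactness of $0\to W_{\sbullet}\Omega^r_{S,\log}\to W_{\sbullet}\Omega^r_{\tS}(\log)\xrightarrow{1-F}W_{\sbullet}\Omega^r_{\tS}(\log)\to 0$.
\item Left exactness is inherited from Illusie's sequence for the ind-smooth $S$, of which this is a subsequence.
\item Right exactness follows from surjectivity of $1-F$ on $W_{\sbullet}\Omega^r_{\tS}$ for the strictly henselian $\tS$, together with $F\dlog[x]=\dlog[x]$.
\end{enumerate}

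So keep the comparison with $S$ only for the kernel, and drop the quotient $Q^r$ entirely. Your Steps 1 and 2 match the paper's reduction. Your remark about uniformity of the pro-systems when passing to stalks addresses a point the paper leaves implicit.
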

\begin{proof}
This is the tame version of \cite[Proposition 8.4]{BMS2}.
Indeed, $\iota_r^q$ (resp. $F^q_r$) is nilpotent in degrees $\le r_1$ (resp. $\ge r$) and hence
$\iota_r-F_r$ is an isomorphism of pro-complexes in degrees $\neq r$. 
It remains to show that 
\[{\rm id}-F: W_{\sbullet}\Omega^{r,t}\to W_{\sbullet} \Omega^{r,t}\]
is surjective on $(X,\tX)$ with kernel $W\Omega^{q,t}_{\log}$. 
By Proposition \ref{prop:tamely-acyclic}, Remark \ref{rmk;Deligne}, and Proposition \ref{prop:DRWOmegaplus} 
it suffices to show that the sequence
\eq{thm:syn-vs-log1}{0\to W_{\sbullet}\Omega^{r}_{A, \log}\to W_{\sbullet}\Omega^{r}_{\tA}(\log)\xrightarrow{{\rm id} -F} W_{\sbullet}\Omega^r_{\tA}(\log)\to 0}
is exact for henselian local rings $(A,\fm)$ with a valuation $v$ on the residue field $K=A/\fm$, 
such that $A$ is ind-\'etale over $X$, $(K,v)$ is tamely closed, and $\tA=A\times_K \sO_v$. 
As \eqref{thm:syn-vs-log1} is a subsequence  of the sequence
\[0\to W_{\sbullet}\Omega^{r}_{A, \log}\to W_{\sbullet}\Omega^{r}_{A}
\xrightarrow{{\rm id} -F} W_{\sbullet}\Omega^r_{A},\]
which is exact by \cite[I, Théorème 5.7.2]{IllusiedRW}, 
we get that the sequence \eqref{thm:syn-vs-log1} is
left exact as well. As $\tA$ is strictly henselian, the map
$W_{\sbullet}\Omega^{r}_{\tA}\xrightarrow{{\rm id} -F} W_{\sbullet}\Omega^r_{\tA}$ is surjective, 
by {\em loc. cit.} Thus the formula $Fd\log[x]=\dlog[x]$ yields
the right exactness of \eqref{thm:syn-vs-log1}.
\end{proof}

\begin{corollary}\label{cor:tame-DRW-cycle-map}
There is a natural cycle map
\eq{cor:tame-DRW-cycle-map1}{{\rm CH}^r(X, 2r-i)\to H^i((X,\tX)_t, \Z_p(r)^t)\to 
H^i((X,\tX)_t, W\Omega^{\bullet,t}}),
where the left hand side denotes Bloch's higher Chow groups.
This map is functorial with respect to pullback along $k$-morphisms $(Y,\tY)\to (X,\tX)$, with
$\tY$ a finite type separated $k$-scheme and $Y\subset \tY$ a dense open which is smooth.
This map vanishes for $r>i$. 

Moreover, if $\tX$ is smooth and proper over $k$ and $i=2r$, then this cycle map fits into a
commutative diagram
\[\xymatrix{
\CH^r(\tX)\ar[r]\ar[d] & H^r_{\rm crys}(\tX/W(k))\ar[d]\\
\CH^r(X)\ar[r] & H^r((X,\tX)_t, W\Omega^{\bullet,t}),
}\]
where the top horizontal map is the classical crystalline cycle map,
the left vertical map is restriction along the open immersion $X\inj \tX$, and 
the right vertical map is induced from 
the identification $H^r_{\rm crys}(X/W(k))=H^r((\tX,\tX)_t, W\Omega^{\bullet,t})$ stemming from
Theorem \ref{thm:tameDRWcoh} and pullback along $(\tX,\tX)\to (X,\tX)$.    
\end{corollary}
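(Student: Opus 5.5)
The plan is to build everything from Theorem \ref{thm:syn-vs-log}, which identifies $\Z/p^{\sbullet}(r)^t$ with $W_{\sbullet}\Omega^{r,t}_{\log}[-r]$. Since the sections of $W_n\Omega^{r,t}_{\log}$ on $(V,\tV)$ are just $\Gamma(V,W_n\Omega^r_{X,\log})$, the sheaf is pulled back from $X_{\et}$ along the morphism of sites $\rho\colon (X,\tX)_t\to X_{\et}$, $(V,\tV)\mapsto V$. More precisely, it is $\rho^{-1}$ of the \'etale sheaf, because the functor $V\mapsto (V,\ol V)$ with $\ol V$ a Nagata compactification over $\tX$ is available, and by Lemma \ref{lem:blow-up-inv} the choice of compactification is irrelevant.

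The first step is to construct the cycle map. Geisser--Levine give $\CH^r(X,2r-i)/p^n\cong H^{i-r}(X_{\et},W_n\Omega^r_{\log})$, and more precisely a map $\Z(r)/p^n\to W_n\Omega^r_{\log}[-r]$. I would pull this back along $\rho$ to obtain
\[\CH^r(X,2r-i)\to H^i((X,\tX)_t,\Z/p^n(r)^t).\]
These maps are compatible in $n$, so passing to $R\lim$ gives the map to $H^i(\Z_p(r)^t)$. Composing with the projection $\Z_p(r)^t\to \sN^rW\Omega^{\bullet,t}\to W\Omega^{\bullet,t}$ (the map $\iota_r$, or equivalently $F_r$) produces the second arrow.

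Functoriality follows because pullback along $(Y,\tY)\to(X,\tX)$ is compatible with $\rho$, with the functoriality of Geisser--Levine, and with $\beta$-constructions (Remark \ref{rmk:functoriality}). For the vanishing when $r>i$: the complex $\Z_p(r)^t$ is concentrated in degree $r$, so $H^i$ vanishes for $i<r$. Indeed $\Z/p^n(r)^t=W_n\Omega^{r,t}_{\log}[-r]$ has cohomology only in degrees at least $r$, and the $R\lim$ term adds at most a $\lim^1$ in degree $r$, so nothing survives below degree $r$.

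For the comparison square I would take $\tX$ smooth and proper and reduce to $(\tX,\tX)$. Pullback along $(X,\tX)\to(\tX,\tX)$ is functorial, so it suffices to show that on $(\tX,\tX)_t$, with $D=\emptyset$, the tame cycle class agrees with the crystalline one. There, Theorem \ref{thm:tameDRWcoh} (where \ref{res2} is trivial) identifies tame cohomology with $R\Gamma(\tX,W\Omega^\bullet)$. Under this identification, the composite
\[\CH^r\to H^r(\tX,W\Omega^r_{\log})\xrightarrow{\dlog} H^{2r}(W\Omega^\bullet)\]
is Gros' crystalline cycle class, which agrees with the classical one by Gros/Illusie.

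The main obstacle I expect is checking that the map $\Z_p(r)^t\to W\Omega^{\bullet,t}$ restricted to $W\Omega^r_{\log}[-r]$ is exactly the inclusion via $\dlog$ into degree $r$. This amounts to unwinding the cone construction in degree $r$, where $\iota_r$ is the identity. That identification is also what matches the result with Gros' description.
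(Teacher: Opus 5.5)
Your construction of the first map fails as written, because the comparison between the tame site and the étale site goes in the opposite direction to the one you use. The functor $(V,\tV)\mapsto V$ from $(X,\tX)_t$ to $X_{\et}$ is continuous, since tame coverings are étale coverings. It therefore induces a morphism of sites $X_{\et}\to (X,\tX)_t$, and $W_n\Omega^{r,t}_{\log}$ of Definition \ref{defn:WnOmegalog} is the \emph{direct} image of $W_n\Omega^r_{X,\log}$ along it, not an inverse image. On cohomology this only gives $H^*((X,\tX)_t,W_n\Omega^{r,t}_{\log})\to H^*(X_{\et},W_n\Omega^r_{X,\log})$, which is the wrong direction for a cycle map.

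A morphism of sites $(X,\tX)_t\to X_{\et}$ would need a continuous functor $X_{\et}\to (X,\tX)_t$. The candidate $V\mapsto (V,\ol V)$ is not one: an étale cover of $V$ that is wildly ramified along $\ol V\setminus V$ (an Artin--Schreier cover, cf.\ Remark \ref{rmk:u-lower-star}) is not refined by any tame covering of $(V,\ol V)$. Lemma \ref{lem:blow-up-inv} only gives independence of the compactification and does not address this.

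The Geisser--Levine input is also a Zariski statement, not an étale one. Bloch's cycle complex is a complex of Zariski sheaves, $\Z/p^n(r)\simeq W_n\Omega^r_{\log}[-r]$ holds on $X_{\Zar}$, and $\CH^r(X,2r-i;\Z/p^n)\cong H^{i-r}(X_{\Zar},W_n\Omega^r_{\log})$. The étale groups differ in general: already for $r=0$ and $X=\A^1_k$ one has $H^1(X_{\et},\Z/p)\neq 0=H^1(X_{\Zar},\Z/p)$.

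The missing idea is to pass through the Zariski topology, where the comparison morphism points the right way. The functor $\tV\mapsto (\tV\cap X,\tV)$ is continuous and gives $j\colon (X,\tX)_t\to \tX_{\Zar}$ with $j_*W_n\Omega^{r,t}_{\log}=\sigma_*W_n\Omega^r_{X,\log}$, where $\sigma\colon X\inj \tX$. With $i\colon Y=\tX\setminus X\inj\tX$, the paper composes the maps of pro-complexes (in $n$)
\[
\frac{\Z(r)_{\tX}}{i_*\Z(r)_{Y}}\to \sigma_*\Z(r)_X\to \sigma_*\Z/p^{n}(r)_X\cong \sigma_*W_n\Omega^r_{X,\log}[-r]=j_*W_n\Omega^{r,t}_{\log}[-r]\to Rj_*\Z/p^{n}(r)^t .
\]
The last arrow comes from the unit $j_*\to Rj_*$ and Theorem \ref{thm:syn-vs-log}. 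The paper then applies $R\lim R\Gamma(\tX_{\Zar},-)$ and identifies $R\Gamma(\tX_{\Zar},\Z(r)_{\tX}/i_*\Z(r)_Y)$ with $z^r(X,\ast)[-2r]$ by Bloch's localization theorem. Working on $X_{\Zar}$ via $V\mapsto (V,\tX)$ would do as well.

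Once the first step is done this way, the rest of your outline is sound and matches what the paper leaves as immediate from the construction:
\begin{itemize}
\item the vanishing for $i<r$, since everything lives in degrees $\ge r$ (the $\lim^1$-terms only appear in degrees $\ge r+1$, not in degree $r$ as you wrote);
\item the degree-$r$ description of $\Z_p(r)^t\to W\Omega^{\bullet,t}$ via $\iota^r_r={\rm id}$;
\item the reduction of the square to $(\tX,\tX)$, using Theorem \ref{thm:tameDRWcoh} with $D=\emptyset$ and Gros' cycle class.
\end{itemize}
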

\begin{proof}
We denote by $\Z(r)_X$ Bloch's complex of Zariski sheaves $U\mapsto z^r(U,*)[-2r]$ 
\cite{Bloch86}, which is an avatar of the motivic complex of weight $r$. 
Let $i:Y=\tX\setminus X\inj \tX$ be the closed immersion, 
and denote by $\sigma: X\inj \tX$ the open immersion and by $c={\rm codim(Y,\tX)}$ the codimension. 
Consider the following composition of maps between pro-complexes
\[\frac{\Z(r)_{\tX}}{i_*\Z(r)_{Y}}\to \sigma_* \Z(r)_X\to \sigma_*\Z/p^{\sbullet}(r)_X\cong 
\sigma_*W_{\sbullet}\Omega^r_{X,\log}[-r]=j_*W_{\sbullet}\Omega^{r,t}_{\log}[-r]\to
Rj_*\Z/p^{\sbullet}(r)^t,\]
where the isomorphism in the middle is \cite[Theorem 8.3]{GL}, the equality holds by definition of 
$W_n\Omega^{q,t}_{\log}$, and the last map is induced by Theorem \ref{thm:syn-vs-log}.
Applying $R\lim R\Gamma(\tX, -)$ and using 
$R\Gamma(\tX_{\Zar}, \frac{\Z(r)_{\tX}}{i_*\Z(r)_{Y}}) \cong \Gamma(X,\Z(r)_X)$,
which holds by  \cite[Theorems (3.3) and (3.4)]{Bloch86} together with \cite{Bloch94},
we get the first map in \eqref{cor:tame-DRW-cycle-map1}. 
The second map is the composition
\[H^i((X,\tX)_t, \Z_p(r)^t)\to H^i((X,\tX), R\lim \sN^r W_n\Omega^{\bullet,t})\
\xrightarrow{\iota_r} H^i((X,\tX), R\lim W_n\Omega^{\bullet,t}).\]
This completes the construction of the cycle map.
The other statements follow immediately from this construction.
\end{proof}

\section{Tame sheaves associated to reciprocity sheaves}\label{sec:11}

\begin{para}\label{para:tameRSC}
We explain how to apply the construction \ref{para:constr-tame-sheaves} to reciprocity sheaves.
Let $k$ be a perfect field of characteristic $p\ge 0$ 
and let $X\inj \tX$ be an open immersion between separated finite type
$k$-schemes such that $\tX\setminus X$ is the support of an effective Cartier divisor 
and $X$ is smooth over $k$. Set $\cX=(X,\tX)$.

Let $F$ be a reciprocity sheaf in the sense of \cite{KSY-RecII}, see also \cite{KSY}. 
In particular it is defined on $\Sm_k$ the category of smooth separated $k$-schemes. 
We denote by $\underline{\omega}^{\CI}F$ the maximal cube-invariant and semipure modulus sheaf with 
transfers, which has $M$-reciprocity and extends $F$, see \cite[(1.0.4)]{shujilog}.
In the following we assume that $F$ additionally defines a sheaf on $X_{\et}$.  

Let $(L,w,\epsilon)\in \Vals_{\cX}$.  We say a discrete valuation $v$ on $L$ is of {\em geometric type over $k$}, if there is some smooth integral $k$-scheme $U$ with function field 
$L$  and some point $y\in U^{(1)}$, such that $\sO_v= \sO_{U,y}$. We say such a 
discrete valuation $v$  is a {\em generalization of $w$}, if $U$ and $y$ can be chosen in such way 
that $\Spec L\to X$ extends to $U\to \tX$ and the image of the closed point 
of $\Spec \sO_w$ in $\tX$ is contained in the closure of the image of $y$ in $\tX$. We denote by 
$\Phi_w$ the set of all discrete valuations of geometric type over $k$ on $L$ generalizing $w$. 
For $v\in \Phi_{w}$ we denote by $L^h_v$ and $\sO_v^h$ the henselizations. 
By taking colimits over geometric models we can define
\[F(L^h_v)\quad\text{and}\quad \underline{\omega}^{\CI}F(\sO_v^h, \fm_v^{-1}),\]
where $\fm_v$ denotes the maximal ideal of $\sO_v^h$.
In particular $\underline{\omega}^{\CI}F(\sO_v^h, \fm_v^{-1})$ is a colimit of groups of the form
$\underline{\omega}^{\CI}F(\tV,D)$, where $\tV$ is smooth and affine  with a morphism $\tV\to \tX$ and $D$ is a smooth divisor on $\tV$.

Define $\log F(w)$ to be the subgroup of $F(L)$ given by 
\eq{para:example-tame-sheaves4.1}{
\log F(w)= \Ker\left(F(L)\to \prod_{v\in \Phi_w} \frac{F(L^h_v)}{\underline{\omega}^{\CI}F(\sO_v^h,\fm_v^{-1})}\right)}
and set
\[\log F_w:= F(\sO^h_{X,L})\times_{F(L)} \log F(w).\]
We claim, that 
\[\beta^{\rm rec}_{\log}:=\{\log F_w\subset F(\sO^h_{X,L})\}_{(L,w,\epsilon)\in \Vals_{\cX}}\]
satisfies the assumption \ref{beta1} of \ref{para:constr-tame-sheaves}.
To this end it suffices to show that if $f:\tV_1\to \tV$ is  a morphism and $D$ is a smooth divisor on $\tV$, such that 
$D_1:=f^{-1}(D)_{\rm red}$ is a smooth divisor and $\tV_1\setminus D_1\to V\setminus D$ is \'etale, then 
$f^*: F(\tV\setminus D)\to F(\tV_1\setminus D_1)$ induces a morphism 
$\underline{\omega}^{\CI}F(\tV, D)\to \underline{\omega}^{\CI}(\tV_1,D_1)$,
which is a consequence of  \cite[Theorem 4.2]{shujilog}.
Thus we obtain a sheaf $F_{\beta^{\rm rec}_{\log}}$ on $(X,\tX)_t$.
This construction yields a functor
\[\RSC_{X_\et}\to {\rm Sh}((X,\tX)_t), \quad F\mapsto F_{\beta^{\rm rec}_{\log}},\]
where $\RSC_{X_{\et}}=\RSC_{\Nis}\cap {\rm Sh}(X_{\et})$ is the category of reciprocity sheaves which are also \'etale sheaves on $X$.
\end{para}

\begin{lemma}\label{lem:tameRSC}
Assume $\tX$ is smooth and $\tX\setminus X$ is the support of a simple normal crossing divisor $D$. 
Let $F$ be a reciprocity sheaf which is also a sheaf of $\Z_{(p)}$-modules on $X_{\et}$ 
and assume that for all $\eta\in D^{(0)}$ the natural map 
\eq{lem:tameRSC1}{\underline{\omega}^{\CI}F(\sO_{\tX,\eta}^{h}, \fm_\eta^{-1})\xrightarrow{\simeq} 
F(K^{h}_\eta)\times_{F(K^{sh}_\eta)} \underline{\omega}^{\CI}F(\sO_{\tX, \eta}^{sh}, \fm_\eta^{-1})}
is an isomorphism, where $K_{\eta}^{(s)h}=\Frac(\sO_{\tX,\eta}^{(s)h})$.
 
 Then with the notation from \ref{para:tameRSC}
\[F_{\beta^{\rm rec}_{\rm log}}(X,\tX)= \underline{\omega}^{\CI}F(\tX, D).\]
\end{lemma}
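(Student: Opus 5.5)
We prove the two inclusions separately, modelled on the proofs of Proposition \ref{prop:Omegaplus} and Proposition \ref{lem:WnOmegatame}: valuation-theoretic conditions are reduced to conditions at the generic points of $D$. Both sides are subgroups of $F(X)$. Recall that $\underline{\omega}^{\CI}F$ is semipure and has $M$-reciprocity, and that for an snc pair $(\tX,D)$ its sections are determined by codimension-one conditions:
\[\underline{\omega}^{\CI}F(\tX,D)=\Ker\Big(F(X)\to \prod_{\eta\in D^{(0)}} F(K^h_\eta)/\underline{\omega}^{\CI}F(\sO^h_{\tX,\eta},\fm_\eta^{-1})\Big).\]
This purity statement for log-modulus pairs is taken from \cite{shujilog}, cf.\ \cite[Theorem 4.15]{RulSaito}.

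\emph{Inclusion $\underline{\omega}^{\CI}F(\tX,D)\subset F_{\beta^{\rm rec}_{\log}}(X,\tX)$.} Let $a\in \underline{\omega}^{\CI}F(\tX,D)$ and $(x,v,\epsilon)\in\Spa(X,\tX)$. We take the trivial tame extension $L=k(x)$, $w=v$, and show $a_L\in\log F_w$. Let $u\in\Phi_w$, realized by a smooth $U\to\tX$ and $y\in U^{(1)}$ whose closure contains the image of the center of $w$. Locally $U\times_{\tX}X\to U$ is the complement of the pullback of $D$. Pulling $a$ back along $(U,D_U)\to(\tX,D)$, and using \cite[Theorem 4.2]{shujilog} exactly as in \ref{para:tameRSC} (possibly after shrinking $U$ around $y$ so that $D_{U,\rm red}$ is smooth or empty at $y$), shows that the image of $a$ in $F(L^h_u)$ lies in $\underline{\omega}^{\CI}F(\sO_u^h,\fm_u^{-1})$. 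Hence $a_L\in\log F(w)$.

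\emph{Inclusion $F_{\beta^{\rm rec}_{\log}}(X,\tX)\subset\underline{\omega}^{\CI}F(\tX,D)$.} By the purity description above, it suffices to check the condition at each $\eta\in D^{(0)}$. The generic point of $X$, together with the discrete valuation $v_\eta$ given by $\sO_{\tX,\eta}$, defines a point of $\Spa(X,\tX)$. By definition there is a finite tame Galois extension $(L,w)/(k(X),v_\eta)$ with $a_L\in\log F(w)$. Since $w$ is discrete and $L$ is a function field of geometric type, $w$ itself lies in $\Phi_w$: take the normalization of $\tX$ in $L$, localized at the center, which is regular at that codimension-one point. Hence $a_L\in\underline{\omega}^{\CI}F(\sO_w^h,\fm_w^{-1})$. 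Passing to strict henselizations, $\sO^{sh}_{\tX,\eta}\to\sO^{sh}_w$ is a tamely ramified extension of degree $e$ prime to $p$ (a Kummer extension $t\mapsto t^{1/e}$).

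The main obstacle is a descent step: from $a\in\underline{\omega}^{\CI}F(\sO^{sh}_w,\fm^{-1})$ we must deduce $a\in\underline{\omega}^{\CI}F(\sO^{sh}_{\tX,\eta},\fm_\eta^{-1})$. We will use the transfers of $\underline{\omega}^{\CI}F$ for the finite flat map of pairs $(\sO^{sh}_w,\fm^{-1})\to(\sO^{sh}_{\tX,\eta},\fm_\eta^{-1})$ given by the tame cover. Compatibility of transfers with the log modulus holds because the pullback of $\fm_\eta$ is $\fm^e$ and we only impose log poles (reduced modulus); this compatibility is what we need to verify, via the modulus pushforward in \cite{KSY-RecII}. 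Then $\frac1e\Tr(a_{L^{sh}})=a$, since $F$ is a sheaf of $\Z_{(p)}$-modules and the trace composed with pullback is multiplication by $e$. This gives the condition over $\sO^{sh}_{\tX,\eta}$. Finally, hypothesis \eqref{lem:tameRSC1} descends it from the strict henselization to $\sO^h_{\tX,\eta}$, because $a$ already lies in $F(K^h_\eta)$. This is the same pattern of trace plus étale descent used in Proposition \ref{prop:WOmegat-via-val}.
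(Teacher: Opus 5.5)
Your proposal is correct and follows essentially the same route as the paper. For $\supset$ you use \cite[Theorem 4.2]{shujilog} along $\Spec \sO_u^h\to\tX$ for $u\in\Phi_w$; for $\subset$ you reduce to the generic points of $D$ (the paper cites \cite[Corollary 2.5]{shujilog}), note that $w$ itself is of geometric type, take a $\tfrac1e$-normalized pushforward over the strict henselizations, and descend to $\sO^h_{\tX,\eta}$ via \eqref{lem:tameRSC1}. The transfer compatibility you leave "to verify" is settled in the paper for exactly your reason: it uses $\underline{\omega}^{\CI}F(\sO_w^{sh},\fm_w^{-1})\subset\underline{\omega}^{\CI}F(\sO_w^{sh},\fm_w^{-e})$ together with the finite morphism of pro-modulus pairs $(\Spec\sO_w^{sh}, e\cdot s_w)\to(\Spec\sO^{sh}_{\tX,\eta},\eta)$ from \cite[Lemma 3.8]{RulSaito}, whose transfer gives the pushforward.
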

\begin{proof}
We start by showing this $\supset$ inclusion. Let $a\in \underline{\omega}^{\CI}F(\tX,D)\subset F(X)$.
Let $(x,w,\epsilon)\in \Spa(X,\tX)$. It suffices to show that $a_{k(x)}$ the pullback of $a$ to $F(k(x))$ lies
in $\log F(w)$ as defined in \eqref{para:example-tame-sheaves4.1}. But for $v\in \Phi_w$ we have a natural map
$\Spec \sO_v^h\to \tX$ and it follows from \cite[Theorem 4.2]{shujilog}, that this map induces a morphism
$\underline{\omega}^{\CI}F(\tX,D)\to \underline{\omega}^{\CI}F(\sO_v^h, \fm_v^{-1})$. Hence $a_{k(x)}\in \log F(w)$.
Now the other inclusion. Let $a\in F_{\beta^{\rm rec}_{\log}}(X,\tX)$. 
By \cite[Corollary 2.5]{shujilog} it suffices to show that for each generic point $\eta$ of $D$ the pullback 
of $a$ to $F(K^h_{\eta})$, where $K=k(\tX)$, lies in $\underline{\omega}^{\CI}F(\sO_{\tX,\eta}^h, \fm_{\eta}^{-1})$.
Let $v$ be the discrete valuation on $K$ corresponding to  $\eta$, giving rise to the  point $(\eta, v,\epsilon)\in \Spa(X,\tX)$,
where $\epsilon:\Spec \sO_{\tX,\eta}\to \tX$ is the natural map.
By definition of $F_{\beta^{\rm rec}_{\log}}$ we find a finite tame extension $(L, w)/(K,v)$ such that 
\[a_{L}\in \log F_{w}=\log F(w)=\Ker \left(F(L)\to \frac{F(L_w^h)}{\underline{\omega}^{\CI}F(\sO_{w}^h, \fm_w^{-1})}\right),\]
where we use that $w$ is already a discrete  valuation of geometric type over $k$ on $L$ and hence 
$\Phi_w=\{w\}$. We have a finite morphism of pro-modulus pairs 
(see \cite[Lemma 3.8]{RulSaito}) $f:(\Spec\sO_w^{sh}, e\cdot s_w)\to 
(\Spec \sO_{\tX, \eta}^{sh},  \eta)$,
where $s_w$ denotes the closed point and $e$ 
is the ramification index, which is also equal to the degree of $f$. 
The transfer structure thus induces a pushforward
\[f_*: \underline{\omega}^{\CI}F(\sO_w^{sh}, \fm_w^{-1})
\subset \underline{\omega}^{\CI}F(\sO_w^{sh}, \fm_w^{-e})\to 
\underline{\omega}^{\CI}F(\sO_{\tX,\eta}^{sh}, \fm_\eta^{-1}).\]
By tameness,  $(e,p)=1$ and hence
\[a_{K_\eta^{sh}}= \tfrac{1}{e} f_*(a_{L_w^{sh}})\in 
\underline{\omega}^{\CI}F(\sO_{\tX,\eta}^{sh}, \fm_\eta^{-1}).\]
Thus $a_{K_{\eta}^h}\in \underline{\omega}^{\CI}F(\sO_{\tX,\eta}^h, \fm_{\eta}^{-1})$,
by \eqref{lem:tameRSC1}, which completes the proof.
\end{proof}

\begin{thm}\label{thm:tamecohRSC}
Let $F$ be a reciprocity sheaf and fix integers $i, d\geq 0$.
Assume for any normal crossing pair $(\tY,D)$ with $\dim\tY=d$,
\begin{enumerate}[label=(\alph*)]
    \item\label{thm:tamecohRSCa}  the assignment 
                 $U/\tY\mapsto \underline{\omega}^{\CI}F(U, D_{|U})$ defines an 
                 \'etale sheaf of $\Z_{(p)}$-modules on $\tY$, denoted by $F_{(\tY,D)}$,
    \item\label{thm:tamecohRSCb} 
    $H^i(\tY_{\et}, F_{(\tY,D)})=H^i(\tY_{\Nis}, F_{(\tY,D)})$.
\end{enumerate}
Assume $\tX$ is smooth of dimension $d$ and $D=\tX\setminus X$ is an SNCD.
Assume \ref{para:res}, \ref{res2} holds. Then 
    \[H^i((X,\tX)_t, F_{\beta_{\log}^{\rm rec}})= H^i(\tX_{\Nis}, F_{(\tX,D)}).\]
\end{thm}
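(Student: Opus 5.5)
The plan is to follow the proof of Theorem \ref{thm:tameDRWcoh} closely, replacing $W_n\Omega^{q,t}$ by $G:=F_{\beta^{\rm rec}_{\log}}$ and Proposition \ref{lem:WnOmegatame} by Lemma \ref{lem:tameRSC}. First, condition (p) of Theorem \ref{thm:tame-vs-etale} holds: $G$ is a subsheaf of the $\Z_{(p)}$-module sheaf $F$, and $\tX$ is a $k$-scheme, so every residue field has exponential characteristic $p$ (Remark \ref{rmk:tames-vs-etale}). Theorem \ref{thm:tame-vs-etale} therefore gives
\[H^i((X,\tX)_t,G)\cong \colim_{\cW\to \cX} H^i(\tW_{\et}, j^{\cW}_*G),\]
with the colimit over admissible blow-ups $\cW=(X,\tW)$. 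Using \ref{res2} in dimension $d$, the modifications obtained as composites of blow-ups in smooth centres with normal crossings with the boundary are cofinal, exactly as in the proof of Theorem \ref{thm:tameDRWcoh}. For such $\pi:\tY\to\tX$ the pair $(\tY,D')$, with $D'=\pi^{-1}(D)_{\rm red}$, is a normal crossing pair of dimension $d$.

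The second step identifies $j^{\cY}_*G$ with $F_{(\tY,D')}$ for these $\tY$. For $\tV\to\tY$ étale, $(\tV\times_{\tY}X,\tV)$ is again a smooth SNCD pair. Lemma \ref{lem:tameRSC} would then give $G(\tV\times_{\tY}X,\tV)=\underline{\omega}^{\CI}F(\tV,D'_{|\tV})$, provided its hypothesis \eqref{lem:tameRSC1} holds. I expect to deduce \eqref{lem:tameRSC1} from \ref{thm:tamecohRSCa}. The sheaf $F_{(\tY,D')}$ is étale, so its sections on the henselization are the Galois invariants (equivalently, an equalizer) of its sections on the strict henselization. Since $F$ itself is étale on $X$, this equalizer description is the fibre product in \eqref{lem:tameRSC1}. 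This step is the one I expect to require the most care. One has to check that the colimits over geometric models defining $\underline{\omega}^{\CI}F(\sO^{(s)h},\fm^{-1})$ agree with the stalks of $F_{(\tY,D')}$ at $\eta$ and at a geometric point over $\eta$.

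Third, combining the two steps gives $H^i((X,\tX)_t,G)=\colim_{\tY} H^i(\tY_{\et},F_{(\tY,D')})$. By \ref{thm:tamecohRSCb} each term equals $H^i(\tY_{\Nis},F_{(\tY,D')})$. The transition maps are pullbacks along the blow-ups $\pi$, and it remains to show they are isomorphisms $H^i(\tX_{\Nis},F_{(\tX,D)})\xrightarrow{\simeq}H^i(\tY_{\Nis},F_{(\tY,D')})$. This is the blow-up invariance of cohomology of $\underline{\omega}^{\CI}F$ for blow-ups in centres with normal crossings, \cite[Theorem 5.2]{shujilog}, already used in the proof of Theorem \ref{thm:tameDRWcoh}. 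Since every transition map is an isomorphism, the colimit is just the term for $\tX$ itself, which gives the claimed equality.
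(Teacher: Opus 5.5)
Your proposal is correct and follows the paper's proof essentially step for step. It uses Theorem \ref{thm:tame-vs-etale}, cofinality of the modifications from \ref{res2}, the identification $j^{\cY}_*F_{\beta^{\rm rec}_{\log}}=F_{(\tY,D')}$ via \ref{thm:tamecohRSCa} and Lemma \ref{lem:tameRSC}, then \ref{thm:tamecohRSCb} and the blow-up invariance \cite[Theorem 5.2]{shujilog}. Your derivation of hypothesis \eqref{lem:tameRSC1} from \ref{thm:tamecohRSCa} (Galois descent from the strict henselization plus injectivity into $F(K^{sh}_\eta)$) spells out a step the paper uses silently. One small correction: $G\subset F$ alone does not make $G$ a sheaf of $\Z_{(p)}$-modules. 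That follows because $n$ prime to $p$ acts invertibly on $\underline{\omega}^{\CI}F$ by functoriality.
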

\begin{proof}
Under \ref{res2} the modifications as in 
\ref{res2} are cofinal in the cofiltered category of all modifications to $\sX$,
see, e.g., the argument in the proof of 
\cite[\href{https://stacks.math.columbia.edu/tag/0AHI}{Tag 0AHI}]{stacks-project}.
Thus 
 \eq{thm:tamecohRSC1}{H^i((X,\tX)_t, F_{\beta_{\log}^{\rm rec}})
 =\varinjlim_{\sY\to \sX} 
 H^i(\tY_{\et}, j_*^{\sY}F_{\beta_{\log}^{\rm rec}})
= \varinjlim_{\sY\to \sX} H^i(\tY_{\Nis}, F_{(\tY,D)}),}
where  the direct limit is over all modifications $\sY=(X,\tY)\to \sX$ as in \ref{res2}.
The first equality holds by Theorem \ref{thm:tame-vs-etale}, 
and the second equality by assumptions \ref{thm:tamecohRSCa}, \ref{thm:tamecohRSCb}, and Lemma \ref{lem:tameRSC}.  Thus it remains to prove the colimit on the right is constant. 
Let $\pi:\tY\to \tX$ be a morphism as in  \ref{res2} and set $D':=\pi^{-1}(D)_{\rm red}$. 
In this case the pullback
 \[\pi^*: H^i(\tX_{\Nis}, F_{(\tX,D)})\to H^i(\tY_{\Nis}, F_{(\tY, D')})\]
 is an isomorphism by \cite[Theorem 5.2]{shujilog}, hence the statement.
\end{proof}

\begin{example}\label{exa:tamecohRSC}
The following reciprocity sheaves satisfy the assumptions \ref{thm:tamecohRSCa} and \ref{thm:tamecohRSCb} of Theorem \ref{thm:tamecohRSC}:
\begin{itemize}
\item $G$ a smooth unipotent commutative $k$-group, as in this case $\omega^{\CI}G_{(\tY, D_{\red})}=G_{\tY}$, as follows from 
           \cite[Theorem 5.2]{RulSaito};
    \item $\Omega^q_{/k}$, as follows from the description of $\underline{\omega}^{\CI}\Omega^q_{/k}$ in \cite[Corollary 6.8]{RulSaito}
    (if  ${\rm char}(k)=0$) and in \cite[Corollary 6.8]{RulSaito-AbbesSaito} (if ${\rm char}(k)\ge 0$);
    \item $W_n\Omega^q$, if ${\rm char}(k)=p>0$, as follows from 
         the description of $\underline{\omega}^{\CI}W_n\Omega^q_{(\tY, D_{\red})}$ 
         in \cite[Theorem 4.4]{mericicrys} or \cite[Theorem 5.4]{RenRuelling};
    \item $B_n\Omega^q= F^{n-1}dW_n\Omega^q$ and $Z_n\Omega^q=F^n W_{n+1}\Omega^q$ as follows from \cite[Corollary 6.16]{RenRuelling}
    (still assuming ${\rm char}(k)=p>0$).
\end{itemize}
Note that by \cite[Theorem 3.2.8]{CR11}  the cohomology $R\Gamma(\tX, \sO_{\tX})$ is a birational invariant 
and hence so is  $R\Gamma(\tX, W_n\sO_{\tX})$ (using \cite[Proposition 9.10]{BindaRuellingSaito}). 
Thus in this case the direct limit in \eqref{thm:tamecohRSC1} for general modifications
$\sY\to\sX$ stabilizes as soon as the following condition is satisfied for $d=\dim X$:
\begin{enumerate}[label={$({\rm res})_{d}$}]
    \item \label{res1} For $\tY$ an integral and separated finite type $k$-scheme of dimension $d$ 
    with a dense smooth open subscheme  $Y\subset \tX$, there exists a proper morphism 
    $f:\tilde{Z}\to \tY$, such that its restriction  $f^{-1}(Y)\xrightarrow{\simeq} Y$ is an isomorphism and $f^{-1}(\tY\setminus Y)_{\rm red}$ is an SNCD.
\end{enumerate}
If ${\rm char}(k)=0$, then \ref{res1} holds for all $d$ by Hironaka;
if ${\rm char}(k)=p>0$, then \ref{res1} holds for $0\le d\le 3$. 
For this first take a resolution $h: \tY_1\to \tY$,
such that $h^{-1}(Y)\xrightarrow{\simeq} Y$ is an isomorphism, 
see \cite[Theorem 1.1]{CossartPiltant-ResArith3folds}, then 
apply \cite[Theorem 1.4]{Cossart-Jannsen-Saito} (with $B=\emptyset$) to get a projective morphism
$g: \tY_2\to\tY$ such that $g^{-1}(Y)\xrightarrow{\simeq} Y$  is an isomorphism and
$g^{-1}(\tY\setminus Y)_{\rm red}= A\cup E$ with $E$ a simple normal crossing divisor 
and $A$ smooth  intersecting $E$ transversally, then blow-up once more in $A$ to get
$f:\tilde{Z}\to \tY$ as in \ref{res1}. 

In particular for $\tX$ as in Theorem \ref{thm:tamecohRSC} with $\dim X\le 3$ we get
\[R\Gamma((X,\tX)_t, W_n\sO^t)=R\Gamma(\tX_{\rm Zar}, W_n\sO),\]
a version of which was obtained in \cite[Theorem 13.7]{Huebner2021} for $n=1$ and where the left hand side is replaced by 
the cohomology of a discretely ringed adic space.    
\end{example}

 \section{Comparison with the tame site \texorpdfstring{$(X/S)_t$}{(X/S)t} from \texorpdfstring{\cite{HS2020}}{H\"ubner--Schmidt}}\label{sec:12}

Let $X\to S$ be a morphism of schemes.

\def\Xet{X_{\et}}

\begin{para}\label{para:HS}

Recall that the tame site $(X/S)_t$ from \cite[Definition 2.3]{HS2020} has the category of \'etale $X$-schemes 
as underlying category and 
a covering $\{U_i\to U\}_{i\in I}$ is an \'etale covering, such that  for every $(x,v,\epsilon_v)\in \Spa(U,S)$,
there exists $i\in I$ and $(y,w,\epsilon_w)\in \Spa(U_i,S)$ such that 
$(k(y),w)/(k(x),v)$ is tame. 
\end{para}

\begin{prop}\label{prop-def;XStHS}
Let $\tX\to S$ a proper morphism and  $X\hookrightarrow \tX$  be a quasi-compact open immersion of qcqs schemes.
Then there are adjoint functors\[
\begin{tikzcd}
\Sh((X,\tX)_t)\ar[r,"u_*"'] &\Sh((X/S)_t),\ar[l, shift right = 1, "u^*"']
\end{tikzcd}
\]
with $u^*$ exact, such that for $F\in \Sh((X,\tX)_t)$ and $U\in (X/S)_t$ affine, $u_*F(U) = F(U,\tU)$ for any choice of a Nagata compactification $U\inj \tU$ of $U\to \tX$, and for $G\in \Sh((X/S)_t)$, $u^*G$ is the sheafification of the presheaf $(U,\tU)\mapsto G(U)$. If $(U,\tU)\in (X,\tX)_{t}$ with $\tU\to \tX$ separated and 
universally closed, then $u^*G(U,\tU) = G(U)$, in particular $G\cong u_*u^*G$. 
\begin{proof}
 Consider the subsite $(X,\tX)_{t,suc}\subseteq (X,\tX)_{t}$ consisting of $(U,\tU)$ such that $\tU\to \tX$ is separated and universally closed. Consider the functor:\[
\Upsilon \colon (X,\tX)_{t,suc}\to (X/S)_t,\quad (U,\tU)\mapsto U. 
\]
Then \cite[\href{https://stacks.math.columbia.edu/tag/03A0}{Tag 03A0}]{stacks-project} gives an equivalence of topoi\[
\Upsilon^*\colon \Sh((X,\tX)_{t,suc})\cong \Sh((X/S)_t)\colon \Upsilon_*.
\]
Indeed: 
\begin{enumerate}
    \item $\Upsilon$ is cocontinuous. Let $(U,\tU)\in (X,\tX)_{t,suc}$ and let $\{f_i\colon V_i\to U\}$ be a covering. 
    Up to  refining we can assume that all $V_i$ are affine, therefore $V_i\to \tU$ are separated and of finite type. 
    Thus  we can choose a Nagata compactification $\tV_i$ giving a covering $\{(V_i,\tV_i)\to (U,\tU)\}$ in $(X,\tX)_{t,suc}$.
    \item $\Upsilon$ is continuous. If $\{(V_i,\tV_i)\to (U,\tU)\}$ is a tame covering in $(X,\tX)_{t,suc}$, then by the valuative criterion for universal closedness \cite[\href{https://stacks.math.columbia.edu/tag/01KA}{Tag 01KA}]{stacks-project}, $\{V_i\to U\}$ is a covering in $(X/S)_t$, and $\Upsilon$ respects fiber products by definition.
    \item Given maps in $(X,\tX)_{t,suc}$\[
    (f,\tf_1),(f,\tf_2)\colon (U',\tU')\to (U,\tU),
    \]
    consider the closure $\ol{U'}$ of $U'$ in $\tU'$. By Remark \ref{rmk-def;XSt}, $(id,\iota)\colon (U',\ol{U'})\to (U',\tU')$ is a covering in  $(X,\tX)_{t, suc}$ and $U'$ is an open dense in $\ol{U'}$. Since $\tU\to S$ is separated by assumption, the equalizer of $\tf_1 \iota$ and $\tf_2 \iota$ is closed in $\ol{U'}$ by \cite[\href{https://stacks.math.columbia.edu/tag/01KM}{Tag 01KM}]{stacks-project}, and since it contains $U'$ we conclude that $ \tf_1 \iota =  \tf_2 \iota$. 
    \item Given $(U,\tU),(V,\tV)\in (X,\tX)_{t,suc}$ and $c\colon U\to V$ \'etale, we can cover $U$ by affine schemes $U_i$, so that $U_i\to \tU$ and $U_i\to \tV$ are separated \'etale maps between qcqs schemes (hence again of finite type). Let $U_{i,1}$ and $U_{i,2}$ be Nagata compactifications of $U_i\to \tU$ and $U_i\to \tV$, respectively. Since the compositions $U_i\to U_{i,1}\to \tX$ and $U_i\to U_{i,2}\to \tX$ agree, this gives a map $\phi_{12}\colon U_i\to U_{i,1}\times_{\tX} U_{i,2}$.
    The map $\phi_{12}$ is quasi-finite. Indeed, it is separated of finite type since both $U_i\to U_{i,1}$ and $U_i\to U_{i,2}$ are quasi-compact open immersions, and it has isolated fibers. Then by Zariski's Main Theorem \cite[\href{https://stacks.math.columbia.edu/tag/05K0}{Tag 05K0}]{stacks-project} the map $\phi_{12}$ factors as a quasi-compact open immersion  $U_i\to \tU_i$ followed by a finite morphism $\tU_i\to U_{i,1}\times_{\tX} U_{i,2}$, therefore $\tU_i\to \tX$ is separated and universally closed. This gives a covering $\{(f_i,\tf_i)\colon (U_i,\tU_i)\to (U,\tU)\}$ together with a map $(c_i,\tc_i)\colon (U_i,\tU_i)\to (V,\tV)$ such that $c_i = c\circ f_i$.
    \item Given $V\in (X/S)_t$, we can cover $V$ by affine schemes $V_i$, so that $V_i\to \tX$ is a separated \'etale map between qcqs schemes. 
    By taking a Nagata compactification $\tV_i$ of $V_i\to \tX$ 
    we obtain a covering  $\{V_i=\Upsilon(V_i,\tV_i)\to V\}$  in $(X/S)_t$. 
\end{enumerate}
In particular, for every $G\in \Sh((X/S)_t)$ and $F\in \Sh((X,\tX)_{t,suc})$, there are unique sheaves 
$\tG:=\Upsilon_*G\in \Sh((X,\tX)_{t,suc})$ and $\tF:=\Upsilon^*F\in \Sh((X/S)_t)$ such that $\tG(U,\tU)=G(U)$, 
for $(U,\tU)\in (X,\tX)_{t,suc}$, and $\tF(U) = F(U,\tU)$, for $U\in (X/S)_t$ affine and any Nagata compactification $\tU$ of $U\to \tX$. Consider the inclusion $u'\colon (X,\tX)_{t,suc}\hookrightarrow (X,\tX)_t$, which is continuous by definition.
 We obtain  the adjoint functors\[
(u')^*\colon \Sh((X,\tX)_{t,suc})\leftrightarrows \Sh((X,\tX))\colon (u')_*.
\] 
The  functors in the statement are defined as $u_*:=\Upsilon^*(u')_*$ and $u^*:=(u')^*\Upsilon_*$. Since $\Upsilon^*$ is both left and right adjoint of $\Upsilon_*$, we obtain immediately that $u^*$ is left adjoint to $u_*$

We compute them. Let $F\in \Sh((X,\tX)_t)$ and let $U\in (X/S)_t$ be affine, then by definition $u_* F(U) = ((u')_*F)(U,\tU) = F(U,\tU)$ for any choice of a Nagata compactification $\tU$ of $U\to X\to \tX$, and for $G\in \Sh((X/S)_t)$, $u^*G = (u')^*\Upsilon_*G$ is the sheafification of the presheaf $(u')^p\widetilde{G}$ on $(X,\tX)_t$ defined by the rule 
\eq{up-G}{
(u')^p\widetilde{G}\colon (U,\tU)\in (X,\tX)_t\mapsto \colim_{\substack{(U,\tU)\to (V,\tV)\\ \tV\to \tX \textrm{sep. univ. closed}}} G(V).
}
Let $(V, \tV)\in (X,\tX)_{t,suc}$ and let $(g,\tg): (U,\tU)\to (V,\tV)$ be a morphism in $(X,\tX)_t$.
\begin{claim}\label{prop;def-XSTHS-claim}
   There is an affine open covering $\tU=\cup_i \tU_i$ and quasi-compact open immersions $\tU_i\to \ol{U_i}$ 
   such that  $(U_i:=U\cap \tU_i, \ol{U_i})\in (X,\tX)_{t,suc}$ and the composition
   $(U_i,\tU_i)\to (U,\tU)\to (V,\tV)$ factors via a morphism $(U_i,\ol{U_i})\to (V,\tV)$ in $(X,\tX)_{t,suc}$, for all $i$.
\end{claim}
Indeed, the statement is Zariski local on $\tX$ and as $\tU\xrightarrow{\tg} \tV$ is ift, 
we may 
assume that $\tU$ is affine and $\tg :\tU\to \tV$ factors as
$\tU\xrightarrow{\tg_1} \tV' \xrightarrow{\tg_2} \tV$ with $\tV'$ affine, $\tg_1$ integral, and $\tg_2$ of finite type.
Considering a Nagata compactification $\ol{\tW}\to \tV$ of this latter morphism, 
we see that $\tU\to \tV$ factors  as a composition
\[\tU\xrightarrow{h} \tW\xrightarrow{j}\ol{\tW}\xrightarrow{f}\tV,\]
with $h$ an integral morphism, $j$ a quasi-compact open immersion, and $f$ a proper morphism. 
By Lemma \ref{lem:pro-zmt}, we find $\ol{\tU}$, a quasi-compact open immersion $U\to \tU\to \ol{\tU}$ and 
$\ol{\tU} \to \ol{\tW} \to \tV$ a composition of an integral and a proper morphism, so separated and universally closed. Therefore $(U,\ol{\tU})\in (X,\tX)_{t,suc}$ and this proves the claim.

We conclude as follows. Consider the presheaf\[
G'\colon (U,\tU)\in (X,\tX)_t\mapsto G(U).
\]
By construction, there exists a map $(u')^p\tG\to G'$.
By what we observed before, for all $(U,\tU)\to (V,\tV)$ with $\tV\to \tX$ separated and universally closed there exists a covering $(U_i,\tU_i)$ of $(U,\tU)$ such that the following diagram commutes:
\[
\begin{tikzcd}
    G(V)\ar[r]\ar[d] &(u')^p\tG(U,\tU)\ar[r]\ar[d] &G'(U,\tU)\ar[d]\\
    G(U_i)\ar[r]\ar[rr,bend right = 20, equal] &(u')^p\tG(U_i,\tU_i)\ar[r]&G'(U_i,\tU_i).
\end{tikzcd}
\]
Let $a\in (u')^p\tG(U,\tU)$ and let $(U,\tU)\to (V,\tV)$ as in \eqref{up-G} such that there exists $b\in G(V)$ that maps to $a$, and let $\{(U_i,\tU_i)\}$ be a covering such that a diagram as above exists. Assume that $a\mapsto 0$ in $G'(U,\tU)$, then we conclude that $b_{|U_i}=0$, so $a_{|(U_i,\tU_i)}=0$, which implies that the map $(u')^p\tG\to G'$ is injective after sheafification. Let now $b\in G(U) = G'(U,\tU)$. By applying the above diagram with $(V,\tV)=(X,\tX)$, there exists a covering $\{(U_i,\tU_i)\}$ such that $b_{U_i}$ is in the image of $(u')^p\tG(U_i,\tU_i)\to G'(U_i,\tU_i)$. Hence the map $(u')^p\tG\to G'$ is surjective after sheafification.
Moreover, if $\tU\to \tX$ is separated and universally closed, then $G'$ satisfies descent on $(U,\tU)$. Indeed, 
if $\{(V_i,\tV_i)\to (U,\tU)\}$ is a covering in $(X,\tX)_t$ , then $\{V_i\to U\}$
is a tame covering in $(X/S)_t$, as we have bijections $\Spa(U,\tU)\leftrightarrow \Spa(U,\tX)\leftrightarrow \Spa(U,S)$ given by the valuative criterion for separatedness and universal closedness \cite[\href{https://stacks.math.columbia.edu/tag/01KA}{Tag 01KA} and \href{https://stacks.math.columbia.edu/tag/01KY}{Tag 01KY}]{stacks-project}. Therefore in this case 
we have $u^*G(U,\tU)=G'(U,\tU)=G(U)$. 
Finally, for $U\in (X/S)_t$ affine, the map $U\to \tX$ is separated of finite type so it always admits a Nagata compactification $\tU$, and we have $u_*u^*G(U)=u^*G(U,\tU)=G(U)$.
This implies $G\cong u_*u^*G$.
\end{proof}

\end{prop}
\begin{remark}\label{rmk:u-lower-star}
\begin{enumerate}[label=(\arabic*)]
\item
    Notice that for $G\in \Sh((X/S)_t)$ the presheaf $(U,\tU)\mapsto G(U)$ on $(X,\tX)_t$ does not necessarily have descent if $\tU\to \tX$ is not proper and $G$ is not an \'etale sheaf on $X$. Indeed, let $S$ be a scheme with $p\cO_S=0$ and consider $(\A^1_S,\A^1_S)$ in $(\A^1_S,\P^1_S)_t$. Then, writing $\A^1_S=\Spec\cO_S[x]$, the Artin--Schreier cover $C\to \A^1_S$ defined by $t^p-t=x$ gives a covering $(C,C)\to (\A^1_S,\A^1_S)$ in $(\A^1_S,\P^1_S)_t$, but it is not a tame covering in $(\A^1_S/S)_t$.
    \item
    Consider the sheaf $\sO^t\in \Shv((X,\tX)_t)$ from Lemma \ref{lem:Gat}. 
        Then, 
\[u_*\cO^t(U)=\sO^t(U,\ol{U})=\sO(\ol{U})=\cO(T)\qfor U\in \Xet,\]
where $U\inj \ol{U}$ is the integral closure in $U$ of a Nagata compactification of $U\to \tX$, and 
$\ol{U}\to T \to \tX$ is the Stein factorization of the universally closed morphism $\ol{U}\to \tX$.

\item
For $F\in \Sh((X,\tX)_t)$, the map $u^*u_*F\to F$ is not an isomorphism in general. If that was the case, then for $(U,\tU)=\lim (U_{\lambda},\tU_{\lambda})\in \widetilde{(X,\tX)_t}$ tamely acyclic, the above computation gives that\[
u^*u_*\cO^t(U,\tU)=u^pu_*\cO^t(U,\tU) = \colim_{\lambda} \cO(T_{\lambda})
\] 
where $T_{\lambda}\to \tX$ is integral coming from the Stein factorization of $\ol{U_{\lambda}}$ as above. Therefore, this would imply that the map $\cO(\tX)\to \cO(\tU)$ is integral, which does not  hold in general.
      This implies that there is no object $G\in \Shv((X/S)_t)$ such that $u^*G=\sO^t$. 
      Else we would have by Proposition \ref{prop-def;XStHS} that $G=u_*u^*G=u_*\cO^t$, hence 
      $\cO^t=u^*G=u^*u_*\cO^t$.
       \end{enumerate}
\end{remark}
\medbreak

\begin{thm}\label{thm:HS-comparison}
    Let $S$ be an affine scheme, let $\tX\to S$ be a proper morphism and  $X\inj \tX$ be a quasi-compact open immersion.
    Consider the adjoint functors $u^*\dashv u_*$ of Proposition \ref{prop-def;XStHS}. Then for all $G\in \Sh((X/S)_t)$ we have \[
    R\Gamma((X/S)_t,G) \simeq R\Gamma((X,\tX)_t,u^*G).
    \]
\end{thm}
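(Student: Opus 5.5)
The plan is to factor $u$ through the subsite $(X,\tX)_{t,suc}$ used in the proof of Proposition \ref{prop-def;XStHS}. There, $\Upsilon$ induces an equivalence of topoi $\Sh((X,\tX)_{t,suc})\simeq \Sh((X/S)_t)$, and $u^*=(u')^*\Upsilon_*$, where $u'\colon (X,\tX)_{t,suc}\inj (X,\tX)_t$ is the inclusion. The equivalence identifies global sections: for $\tX$ proper over $S$, the object $(X,\tX)$ lies in $(X,\tX)_{t,suc}$ and is terminal there, and $\Upsilon(X,\tX)=X$. It therefore gives $R\Gamma((X/S)_t,G)\simeq R\Gamma((X,\tX)_{t,suc},\Upsilon_*G)$. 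So it suffices to prove that $(u')^*$ preserves cohomology, namely
\[R\Gamma((X,\tX)_{t,suc},H)\simeq R\Gamma((X,\tX)_t,(u')^*H)\]
for every abelian sheaf $H$ on $(X,\tX)_{t,suc}$.

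To do this, I will show that $u'$ also induces an equivalence of topoi, using \cite[Tag 03A0]{stacks-project} as in Lemma \ref{lem:reduction-affine}. The key input is Claim \ref{prop;def-XSTHS-claim} applied with $(V,\tV)=(X,\tX)$. It says that every $(U,\tU)\in(X,\tX)_t$ admits an affine open covering $\tU=\cup\tU_i$ together with quasi-compact open immersions $\tU_i\inj\ol{U_i}$ with $(U_i,\ol{U_i})\in (X,\tX)_{t,suc}$. The resulting maps $(U_i,\tU_i)\to(U_i,\ol{U_i})$ are not coverings, but the pairs $(U_i,\tU_i)$ form a Zariski-type (strict \'etale) covering of $(U,\tU)$.

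The condition in Tag 03A0 that every object is covered by objects of the subsite therefore fails on the nose, and I would repair this with the following step. Replace $\tU_i$ by the closure of $U_i$ in $\ol{U_i}$, or more generally by the ``union'' pushout. Concretely, the natural map $(U_i,\ol{U_i}')\to(U_i,\tU_i)$, where $\ol{U_i}'$ is the closure of the graph of $U_i$ in $\tU_i\times_{\tX}\ol{U_i}$, is a quasi-modification. By Remark \ref{rmk-def;XSt}\ref{rmk-def;XSt2} and Lemma \ref{lem:blow-up-inv} it is a tame covering. Moreover $\ol{U_i}'\to\tX$ is separated and universally closed, because it is a closed subscheme of something separated and universally closed over $\tX$ (the projection to $\ol{U_i}$ is closed). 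Thus every object of $(X,\tX)_t$ is covered by objects of $(X,\tX)_{t,suc}$.

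Next I would verify the remaining conditions: continuity and cocontinuity of $u'$, which are clear since coverings in $(X,\tX)_{t,suc}$ are exactly tame coverings between its objects; fullness up to covering; and faithfulness, which follows by the separatedness argument of item (3) in the proof of Proposition \ref{prop-def;XStHS}. This yields an equivalence of topoi $\Sh((X,\tX)_{t,suc})\simeq\Sh((X,\tX)_t)$ with $(u')^*$ as the equivalence. Hence $u^*$ is an equivalence of topoi compatible with global sections, since it sends the terminal sheaf to the terminal sheaf. Derived global sections are then preserved, which gives the theorem.

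The main obstacle I expect is the covering step. One must check carefully that the graph-closure construction stays ift over $\tX$: it is a closed subscheme of a fibre product of ift schemes, so Temkin's stability results should apply. One must also handle the hypothesis that $S$ is affine, which I would use to guarantee that Nagata compactifications exist and that the claim's reduction to $\tX$ affine works uniformly. If full equivalence fails, the fallback is a \v{C}ech/hypercover argument showing $R^q(u')_*$ and the unit behave correctly on the cofinal family of suc objects.
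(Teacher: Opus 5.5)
Your first step is fine: via $\Upsilon$, the theorem is equivalent to $R\Gamma((X,\tX)_{t,suc},H)\simeq R\Gamma((X,\tX)_t,(u')^*H)$. The gap is in how you then try to prove this.

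\textbf{The equivalence-of-topoi route fails.} You want the inclusion $u'$ to induce an equivalence of topoi, and for that you need objects of $(X,\tX)_{t,suc}$ to cover every object of $(X,\tX)_t$. They do not.

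Your graph-closure construction does not produce suc objects. Since $\ol{U_i}\to\tX$ is separated, the graph of the open immersion $\tU_i\inj\ol{U_i}$ is a closed subscheme of $\tU_i\times_{\tX}\ol{U_i}$, isomorphic to $\tU_i$. So $\ol{U_i}'$ is just the closure of $U_i$ in $\tU_i$, and its map to $\tX$ factors through $\tU_i$. Also, the projection $\tU_i\times_{\tX}\ol{U_i}\to\ol{U_i}$ is a base change of $\tU_i\to\tX$, not of a universally closed map.

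No other construction can work either. Suppose $(V,\tV)\to(U,\tU)$ with $\tV\to\tX$ universally closed. Then the image of $\tV$ in $\tX$ is closed, lies inside the image of $\tU$, and contains the open image of $V$. Take $(U,\tU)=(\A^1_k,\A^1_k)$ in $(\A^1_k,\P^1_k)_t$: the image of $\tV$ is a closed subset of $\P^1$ contained in $\A^1$, hence finite, which forces $V=\emptyset$. So this object admits no covering by suc objects.

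The same argument applied to the strict \'etale cover $\{(\A^1,\A^1),(\A^1\setminus 0,\P^1\setminus 0)\}$ of $(\A^1,\P^1)$ shows that $u'$ is not cocontinuous either. Consistently, an equivalence $u^*$ would contradict Remark \ref{rmk:u-lower-star}(3), which shows $\cO^t$ is not of the form $u^*G$, and also Remark \ref{rmk:tame-vs-tame}. Claim \ref{prop;def-XSTHS-claim} only gives maps \emph{from} pieces of a cover \emph{to} suc objects. That is why $u^*G$ is the sheafification of $(U,\tU)\mapsto G(U)$, and not an equivalence.

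\textbf{What the proof actually needs.} The theorem has to be proved as a cohomological comparison. Since $G\cong u_*u^*G$ by Proposition \ref{prop-def;XStHS}, it suffices to show $R^qu_*u^*G=0$ for $q>0$. One checks this on stalks at tame points $(\bar x,\bar v)$, i.e.\ on $\colim_U H^q((U,\tU)_t,u^*G)$ over affine tame neighbourhoods $U$. The steps are:
\begin{enumerate}
\item Affineness of $S$ is used here, not for the existence of Nagata compactifications (which always exist). $U$ is quasi-projective over $S$, so $\tU$ can be chosen quasi-projective, and then every finite set of its points lies in an affine open.
\item This makes Theorem \ref{thm:cech-comp} applicable. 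It identifies cohomology with \v{C}ech cohomology and shows the latter is insensitive to sheafification, so you may compute with the presheaf $(V,\tV)\mapsto G(V)$.
\item By Claim \ref{prop;def-XSTHS-claim}, the coverings may be taken with suc source. By the valuative criterion, these are exactly coverings in $(X/S)_t$.
\item The colimit is then the \v{C}ech cohomology of $G$ at the tame henselization, which vanishes by \cite[Theorem 7.16]{HS2020}.
\end{enumerate}
Your fallback sentence gestures at a \v{C}ech argument, but it supplies neither the \v{C}ech comparison on $(X,\tX)_t$ nor the local vanishing on the $(X/S)_t$ side. These two inputs are the actual content of the proof.
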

\begin{proof}
    We need to show that $R^qu_*u^*G = 0$ for $q>0$. Indeed, if this holds, we have that the map $G\to Ru_*u^*G$ is an equivalence, therefore\[
    R\Gamma((X/S)_t,G) \simeq R\Gamma((X/S)_t,Ru_*u^*G)\simeq R\Gamma((X,\tX)_t,u^*G).
    \]
    Let $(\ol{x},\ol{v})$ be a tame point of $X$ in the sense of \cite[Definition 2.7]{HS2020}, then\[
    (R^qu_*u^*G)_{(\ol{x},\ol{v})} = \colim_{U\to X} H^q((U,\tU)_t,u^*G),
    \]
    where the colimit on the right hand side is indexed along all \'etale neighborhoods $U\to X$ of $\ol{x}$ which are tame at $(\ol{x},\ol{v})$, and $\tU\to \tX$ a choice of a Nagata compactification of $U\to \tX$.
    We can always suppose that $U$ is affine, therefore the map $U\to S$ is quasi-projective. By Chow's lemma and the control of the birational locus of Raynaud and Gruson (see \cite[Corollary 2.6]{ConradNagata}), we can choose $\tU$ quasi-projective over $S$.
    Hence $\tU$ satisfies the property that every finite set of points is contained in an affine open. Thus  
    by Theorem \ref{thm:cech-comp} \[
    \colim_{U\to X} H^q((U,\tU)_t,u^*G) \cong \colim_{U\to X} \check H^{q}((U,\tU)_t,u^*G),
    \]
    Recall that $u^*G$ is the sheafification of the presheaf $(V,\tV)\mapsto G(V)$. Since \v Cech cohomology of presheaves is invariant under taking sheafification by Theorem \ref{thm:cech-comp}, we have that\[
    \colim_{U\to X} \check H^{q}((U,\tU)_t,u^*G) \cong 
    \colim_{(V,\tV)\to (U,\tU)}\colim_{U\to X} \pi_{-q}G(V^{\times \bullet}),
    \]
    where we can suppose that $(V,\tV)\to (U,\tU)$ ranges over all coverings in $(X,\tX)_{{\rm affine},t}$ by Lemma \ref{lem:reduction-affine}, and moreover by Claim \ref{prop;def-XSTHS-claim} that factors through $(V,\ol{V})\to (U,\tU)$ in $(X,\tX)_{\rm suc}$.
    Since $\pi_{-q}G(V^{\times \bullet})$ does not depend on $\tV$, we can in fact suppose that $\tV=\ol{V}$, therefore we have that\[
    \colim_{U\to X} \check H^{q}((U,\tU)_t,u^*G)\cong \colim_{(V,\ol{V})\to (U,\tU)}\colim_{U\to X} \pi_{-q}G(V^{\times \bullet}),
    \]
    where $(V,\tV)\to (U,\tU)$ ranges over all coverings in $(X,\tX)_{suc,t}$ with $V$ affine. By the valuative criterion (as in the proof of Proposition \ref{prop-def;XStHS}) this implies that $V\to U$ is a covering in $(X/S)_t$, and every covering $V\to U$ in $(X/S)_t$ with $V$ affine induces a covering $(V,\ol{V})\to (U,\tU)$ in $(X,\tX)_{suc,t}$ by the choice of a Nagata compactification, therefore \[
    \colim_{(V,\ol{V})\to (U,\tU)}\colim_{U\to X} \pi_{-q}G(V^{\times \bullet})\cong  \colim_{V\to U}\colim_{U\to X} \pi_{-q}G(V^{\times \bullet}),
    \]
    where on the right $\{V\to U\}$ is the cofiltered set of coverings in $(X/S)_t$ and $\{U\to X\}$ is the cofiltered set of tame neighborhoods of $(\ol{x},\ol{v})$, therefore by \cite[Theorem 7.16]{HS2020} it is\[
    \colim_{U\to X} \check{H}^q((U/S)_t,G) = H^q((\Spec(\cO_{X,(\ol{x},\ol{v})}^t)/S)_t,G) = 0.
    \]
\end{proof}

\begin{remark}\label{rmk:tame-vs-tame}
We observe that for $F\in \Sh((X,\tX)_t)$, the cohomology groups
\[H^i((X,\tX)_t, F)\quad \text{and}\quad 
H^i((X/S)_t, u_*F)=H^i((X,\tX)_t, u^*u_*F))\]
are in general not equal, even if $\tX\to S$ is proper and $F$ is constructed as in \eqref{para:constr-tame-sheaves}. 
Indeed, assume $S=\Spec(k)$ with $k$ an algebraically closed field of characteristic $p>0$ 
and assume  $\dim X\le 3$. Take $F=\sO^t$.
We get from Remark \ref{rmk:u-lower-star} that $u_*\sO^t$ agrees with the constant sheaf $k$ on $(X/S)_t$, so we have by
Theorem \ref{thm:tame-vs-etale}
\[H^i((X/S)_t, u_*\sO^t)= \varinjlim_{\tY} H^i(\tY_{\et}, j_*^{(Y,\tY)}u^*k),\]
where the direct limit is over all smooth compactifications of $X$. By construction, $j_*^{(Y,\tY)}u^*k$ agrees with the constant \'etale sheaf $k$ on $\tY$.
Furthermore we have 
\eq{rmk:tame-vs-tame1}{H^i(\tY_{\et}, k)=H^i(\tY_{\et}, \Z/p)\otimes_{\F_p} k.}
For this we observe, that for any étale sheaf $G$ on $\tY$, the presheaf $U\mapsto G(U)\otimes_{\F_p} k$ is already a sheaf,
since we can identify its sections as an abelian group with a direct sum over a basis of $k/\F_p$,
which  is a sheaf as the site $\tY_{\et}$ 
is noetherian. It also follows that the functor $-\otimes k: \Shv(\tY_{\et},\F_p)\to \Shv(\tY_{\et}, k)$ is exact and maps
$\Gamma$-acyclic sheaves to $\Gamma$-acyclic sheaves. This gives \eqref{rmk:tame-vs-tame1}. 
Furthermore, we remark that $Z\mapsto H^i(Z_{\et},\Z/p)$ is a birational invariant on smooth proper schemes.
Indeed this follows from the fact that proper correspondences act on 
\[H^i(Z_{\et}, \Z/p)= H^i(Z_{\rm Zar}, [\sO_Z\xrightarrow{{\rm Frob}-{\rm id}}\sO_Z]),\]
by \cite[9.11]{BindaRuellingSaito} and the fact that the action of the closure of a graph of a birational 
(maybe not everywhere defined) map $Z\dashrightarrow Z'$ induces an isomorphism $H^i(Z, \sO_Z)\xrightarrow{\simeq} H^i(Z', \sO_{Z'})$,
by \cite[Theorem 1]{CR11}. Thus altogether we find
\eq{rmk:tame-vs-tame2}{H^i((X/S)_t, u_*\sO^t)= H^i(\tX_{\et}, \Z/p)\otimes_{\F_p} k,}
where $\tX$ is some smooth compactification of $X$. 
On the other hand by Remark \ref{rmk:Ot} and the birational invariance of $\sO$-cohomology on smooth proper schemes we have 
\eq{rmk:tame-vs-tame3}{H^i((X,\tX)_t, \sO^t)= H^i(\tX, \sO_{\tX}),}
The groups \eqref{rmk:tame-vs-tame2} and \eqref{rmk:tame-vs-tame3} are equal for all $i$ if and only if the  Frobenius
acts bijectively on $H^i(\tX, \sO_{\tX})$. This is, e.g., not the case if $\tX$ is a supersingular abelian variety.    
\end{remark}
We conclude the section with a computation of the cohomology of $u_*F_{\beta^{\rm rec}_{\log}}$ for $F$ a reciprocity sheaf,
see \eqref{para:tameRSC} for notation.

\begin{lemma}\label{lem:tameRSCA1}
Let $k$, $X\subset \tX$, $D$, and $F$ be as in Lemma \ref{lem:tameRSC} above.
Assume additionally that $\tX$ has dimension $d$ and that \ref{res1} holds (e.g. $d\le 3$).
Denote by $h^0_{\A^1}(F)$ the maximal $\A^1$-invariant subsheaf with transfers  of $F$, which is a Nisnevich sheaf 
(e.g. \cite[4.30]{RulSaito}), and by $h^{0,t}_{\A^1}(F)$ the presheaf on $(X/k)_t$ given by 
$V\mapsto h^0_{\A^1}(F)(V)$. Then 
\[u_*(F_{\beta^{\rm rec}_{\log}})=h^{0,t}_{\A^1}(F),\]
where $u_* : \Shv((X,\tX)_t)\to \Shv((X/k)_t)$ is the functor from Proposition \ref{prop-def;XStHS}.
In particular,  $h^{0,t}_{\A^1}(F)$ is a sheaf on $(X/k)_t$ and if $\tX$ is additionally proper, we get 
\[R\Gamma((X/k)_t, u_*(F_{\beta^{\rm rec}_{\log}}))= R\Gamma((X/k)_t, h^{0,t}_{\A^1}(F))=
R\Gamma((X,\tX)_t, u^*h^{0,t}_{\A^1}(F)).\]
\end{lemma}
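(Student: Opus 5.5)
The plan is to compute $u_*(F_{\beta^{\rm rec}_{\log}})$ on an affine $V\in(X/k)_t$ via Proposition \ref{prop-def;XStHS}. By that proposition, $u_*(F_{\beta^{\rm rec}_{\log}})(V)=F_{\beta^{\rm rec}_{\log}}(V,\tV)$ for any Nagata compactification $V\inj \tV$ of $V\to \tX$; since $\tX$ is proper over $k$, $\tV$ is proper over $k$ as well. By Lemma \ref{lem:blow-up-inv} we may replace $\tV$ by any modification. Using \ref{res1} (applied to the closure of $V$, which is smooth of dimension $\le d$), we may therefore assume $\tV$ is smooth and proper over $k$ and that $D_V:=\tV\setminus V$ is a simple normal crossing divisor. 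Lemma \ref{lem:tameRSC}, applied to the pair $(V,\tV)$, then gives
\[F_{\beta^{\rm rec}_{\log}}(V,\tV)=\underline{\omega}^{\CI}F(\tV,D_V).\]
Here hypothesis \eqref{lem:tameRSC1} is a local condition at the generic points of the boundary, and it carries over from the situation of $\tX$; it is imposed as part of ``$F$ as in Lemma \ref{lem:tameRSC}''.

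The key step is to identify $\underline{\omega}^{\CI}F(\tV,D_V)$ with $h^0_{\A^1}(F)(V)$ whenever $\tV$ is proper and $D_V$ has reduced SNC support. The idea is that a section with log poles, i.e.\ modulus $D_V$ with multiplicity one, along a compactification is $\A^1$-invariant. Concretely, I would invoke the known result from the theory of reciprocity sheaves that for proper $\tV$,
\[h^0_{\A^1}(F)(V)=\bigcap_{n}\underline{\omega}^{\CI}F(\tV,n D_V)\]
for sections that are tame in the appropriate sense. More precisely, I would use the characterization in \cite{RulSaito} (cf.\ \cite[4.30]{RulSaito}) that $h^0_{\A^1}(F)(V)=\underline{\omega}^{\CI}F(\tV,D_{V,\red})$. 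For the inclusion $\supseteq$, a section with modulus $D_{V,\red}$ pulled back to $V\times\A^1$ has modulus on $\tV\times\P^1$ with reduced poles at infinity, and cube invariance then forces it to be $\A^1$-invariant. For the inclusion $\subseteq$, the maximal $\A^1$-invariant subsheaf has trivial motivic conductor $\le 1$ at every point of the boundary. This identification is the step I expect to be the main obstacle. If it is not available in exactly this form, I would try to prove it via the conductor description of $\underline{\omega}^{\CI}F$ \cite[Corollary 2.5]{shujilog}, reducing to henselian DVRs of geometric type and using that $\A^1$-invariant sheaves have conductor at most $1$.

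Once this identification is in place, the result is compatible with the restriction maps, since both sides are functorial for morphisms of $V$. Hence $u_*(F_{\beta^{\rm rec}_{\log}})=h^{0,t}_{\A^1}(F)$ on affines, and therefore everywhere by descent. In particular $h^{0,t}_{\A^1}(F)$ is a sheaf on $(X/k)_t$, since $u_*$ takes values in sheaves. The final displayed equality follows by applying Theorem \ref{thm:HS-comparison} with $G=h^{0,t}_{\A^1}(F)$ when $\tX$ is proper.
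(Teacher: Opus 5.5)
Your proposal is correct and follows essentially the same route as the paper: reduce to affine $V$ (both sides are Zariski sheaves), use Proposition \ref{prop-def;XStHS} and \ref{res1} to pass to a smooth proper compactification $(V,\ol{V})$ with SNC boundary $D_V$, apply Lemma \ref{lem:tameRSC} to get $\underline{\omega}^{\CI}F(\ol{V},D_V)$, and then identify this group with $h^0_{\A^1}(F)(V)$. For that key step the paper uses your fallback. By \cite[Theorem 4.2]{shujilog}, together with properness of $\ol{V}$, the group $\underline{\omega}^{\CI}F(\ol{V},D_V)$ is exactly the set of sections of $F(V)$ all of whose motivic conductors are $\le 1$, and \cite[Corollary 4.32]{RulSaito} (not 4.30, which is the Nisnevich sheaf property) identifies this set with $h^0_{\A^1}(F)(V)$.
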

\begin{proof}
The second statement follows directly from the first and Theorem \ref{thm:HS-comparison}.
As $h^0_{\A^1}(F)$ is a Zariski sheaf it suffices to show the equality of the first statement when evaluated on 
an affine étale $X$-scheme $V$. 
By \ref{res1} we find a smooth compactification $\ol{V}$ of $V$ with 
simple normal crossing complement $D_V=(\ol{V}\setminus V)_{\red}$. 
By Proposition \ref{prop-def;XStHS} together with Lemma \ref{lem:tameRSC} we have 
\eq{lem:tameRSCA1-1}{u_*(F_{\beta^{\rm rec}_{\log}})(V)= F_{\beta^{\rm rec}_{\log}}(V,\ol{V})= \ul{\omega}^{\CI}F(\ol{V}, D_V).}
By \cite[Theorem 4.2]{shujilog}  for each map $\Spec \sO_v^h\to \ol{V}$, 
with $\sO^h_v$ a henselian discrete valuation ring of geometric type over $k$,  
we get an  induced morphism
$\underline{\omega}^{\CI}F(\ol{V},D_V)\to \underline{\omega}^{\CI}F(\sO_v^h, \fm_v^{-1})$.
Therefore, the right hand side of \eqref{lem:tameRSCA1-1} is precisely the subset of $F(V)$ consisting of  
the sections with all the motivic conductors $\le 1$. Thus by \cite[Corollary 4.32]{RulSaito}, 
\eq{lem:tameRSCA1-2}{\ul{\omega}^{\CI}F(\ol{V}, D_V)= h^0_{\A^1}(F)(V),}
which yields the statement.
\end{proof}

\begin{remark}\label{rmk:h0-tame-vs-et}
Let $X$, $\tX$  and $F$ be as above. 
If $h^0_{\A^1}(F)$ happens to be an étale sheaf on $X$, then  the presheaf
$(V,\tV)\mapsto h^0_{\A^1}(F)(V)$ is also a sheaf on $(X,\tX)_t$ and  is equal to $u^*h^{0,t}_{\A^1}(F)$, 
by Proposition \ref{prop-def;XStHS}.

Note however that $h^0_{\A^1}(F)$ will  in positive characteristic $p$ in general not be an étale sheaf even if $F$ is.
For example consider $F=\Omega^q$ and assume \ref{res1}, with $d=\dim X$.
By \cite[(4) on p. 5]{RulSaito}  and \eqref{lem:tameRSCA1-2} the Nisnevich sheaf $h^0_{\A^1}(F)$ on $X$ is given by
\[V\mapsto H^0(\ol{V}, \Omega^q_{\ol{V}/k}(\log D)),\]
where $(\ol{V}, D)$ is a normal crossing compactification of $V$. 
It is well known that this is not an \'etale sheaf. Indeed,  
consider any form $\alpha\in \Omega^1_{k(x,y)}$, which has not only log-poles, e.g. $\alpha=dx/y$. 
By \cite[I, Theorem]{Mumford61} and resolution of singularities in dimension 2, 
there is a dense open $V\subset \P^2$ and a projective morphism $f:X\to \P^2$ from a smooth  $k$-scheme $X$, 
such that  $\alpha \in H^0(V, \Omega^1_V)$, the induced map $f^{-1}(V)\to V$ is finite \'etale, 
$D=f^{-1}(\P^2\setminus V)_{\red}$ is an SNCD, and $f^*\alpha\in H^0(X, \Omega^1_X)\subset H^0(X, \Omega^1_X(\log D))$. 
In this case clearly the pullbacks of $f^*\alpha$  to any normal crossing  model of 
$f^{-1}(U)\times_U f^{-1}(U)$ along the maps induced by the two projections agree, but it does not descent to a logarithmic form on a normal crossing model of $U$ by our choice of $\alpha$.
\end{remark}

\bibliographystyle{alpha}
\bibliography{bib-1}

\end{document}